\let\smallsection=\subsubsection
\let\epsilon=\varepsilon 
\def\Mext{M_{\mathrm{ext}}}
\newtheorem*{conj}{Conjecture}
\newtheorem{example}{Example}
\newtheorem*{ADRegularDefn}{Definition \ref{d:ad-regular}}
\newtheorem{defn}[prop]{Definition}
\newcommand{\betaXBound}{\delta\exp\big[-\mathbf{K}(1-\delta)^{-14}(1+\log^{14}C)\big]}
\newcommand{\betaXBoundBFC}{\delta\exp\big[-\mathbf{K}(1-\delta)^{-28}(1+\log^{14}\mathbf{C})\big]}
\newcommand{\RR}{{\mathbb R}}
\newcommand{\setdim}{\delta}
\newcommand{\h}{\alpha}
\newcommand{\metSpace}{\mathcal{M}}
\newcommand{\metricChar}{d}
\newcommand{\X}{\mathcal{X}}
\newcommand{\ZZ}{\mathbb{Z}}
\title[Spectral gaps, additive energy, and a fractal uncertainty principle]%
{Spectral gaps, additive energy,\\
and a fractal uncertainty principle}
\author{Semyon Dyatlov}
\email{dyatlov@math.mit.edu}
\author{Joshua Zahl}
\email{jzahl@mit.edu}
\address{Department of Mathematics, Massachusetts Institute of Technology,
77 Massachusetts Ave, Cambridge, MA 02139}
\begin{document}

\begin{abstract}
We obtain an essential spectral gap for $n$-dimensional convex co-compact hyperbolic manifolds
with the dimension $\delta$ of the limit set close
to ${n-1\over 2}$. The size of the gap is expressed using
the additive energy of stereographic projections of the limit set.
This additive energy can in turn be estimated in terms of
the constants in Ahlfors-David regularity of the limit set.
Our proofs use new microlocal methods, in particular a notion of a fractal uncertainty principle.
\end{abstract}

\maketitle

\addtocounter{section}{1}
\addcontentsline{toc}{section}{1. Introduction}

In this paper we study essential spectral gaps for convex co-compact
hyperbolic quotients $M=\Gamma\backslash \mathbb H^n$. To formulate our result in the simplest
setting, consider $n=2$ and take the \emph{Selberg zeta function}~\cite[(10.1)]{Borthwick}
$$
Z_M(\lambda)=\prod_{\ell\in \mathcal L_M}\prod_{k=0}^\infty \big(1-e^{-(s+k)\ell}\big),\quad
s={1\over 2}-i\lambda,
$$
where $\mathcal L_M$ consists of all lengths of primitive closed geodesics on $M$ (with multiplicity).
The spectral representation
of $Z_M$ implies that it has only finitely many singularities (that is, zeros and poles) in $\{\Im\lambda>0\}$.
The work of Patterson~\cite{Patterson} and Sullivan~\cite{Sullivan}
shows that $Z_M(\lambda)$ has no singularities in $\{\Im\lambda>\delta-{1\over 2}\}$
and a simple zero at $i(\delta-{1\over 2})$, where $\delta\in [0,1]$ is the dimension of the limit
set of the group (see~\eqref{e:delta}). Therefore $Z_M$ has finitely many singularities
in $\{\Im\lambda>-\max(0,{1\over 2}-\delta)\}$.

For $\delta\in \big(0,{1\over 2}\big]$, Naud~\cite{Naud} obtained the stronger
statement that $Z_M$
has finitely many singularities in $\{\Im\lambda>-\beta\}$ for some $\beta$
strictly greater than ${1\over 2}-\delta$.
Naud's result, generalized to
higher dimensional quotients by Stoyanov~\cite{Stoyanov1}, is based
on the method of Dolgopyat~\cite{Dolgopyat} and does not specify the size of the improvement.
Our first result in particular gives explicit estimates on the value of~$\beta$ when $\delta={1\over 2}$:
\begin{theo}
  \label{t:marketing}
Let $M=\Gamma\backslash\mathbb H^2$ be a convex co-compact hyperbolic surface.
Then for each $\varepsilon>0$, the function $Z_M$ has finitely many singularities in $\{\Im\lambda>-\beta+\varepsilon\}$,
where
\begin{equation}
  \label{e:our-gap}
\beta={3\over 8}\Big({1\over 2}-\delta\Big)+{\beta_E\over 16},\quad
\beta_E:=\betaXBoundBFC.
\end{equation}
Here $\mathbf K>0$ is a global constant and $\mathbf C\geq 1$ is the constant
in the Ahlfors-David regularity for the limit set $\Lambda_\Gamma$,
see~\eqref{e:ad-regular-limit}.
\end{theo}
See Figure~\ref{f:gaps}. We say that $M$ has an \emph{essential spectral gap}
of size $\beta$. Theorem~\ref{t:main} improves over the standard gap
$\beta_{\mathrm{std}}=\max(0,{1\over 2}-\delta)$
for all surfaces with $\delta={1\over 2}$.
In fact, we show on the example of three-funnel surfaces
that the regularity constant~$\mathbf C$ is bounded when the surface $M$
varies in a compact set in the moduli space, and thus~\eqref{e:our-gap}
improves over $\beta_{\mathrm{std}}$ for surfaces close to those with $\delta={1\over 2}$~--
see Theorem~\ref{t:3funny}
in~\S\ref{s:3fun}.
This includes some surfaces with $\delta>{1\over 2}$, which to our knowledge provide the first
examples of fractal chaotic systems where the pressure $P({1\over 2})$ is
positive but there is an essential spectral gap below the real line.
On the other hand our methods cannot improve over $\beta_{\mathrm{std}}$
even in the most favorable situation unless
$\delta\in ({5\over 11},{3\over 5})$~-- see the remark following Theorem~\ref{t:ae-reduction}.

A numerical investigation of the gap was done by Borthwick~\cite[\S7]{BorthwickNum}
and Borthwick--Weich~\cite{Borthwick-Weich} (see Figure~\ref{f:gaps}(b)).
Both~\cite[Figure~14]{Borthwick-Weich} and the experimental results of~\cite[Figure~4]{prl}
suggest that the improvement $\beta-\beta_{\mathrm{std}}$ should indeed be larger when $\delta\approx {1\over 2}$
than for other values of $\delta$.
\begin{figure}
\includegraphics[scale=0.975]{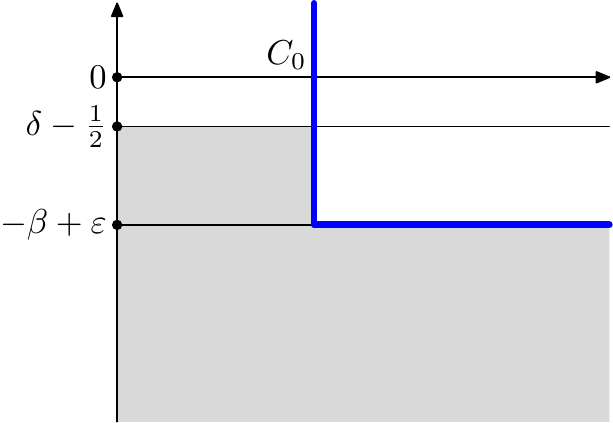}
\qquad
\includegraphics[scale=0.975]{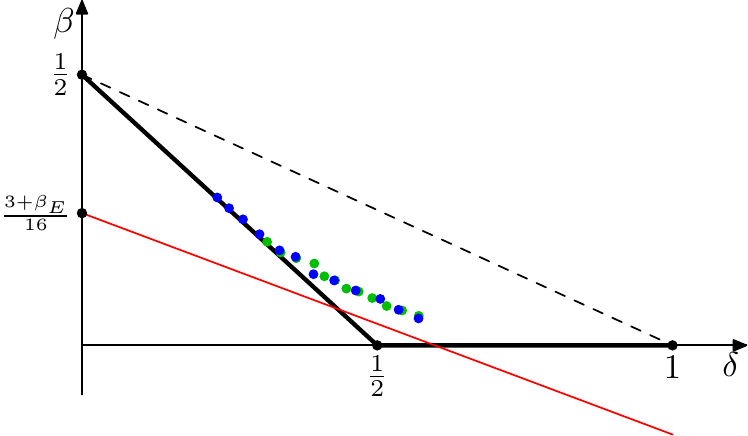}
\hbox to\hsize{\hss(a)\hss\hss (b)\hss}
\caption{(a) Resonance free regions (in white) in dimension 2 for $\delta<{1\over 2}$ given by the Patterson--Sullivan gap~\eqref{e:standard-gap}
and an essential spectral gap of Theorem~\ref{t:main} (outlined in blue).
(b) Essential spectral gaps~$\beta$ in dimension~2 depending on $\delta$: the standard gap (in bold) and
the Jakobson--Naud conjecture (dashed).
The circles correspond to numerically computed
spectral gaps for symmetric 3-funneled surfaces (blue)
and 4-funneled surfaces (green) from~\cite[Figure~14]{Borthwick-Weich}
(specifically, $G^{I_1}_{100}$ in the notation of~\cite{Borthwick-Weich}).
The red line is the gap given by Theorem~\ref{t:marketing} for
a \emph{sample choice} of $\beta_E:={1\over 2}$.}
\label{f:gaps}
\end{figure}

In fact, our results apply to convex co-compact
quotients $M$ of any dimension $n$ and give bounds on the
\emph{scattering resolvent}
$$
R(\lambda)=\Big(-\Delta-{(n-1)^2\over 4}-\lambda^2\Big)^{-1}:L^2_{\comp}(M)\to H^2_{\loc}(M),\quad
\lambda\in\mathbb C,
$$
which is the meromorphic continuation of the $L^2$ resolvent from the upper half-plane~-- see~\S\ref{s:scattering-resolvent}.
The standard Patterson--Sullivan gap in this setting is~\cite[\S14.4]{Borthwick}
\begin{equation}
  \label{e:standard-gap}
R(\lambda)\text{ has only finitely many poles in }
\Big\{\Im\lambda \geq -\max\Big(0,{n-1\over 2}-\delta\Big)\Big\}.
\end{equation}

The poles of $R(\lambda)$, called \emph{resonances}, are related to the scattering poles and to the zeroes of $Z_M$ as proved
by Bunke--Olbrich~\cite{Bunke-Olbrich} and Patterson--Perry~\cite{Patterson-Perry};
see Borthwick~\cite[Chapter~10]{Borthwick} for an expository proof and the history of the subject.
Therefore Theorem~\ref{t:marketing} is a direct corollary of the following
stronger statement:
\begin{theo}
  \label{t:main}
Under the assumptions of Theorem~\ref{t:marketing}, we have the resolvent estimate
\begin{equation}
  \label{e:essential-gap2}
\|\chi R(\lambda)\chi\|_{L^2\to L^2}\leq C|\lambda|^{-1-2\min(0,\Im\lambda)+\varepsilon},\quad
|\lambda|>C_0,\ 
\Im\lambda\in [-\beta+\varepsilon,1],
\end{equation}
where $\beta$ is given by~\eqref{e:our-gap}, $\varepsilon>0$ and $\chi\in C_0^\infty(M)$ are arbitrary,
the constant $C_0$ depends on $\varepsilon$, and the constant $C$ depends on $\varepsilon,\chi$.
\end{theo}
Spectral gaps for the special case of arithmetic quotients
have recently found important applications to diophantine problems, see Bourgain--Gamburd--Sarnak~\cite{BGS}
and Magee--Oh--Winter~\cite{MOW}. For these applications one needs a uniform resonance free region for
congruence subgroups of $\Gamma$; a uniform logarithmic region was obtained in~\cite{BGS}
and a uniform gap by Oh--Winter~\cite{Oh-Winter}.

Our results have
the following features compared to previous works on spectral gaps:
\begin{itemize}
\item The size of the gap is more explicit, expressed in terms of additive
energy of the limit set (Theorem~\ref{t:ae-reduction}) or the constants
in Ahlfors-David regularity of this set. Compared
to Dolgopyat's method, we decouple analytical
aspects of the problem from combinatorial ones. This
makes it more feasible to compute the size of the gap for specific hyperbolic
manifolds.
\item We obtain a polynomial resolvent bound~\eqref{e:essential-gap2}, rather
than just a resonance free strip. This can be used to obtain polynomial
bounds on other objects such as Eisenstein functions.
\item We rely on $C^\infty$ microlocal analysis (for instance, nowhere using explicitly
that $M$ is analytic); this gives hope that our strategy may apply
to more general classes of manifolds.
\end{itemize}
Regarding the last item on the above list, we make the following conjecture
which would improve over the \emph{pressure gap}
studied in more general cases by Ikawa~\cite{Ikawa}, Gaspard--Rice~\cite{GaspardRice}, and Nonnenmacher--Zworski~\cite{NoZwActa}. 
\begin{conj}
Let $(M,g)$ be a convex co-compact hyperbolic surface with $\delta={1\over 2}$.
Then all sufficiently small smooth compactly supported metric perturbations
of $(M,g)$ satisfy~\eqref{e:essential-gap2} for some $\beta>0$.
\end{conj}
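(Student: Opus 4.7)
The plan is to generalize the microlocal framework of Theorem~\ref{t:main} to non-hyperbolic metrics. The reason to expect success is that the argument in the paper uses only $C^\infty$ microlocal analysis and treats the hyperbolic structure as input data about the trapped set; no step invokes the analyticity or the Lie-theoretic structure of $\mathbb H^2$ in an essential way. The program then decomposes into a dynamical part (transferring the relevant fractal geometry to the perturbed flow) and an analytical part (re-deriving the fractal uncertainty principle in this intrinsic setting).

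First I would establish the dynamical foundation. The geodesic flow of a convex co-compact hyperbolic surface with $\delta={1\over 2}$ is Anosov on its trapped set $K\subset S^*M$. By structural stability of uniformly hyperbolic sets, any sufficiently small $C^\infty$ metric perturbation $g'$ preserves this picture: the perturbed flow admits a trapped set $K'$ that is H\"older-conjugate to $K$, and the Ahlfors--David regularity constants of $K'$ (and of the fractal sets cut out on a family of Poincar\'e sections by the stable and unstable manifolds) vary continuously with the perturbation. In particular $K'$ remains Ahlfors--David regular with bounded constant $\mathbf C'$ and with dimension $\delta'$ close to $1/2$.

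Second, I would set up an intrinsic microlocal machine on Poincar\'e sections transverse to the flow. In the hyperbolic case the stereographic projection realizes the limit set as a fractal in $\mathbb R$, and the fractal uncertainty principle is applied to the Euclidean-Fourier dual pair formed by the stable and unstable directions. For a perturbed metric the Fourier transform must be replaced by a semiclassical Fourier integral operator whose canonical relation is the Poincar\'e return map. The heart of the proof would be an intrinsic fractal uncertainty principle: functions microlocalized to an $h$-neighborhood of the unstable fractal that are mapped by this operator into an $h$-neighborhood of the stable fractal must lose a factor $h^{\beta/2}$ in $L^2$ for some $\beta>0$. Combined with the propagation-of-singularities scheme from the proof of Theorem~\ref{t:main}, iterated a logarithmic number of times in the Ehrenfest window, this would yield the resolvent bound~\eqref{e:essential-gap2}.

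The main obstacle is precisely the intrinsic FUP. The paper exploits the fact that the operator entering the FUP is the ordinary Fourier transform, so additive-combinatorial tools (Pl\"unnecke--Ruzsa, additive energy) apply directly to the projected limit set. For a general Poincar\'e map one has only a Fourier integral operator, and it is unclear how to phrase an additive-energy bound for its canonical relation. A natural first attempt is to transport the hyperbolic additive-energy bound of Theorem~\ref{t:marketing} across the H\"older conjugacy $K\to K'$; this loses quantitative control but only requires the qualitative positivity of~$\beta$ that the conjecture asks for. A more robust alternative is to prove an FUP directly for Ahlfors--David regular sets with $\delta$ close to $1/2$ without factoring through additive energy. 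Either route requires a genuinely new analytical ingredient beyond the present paper, which is why the statement is posed only as a conjecture.
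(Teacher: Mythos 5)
The statement you are asked to prove is stated in the paper only as a conjecture, so there is no proof in the paper to compare against. What you have written is a program sketch rather than a proof, and to your credit you say so plainly at the end. The architecture you propose---re-derive the second-microlocal reduction of \S\ref{s:hyperbolic} for perturbed metrics, then establish an intrinsic fractal uncertainty principle on Poincar\'e sections---is the right kind of plan, but several of your intermediate claims do not hold up, and the main gap is wider than your final paragraph acknowledges.

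First, the Ahlfors--David regularity claim. Structural stability of a hyperbolic set yields a conjugacy $K\to K'$ that is only H\"older, not bi-Lipschitz, and a H\"older homeomorphism can distort regularity constants without bound; Ahlfors--David regularity of $K'$ with controlled constant $\mathbf C'$ does \emph{not} follow from structural stability alone. The paper's proof of uniform regularity for three-funneled surfaces (Proposition~\ref{l:3fun}) is genuinely group-theoretic---it uses the Schottky M\"obius generators via Lemmas~\ref{l:bashas} and~\ref{l:bashas2}---and none of it survives a metric perturbation. An independent argument through Gibbs (Bowen--Margulis) measures would be needed, and you do not supply one.

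Second, ``transporting the hyperbolic additive-energy bound across the H\"older conjugacy'' cannot work even qualitatively. The additive energy of Definition~\ref{d:ae} counts approximate solutions of $\eta_1-\eta_2+\eta_3-\eta_4=0$ and depends on the linear structure of $\mathbb R$. A H\"older homeomorphism is not compatible with addition, does not map sumsets to sumsets, and gives no control of the additive energy of the image in terms of that of the preimage. This is not a degradation that merely costs constants; the bound simply does not transfer.

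Third, and centrally, the obstruction begins earlier than you indicate. The operator $\mathcal B_\chi$ entering the FUP is not just ``some FIO for the Poincar\'e return map'': it is produced by the \emph{explicit} symplectomorphisms $\varkappa^\pm$ of Lemma~\ref{l:kappa-M}, built from the Poisson kernel, which globally straighten the smooth weak (un)stable foliations $L_u,L_s$ to the vertical foliation on $T^*(\mathbb R^+\times\mathbb S^1)$. For a perturbed metric the weak (un)stable foliations are generically only $C^{1+\alpha}$, not $C^\infty$, so the whole second-microlocal calculus of \S\ref{s:second-microlocalization}---which requires a \emph{smooth} Lagrangian foliation (Definition~\ref{d:l-foli}) in order to even define the symbol class $S^{\comp}_{L,\rho}$---must be reformulated before an FUP can be posed. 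This analytic obstruction sits upstream of any additive-combinatorial difficulty and is the real reason the statement remains a conjecture.
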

This conjecture is related to the improved gaps obtained for scattering
by several convex obstacles
by Petkov--Stoyanov~\cite{PetkovStoyanov} and Stoyanov~\cite{Stoyanov2}
using the methods of~\cite{Dolgopyat}.

We now describe the scheme of proof of Theorem~\ref{t:main}:

\subsection{Spectral gaps via a fractal uncertainty principle}
  \label{s:spfup}

We first reduce the estimate~\eqref{e:essential-gap2} to a fractal uncertainty principle.
To state it, let $\Lambda_\Gamma\subset\mathbb S^{n-1}$ be the limit set of
the group $\Gamma$, $M=\Gamma\backslash\mathbb H^n$ (see~\eqref{e:Lambda-Gamma}) and
 denote by $\indic_{\Lambda_\Gamma(\alpha)}$ the indicator function
of the set
\begin{equation}
  \label{e:limit-nbhd}
\Lambda_\Gamma(\alpha)=\{y\in \mathbb S^{n-1}\mid d(y,\Lambda_\Gamma)\leq\alpha\}
\end{equation}
where $d(y,y')=|y-y'|$ denotes the Euclidean distance function
on $\mathbb S^{n-1}\subset\mathbb R^n$.
Note that the Minkowski dimension of $\Lambda_\Gamma$ is equal to $\delta$, therefore (see~\eqref{e:AD-estimate-Lebesgue})
\begin{equation}
  \label{e:LG-upper-1}
\alpha^{n-1-\delta}/C\,\leq\,\mu_L(\Lambda_\Gamma(\alpha))\,\leq\, C \alpha^{n-1-\delta},\quad
\alpha\in (0,1)
\end{equation}
where $\mu_L$ denotes the Lebesgue measure on $\mathbb S^{n-1}$.

Define the operator $\mathcal B_\chi=\mathcal B_\chi(h):L^2(\mathbb S^{n-1})\to L^2(\mathbb S^{n-1})$
by
\begin{equation}
  \label{e:B-chi}
\mathcal B_\chi v(y)=(2\pi h)^{1-n\over 2}\int_{\mathbb S^{n-1}} |y-y'|^{2i/h} \chi(y,y') v(y')\,dy'
\end{equation}
where $dy'$ is the standard volume form on $\mathbb S^{n-1}$ and $\chi\in C_0^\infty(\mathbb S^{n-1}_\Delta)$, where
\begin{equation}
  \label{e:s-diag}
\mathbb S^{n-1}_\Delta=\{(y,y')\in\mathbb S^{n-1}\times\mathbb S^{n-1}\mid
y\neq y'\}.
\end{equation}
\begin{defi}
  \label{d:fup}
We say that $\Lambda_\Gamma$ satisfies the \textbf{fractal uncertainty principle}
with exponent $\beta>0$, if for each $\varepsilon>0$ there exists $\rho\in (0,1)$
such that
\begin{equation}
  \label{e:fup-standard}
\|\indic_{\Lambda_\Gamma(C_1h^\rho)}\mathcal B_\chi(h) \indic_{\Lambda_\Gamma(C_1h^\rho)}\|_{L^2(\mathbb S^{n-1})\to L^2(\mathbb S^{n-1})}
\leq C h^{\beta-\varepsilon},\quad
h\in (0,1)
\end{equation}
for every $h$-independent constant $C_1$ and function $\chi\in C_0^\infty(\mathbb S^{n-1}_\Delta)$,
and some $C$ depending on $C_1,\chi$.
\end{defi}

\begin{figure}
\includegraphics{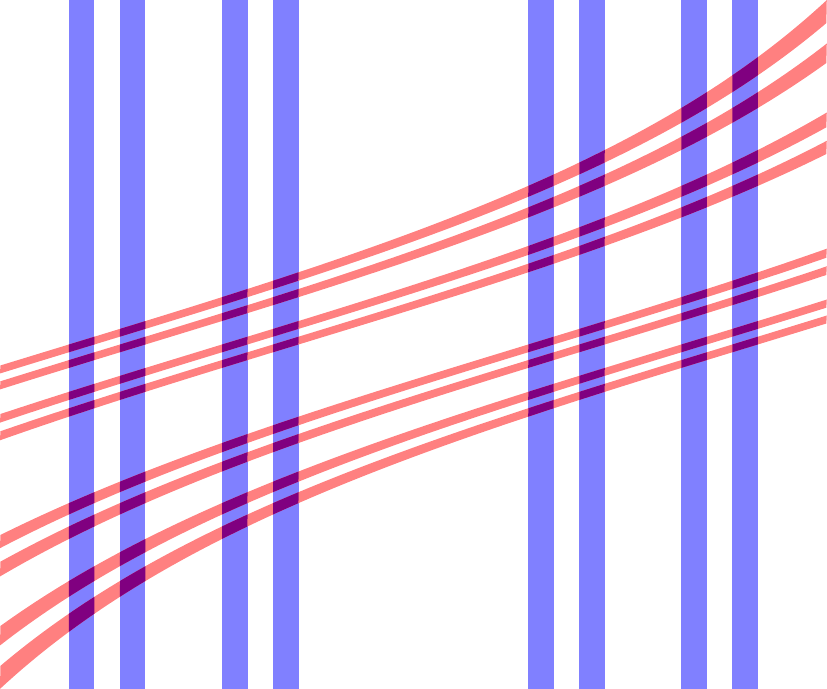}
\caption{The horizontal leaves~\eqref{e:hor-lag}, in red, and the vertical leaves~\eqref{e:ver-lag}, in blue, for $n=2$.
The horizontal variable
is $y\in\mathbb S^1$ and the vertical variable is $\eta$. For the fractal uncertainty principle, the width
of the distorted rectangles is slightly larger than $h$.}
\label{f:foliations}
\end{figure}
\noindent\textbf{Remark}. The fractal uncertainty principle always holds with $\beta=\max(0,{n-1\over 2}-\delta)$, see~\eqref{e:tb-1} and~\eqref{e:tb-2}.
On the other hand, by~\eqref{e:JN} the maximal $\beta$
for which~\eqref{e:fup-standard} can be true is $\beta={n-1\over 2}-{\delta\over 2}$,
which (in dimension 2) is exactly the value of the essential spectral gap conjectured by
Jakobson--Naud~\cite{Jakobson-Naud2}.

To explain how the estimate~\eqref{e:fup-standard} represents an uncertainty principle
associated to the set $\Lambda_\Gamma$, we consider the extremal case $\rho=1$,
put $C_1=1$,
and cover $\Lambda_\Gamma(h)$ by a collection of balls of radius $h$ centered at some
points $y_1,\dots,y_N\in\Lambda_\Gamma$, where $N\sim h^{-\delta}$ by~\eqref{e:LG-upper-1}. Then for each
$v\in L^2(\mathbb S^{n-1})$, the function $\mathcal B_\chi(h)\indic_{\Lambda_\Gamma(h)}v$ microlocally
concentrates (see~\eqref{e:oppa} below) in an $h$-neighborhood of the union of `horizontal' Lagrangian leaves
\begin{equation}
  \label{e:hor-lag}
\bigcup_{j=1}^N \big\{\big(y,\partial_y\log (|y-y_j|^2)\big)\,\big|\, (y,y_j)\in\supp\chi\big\}\ \subset\ T^*\mathbb S^{n-1},
\end{equation}
while the operator $\indic_{\Lambda_\Gamma(h)}$ microlocalizes to an $h$-neighborhood of the union of `vertical' Lagrangian
leaves
\begin{equation}
  \label{e:ver-lag}
\bigcup_{j=1}^N \{(y_j,\eta)\mid \eta\in T^*_{y_j}\mathbb S^{n-1}\}\ \subset\ T^*\mathbb S^{n-1}.
\end{equation}
The estimate~\eqref{e:fup-standard} with $\beta>0$ then says that no function can be perfectly localized to $h$-neighorhoods
of both~\eqref{e:hor-lag} and~\eqref{e:ver-lag}~-- see Figure~\ref{f:foliations}. Note that $h$-neighborhoods here
cannot be replaced by, say, $h^{1/2}$ neighborhoods since Gaussians provide examples of functions that concentrate
$h^{1/2}$ close to any fixed leaf of~\eqref{e:hor-lag} and to any fixed leaf of~\eqref{e:ver-lag}.
A related statement in the context of normally hyperbolic trapping was proved by  Nonnenmacher--Zworski~\cite[Lemma~5.12]{NoZwInv}.

If $\Lambda_\Gamma$ satisfies the fractal uncertainty principle, then an essential spectral
gap is given by the following
\begin{theo}
  \label{t:fup-reduction}
Assume that $\Lambda_\Gamma$ satisfies the fractal uncertainty principle with
exponent $\beta>0$.
Then~\eqref{e:essential-gap2} holds, in particular
$R(\lambda)$ has finitely many poles in $\{\Im\lambda>-\beta+\varepsilon\}$
for each $\varepsilon>0$.
\end{theo}
We outline the proof of the resonance free region of Theorem~\ref{t:fup-reduction}
(the resolvent bound follows directly from the argument). It suffices to show
that for $\Re\lambda\gg 1$ and $\Im\lambda\geq -\beta+\varepsilon$, there
are no nontrivial \emph{resonant states}, that is solutions to the equation
\begin{equation}
  \label{e:laggie}
\Big(-\Delta-{(n-1)^2\over 4}-\lambda^2\Big)u=0
\end{equation}
which satisfy certain \emph{outgoing} conditions asymptotically at the infinite ends of $M$.

Put $h:=(\Re\lambda)^{-1}$ and assume that $u$ is $L^2$-normalized on a sufficiently large fixed compact
subset of $M$. We study concentration of $u$ in the \emph{phase space} $T^*M$ using semiclassical quantization
\begin{equation}
  \label{e:oppa}
a\in C^\infty(T^*M)\ \mapsto\ \Op_h(a):C^\infty(M)\to C^\infty(M)
\end{equation}
where $a$ satisfies certain growth conditions~-- see~\S\ref{s:semiclassical}.

Let $\Gamma_+\subset T^*M\setminus 0$ be the \emph{outgoing tail},
consisting of geodesics which are trapped backwards in time;
define also the \emph{incoming tail} $\Gamma_-\subset T^*M\setminus 0$
(see~\eqref{e:GpmDef}).
The work of Vasy~\cite{vasy1,vasy2} near the infinite ends together with propagation
of semiclassical singularities shows that $u$ is microlocalized on $\Gamma_+$
(see~\cite{BonyMichel,NoZwActa} for related results in Euclidean scattering).
More precisely, for an $h$-independent symbol $a_+$,
\begin{equation}
  \label{e:g+1}
\supp (1-a_+)\cap\Gamma_+=\emptyset\quad \Longrightarrow\quad (1-\Op_h(a_+))u=\mathcal O(h^\infty)_{C^\infty(M)}.
\end{equation}
Moreover, $u$ has positive mass near $\Gamma_-$; more precisely, for $h$-independent $a_-$
\begin{equation}
  \label{e:g+2}
\supp(1-a_-)\cap \Gamma_-=\emptyset\quad \Longrightarrow\quad \|\Op_h(a_-) u\|_{L^2}\geq C^{-1}>0.
\end{equation}
(The statement~\eqref{e:g+2} is not quite correct since $\Gamma_-$ extends to the infinite ends of $M$
and thus $a_-$ cannot be compactly supported; however, we may argue in a fixed neighborhood
of the trapped set $K=\Gamma_+\cap \Gamma_-$. See Lemma~\ref{l:outgoing} for precise statements.)
The main idea of the proof is to replace $h$-independent symbols in~\eqref{e:g+1} and~\eqref{e:g+2} with
symbols that concentrate $h^\rho$ close to $\Gamma_\pm$:
\begin{align}
  \label{e:g++1}
d(\supp(1-a_+),\Gamma_+)>h^\rho\quad &\Longrightarrow\quad (1-\Op_h(a_+))u=\mathcal O(h^\infty)_{C^\infty(M)},\\
  \label{e:g++2}
d(\supp(1-a_-),\Gamma_-)>h^\rho\quad &\Longrightarrow\quad \|\Op_h(a_-)u\|_{L^2}\geq C^{-1}h^{(-\Im\lambda)\rho}.
\end{align}
The constant $\rho\in (0,1)$ is taken very close to $1$. See Lemma~\ref{l:second} for precise statements.

The proofs of~\eqref{e:g+1} and~\eqref{e:g+2} use propagation estimates for some $h$-independent time.
The proofs of~\eqref{e:g++1} and~\eqref{e:g++2} use similar estimates for
time $t=\rho\log(1/h)$, and the factor $h^{(-\Im\lambda)\rho}=e^{(\Im\lambda)t}$
results from the imaginary part of the operator in~\eqref{e:laggie}.

However, the analysis for~\eqref{e:g++1} and~\eqref{e:g++2} is considerably
more complicated since the symbols $a_\pm$ have very rough behavior in the directions transversal to $\Gamma_\pm$,
oscillating on the scale $h^\rho$~-- this corresponds to the fact that $t$ is almost \emph{twice}
the Ehrenfest time (since the maximal expansion rate for the geodesic flow is equal to 1,
the Ehrenfest time is just below ${1\over 2}\log(1/h)$~-- see for instance~\cite[Proposition~3.9]{qeefun}).
To solve this problem, we use the fact that $\Gamma_+$ is foliated by
the leaves of the weak unstable Lagrangian foliation $L_u$, while $\Gamma_-$ is foliated by the leaves
of the weak stable Lagrangian foliation $L_s$; therefore, we can make $a_+$ vary on scale $1$
along $L_u$ and $a_-$ vary on the scale $1$ along $L_s$. Then $a_+$ and $a_-$ can both be quantized
to some operators $\Op_h^{L_u}(a_+)$ and $\Op_h^{L_s}(a_-)$;
however, these operators will not be part of the same calculus~-- see~\S\ref{s:second-microlocalization} for details.

Next, the fractal uncertainty principle gives the following estimate
for some $a_\pm$ satisfying the conditions of~\eqref{e:g++1}, \eqref{e:g++2}:
\begin{equation}
  \label{e:fuppie}
\|\Op_h(a_-)\Op_h(a_+)\|_{L^2\to L^2}\leq Ch^{\beta-\varepsilon/2}.
\end{equation}
To see this, we conjugate by a Fourier integral operator whose underlying canonical transformation maps
an $h^\rho$ neighborhood of $\Gamma_+,\Gamma_-$ to an $h^\rho$ neighborhood of~\eqref{e:hor-lag},
\eqref{e:ver-lag} respectively
(strictly speaking, to the products of \eqref{e:hor-lag}, \eqref{e:ver-lag} with $(T^*\mathbb R^+)_{w,\theta}$
where $w$ corresponds to $|\xi|_g$ and $\partial_\theta$ corresponds to the generator of the geodesic flow).
Under this conjugation, $\Op_h(a_-)$ corresponds to $\indic_{\Lambda_\Gamma(h^\rho)}$
and $\Op_h(a_+)$ corresponds to $\mathcal B_\chi \indic_{\Lambda_\Gamma(h^\rho)} \mathcal B_\chi^*$,
therefore~\eqref{e:fuppie} follows from~\eqref{e:fup-standard}.
See~\S\ref{s:fun} for details.

Gathering together~\eqref{e:g++1}, \eqref{e:g++2},
and~\eqref{e:fuppie}, and recalling that $-\Im\lambda<\beta-\varepsilon$, we obtain a contradiction
for $\rho$ close enough to $1$ and $h\ll 1$ (thus finishing the proof):
$$
\begin{aligned}
C^{-1}h^{(\beta-\varepsilon)\rho}&\leq
C^{-1}h^{(-\Im\lambda)\rho}\leq \|\Op_h(a_-)u\|_{L^2}\\
&\leq \|\Op_h(a_-)\Op_h(a_+)u\|_{L^2}+\mathcal O(h^\infty)
\leq Ch^{\beta-\varepsilon/2}.
\end{aligned}
$$

It would be interesting to see if 
Theorem~\ref{t:fup-reduction} could be proved using transfer operator techniques such as the ones in~\cite{Naud}.
We however note that the microlocal argument presented above may be easier to adapt to a variable curvature
situation (see the Conjecture above) and it also provides an explicit polynomial bound on the resolvent~\eqref{e:essential-gap2}.

\subsection{Fractal uncertainty principle via additive energy}

As remarked before (following Definition~\ref{d:fup}),
the fractal uncertainty principle holds with
$\beta={n-1\over 2}-\delta$. This corresponds to counting the total
area of the intersections of $h$-neighborhoods of~\eqref{e:hor-lag} and~\eqref{e:ver-lag}
(which in turn depend on $\delta$ by~\eqref{e:LG-upper-1})
and can be seen via an $L^1\to L^\infty$ norm bound on $\mathcal B_\chi$.
On the other hand, an $L^2\to L^2$ norm bound on $\mathcal B_\chi$
gives the fractal uncertainty principle with $\beta=0$.

If we only use the volume bound~\eqref{e:LG-upper-1}, then
no better value of $\beta$ can be obtained~-- for a non-rigorous explanation, one may
replace $\Lambda_\Gamma(C_1h^\rho)$ in~\eqref{e:fup-standard}
by a ball of volume $h^{n-1-\delta}$ in $\mathbb R^{n-1}$, replace $\mathcal B_\chi$
by the semiclassical Fourier transform, and calculate the corresponding $L^2\to L^2$ norm.

To get a better exponent $\beta$, we thus have to use the fractal structure of $\Lambda_\Gamma$. More precisely,
we will rely on the following combinatorial quantity:
\begin{defi}
  \label{d:ae}
For $\mathcal X\subset\mathbb R^{n-1}$ and $\alpha>0$, define
the \textbf{$\alpha$-additive energy} of $\mathcal X$ by
$$
E_A(\mathcal X,\alpha)=\alpha^{4(1-n)}\mu_L(\{(\eta_1,\eta_2,\eta_3,\eta_4)\in \mathcal X(\alpha)^4\mid
|\eta_1-\eta_2+\eta_3-\eta_4|\leq\alpha\})
$$
where $\mathcal X(\alpha)$ is the $\alpha$-neighborhood of $\mathcal X$ and $\mu_L$ is the Lebesgue measure.
This definition trivially extends
from $\mathbb R^{n-1}$ to any $n-1$ dimensional vector space with an inner product.
\end{defi}
Additive energy is intimately connected with the additive structure of finite sets, and it is one of the central concepts in the field of additive combinatorics. See~\cite{Tao} for further information on additive energy and related topics.

To explain the normalization of $E_A$, assume that $\mathcal X(\alpha)$ is the union of $N(\alpha)$ disjoint 
balls of radius $\alpha$, where the volume of $\mathcal X(\alpha)$ is proportional to $N(\alpha)\alpha^{n-1}$.
Then $E_A(\mathcal X,\alpha)$ is proportional to the number of combinations of four such balls such that the sum of the centers
of the first two balls is approximately equal to the sum of the centers of the other two.

Motivated by~\eqref{e:LG-upper-1}, we assume that $N(\alpha)\sim\alpha^{-\delta}$. Then
\begin{equation}
  \label{e:basicae}
\alpha^{-2\delta}\lesssim E_A(\mathcal X,\alpha)\lesssim \alpha^{-3\delta}.
\end{equation}
Indeed, the upper bound follows from the fact that the first three balls determine the fourth one uniquely,
and the lower bound follows from considering combinations of the form $(\eta_1,\eta_1,\eta_3,\eta_3)$.
\begin{figure}
\includegraphics{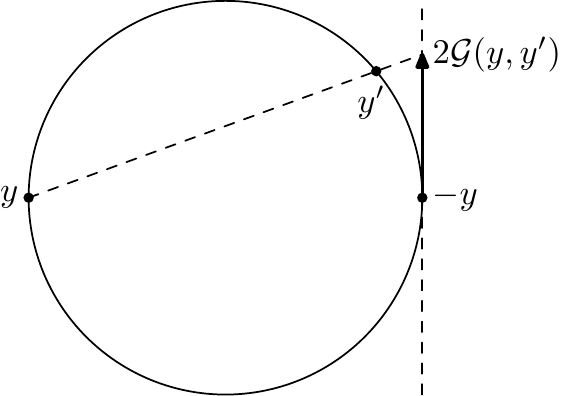}
\caption{The stereographic projection map $\mathcal G$.}
\label{f:stereographic0}
\end{figure}

We will use the additive energy of the images of the limit set $\Lambda_\Gamma$ by the map
\begin{equation}
  \label{e:stpro}
\mathcal G(y,y')={y'-(y\cdot y')y\over 1-y\cdot y'}\in\mathbb R^n,\quad
y,y'\in\mathbb S^{n-1}\subset\mathbb R^n,\quad
y\neq y',
\end{equation}
which is half the stereographic projection of $y'$ with the base point $y$~-- see Figure~\ref{f:stereographic0}.
We have $\mathcal G(y,y')\perp y$, therefore we may think of it as a vector in $T_y\mathbb S^{n-1}$,
or (pairing with the round metric on the sphere) as a vector in $T_y^*\mathbb S^{n-1}$.
Note that $\mathcal G$ is related to the leaves of~\eqref{e:hor-lag} since
\begin{equation}
  \label{e:gide}
\partial_y\log (|y-y'|^2)=-\mathcal G(y,y').
\end{equation}
\begin{defi}
  \label{d:ae-estimate}
We say that $\Lambda_\Gamma$ satisfies the \textbf{additive energy bound} with exponent
$\beta_E>0$, if for each $C_1>0$ there exists $C>0$ such that for all $\alpha\in (0,1)$,
\begin{equation}
  \label{e:ae-estimate}
\sup_{y_0\in \Lambda_\Gamma}E_A(\mathcal G(y_0,\Lambda_\Gamma)\cap B(0,C_1),\alpha)
\leq C\alpha^{-3\delta+\beta_E}.
\end{equation}
\end{defi}
One can also interpret the sets~$\mathcal G(y_0,\Lambda_\Gamma)$ in terms of the dynamics
of the geodesic flow on $M$ using horocyclic flows~-- see~\eqref{e:cal-F} and~\eqref{e:cal-F-useful}.

Given an additive energy bound, we obtain a fractal uncertainty principle
and thus (by Theorem~\ref{t:fup-reduction}) an essential spectral gap:
\begin{theo}
  \label{t:ae-reduction}
Assume that $\Lambda_\Gamma$ satisfies the additive energy bound with
exponent $\beta_E>0$.
Then $\Lambda_\Gamma$ satisfies the fractal uncertainty principle with exponent
\begin{equation}
  \label{e:beta-ae}
\beta={3\over 8}\Big({n-1\over 2}-\delta\Big)+{\beta_E\over 16}.
\end{equation}
\end{theo}
\noindent\textbf{Remark}. Note that by~\eqref{e:basicae}, the maximal $\beta_E$ for which Definition~\ref{d:ae-estimate}
may hold is $\beta_E=\delta$. Plugged into~\eqref{e:beta-ae},
this gives an essential spectral gap of size ${3(n-1)-5\delta\over 16}$, which
improves over~\eqref{e:standard-gap} only when $\delta\in ({5\over 11}(n-1),{3\over 5}(n-1))$.

Theorem~\ref{t:ae-reduction} is proved using an $L^4$ estimate on the Fourier transforms
of $\indic_{\mathcal G(y_0,\Lambda_\Gamma(h^{\rho/2}))}$
for $y_0\in\Lambda_\Gamma$ obtained from the additive energy bound. Here we have
to replace the original $h^\rho$ neighborhood of $\Lambda_\Gamma$ by a bigger
$h^{\rho/2}$ neighborhood to approximate correlations between different
leaves of~\eqref{e:hor-lag} restricted to $\Lambda_\Gamma(h^{\rho/2})$ using the Fourier transform.
Roughly speaking,
the leaves which are farther than $h^{1/2}$ apart have an $\mathcal O(h^\infty)$ correlation
and for the leaves which are closer than $h^{1/2}$ to each other, the difference
of the phase functions in the resulting integral can be well approximated by its linear part~--
see the paragraph following~\eqref{e:eddie}.
The enlargement of the neighborhood to $\Lambda_\Gamma(h^{\rho/2})$
causes the loss of a factor of $1\over 2$ in the size of the gap;
together with a factor of $3\over 4$ coming from the use of the $L^4$ bound (rather than $L^\infty$)
this explains the factor of $3\over 8$ in~\eqref{e:beta-ae}.

\subsection{Additive energy via Ahlfors-David regularity}
  \label{s:introad}

We now restrict to dimension $n=2$ and show that the limit set $\Lambda_\Gamma\subset\mathbb S^1$ of a convex co-compact Fuchsian
group $\Gamma$ with $\delta\in (0,1)$ satisfies the additive energy bound with some positive exponent.
For that we use the following regularity property:
\begin{defi}
  \label{d:ad-regular}
Let $(\metSpace,\metricChar)$ be a complete metric space with more than one element. We say a closed set $\mathcal X \subset \metSpace$ is \textbf{$\setdim$--regular} with constant $C_{\mathcal X}$ if for all $x\in \mathcal X$ we have
\begin{equation}\label{defnADRegular}
C_{\mathcal X}^{-1}r^\setdim\ \leq\ \mu_{\setdim}(\mathcal X\cap B(x,r))\ \leq\ C_{\mathcal X} r^{\setdim},\quad
0<r<\diam(\mathcal M)
\end{equation}
where $B(x,r)$ is the metric ball of radius $r$ centered at $x$ and $\mu_{\setdim}$ is the $\setdim$--dimensional Hausdorff measure.
\end{defi}
Sets with this property are also known as \emph{Ahlfors-David regular}. See~\cite{DS} for an introduction to $\setdim$--regular sets.
While Definition~\ref{d:ad-regular} is phrased using $\setdim$--dimensional Hausdorff measure, any other Borel outer measure could be used instead (in particular, for limit sets of convex co-compact Fuchsian groups the Patterson--Sullivan measure could be used).
This is discussed further in Lemma~\ref{equivOfADRegDefns} below.

The limit set $\Lambda_\Gamma\subset \mathbb S^{n-1}$
of a convex co-compact Fuchsian group $\Gamma$
is $\delta$--regular with $\delta$ defined in~\eqref{e:delta}~--
see~\cite[Theorem~7]{Sullivan} and~\cite[Lemma~14.13 and Theorem~14.14]{Borthwick}. We denote
the associated regularity constant by
\begin{equation}
  \label{e:ad-regular-limit}
\mathbf C:=C_{\Lambda_\Gamma}.
\end{equation}
Using $\delta$--regularity of $\Lambda_\Gamma$, we obtain the following additive energy bound.
Combined with Theorems~\ref{t:fup-reduction} and~\ref{t:ae-reduction}, it implies
Theorem~\ref{t:main} and thus Theorem~\ref{t:marketing}.
\begin{theo}
  \label{t:ad-reduced}
Let $M=\Gamma\backslash\mathbb H^2$ be a convex co-compact hyperbolic surface
with limit set $\Lambda_\Gamma\subset\mathbb S^1$ of dimension $\delta\in (0,1)$.
Then $\Lambda_\Gamma$ satisfies the additive energy bound in the sense
of Definition~\ref{d:ae-estimate} with exponent
\begin{equation}\label{e:betaE}
\beta_E:=\betaXBoundBFC,
\end{equation}
where $\mathbf C$ is defined in~\eqref{e:ad-regular-limit} and $\mathbf K$ is a global constant.
\end{theo}
%
\noindent\textbf{Remarks}.
(i) The specifics of the bound \eqref{e:betaE} are not particularly important. The key point is that the exponent $\beta_E $ 
in~\eqref{e:ae-estimate}
is independent of $\h$, and it can be computed explicitly. We did not compute the value of $\mathbf{K}$, but in principle it can be done without much difficulty. 

\noindent (ii) In dimensions $n>2$, Theorem~\ref{t:ad-reduced} no longer holds in general as shown by the
example of the hyperbolic cylinder in three dimensions (see for instance~\cite[Appendix~A]{fwl}). In this example,
the limit set $\Lambda_\Gamma$ is a great circle on $\mathbb S^2$, and the stereographic projections
$\mathcal G(y_0,\Lambda_\Gamma)$ are straight lines, which saturate the upper bound in~\eqref{e:basicae}.
See~\S\ref{higherDimRemark} for possible generalizations to higher dimensions.

Theorem~\ref{t:ad-reduced} follows from a general result bounding additive
energy of Ahlfors-David regular sets, stated as 
Theorem~\ref{t:ae-combinatorial} in~\S\ref{s:ae-combinatorial}; the proof of Theorem~\ref{t:ae-combinatorial}
can schematically be explained as follows (see~\S\ref{s:ae-ideas} for more details):
\begin{enumerate}
\item Ahlfors-David regular sets cannot contain large subsets of arithmetic progressions. This follows
by a direct argument using~\eqref{defnADRegular} and the fact that $\delta<1$.
\item A variant of Fre{\u\i}man's theorem from additive combinatorics asserts that any set with large additive energy must contain large subsets of generalized arithmetic progressions. Together with~(1) this implies that Ahlfors-David regular sets cannot have extremely large (i.e.~near maximal) additive energy.
\item Ahlfors-David regular sets also have a certain type of coarse self-similarity. This allows us to analyze them at many scales and at many different locations. Since Ahlfors-David regular sets cannot have extremely large additive energy at any scale or at any location, we can perform a multi-scale analysis to conclude that such sets must actually have small additive energy. 
\end{enumerate}
\subsection{Structure of the paper}

\begin{itemize}
\item In~\S\ref{s:semiclassical}, we review certain notions in semiclassical analysis,
in particular pseudodifferential and Fourier integral operators.
\item In~\S\ref{s:second-microlocalization}, we study an anisotropic pseudodifferential
calculus associated to a Lagrangian foliation.
\item In~\S\ref{s:hyperbolic}, we study geometric and dynamical properties
of hyperbolic manifolds and, using the calculus of~\S\ref{s:second-microlocalization},
prove Theorem~\ref{t:fup-reduction}.
\item In~\S\ref{s:fup}, we discuss the fractal uncertainty principle
and prove Theorem~\ref{t:ae-reduction}.
\item In~\S\ref{s:ae-combinatorial}, we prove that
Ahlfors-David regular sets have small additive energy.
\item In~\S\ref{s:ae}, we establish Ahlfors-David regularity of the stereographic projections
of the limit set and prove Theorem~\ref{t:ad-reduced}. We also
obtain locally uniform bounds on the regularity constant for 3-funneled surfaces.
\item In Appendix~\ref{s:hyperbolic-technical}, we prove several technical
lemmas used in~\S\ref{s:hyperbolic} and~\S\ref{s:ae}.
\end{itemize}

\section{Semiclassical preliminaries}
  \label{s:semiclassical}

In this section, we give a brief review of semiclassical analysis.
For a comprehensive introduction to the subject, the reader is referred to~\cite{e-z}.
We partially follow the presentation of~\cite[Appendix~E]{dizzy} and~\cite{qeefun,fwl,nhp}.

\subsection{Pseudodifferential operators}

Let $M$ be a manifold. For $k\in\mathbb R$, we say that $a(x,\xi)\in C^\infty(T^*M)$
lies in the symbol class $S^k_{1,0}(T^*M)$ if it satisfies the derivative bounds
\begin{equation}
  \label{e:basic-symbol}
|\partial^\alpha_x\partial^\beta_\xi a(x,\xi)|\leq C_{\alpha\beta K}\langle\xi\rangle^{k-|\beta|},\quad
x\in K,
\end{equation}
for each compact set $K\subset M$.
We restrict ourselves to the subset of \emph{polyhomogeneous, or classical, symbols} $S^k(T^*M)\subset S^k_{1,0}(T^*M)$ which have asymptotic expansions
$a(x,\xi)\sim\sum_{j=0}^\infty a_j(x,\xi)$ as
$|\xi|\to\infty$
where each $a_j$ is positively homogeneous in $\xi$ of degree $k-j$.

A family of symbols $b(x,\xi;h)\in S_{1,0}^k(T^*M)$ depending on a small parameter $h>0$
is said to lie in the class $S^k_h(T^*M)$ if it has the following expansion as $h\to 0$:
\begin{equation}
  \label{e:h-classical}
b(x,\xi;h)\sim\sum_{\ell=0}^\infty h^\ell a_\ell(x,\xi),\quad
a_\ell\in S^{k-\ell}(T^*M).
\end{equation}
See for instance~\cite[\S E.1.2]{dizzy} and~\cite[\S2]{vasy2} for details.

If $a\in S^k_{1,0}(T^*\mathbb R^n)$ satisfies~\eqref{e:basic-symbol} uniformly in $x\in\mathbb R^n$,
then we can quantize it by the following formula (see~\cite[\S4.1.1]{e-z}
and~\cite[\S E.1.4]{dizzy})
\begin{equation}
  \label{e:standard-quantization}
\Op_h(a)f(x)=(2\pi h)^{-n}\int_{\mathbb R^{2n}}e^{{i\over h}(x-y)\cdot\xi}a(x,\xi)f(y)\,dyd\xi,
\end{equation}
which gives an operator $\Op_h(a)$ acting on the space
$\mathscr S(\mathbb R^n)$ of Schwartz functions, as well as on the dual space
$\mathscr S'(\mathbb R^n)$ of tempered distributions.

Following~\cite[\S E.1.5]{dizzy} and~\cite[\S14.2.2]{e-z}, for a general manifold $M$ we consider the class $\Psi^k_h(M)$ of semiclassical
pseudodifferential operators with symbols in $S^k_h(T^*M)$. We denote by
$$
\sigma_h:\Psi^k_h(M)\to S^k(T^*M)
$$
the principal symbol map.
Operators in $\Psi^k_h$ act on semiclassical Sobolev spaces $H^s_{h,\comp}\to H^{s-k}_{h,\loc}$, see~\cite[\S E.1.6]{dizzy} and~\cite[\S14.2.4]{e-z}.
We will
often use the class $\Psi^{\comp}_h(M)$ of operators whose full symbols are essentially compactly supported in $T^*M$ and
whose Schwartz kernels are compactly supported in $M\times M$.

For $A\in\Psi^k_h(M)$, denote by $\WFh(A)$ its semiclassical wavefront set, which is the essential support
of its full symbol~-- see for instance~\cite[\S E.2.1]{dizzy} and~\cite[Appendix~C.1]{zeta}. Then $\WFh(A)$ is a closed
subset of the fiber-radially compactified cotangent bundle $\overline T^*M\supset T^*M$, see for instance~\cite[\S E.1.2]{dizzy}
or~\cite[\S2]{vasy2}.
For $A,B\in\Psi^k_h(M)$ and an open set $U\subset \overline T^*M$, we say that
$$
A=B+\mathcal O(h^\infty)\quad\text{microlocally in }U,
$$
if $\WFh(A-B)\cap U=\emptyset$.
We also use the notion of wavefront sets of $h$-tempered distributions and operators, see
for instance~\cite[\S E.2.3]{dizzy} or~\cite[\S2.3]{zeta}.  

Let $B=B(h):\mathcal D'(M)\to C_0^\infty(M)$ be an $h$-tempered family of smoothing operators
and assume that the wavefront set $\WF'_h(B)\subset \overline T^*(M\times M)$ is a compact
subset of $T^*(M\times M)$. We say that $B$ is \emph{pseudolocal} if $\WF'_h(B)$ is contained
in the diagonal $\Delta(T^*M)\subset T^*(M\times M)$. For a pseudolocal operator $B$,
we consider the set $\WF_h(B)\subset T^*M$ defined by
\begin{equation}
  \label{e:wf-pseudolocal}
  \WF'_h(B)=\{(x,\xi,x,\xi)\mid (x,\xi)\in \WF_h(B)\}.
\end{equation}
Note that operators in $\Psi^{\comp}_h(M)$ are pseudolocal and their definition of
wavefront set given in~\cite[\S E.2.1]{dizzy} agrees with the one given by~\eqref{e:wf-pseudolocal}.

\subsection{Fourier integral operators}
  \label{s:fios}

We next introduce semiclassical Fourier integral operators.
Let $\varkappa:U_2\to U_1$ be a canonical transformation (that is,
a symplectomorphism), where $U_j\subset T^*M_j$ are open sets
and $M_j$ are manifolds of the same dimension. Define the graph of $\varkappa$ by
\begin{equation}
  \label{e:Graph}
\Graph(\varkappa):=\{(x,\xi,y,\eta)\mid (y,\eta)\in U_2,\ (x,\xi)=\varkappa(y,\eta)\}\ \subset\ T^*(M_1\times M_2).
\end{equation}
Let $\xi\,dx$ and $\eta\,dy$ be the canonical 1-forms on $T^*U_1$ and $T^*U_2$ respectively.
Since $\varkappa$ is a canonical transformation, the restriction
$(\xi\,dx-\eta\,dy)|_{\Graph(\varkappa)}$ is a closed 1-form. We require that
$\varkappa$ is \emph{exact} in the sense that this restriction is an exact form,
and fix an antiderivative
\begin{equation}
  \label{e:antiderivative}
F\in C^\infty(\Graph(\varkappa)),\quad
(\xi\,dx-\eta\,dy)|_{\Graph(\varkappa)}=dF.
\end{equation}
For a canonical transformation $\varkappa$ with a fixed antiderivative $F$,
we consider the class~$I^{\comp}_h(\varkappa)$ of compactly supported and microlocalized
Fourier integral operators associated to $\varkappa$~-- see for instance~\cite[Chapter~5]{gu-st0},
\cite[Chapter~8]{gu-st},
\cite[\S3.2]{qeefun},
\cite[\S3.2]{fwl},%
\footnote[2]{The presentation in~\cite[\S3.2]{fwl} contained an error because the Fourier
integral operators associated to the identity map were not necessarily pseudodifferential
operators with classical symbols due to a possible constant phase factor $e^{ic/h}$. We correct
it here by fixing the antiderivative, which is always possible locally.}
\cite[\S3.2]{nhp}, and the references there.
We adopt a convention that operators in $I^{\comp}_h(\varkappa)$
act $\mathcal D'(M_2)\to C_0^\infty(M_1)$.

We list some basic properties of the class $I^{\comp}_h(\varkappa)$:
\begin{itemize}
\item each $B\in I^{\comp}_h(\varkappa)$ is bounded
uniformly in~$h$ on the spaces $H^s_{h,\loc}(M_2)\to H^{s'}_{h,\comp}(M_1)$ for all $s,s'\in\mathbb R$,
and $\WF'_h(B)\subset \Graph(\varkappa)$;
\item if $\varkappa:U_2\to U_1$, $U'\subset U_2$, and $\varkappa':=\varkappa|_{U'}$,
then $B\in I^{\comp}_h(\varkappa')$ if and only if $B\in I^{\comp}_h(\varkappa)$ and
$\WF'_h(B)\subset\Graph(\varkappa')$;
\item if $\varkappa:T^*M\to T^*M$ is the identity map with the zero
antiderivative, then $B\in I^{\comp}_h(\varkappa)$ if and only if
$B\in\Psi^{\comp}_h(M)$;
\item if $B\in I^{\comp}_h(\varkappa)$, then $B^*\in I^{\comp}_h(\varkappa^{-1})$,
with the antiderivatives
on $\Graph(\varkappa)$ and $\Graph(\varkappa^{-1})$ summing up to zero;
\item if $\varkappa:U_2\to U_1$, $\varkappa':U_3\to U_2$, and $B\in I^{\comp}_h(\varkappa)$,
$B'\in I^{\comp}_h(\varkappa')$, then $BB'\in I^{\comp}_h(\varkappa\circ\varkappa')$,
with the antiderivative on $\Graph(\varkappa\circ\varkappa')$ chosen as the sum of the antiderivatives
on $\Graph(\varkappa)$ and $\Graph(\varkappa')$.
\end{itemize}
To give a concrete expression for elements of $I^{\comp}_h(\varkappa)$, assume
that $\varkappa$ is parametrized by a nondegenerate phase function
$\Phi(x,y,\zeta)\in C^\infty(U_\Phi;\mathbb R)$, $U_\Phi\subset M_1\times M_2\times\mathbb R^m$,
in the sense that the differentials
$d(\partial_{\zeta_1}\Phi),\dots,d(\partial_{\zeta_m}\Phi)$ are independent on the critical set
$$
\mathcal C_\Phi=\{(x,y,\zeta)\in U_\Phi\mid\partial_\zeta\Phi(x,y,\zeta)=0\}
$$
and the graph $\Graph(\varkappa)$ is given by
\begin{equation}
  \label{e:kappa-parametrized}
\Graph(\varkappa)=j_\Phi(\mathcal C_\Phi),\quad
j_\Phi:(x,y,\zeta)\mapsto
(x,\partial_x\Phi(x,y,\zeta),y,-\partial_y\Phi(x,y,\zeta)).
\end{equation}
The corresponding antiderivative is just the pullback of $\Phi$ from $\mathcal C_\Phi$ to $\Graph(\varkappa)$
by the map $j_\Phi$. Then any operator $B\in I^{\comp}_h(\varkappa)$ has the following
form modulo $\mathcal O(h^\infty)_{\Psi^{-\infty}}$:
\begin{equation}
  \label{e:fio-general-form}
Bf(x)=(2\pi h)^{-{m+n\over 2}}\int_{M_1\times\mathbb R^m}
e^{{i\over h}\Phi(x,y,\zeta)}b(x,y,\zeta;h)\,dyd\zeta
\end{equation}
where $n=\dim M_1=\dim M_2$ and $b$ is a compactly supported symbol on $U_\Phi$,
that is an $h$-dependent family of smooth functions with support
contained in some $h$-independent compact set which has
an asymptotic expansion in nonnegative integer powers of $h$.
Moreover, local principal symbol calculus shows that
\begin{equation}
  \label{e:principal-killed}
b(x,y,\zeta;0)=0\quad\text{for all }(x,y,\zeta)\in\mathcal C_\Phi\ \Longrightarrow\
B\in hI^{\comp}_h(\varkappa).
\end{equation}
See for example~\cite[\S3.2]{qeefun} for details.

A special case is when $M_2$ is an open subset of $\mathbb R^n$ and $\Graph(\varkappa)$
projects diffeomorphically onto the $(x,\eta)$ variables. Let $F\in C^\infty(\Graph(\varkappa))$
be the fixed antiderivative, and define the \emph{generating function}
$S(x,\eta)\in C^\infty(U_S;\mathbb R)$ by the formula $S(x,\eta)=F+y\cdot\eta$,
where $\Graph(\varkappa)$ is parametrized by $(x,\eta)\in U_S\subset M_1\times\mathbb R^n$.
Then
\begin{equation}
  \label{e:canonical-form}
\Graph(\varkappa)=\{\xi=\partial_x S(x,\eta),\
y=\partial_\eta S(x,\eta),\
(x,\eta)\in U_S\}
\end{equation}
implying that $\varkappa$ is parametrized in the sense of~\eqref{e:kappa-parametrized} by the function
$(x,y,\zeta)\mapsto S(x,\zeta)-y\cdot\zeta$.
Each $B\in I^{\comp}_h(\varkappa)$ has the following
form modulo $\mathcal O(h^\infty)_{\mathcal D'(M_2)\to C_0^\infty(M_1)}$:
\begin{equation}
  \label{e:fio-local-form}
Bf(x)=(2\pi h)^{-n}\int_{\mathbb R^{2n}}e^{{i\over h}(S(x,\eta)-y\cdot\eta)}b(x,\eta;h)\chi(y)f(y)\,dyd\eta,\quad
f\in \mathcal D'(M_2),
\end{equation}
where $n=\dim M_j$, $b(x,\eta;h)$ is a compactly supported symbol on $U_S$,
and $\chi\in C_0^\infty(M_2)$ is any function such that $\chi=1$ near 
$\partial_\eta S(\supp b)$. (The resulting operator is independent of the choice of $\chi$
modulo $\mathcal O(h^\infty)_{\mathcal D'(M_2)\to C_0^\infty(M_1)}$.)

As remarked in~\cite[\S3.2]{fwl}, $\varkappa$ can locally be written in the form~\eqref{e:canonical-form}
for some choice of local coordinates on $M_2$ as long as
its domain does not intersect the zero
section of $T^*M_2$. The latter condition can be arranged locally by composing $\varkappa$
with a transformation of the form $(y,\eta)\mapsto (y,\eta-d\psi(y))$ for some $\psi\in C^\infty(M_2)$,
which amounts to multiplying the resulting operators by $e^{i\psi(y)/ h}$~-- see Lemma~\ref{l:gauge-fio} below.

We next discuss microlocal inverses of Fourier integral operators.
Assume that $B\in I^{\comp}_h(\varkappa),B'\in I^{\comp}_h(\varkappa^{-1})$. Then
$BB'\in\Psi^{\comp}_h(M_1)$, $B'B\in\Psi^{\comp}_h(M_2)$, 
$\WFh(BB')\subset U_1$, $\WFh(B'B)\subset U_2$, and
(as shown in the case of~\eqref{e:fio-local-form} by an explicit application of the method
of stationary phase and in general is a form of Egorov's Theorem)
\begin{equation}
  \label{e:symbol-commutes}
\sigma_h(B'B)=\sigma_h(BB')\circ\varkappa.
\end{equation}
We call $B\in I^{\comp}_h(\varkappa)$ \emph{elliptic} at a point $(x,\xi,y,\eta)\in\Graph(\varkappa)$, if
there exists $B'\in I^{\comp}_h(\varkappa^{-1})$ such that $\sigma_h(BB')(x,\xi)\neq 0$
(in fact, this is equivalent to requiring that $\sigma_h(BB^*)(x,\xi)\neq 0$). For $B$ given by~\eqref{e:fio-general-form},
this simply means that $b(x,y,\zeta;0)\neq 0$ where $(x,y,\zeta)=j_\Phi^{-1}(x,\xi,y,\eta)\in \mathcal C_\Phi$.
For each point in $\Graph(\varkappa)$,
there exist operators in $I^{\comp}_h(\varkappa)$ elliptic at this point.

If $V_j\subset U_j$ are compact subsets such that $\varkappa(V_2)=V_1$, then
we say that $B,B'$ \emph{quantize}~$\varkappa$ near $V_1\times V_2$ if
\begin{equation}
  \label{e:quantized}
\begin{aligned}
BB'&=1+\mathcal O(h^\infty)\quad\text{microlocally near }V_1,\\
B'B&=1+\mathcal O(h^\infty)\quad\text{microlocally near }V_2.
\end{aligned}
\end{equation}
Such operators $B,B'$ exist if $V_2=\{(y,\eta)\}$ for
any given point $(y,\eta)\in U_2$ (and thus if $V_2$ is a sufficiently small
neighborhood of $(y,\eta)$). 
To show this, take $B\in I^{\comp}_h(\varkappa)$
elliptic at $(\varkappa(y,\eta),y,\eta)$ and $B'_0\in I^{\comp}_h(\varkappa^{-1})$
such that $\sigma_h(BB'_0)\neq 0$ on $V_1$. Multiplying $B'_0$ on the right
by an elliptic parametrix of $BB'_0$
(see for instance~\cite[\S E.2.2]{dizzy}
and~\cite[Proposition~2.4]{zeta}), we obtain $B'\in I^{\comp}_h(\varkappa)$
such that $BB'=1+\mathcal O(h^\infty)$ microlocally near $V_1$. By~\eqref{e:symbol-commutes},
we have $\sigma_h(B'_0B)\neq 0$ on $V_2$, so we can construct $B''\in I^{\comp}_h(\varkappa)$
such that $B''B=1+\mathcal O(h^\infty)$ microlocally near $V_2$. Then
$$
\WF'_h(B'-B'')\cap (V_1\times V_2)\ \subset\ \WF'_h((B''B)B'-B''(BB'))\ =\ \emptyset,
$$
therefore~\eqref{e:quantized} holds.
One could also define $B,B'$ as solutions of an evolution equation,
see~\cite[Theorem~11.5]{e-z} and~\cite[\S3.2]{fwl}.

One useful family of Fourier integral operators is given by the following
\begin{lemm}
  \label{l:gauge-fio}
Let $\varphi:M_1\to M_2$ be a diffeomorphism
and $\psi\in C^\infty(M_1)$. Consider the operator
$$
B=B(h):\mathcal D'(M_2)\to \mathcal D'(M_1),\quad
Bf(x)=e^{i\psi(x)/h}f(\varphi(x)).
$$
Then for each $A_j\in\Psi^{\comp}_h(M_j)$, we have
$A_1B,BA_2\in I^{\comp}_h(\varkappa^{-1})$, where
$$
\varkappa:T^*M_1\to T^*M_2,\quad
\varkappa(x,\xi)=\big(\varphi(x),(d\varphi(x))^{-T}\cdot(\xi-d\psi(x))\big),
$$
and the antiderivative is given by $\psi(x)$.
\end{lemm}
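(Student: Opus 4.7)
The plan is to represent $B$ as a Lagrangian oscillatory integral and then use $A_1$ or $A_2$ to provide the compact microlocal cutoff needed to land in $I^{\comp}_h$. In local coordinates the Schwartz kernel of $B$ is $K_B(x,y) = e^{i\psi(x)/h}\delta(y - \varphi(x))$, and Fourier-expanding the delta yields
\begin{equation*}
K_B(x,y) = (2\pi h)^{-n}\int_{\mathbb R^n} e^{i\Phi(x,y,\eta)/h}\,d\eta,\qquad
\Phi(x,y,\eta) := (\varphi(x) - y)\cdot\eta + \psi(x).
\end{equation*}
The differentials $d(\partial_{\eta_j}\Phi) = d\varphi_j(x) - dy_j$ are independent, so $\Phi$ is a nondegenerate phase in the sense of \eqref{e:kappa-parametrized}. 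The critical set $\mathcal C_\Phi = \{y=\varphi(x)\}$ is mapped by $j_\Phi$ to $(x,\,d\varphi(x)^T\eta + d\psi(x),\,\varphi(x),\,\eta)$, which is exactly a parametrization of $\Graph(\varkappa^{-1})$ for the canonical transformation $\varkappa$ displayed in the statement. Pulling $\Phi$ back to $\mathcal C_\Phi$ (parametrized by $(x,\eta)$) gives $\psi(x)$, and a direct restriction of the canonical 1-form confirms \eqref{e:antiderivative}:
\begin{equation*}
(\xi\cdot dx - \eta\cdot dy)|_{\Graph(\varkappa^{-1})} = (d\varphi(x)^T\eta + d\psi(x))\cdot dx - \eta\cdot d\varphi(x)\,dx = d\psi(x).
\end{equation*}

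The operator $B$ on its own does not lie in $I^{\comp}_h(\varkappa^{-1})$, since its wavefront set extends to fiber-infinity and the amplitude $b\equiv 1$ above is not compactly supported. Composing with $A_1 \in \Psi^{\comp}_h(M_1)$ on the left supplies the missing compact cutoff. In a single chart, using the standard quantization \eqref{e:standard-quantization} of $A_1$ with essentially compactly supported full symbol $a_1(x,\xi)$ and the change of variable $y = \varphi(x')$, one obtains (modulo $\mathcal O(h^\infty)_{\Psi^{-\infty}}$)
\begin{equation*}
A_1 B f(x) = (2\pi h)^{-n}\int e^{i[(x-\varphi^{-1}(y))\cdot\xi + \psi(\varphi^{-1}(y))]/h}\,a_1(x,\xi)\,J(y)\,f(y)\,dy\,d\xi,
\end{equation*}
where $J$ is the Jacobian of $\varphi^{-1}$. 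This has the form \eqref{e:fio-general-form} with compactly supported amplitude, and the calculation of the previous paragraph identifies the underlying Lagrangian as $\Graph(\varkappa^{-1})$ with antiderivative $\psi$. The case $BA_2$ is entirely symmetric: one writes
\begin{equation*}
BA_2 f(x) = (2\pi h)^{-n}\int e^{i\Phi(x,y',\eta)/h}\,a_2(\varphi(x),\eta)\,f(y')\,dy'\,d\eta,
\end{equation*}
whose amplitude is compactly supported in $(x,\eta)$ because $a_2$ is essentially compactly supported in $(y,\eta)$ and $\varphi$ is a diffeomorphism. Globalization to the full manifolds $M_j$ is done with partitions of unity subordinate to coordinate charts; since $A_1$ and $A_2$ have compactly supported Schwartz kernels and microsupports, only finitely many patches contribute.

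The one substantive technical point is to check that the $\mathcal O(h^\infty)$ errors in the oscillatory representations above are genuinely negligible despite the raw $\eta$-integral for $K_B$ being over all of $\mathbb R^n$. Outside $\supp a_1$ (respectively $\supp a_2$) the relevant compound phase is non-stationary, and repeated integration by parts in the non-stationary variables produces the desired decay; this is routine semiclassical bookkeeping and should not present a real obstacle beyond carefully tracking the cutoffs.
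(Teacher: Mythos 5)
Your proof is correct and follows essentially the same route as the paper's: compute the composed kernel explicitly in local coordinates, recognize the phase $(\varphi(x)-y)\cdot\eta+\psi(x)$, verify it parametrizes $\Graph(\varkappa^{-1})$, and read off the antiderivative $\psi(x)$. The only divergence is that you treat $A_1B$ by the direct change of variables $y=\varphi(x')$, whereas the paper disposes of $A_1B$ by reducing it to the $BA_2$ case via adjoints; both are fine, and your final remark about inserting a compactly supported cutoff in $y$ (at the cost of an $\mathcal O(h^\infty)$ error from non-stationary phase) is exactly the point the paper builds into the definition~\eqref{e:fio-local-form} by including the factor $\chi(y)$.
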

\begin{proof}
It suffices to consider the case when $M_1,M_2$ are open subsets of $\mathbb R^n$.
Let $A_2=\Op_h(a)\chi$, where $a(y,\eta;h)$ is compactly supported in $M_2\times\mathbb R^n$
and $\chi\in C_0^\infty(M_2)$ is equal to 1 near the projection of $\supp a$.
Then
$$
BA_2f(x)=(2\pi h)^{-n}\int_{\mathbb R^{2n}}e^{{i\over h}((\varphi(x)-y)\cdot\eta+\psi(x))}
a(\varphi(x),\eta;h)\chi(y)f(y)\,dyd\eta.
$$
This has the form~\eqref{e:fio-local-form} with
$$
S(x,\eta)=\varphi(x)\cdot\eta+\psi(x),\quad
b(x,\eta;h)=a(\varphi(x),\eta;h),
$$
and it is straightforward to see that $\varkappa^{-1}$ is given by~\eqref{e:canonical-form}.
The case of $A_1B$ is reduced to the case of $BA_2$ by considering adjoint operators.
\end{proof}

\section{Calculus associated to a Lagrangian foliation}
\label{s:second-microlocalization}

In this section, we define a class of exotic pseudodifferential operators associated to a Lagrangian
foliation. The symbols of these operators are allowed to vary on the constant scale along the foliation
and on the scale $h^\rho$, $0\leq \rho<1$, in the directions transversal to the foliation.
For $\rho>{1\over 2}$, the resulting operators will not generally lie in the exotic calculus
$\Psi_{1/2}$ (see for instance~\cite[\S5.1]{fwl}), yet they form an algebra with properties
similar to those of standard pseudodifferential operators.

A similar (in fact, sharper in certain ways
as it allowed for $\rho=1$ and $\Psi_{1/2}$ behavior in some directions) second microlocal calculus
associated to a hypersurface
has previously been developed by Sj\"ostrand--Zworski~\cite[\S5]{sj-zw};
for a calculus associated to a Lagrangian submanifold in the analytic category,
see~\cite[Chapter~2]{delort-book} and the references given there.

\subsection{Foliations and symbols}

We start with the definition of a Lagrangian foliation:
\begin{defi}
  \label{d:l-foli}
Let $M$ be a manifold, $U\subset T^*M$ be an open set, and 
$$
L_{(x,\xi)}\ \subset\ T_{(x,\xi)}(T^*M),\quad
(x,\xi)\in U
$$
a family of subspaces depending smoothly on $(x,\xi)$. We say that $L$ is a \textbf{Lagrangian foliation} on $U$ if
\begin{itemize}
\item $L_{(x,\xi)}$ is integrable in the sense that if $X,Y$ are two vector fields on $U$ lying in $L$ at each
point (we denote this by $X,Y\in C^\infty(U;L)$), then the Lie bracket $[X,Y]$ lies in $C^\infty(U;L)$ as well;
\item $L_{(x,\xi)}$ is a Lagrangian subspace of $T_{(x,\xi)}(T^*M)$ for each $(x,\xi)\in U$.
\end{itemize}
\end{defi}
Another way to think about a Lagrangian foliation is in terms of its
leaves, which are Lagrangian submanifolds whose tangent spaces are given by $L$.
The existence of these leaves follows from Frobenius's Theorem, see Lemma~\ref{l:canonical}
below.

We consider the following class of symbols:
\begin{defi}
  \label{d:symbols}
Let $L$ be a Lagrangian foliation on $U\subset T^*M$, and fix $\rho\in [0,1)$. We say that a function
$a(x,\xi;h)$ is a (compactly supported) symbol of class $S_\rho$ with respect to $L$, and write
$$
a\in S^{\comp}_{L,\rho}(U),
$$
if for each $h\in (0,h_0)$, $(x,\xi)\mapsto a(x,\xi;h)$ is a smooth function
on $U$ supported inside some $h$-independent compact set and it satisfies the derivative bounds
(with the constant $C$ depending on $Y_j,Z_j$, but not on $h$)
\begin{equation}
  \label{e:symbols-def}
\sup_{x,\xi} |Y_1\dots Y_m Z_1\dots Z_k a(x,\xi;h)|\leq C h^{-\rho k},
\end{equation}
for each vector fields $Y_1,\dots,Y_m,Z_1,\dots,Z_k$ on $U$ such that
$Y_1,\dots,Y_m\in C^\infty(U;L)$.
\end{defi}
The following statement is useful for constructing symbols in the class
$S^{\comp}_{L,\rho}$:
\begin{lemm}
  \label{l:symbol-construction}
Let $M_1$ be a compact manifold and 
$V_0(h)\subset V_1(h)\subset M_1$ be $h$-dependent sets
satisfying
$$
d\big(V_0(h),M_1\setminus V_1(h)\big)>\varepsilon h^\rho
$$
for some fixed $\varepsilon>0,\rho\in [0,1)$
and all $h\in (0,1)$. Then there exists $\chi(h)\in C_0^\infty(M_1;[0,1])$
such that for all $h\in (0,1)$,
\begin{gather}
  \label{e:sc-1}
\supp(1-\chi(h))\cap V_0(h)=\emptyset,\quad
\supp\chi(h)\subset V_1(h);\\
  \label{e:sc-2}
\sup_{M_1}|\partial^\alpha\chi|\leq C_\alpha h^{-\rho|\alpha|}.
\end{gather}
\end{lemm}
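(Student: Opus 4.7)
The plan is to reduce the construction to a local Euclidean mollification and then glue via a fixed partition of unity. Since $M_1$ is compact, fix a finite atlas $(U_j,\kappa_j)_{j=1}^N$, with each $\kappa_j:U_j\to\mathbb R^n$ a diffeomorphism onto a bounded open set, together with an $h$-independent partition of unity $\{\eta_j\}_{j=1}^N$ subordinate to it, chosen so that on a fixed open neighborhood of each $\supp\eta_j$ inside $U_j$ the metric $d$ on $M_1$ is uniformly bi-Lipschitz equivalent to the pullback by $\kappa_j$ of the Euclidean distance, with constants independent of $j$ and $h$.

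Introduce the intermediate set $V_{1/2}(h):=\{x\in M_1\mid d(x,V_0(h))\leq (\varepsilon/2) h^\rho\}$, so that by the triangle inequality and the hypothesis,
\[
d(V_0(h),\,M_1\setminus V_{1/2}(h))\geq (\varepsilon/2) h^\rho,\qquad d(V_{1/2}(h),\,M_1\setminus V_1(h))\geq (\varepsilon/2) h^\rho.
\]
Take $\phi\in C_0^\infty(\mathbb R^n;[0,\infty))$ with $\int\phi=1$ and $\supp\phi\subset B(0,1)$, and set $\phi_h(y):=(c h^\rho)^{-n}\phi(y/(c h^\rho))$, where $c>0$ is chosen small enough relative to $\varepsilon$ and the bi-Lipschitz constants of the charts. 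In each chart, define the mollified local cutoff
\[
\tilde\chi_j:=\mathbf 1_{\kappa_j(V_{1/2}(h)\cap U_j)}\ast\phi_h\ \in\ C_0^\infty(\mathbb R^n;[0,1]).
\]
The buffers on either side of $V_{1/2}$, combined with the bi-Lipschitz equivalence, yield (for all $h$ small and all $x\in\supp\eta_j$) that $\tilde\chi_j(\kappa_j(x))=1$ if $x\in V_0(h)$ and $\tilde\chi_j(\kappa_j(x))=0$ if $x\notin V_1(h)$; the standard derivative estimate for convolution against a scale-$h^\rho$ mollifier gives $|\partial^\alpha_y\tilde\chi_j|\leq C_\alpha h^{-\rho|\alpha|}$ uniformly in $h$.

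Finally, set $\chi(h):=\sum_{j=1}^N \eta_j\cdot(\tilde\chi_j\circ\kappa_j)$, extended by zero outside $\bigcup_j U_j$. Because $\sum_j\eta_j\equiv 1$ and each factor equals $1$ on $V_0(h)$, one gets $\chi\equiv 1$ on $V_0(h)$, which yields the first half of \eqref{e:sc-1}; because every summand vanishes off $V_1(h)$, we obtain $\supp\chi\subset V_1(h)$. The bound \eqref{e:sc-2} then follows from Leibniz's rule together with the $h$-independence of the $\eta_j$ and $\kappa_j$ and the derivative bound on $\tilde\chi_j$, while $0\leq \chi\leq 1$ is immediate. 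The only real thing to check is that the convolution scale $c h^\rho$ is smaller than the buffer $(\varepsilon/2) h^\rho$ after pulling back through the fixed bi-Lipschitz constants of the finitely many charts; this is a one-time choice of $c$ once the atlas is fixed, and is the closest thing to an obstacle in the argument.
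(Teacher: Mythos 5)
Your proposal is correct and follows essentially the same approach as the paper: introduce an intermediate set thickened by $\sim(\varepsilon/2)h^\rho$ around $V_0(h)$, convolve its indicator with a scale-$h^\rho$ mollifier to get the cutoff with the desired derivative bounds, and reduce the manifold case to $\mathbb R^n$ via a fixed finite partition of unity. The only superficial difference is that you carry out the chart/gluing step explicitly with the bi-Lipschitz bookkeeping, while the paper dispatches it in one sentence; your hedge ``for all $h$ small'' is unnecessary, since once $c$ is chosen small relative to the chart Lipschitz constants the buffer estimate holds for every $h\in(0,1)$.
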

\begin{proof}
By a partition of
unity we reduce to the case when $V_1(h)$ is contained
in a small coordinate neighborhood on $M_1$; therefore,
it suffices to consider the case $M_1=\mathbb R^n$.
Let $d(\cdot,\cdot)$ be the Euclidean distance function.
Put
$$
V_2(h):=\{x\in\mathbb R^n\mid d(x,V_0(h))\leq \varepsilon h^\rho/2\},
$$
then (here $B(x,r)$ denotes the ball of radius $r$ centered at $x$)
\begin{equation}
  \label{e:sc-3}
\begin{aligned}
x\in V_0(h)\quad&\Longrightarrow\quad B(x,\varepsilon h^\rho/2)\subset V_2(h),\\
x\in V_2(h)\quad&\Longrightarrow\quad B(x,\varepsilon h^\rho/2)\subset V_1(h).
\end{aligned}
\end{equation}
Take nonnegative $\psi\in C_0^\infty(B(0,\varepsilon/2))$ such that $\int\psi=1$,
and put (here $m=\dim M_1$)
$$
\chi(x;h):=h^{-m\rho}\int_{V_2(h)}\psi\Big({x-y\over h^\rho}\Big)\,dy.
$$
It follows immediately from~\eqref{e:sc-3} that $\chi$ satisfies~\eqref{e:sc-1}.
Moreover, by putting derivatives on $\psi$ we obtain the derivative bounds~\eqref{e:sc-2},
finishing the proof.
\end{proof}
To keep track of the essential supports of symbols in $S^{\comp}_{L,\rho}(U)$ in an $h$-dependent way,
we use the following
\begin{defi}
  \label{d:rapid-decay}
Assume that $a(x,\xi;h)$ is an $h$-dependent family of smooth functions in $(x,\xi)\in U$,
and $h_j\to 0$, $(x_j,\xi_j)\in U$ are some sequences. We say that $a$ is $\mathcal O(h^\infty)$
along the sequence $(x_j,\xi_j,h_j)$, if for each $N$ and each vector fields
$Z_1,\dots,Z_L$ on $U$, there exists a constant $C$ such that
$$
|Z_1\dots Z_L a(x_j,\xi_j;h_j)|\leq C h_j^N.
$$  
\end{defi}
We next introduce local canonical coordinates bringing an arbitrary Lagrangian foliation
to a normal form. Let $L_0$ be the Lagrangian foliation on $T^*\mathbb R^n$ given by
the fibers of the cotangent bundle; that is, in the standard coordinates $(y,\eta)$
on $T^*\mathbb R^n$,
$$
L_0=\Span(\partial_{\eta_1},\dots,\partial_{\eta_n})
$$
is the annihilator of $dy$.
\begin{defi}
Let $L$ be a Lagrangian foliation on $U\subset T^*M$. A \textbf{Lagrangian chart}
is a symplectomorphism
$$
\varkappa:U_0\to V,\quad
U_0\subset U,\quad
V\subset T^*\mathbb R^n,
$$
such that $d\varkappa(x,\xi)\cdot L_{(x,\xi)}=(L_0)_{\varkappa(x,\xi)}$ for each $(x,\xi)\in U_0$.
\end{defi}
The basic properties of Lagrangian charts are given by
\begin{lemm}
  \label{l:canonical}
1. Let $L$ be a Lagrangian foliation on $U\subset T^*M$ and $(x_0,\xi_0)\in U$.
Then there exists a Lagrangian chart $\varkappa:U_0\to T^*\mathbb R^n$ on some neighborhood
$U_0\subset U$ of $(x_0,\xi_0)$. 

2. Assume that $\varkappa:V\to V'$, where $V,V'\subset T^*\mathbb R^n$ are open, is a symplectomorphism
which preserves the foliation $L_0$, and $(y_0,\eta_0)\in V$.
Then there exists $\varepsilon>0$ such that
\begin{equation}
  \label{e:gauge-transform}
\varkappa(y,\eta)=\big(\varphi(y),(d\varphi(y))^{-T}\cdot (\eta-d\psi(y))\big),\quad
(y,\eta)\in B(y_0,\varepsilon)\times B(\eta_0,\varepsilon),
\end{equation}
for some diffeomorphism $\varphi:B(y_0,\varepsilon)\to\mathbb R^n$ onto its image
and some function $\psi\in C^\infty(B(y_0,\varepsilon);\mathbb R)$.
\end{lemm}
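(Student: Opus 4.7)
The plan is to prove the two parts separately, with Part~1 relying on a Darboux-type theorem for Lagrangian foliations and Part~2 being a direct computation with the canonical 1-form.

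For Part~1, I first apply the Frobenius theorem to the integrable distribution $L$ to obtain, on a neighborhood $U_0$ of $(x_0,\xi_0)$, smooth functions $f_1,\dots,f_n$ with linearly independent differentials whose common level sets are exactly the leaves of $L$. The Lagrangian property $L = L^{\perp_\omega}$ then forces these functions to Poisson-commute: since $df_j$ annihilates $L$, the Hamiltonian vector field $X_{f_j}$ lies in $L^{\perp_\omega} = L$, hence $\{f_i,f_j\} = df_j(X_{f_i}) = 0$. I next invoke the Carath\'eodory--Jacobi--Lie theorem to complete $f_1,\dots,f_n$ to a full system of Darboux coordinates $(f_1,\dots,f_n,g_1,\dots,g_n)$ near $(x_0,\xi_0)$, and set $\varkappa(x,\xi) := (f(x,\xi),g(x,\xi))$. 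By construction $\varkappa$ is a symplectomorphism onto an open subset of $T^*\mathbb{R}^n$, and since $y_i\circ\varkappa = f_i$, the leaves of $L$ (joint level sets of $f_1,\dots,f_n$) are sent to leaves of $L_0$ (joint level sets of $y_1,\dots,y_n$), giving $d\varkappa(L) = L_0$.

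For Part~2, I use the fact that a symplectomorphism $\varkappa$ preserving $L_0$ must map each cotangent fiber into a single cotangent fiber; writing $\varkappa(y,\eta) = (\varkappa_1(y,\eta),\varkappa_2(y,\eta))$, this forces $\varkappa_1$ to be independent of $\eta$, so I set $\varphi(y) := \varkappa_1(y,\eta)$, a local diffeomorphism near $y_0$ since $\varkappa$ is one near $(y_0,\eta_0)$. Pulling back the canonical 1-form $\alpha_0 = \eta\cdot dy$ gives $\varkappa^*\alpha_0 = \tilde\eta \cdot dy$ with $\tilde\eta(y,\eta) := (d\varphi(y))^T\varkappa_2(y,\eta)$, so the symplectic condition $\varkappa^*\omega_0 = \omega_0$ reads $d\tilde\eta\wedge dy = d\eta\wedge dy$. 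Matching coefficients of $d\eta_i\wedge dy_j$ yields $\partial\tilde\eta/\partial\eta = I$, so $\tilde\eta = \eta + h(y)$ for some $\mathbb{R}^n$-valued $h$, while matching coefficients of $dy_i\wedge dy_j$ forces the Jacobian of $h$ to be symmetric; equivalently the 1-form $\sum_j h_j\,dy_j$ is closed. On a ball $B(y_0,\varepsilon)$ the Poincar\'e lemma gives $h = -d\psi$ for some $\psi \in C^\infty(B(y_0,\varepsilon);\mathbb{R})$, and solving $\tilde\eta = (d\varphi)^T\varkappa_2$ for $\varkappa_2$ yields the formula~\eqref{e:gauge-transform}.

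The main obstacle is the Carath\'eodory--Jacobi--Lie step in Part~1, i.e.\ producing the conjugate variables $g_i$ to a given Poisson-commuting family $f_1,\dots,f_n$ with independent differentials. This is the one non-routine ingredient; it is proved classically by iterating Darboux's construction, using the Hamiltonian flow of each newly-produced $g_i$ to straighten out one more coordinate while preserving the vanishing Poisson brackets among the already-fixed variables. Given that, Part~2 is essentially bookkeeping with the canonical 1-form together with one application of the Poincar\'e lemma to extract $\psi$.
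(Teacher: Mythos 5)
Your proof is correct and follows essentially the same route as the paper: Part~1 is Frobenius plus a Darboux-type completion theorem (the paper cites H\"ormander's version, you cite Carath\'eodory--Jacobi--Lie, which is the same statement), and Part~2 derives the same normal form through the same chain of observations. The only cosmetic difference is that in Part~2 you compute via the pullback of the canonical 1-form and the Poincar\'e lemma, whereas the paper phrases the identical reasoning in terms of the Poisson bracket relations $\{y'_j,\eta'_k\}=\delta_{jk}$ and $\{\eta'_j,\eta'_k\}=0$.
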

\begin{proof}
1. Since $L$ is integrable,
by Frobenius's Theorem~\cite[Theorem~C.1.1]{ho3} there exist local coordinates $(y,\tilde\eta)$
in a neighborhood of $(x_0,\xi_0)$ such that $L$ is the annihilator of
$dy$. Moreover, since $L$ is Lagrangian, we have $\{y_j,y_k\}=0$.
Now, by Darboux Theorem~\cite[Theorem~21.1.6]{ho3} there exists a set of functions $\eta_1,\dots,\eta_n$
defined near $(x_0,\xi_0)$ such that
$$
\{y_j,y_k\}=\{\eta_j,\eta_k\}=0,\quad
\{\eta_j,y_k\}=\delta_{jk}.
$$
The map $(x,\xi)\mapsto (y,\eta)$ is a Lagrangian chart in a neighborhood of $(x_0,\xi_0)$.

2. Define the functions
$y',\eta'$ on $V$ by setting
$\varkappa:(y,\eta)\mapsto (y',\eta')$.
Since the annihilators of $dy'$ and $dy$ are the same (and both equal to $L_0$),
we have $y'=\varphi(y)$ for
$(y,\eta)\in B(y_0,\varepsilon)\times B(\eta_0,\varepsilon)$,
some $\varepsilon>0$,
and some diffeomorphism onto its image $\varphi:B(y_0,\varepsilon)\to\mathbb R^n$.

Since $\{y'_j,\eta'_k\}=\delta_{jk}$, we have
$$
\eta'=(d\varphi(y))^{-T}\cdot(\eta-F(y)),\quad
(y,\eta)\in B(y_0,\varepsilon)\times B(\eta_0,\varepsilon),
$$
for some smooth map $F:B(y_0,\varepsilon)\to\mathbb R^n$.
Since $\{\eta'_j,\eta'_k\}=0$, we have
$F(y)=d\psi(y)$ for some $\psi:B(y_0,\varepsilon)\to\mathbb R$.
\end{proof}

\subsection{Calculus on $\mathbb R^n$}

We next develop the calculus for the case $L=L_0$.
We have $a(y,\eta;h)\in S_{L_0,\rho}^{\comp}(T^*\mathbb R^n)$ if and only if
$a$ is supported inside some $h$-independent compact set and satisfies the derivative bounds
\begin{equation}
  \label{e:s0-symb}
\sup_{y,\eta}|\partial_y^\alpha\partial_\eta^\beta a(y,\eta;h)|\leq C_{\alpha\beta}h^{-\rho|\alpha|}.
\end{equation}
We derive several basic properties of quantizations of symbols in $S_{L_0,\rho}^{\comp}(T^*\mathbb R^n)$
by the map $\Op_h$ defined in~\eqref{e:standard-quantization}:
\begin{lemm}
  \label{l:l2-bdd}
For $a\in S_{L_0,\rho}^{\comp}(T^*\mathbb R^n)$, the operator
$\Op_h(a)$ is bounded on $L^2(\mathbb R^n)$ uniformly in $h$.
\end{lemm}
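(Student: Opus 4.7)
The plan is to prove the bound via Schur's test applied to the Schwartz kernel of $\Op_h(a)$. The crucial observation is that in Definition~\ref{d:symbols}, the directions of the foliation $L_0$ are exactly the $\eta$-directions, so the "cheap" derivatives are the $\eta$-derivatives, and the expensive ones (costing $h^{-\rho}$) are the $y$-derivatives. But the kernel estimates one needs for $L^2$-boundedness only probe $\eta$-regularity of $a$, so the exotic $y$-loss never enters. This is precisely why one avoids any Calder\'on--Vaillancourt-type obstruction in the regime $\rho>1/2$.

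Concretely, I would first write down the Schwartz kernel
$$
K_h(y,y')=(2\pi h)^{-n}\int_{\mathbb R^n} e^{i(y-y')\cdot\eta/h}\, a(y,\eta;h)\,d\eta,
$$
which is well-defined since $a$ is compactly supported in $(y,\eta)$. The trivial bound is $|K_h(y,y')|\leq C h^{-n}$. To obtain decay in $|y-y'|$, I would integrate by parts in $\eta$ using the identity
$$
e^{i(y-y')\cdot\eta/h}=\frac{h}{i|y-y'|^2}\,(y-y')\cdot\nabla_\eta\, e^{i(y-y')\cdot\eta/h},\qquad y\neq y'.
$$
Each application gains a factor of $h/|y-y'|$ and falls on $a$ as an $\eta$-derivative; by~\eqref{e:s0-symb} these $\eta$-derivatives are uniformly $\mathcal O(1)$, with constants independent of $\rho$ and $h$. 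Iterating $N$ times gives the kernel bound
$$
|K_h(y,y')|\leq C_N\, h^{-n}\,\min\bigl(1,\,(h/|y-y'|)^N\bigr),\qquad y,y'\in\mathbb R^n.
$$

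I would then apply Schur's test. Using the compact support of $a$ (and hence of $K_h$) in the $y$-variable, I expect both $\sup_y\int|K_h(y,y')|\,dy'$ and $\sup_{y'}\int|K_h(y,y')|\,dy$ to be bounded uniformly in $h$: on $\{|y-y'|\leq h\}$ the trivial bound $h^{-n}$ against a region of volume $h^n$ yields $\mathcal O(1)$, while on $\{|y-y'|>h\}$ choosing $N>n$ makes the factor $(h/|y-y'|)^N$ integrable and again contributes $\mathcal O(1)$. Schur's test then delivers $\|\Op_h(a)\|_{L^2\to L^2}\leq C$ uniformly in $h\in(0,h_0)$. There is essentially no hard step here; the only point requiring care is recognizing that the $h^{-\rho}$ losses from the $y$-derivatives of $a$ never need to be touched, because the phase gradient $(y-y')/h$ dualizes only to $\eta$-differentiation, which is cost-free for symbols in $S^{\comp}_{L_0,\rho}$.
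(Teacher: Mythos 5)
Your proof is correct, and it takes a genuinely different route from the one in the paper. The paper proves Lemma~\ref{l:l2-bdd} by conjugating $\Op_h(a)$ with the unitary rescaling $T_\rho u(y)=h^{\rho/4}u(h^{\rho/2}y)$, observing that the rescaled symbol $a_\rho(\tilde y,\tilde\eta)=a(h^{\rho/2}\tilde y, h^{-\rho/2}\tilde\eta)$ lies in the standard exotic class $S_{\rho/2}$ with exponent $\rho/2<1/2$, and then citing the Calder\'on--Vaillancourt theorem from~\cite[Theorem~4.23(ii)]{e-z}. Your argument is more elementary: you never leave the original coordinates, you derive the off-diagonal kernel decay $|K_h(y,y')|\leq C_N h^{-n}\min\bigl(1,(h/|y-y'|)^N\bigr)$ directly by repeated integration by parts in $\eta$, and you close with Schur's test. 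The two approaches trade different resources: the paper's rescaling is shorter once Calder\'on--Vaillancourt is available, and it also sets up the machinery reused immediately in the proof of Lemma~\ref{l:quant-basic} (where the composition and adjoint formulas are likewise obtained by rescaling). Your kernel-based argument is self-contained and makes the structural point maximally transparent: the phase $(y-y')\cdot\eta/h$ dualizes only to $\eta$-derivatives of $a$, which are the cheap ($L_0$-foliation) directions in the $S^{\comp}_{L_0,\rho}$ class by~\eqref{e:s0-symb}, so the potentially dangerous factors $h^{-\rho|\alpha|}$ from $y$-derivatives never appear and there is no obstruction from $\rho>1/2$. Both proofs are complete and correct; yours is arguably the more illuminating explanation of \emph{why} the calculus associated to $L_0$ behaves like an ordinary $h$-pseudodifferential calculus on $L^2$.
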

\begin{proof}
We introduce the unitary rescaling operator
$$
T_\rho:L^2(\mathbb R^n)\to L^2(\mathbb R^n),\quad
T_\rho u(y)=h^{\rho/4}u(h^{\rho/2} y).
$$
It suffices to estimate the $L^2\to L^2$ norm of
$$
T_\rho\Op_h(a)T_\rho^{-1}=\Op_h(a_\rho),\quad
a_\rho(\tilde y,\tilde \eta;h):=a(h^{\rho/2} \tilde y,h^{-\rho/2}\tilde \eta;h).
$$
It follows from~\eqref{e:s0-symb} that $a_\rho\in S_{\rho/2}$, where the classes
$S_\delta$, $0\leq \delta\leq 1/2$, are defined in~\cite[(4.4.5)]{e-z}.
It remains to apply~\cite[Theorem~4.23(ii)]{e-z}.
\end{proof}

\begin{lemm}
  \label{l:quant-basic}
Let $a,b\in S_{L_0,\rho}^{\comp}(T^*\mathbb R^n)$. Then:

1. We have
$$
\Op_h(a)\Op_h(b)=\Op_h(a\#b)+\mathcal O(h^\infty)_{L^2\to L^2},
$$
where $a\# b\in S_{L_0,\rho}^{\comp}(T^*\mathbb R^n)$ and for each $N$,
$$
a\# b(y,\eta;h)=\sum_{j=0}^{N-1}{(-ih)^j\over j!} (\partial_\eta\cdot\partial_{y'})^j
\big(a(y,\eta;h)b(y',\eta';h)\big)|_{y'=y\atop \eta'=\eta}+\mathcal O(h^{(1-\rho)N})_{S_{L_0,\rho}^{\comp}(T^*\mathbb R^n)}.
$$

2. We have
$$
\Op_h(a)^*=\Op_h(a^*)+\mathcal O(h^\infty)_{L^2\to L^2},
$$
where $a^*\in S_{L_0,\rho}^{\comp}(T^*\mathbb R^n)$ and for each $N$,
$$
a^*(y,\eta;h)=\sum_{j=0}^{N-1}{(-ih)^j\over j!} (\partial_\eta\cdot\partial_y)^j \overline{a(y,\eta;h)}
+\mathcal O(h^{(1-\rho)N})_{S_{L_0,\rho}^{\comp}(T^*\mathbb R^n)}.
$$
\end{lemm}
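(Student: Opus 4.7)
The plan is to reduce Lemma~\ref{l:quant-basic} to the standard exotic pseudodifferential calculus $S_{\rho/2}$ of~\cite[\S4.4]{e-z} via the unitary rescaling $T_\rho$ already introduced in the proof of Lemma~\ref{l:l2-bdd}. Setting $a_\rho(\tilde y,\tilde\eta;h):=a(h^{\rho/2}\tilde y,h^{-\rho/2}\tilde\eta;h)$ and similarly for $b_\rho$, the chain rule applied to~\eqref{e:s0-symb} gives the standard symbol bound $|\partial_{\tilde y}^\alpha\partial_{\tilde\eta}^\beta a_\rho|\leq C_{\alpha\beta}h^{-\rho(|\alpha|+|\beta|)/2}$, so $a_\rho,b_\rho\in S_{\rho/2}$. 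The conjugation calculation already carried out in the proof of Lemma~\ref{l:l2-bdd} yields $T_\rho\Op_h(a)T_\rho^{-1}=\Op_h(a_\rho)$, so composition and adjoint commute with this reduction.

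First I would apply the standard composition and adjoint theorems for $S_{\rho/2}$ (see~\cite[\S4.4]{e-z}), which produce
$$\Op_h(a_\rho)\Op_h(b_\rho)=\Op_h(a_\rho\# b_\rho)+\mathcal O(h^\infty),\qquad \Op_h(a_\rho)^*=\Op_h(a_\rho^*)+\mathcal O(h^\infty),$$
together with the standard asymptotic expansions (each order gaining a factor $h^{1-2\cdot\rho/2}=h^{1-\rho}$) and $N$th remainders of class $\mathcal O(h^{(1-\rho)N})$ in $S_{\rho/2}$. The key observation for unscaling is that the operator $\partial_{\tilde\eta}\cdot\partial_{\tilde y'}$ is scale invariant: the factor $h^{-\rho/2}$ from rescaling $\partial_{\tilde\eta}$ exactly cancels the $h^{\rho/2}$ from $\partial_{\tilde y'}$, and likewise for $\partial_{\tilde\eta}\cdot\partial_{\tilde y}$ in the adjoint formula. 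Therefore, in the original variables each $j$th term in the composition becomes $\frac{(-ih)^j}{j!}(\partial_\eta\cdot\partial_{y'})^j(a(y,\eta)b(y',\eta'))|_{y'=y,\eta'=\eta}$, matching the formula in the statement, and similarly for $a^*$.

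Finally I would check that the terms and remainders belong to $S^{\comp}_{L_0,\rho}$ with the asserted orders. For the $j$th composition term, $j$ derivatives in $\eta$ on $a$ cost nothing (these lie along the foliation $L_0$) while $j$ derivatives in $y$ on $b$ cost $h^{-\rho j}$, so combined with the explicit $h^j$ coefficient the term has net size $h^{(1-\rho)j}$, and further $L_0$-derivatives remain free while further $y$-derivatives cost $h^{-\rho}$ apiece, as required. For the remainder, the $S_{\rho/2}$ bound $h^{(1-\rho)N}h^{-\rho(|\alpha|+|\beta|)/2}$ in the tilded variables, after incurring chain-rule factors $h^{-\rho|\alpha|/2}$ and $h^{\rho|\beta|/2}$ in passing back to $(y,\eta)$, yields precisely $h^{(1-\rho)N}h^{-\rho|\alpha|}$, which is the correct $S^{\comp}_{L_0,\rho}$ estimate. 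Compact support in $(y,\eta)$ is preserved by the rescaling at each fixed $h$. The main task is thus the careful bookkeeping of the two rescalings against the asymmetric derivative counts in Definition~\ref{d:symbols}; no analytic input beyond the standard $S_{\rho/2}$-calculus is needed.
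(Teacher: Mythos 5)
Your proposal is correct and follows essentially the same route as the paper: rescale by $T_\rho$ to reduce to the standard $S_{\rho/2}$ calculus of \cite[\S4.4]{e-z}, apply the composition and adjoint theorems there, and unscale, with the bookkeeping you carry out (cancellation of $h^{\pm\rho/2}$ factors in $\partial_{\tilde\eta}\cdot\partial_{\tilde y'}$, remainder bounds of order $h^{(1-\rho)N}h^{-\rho|\alpha|}$) matching the paper's implicit verification. The one small point the paper makes explicit that you omit is that the symbols $a\#b$ and $a^*$ produced by the $S_{\rho/2}$ theorems are only $\mathcal O(h^\infty)$ outside a compact set rather than exactly compactly supported, and must therefore be cut off to land in $S^{\comp}_{L_0,\rho}$; your sentence about compact support being preserved by rescaling addresses the input symbols but not this output issue.
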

\begin{proof}
It suffices to apply~\cite[Theorems~4.14 and~4.17]{e-z}
to the rescaled symbols $a_\rho,b_\rho\in S_{\rho/2}$ introduced in the proof of Lemma~\ref{l:l2-bdd}.
The resulting symbols are $\mathcal O(h^\infty)$ outside of a compact set
and thus can be cut off to compactly supported symbols.
\end{proof}
Lemma~\ref{l:quant-basic} (or rather its trivial extension to symbols which are not compactly
supported) implies that
\begin{equation}
  \label{e:funny-pseudolocal}
\Op_h(b_1)\Op_h(a)\Op_h(b_2)=\mathcal O(h^\infty)_{L^2\to L^2},\quad
\supp b_1\cap\supp b_2=\emptyset,
\end{equation}
for each $a\in S_{L_0,\rho}^{\comp}(T^*\mathbb R^n)$ and
$h$-independent $b_1,b_2\in C^\infty(\mathbb R^{2n})$ with all derivatives uniformly bounded.
This in turn implies that the operator $\Op_h(a)$ is pseudolocal
and $\WF'_h(\Op_h(a))$ is compactly contained in $T^*(\mathbb R^n\times\mathbb R^n)$.

The next two lemmas establish invariance of the class of operators of the form
$\Op_h(a)$, $a\in S_{L_0,\rho}^{\comp}(T^*\mathbb R^n)$, under conjugation by Fourier integral
operators whose canonical transformations preserve the foliation $L_0$:
\begin{lemm}
\label{l:chvar}
Assume that $\varphi:V_1\to V_2$ is a diffeomorphism,
where $V_j\subset \mathbb R^n$ are open sets,
$\psi\in C^\infty(V_1)$, and
$\chi\in C_0^\infty(V_1)$. Define the operators
$B:C^\infty(V_1)\to C_0^\infty(V_2)$,
$B':C^\infty(V_2)\to C_0^\infty(V_1)$
by
$$
Bg(y')=e^{-i\psi(\varphi^{-1}(y'))/h}\chi(\varphi^{-1}(y'))f(\varphi^{-1}(y')),\quad
B'f(y)=e^{i\psi(y)/h}\chi(y)f(\varphi(y)).
$$
Then for each
$a\in S_{L_0,\rho}^{\comp}(T^*\mathbb R^n)$,
$$
B' \Op_h(a)B=\Op_h(\tilde a)+\mathcal O(h^\infty)_{L^2\to L^2},
$$
for some $\tilde a\in S_{L_0,\rho}^{\comp}(T^*\mathbb R^n)$ such that for each $N$,
$$
\tilde a(y,\eta;h)=\sum_{j=0}^{N-1}h^j L_j\big(\chi(y)\chi(y')a(\varphi(y),\theta;h)\big)\big|_{y'=y,\,
\theta=d\varphi(y)^{-T}(\eta-d\psi(y))}+\mathcal O(h^N)_{S^{\comp}_{L_0,\rho}(\mathbb R^n)}.
$$
where $L_j$ are differential operators of order $2j$ in $y',\theta$
depending on $\varphi,\psi$ and $L_0=1$.
\end{lemm}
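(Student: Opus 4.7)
The strategy is to compute the Schwartz kernel of $B'\Op_h(a)B$ as a single oscillatory integral, factor the phase into Kohn--Nirenberg form $(y-y')\cdot\theta$ via the standard line-integral identities, and then reduce to a left symbol by Taylor-expanding in $y'$ at $y$ and integrating by parts in $\theta$.

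Unfolding the three operators and changing variables $\tilde y=\varphi(y')$ in the middle integral yields the Schwartz kernel
\begin{equation*}
K(y,y')=(2\pi h)^{-n}\chi(y)\chi(y')|\det d\varphi(y')|\int e^{i\Phi/h}\,a(\varphi(y),\xi;h)\,d\xi,
\end{equation*}
with $\Phi(y,y',\xi)=\psi(y)-\psi(y')+(\varphi(y)-\varphi(y'))\cdot\xi$. Writing $\varphi(y)-\varphi(y')=M(y,y')(y-y')$ with $M(y,y'):=\int_0^1 d\varphi(y'+t(y-y'))\,dt$, and $\psi(y)-\psi(y')=m(y,y')\cdot(y-y')$ analogously, one notes that $\partial_\xi\Phi=\varphi(y)-\varphi(y')$ is nonvanishing off the diagonal, so integration by parts in $\xi$ renders the contribution from any region $|y-y'|\geq\delta>0$ an $\mathcal O(h^\infty)$ term. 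On a neighborhood of the diagonal $M(y,y')$ is invertible (since $M(y,y)=d\varphi(y)$ is), so the substitution $\xi\mapsto\theta:=m(y,y')+M(y,y')^T\xi$ with Jacobian $|\det M(y,y')|^{-1}$ puts the kernel in Kohn--Nirenberg form
\begin{equation*}
K(y,y')=(2\pi h)^{-n}\int e^{i(y-y')\cdot\theta/h}\,c(y,y',\theta;h)\,d\theta+\mathcal O(h^\infty),
\end{equation*}
with $c(y,y',\theta;h)=\chi(y)\chi(y')\frac{|\det d\varphi(y')|}{|\det M(y,y')|}\,a\bigl(\varphi(y),M(y,y')^{-T}(\theta-m(y,y'));h\bigr)$. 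From the $S^{\comp}_{L_0,\rho}$ bounds on $a$, the amplitude $c$ satisfies $|\partial_y^\alpha\partial_{y'}^\beta\partial_\theta^\gamma c|\leq Ch^{-\rho(|\alpha|+|\beta|)}$; at $y'=y$ the Jacobian prefactor equals $1$ and the argument of $a$ becomes $d\varphi(y)^{-T}(\theta-d\psi(y))$, matching the substitution appearing in the lemma.

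Taylor-expanding $c(y,y',\theta;h)$ in $y'$ around $y$ to order $N$ and applying the identity $(y'-y)^\alpha e^{i(y-y')\cdot\theta/h}=(ih)^{|\alpha|}\partial_\theta^\alpha e^{i(y-y')\cdot\theta/h}$ followed by integration by parts in $\theta$ converts the $\alpha$-th Taylor term into $\frac{(-ih)^{|\alpha|}}{\alpha!}\partial_\theta^\alpha\partial_{y'}^\alpha c|_{y'=y}$. Grouping contributions with $|\alpha|=j$ produces a differential operator $L_j$ of order $2j$ in $(y',\theta)$, depending smoothly on $\varphi,\psi$ (with coefficients that absorb the derivatives of the Jacobian factor), such that $L_j\bigl(\chi(y)\chi(y')a(\varphi(y),\theta;h)\bigr)|_{y'=y,\,\theta=d\varphi(y)^{-T}(\eta-d\psi(y))}$ reproduces the $j$-th term of the expansion; since the Jacobian prefactor equals $1$ at $y'=y$, we get $L_0=1$ as required.

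The \textbf{main obstacle} is to check convergence in the exotic symbol class and to control the remainder at the level of $L^2$ operator norms. Each Taylor step gains a factor of $h$ from the integration by parts, while each extra $\partial_{y'}$-derivative of $c$ costs at most $h^{-\rho}$; since $\rho<1$, the net gain per step is $h^{1-\rho}$, so the expansion is genuinely asymptotic in $S^{\comp}_{L_0,\rho}$ with partial-sum error $\mathcal O(h^N)_{S^{\comp}_{L_0,\rho}}$, and Borel summation produces $\tilde a\in S^{\comp}_{L_0,\rho}$ with the stated expansion. To upgrade the symbol-class remainder to an $\mathcal O(h^\infty)_{L^2\to L^2}$ bound, I apply the rescaling argument from the proof of Lemma~\ref{l:l2-bdd}: conjugation by $T_\rho$ converts the remainder amplitude into an $S_{\rho/2}$ symbol with seminorms $\mathcal O(h^{N(1-\rho)})$, to which the Calder\'on--Vaillancourt estimate applies. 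Taking $N$ arbitrarily large finishes the proof.
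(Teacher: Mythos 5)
Your approach is correct in its main lines but is organized differently from the paper's. The paper invokes oscillatory testing (Zworski, Theorem~4.19): it writes $b(y,\eta;h)=e^{-iy\cdot\eta/h}B'\Op_h(a)B(e^{iy'\cdot\eta})$ as a single $2n$--dimensional oscillatory integral with phase $(\varphi(y)-\varphi(y'))\cdot\theta+\psi(y)-\psi(y')-(y-y')\cdot\eta$, and then applies stationary phase in $(y',\theta)$ at the critical point $y'=y$, $\theta=d\varphi(y)^{-T}(\eta-d\psi(y))$, cutting off to compact support at the end. You instead compute the Schwartz kernel explicitly, use the Kuranishi factorizations $\varphi(y)-\varphi(y')=M(y,y')(y-y')$, $\psi(y)-\psi(y')=m(y,y')\cdot(y-y')$ and the affine fiber substitution $\xi\mapsto\theta=m+M^T\xi$ to bring the kernel to Kohn--Nirenberg form near the diagonal (disposing of the rest by nonstationary phase), and then do the standard ``amplitude to left symbol'' reduction by Taylor expansion in $y'$ and integration by parts in $\theta$. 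Both routes lead to the same expansion; yours makes the affine substitution $\theta\mapsto d\varphi(y)^{-T}(\eta-d\psi(y))$ appearing in the lemma explicit from the start, at the cost of more steps.

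There is, however, a small internal inconsistency in your accounting of losses in the exotic class which you should fix. You claim $|\partial_y^\alpha\partial_{y'}^\beta\partial_\theta^\gamma c|\leq Ch^{-\rho(|\alpha|+|\beta|)}$ and then say ``each extra $\partial_{y'}$-derivative of $c$ costs at most $h^{-\rho}$ \dots\ the net gain per step is $h^{1-\rho}$, so the expansion is \dots\ with partial-sum error $\mathcal O(h^N)$''; these two assertions do not cohere --- a gain of $h^{1-\rho}$ per step would only give $\mathcal O(h^{N(1-\rho)})$ after $N$ terms, not the $\mathcal O(h^N)$ required by the lemma. The correct observation is that $\partial_{y'}$ and $\partial_\theta$ derivatives of your amplitude $c$ are actually uniformly bounded with \emph{no} $h^{-\rho}$ loss: the only source of $h^{-\rho}$ growth in $c=\chi(y)\chi(y')\,\frac{|\det d\varphi(y')|}{|\det M(y,y')|}\,a\bigl(\varphi(y),M(y,y')^{-T}(\theta-m(y,y'))\bigr)$ is a $\partial_y$-derivative falling on the \emph{first} argument $\varphi(y)$ of $a$, while $\partial_{y'}$ and $\partial_\theta$ only hit the fiber argument of $a$ (which is free in $S^{\comp}_{L_0,\rho}$), the cutoff $\chi(y')$, and the Jacobian factor. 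Hence $|\partial_y^\alpha\partial_{y'}^\beta\partial_\theta^\gamma c|\leq Ch^{-\rho|\alpha|}$, the gain is a full power of $h$ per Taylor step, and the claimed $\mathcal O(h^N)_{S^{\comp}_{L_0,\rho}}$ error does follow.
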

\begin{proof}
We write
$$
\begin{gathered}
B'\Op_h(a)B f(y)\\
=(2\pi h)^{-n}\int_{\mathbb R^{2n}}
e^{{i\over h}((\varphi(y)-\varphi(y'))\cdot\theta+\psi(y)-\psi(y'))}
\chi(y)\chi(y')J_\varphi(y')a(\varphi(y),\theta;h)f(y')\,dy'd\theta
\end{gathered}
$$
where $J_\varphi(y')=|\det d\varphi(y')|$. By oscillatory testing~\cite[Theorem~4.19]{e-z},
we have $B'\Op_h(a)B=\Op_h(b)$, where
$$
\begin{gathered}
b(y,\eta;h)=e^{-{i\over h} y\cdot\eta}B'\Op_h(a)B(e^{{i\over h}y'\cdot\eta})\\
=(2\pi h)^{-n}\int_{\mathbb R^{2n}}e^{{i\over h}((\varphi(y)-\varphi(y'))\cdot\theta+\psi(y)-\psi(y')-(y-y')\cdot\eta)}
\chi(y)\chi(y')J_\varphi(y')a(\varphi(y),\theta;h)\,dy'd\theta,
\end{gathered}
$$
as long as all derivatives of $b$ are bounded uniformly on $\mathbb R^{2n}$ for each fixed~$h$.
It then remains to establish the asymptotic expansion for $b$, which follows immediately by the method of stationary
phase~\cite[Theorem~3.16]{e-z}. The symbol $b$ is $\mathcal O(h^\infty)_{\mathscr S(\mathbb R^{2n})}$ outside
of a fixed compact set, therefore it can be cut off to a compactly supported symbol.
\end{proof}
%
\begin{lemm}
  \label{l:gauge}
Assume that $\varkappa:V\to V'$, where $V,V'\subset T^*\mathbb R^n$ are open, is a canonical transformation
which preserves the foliation $L_0$. 
Let
$$
B\in I^{\comp}_h(\varkappa),\quad
B'\in I^{\comp}_h(\varkappa^{-1}).
$$
Take $a\in S^{\comp}_{L_0,\rho}(T^*\mathbb R^n)$. Then there exists
$b\in S^{\comp}_{L_0,\rho}(T^*\mathbb R^n)$ such that
$$
\begin{aligned}
B'\Op_h(a)B&=\Op_h(b)+\mathcal O(h^\infty)_{L^2\to L^2},\\
b&=(a\circ\varkappa)\sigma_h(B'B)+\mathcal O(h^{1-\rho})_{S^{\comp}_{L_0,\rho}(T^*\mathbb R^n)}.
\end{aligned}
$$
Moreover, if $h_j\to 0$, $(y_j,\eta_j)\in T^*\mathbb R^n$ are some sequences
such that $a$ is $\mathcal O(h^\infty)$ along the sequence $(\varkappa(y_j,\eta_j),h_j)$
(in the sense of Definition~\ref{d:rapid-decay}), then
$b$ is $\mathcal O(h^\infty)$ along the sequence $(y_j,\eta_j,h_j)$.
\end{lemm}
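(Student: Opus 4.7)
The plan is to reduce Lemma~\ref{l:gauge} to Lemma~\ref{l:chvar} by factoring $B$ and $B'$ through a pair of ``standard'' FIOs that realize the gauge form of $\varkappa$ supplied by Lemma~\ref{l:canonical}(2). First I would use a microlocal partition of unity on $\WF'_h(B)\cup\WF'_h(B')\subset \Graph(\varkappa)\cup\Graph(\varkappa^{-1})$ to reduce to a neighborhood on which
\[
\varkappa(y,\eta)=\big(\varphi(y),(d\varphi(y))^{-T}(\eta - d\psi(y))\big)
\]
for some diffeomorphism $\varphi$ and function $\psi$. Then I would introduce the operators $\widetilde B,\widetilde B'$ from Lemma~\ref{l:chvar} attached to this $\varphi,\psi$ and to a cutoff $\chi$ equal to $1$ on the relevant set. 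By Lemma~\ref{l:gauge-fio} they belong to $I^{\comp}_h(\varkappa)$ and $I^{\comp}_h(\varkappa^{-1})$ respectively, and the explicit computation $\widetilde B'\widetilde B = \chi^2 I$ (and analogously $\widetilde B\widetilde B' = (\chi\circ\varphi^{-1})^2 I$) shows that they microlocally quantize $\varkappa$ in the sense of~\eqref{e:quantized}.

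Next, I would set $Q := B\widetilde B'$ and $P := \widetilde B B'$, both of which lie in $\Psi^{\comp}_h(\mathbb R^n)$ since $\varkappa\circ\varkappa^{-1} = \mathrm{id}$. On the microlocal region where $\widetilde B'\widetilde B = \widetilde B\widetilde B' = I + \mathcal O(h^\infty)$, the factorizations $B = Q\widetilde B + \mathcal O(h^\infty)$ and $B' = \widetilde B' P + \mathcal O(h^\infty)$ hold. Writing $p = \sigma_h(P)$, $q = \sigma_h(Q)$, Lemma~\ref{l:quant-basic} gives $P\,\Op_h(a)\,Q = \Op_h(p\#a\#q) + \mathcal O(h^\infty)_{L^2\to L^2}$ with $p\#a\#q = paq + \mathcal O(h^{1-\rho})_{S^{\comp}_{L_0,\rho}}$; then Lemma~\ref{l:chvar} applied to the outer conjugation yields $B'\,\Op_h(a)\,B = \Op_h(b) + \mathcal O(h^\infty)_{L^2\to L^2}$ with $b\in S^{\comp}_{L_0,\rho}$ and
\[
b = (paq)\circ\varkappa + \mathcal O(h^{1-\rho})_{S^{\comp}_{L_0,\rho}} = (a\circ\varkappa)\cdot((pq)\circ\varkappa) + \mathcal O(h^{1-\rho})_{S^{\comp}_{L_0,\rho}}.
\]
To identify $(pq)\circ\varkappa$ with $\sigma_h(B'B)$, I would invoke the Egorov identity~\eqref{e:symbol-commutes} with $A = PQ\widetilde B\in I^{\comp}_h(\varkappa)$ and $A' = \widetilde B'\in I^{\comp}_h(\varkappa^{-1})$: combined with $\sigma_h(PQ\widetilde B\widetilde B') = pq$ (since $\widetilde B\widetilde B' = I + \mathcal O(h^\infty)$ microlocally), this gives $\sigma_h(B'B) = (pq)\circ\varkappa$, completing the formula for the leading symbol of $b$.

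For the rapid-decay clause, I would examine the explicit expansion from Lemma~\ref{l:chvar}: each term $h^j L_j(\chi(y)\chi(y')a(\varphi(y),\theta;h))$ evaluated at $y'=y$, $\theta = d\varphi(y)^{-T}(\eta-d\psi(y))$ is a linear combination of partial derivatives of $a$ evaluated at $(\varphi(y),\theta(y,\eta)) = \varkappa(y,\eta)$ with smooth $(y,\eta)$-dependent coefficients, so if $a$ is $\mathcal O(h^\infty)$ along $(\varkappa(y_j,\eta_j),h_j)$ together with all of its derivatives, then so is $b$ along $(y_j,\eta_j,h_j)$. The main bookkeeping obstacle, I expect, is the patching across the microlocal partition of unity when $\varkappa$ does not admit a single global gauge representation~\eqref{e:gauge-transform}; since all identities above hold modulo $\mathcal O(h^\infty)$ and $S^{\comp}_{L_0,\rho}$ is closed under addition, this amounts to verifying that local symbols with matching principal parts glue to a globally defined symbol in the same class with the stated leading behavior.
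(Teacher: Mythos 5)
Your proposal is correct and takes essentially the same approach as the paper: reduce via a microlocal partition of unity and Lemma~\ref{l:canonical}(2) to the gauge form, factor $B,B'$ through the explicit change-of-variables operators of Lemma~\ref{l:chvar} and residual pseudodifferential factors, then compose using Lemmas~\ref{l:chvar} and~\ref{l:quant-basic}. The only difference is the order of nesting: the paper places the pseudodifferential factors $A=e^{i\psi/h}\varphi^*B$, $A'=B'(\varphi^{-1})^*e^{-i\psi/h}$ on the outside and applies Lemma~\ref{l:chvar} to the inner $\widetilde B'\Op_h(a)\widetilde B$ first, so that the leading symbol is read off directly from $\sigma_h(A')\sigma_h(A)=\sigma_h(B'B)$, whereas you put the change of variables on the outside and recover $\sigma_h(B'B)=(pq)\circ\varkappa$ via the Egorov identity~\eqref{e:symbol-commutes}; both routes give the same answer with the same ingredients, so the difference is cosmetic rather than structural.
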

\begin{proof}
By applying a partition of unity to $B,B'$ and using pseudolocality of $\Op_h(a)$
(see~\eqref{e:funny-pseudolocal}) and part~2 of Lemma~\ref{l:canonical},
we reduce to the case when $\varkappa$ has the form~\eqref{e:gauge-transform}
for some $\varphi:B(y_0,\varepsilon)\to\mathbb R^n$, $\psi\in C^\infty(B(y_0,\varepsilon);\mathbb R)$;
we add a constant to $\psi$ to make sure that the fixed antiderivative on $\Graph(\varkappa)$
is equal to $\psi(x)$.
By Lemma~\ref{l:gauge-fio} and the composition property of Fourier integral operators, the products
$$
A:=e^{{i\over h}\psi}\varphi^* B,\quad
A':=B'(\varphi^{-1})^*e^{-{i\over h}\psi},
$$
lie in $\Psi^{\comp}_h(\mathbb R^n)$. (Lemma~\ref{l:gauge-fio} applies since we can insert an
element of $\Psi^{\comp}_h$ in between $B,B'$ and other factors.)
Since $\WF'_h(B)\subset\Graph(\varkappa)$ and $\WF'_h(B')\subset\Graph(\varkappa')$
are compact, there exists $\chi\in C_0^\infty(B(y_0,\varepsilon))$ such that
$$
B=(\chi\circ\varphi^{-1})B+\mathcal O(h^\infty)_{L^2\to L^2},\quad
B'=B'(\chi\circ\varphi^{-1})+\mathcal O(h^\infty)_{L^2\to L^2}.
$$
Then we write 
$$
B'\Op_h(a)B=A'\big(\chi e^{{i\over h}\psi}\varphi^*\Op_h(a)(\varphi^{-1})^*e^{-{i\over h}\psi}\chi\big) A+\mathcal O(h^\infty)_{L^2\to L^2}.
$$
By Lemma~\ref{l:chvar}, we can write the operator in parentheses on the right-hand side
as $\Op_h(\tilde a)+\mathcal O(h^\infty)_{L^2\to L^2}$ for some $\tilde a\in S^{\comp}_{L_0,\rho}(T^*\mathbb R^n)$;
by Lemma~\ref{l:quant-basic}, we have $A'\Op_h(\tilde a)A=\Op_h(b)+\mathcal O(h^\infty)_{L^2\to L^2}$
for some $b\in S^{\comp}_{L_0,\rho}(T^*\mathbb R^n)$. The expression for the principal part of $b$
and the microlocal vanishing statement follow directly from Lemmas~\ref{l:quant-basic} and~\ref{l:chvar}
and the fact that $\sigma_h(A')\sigma_h(A)=\sigma_h(A'A)=\sigma_h(B'B)$.
\end{proof}

\subsection{General calculus}
  \label{s:calculus-general}

We now introduce pseudodifferential operators associated to general Lagrangian foliations,
starting with the following 
\begin{defi}
Let $M$ be a manifold, $U\subset T^*M$ an open set, $L$ a Lagrangian foliation on $U$, and
$\rho\in [0,1)$.
A family of operators
$$
A=A(h):\mathcal D'(M)\to C_0^\infty(M)
$$
is called a semiclassical pseudodifferential operator with symbol of class $S_{L,\rho}^{\comp}(U)$
(denoted $A\in\Psi^{\comp}_{h,L,\rho}(U)$)
if it can be written in the form
\begin{equation}
  \label{e:represent}
A=\sum_{\ell=1}^N B'_\ell \Op_h(a_\ell) B_\ell+\mathcal O(h^\infty)_{\mathcal D'(M)\to C_0^\infty(M)}
\end{equation}
for some Lagrangian charts $\varkappa_\ell$, Fourier integral operators
$B_\ell\in I^{\comp}_h(\varkappa_\ell),B'_\ell\in I^{\comp}_h(\varkappa_\ell^{-1})$,
and symbols $a_\ell\in S_{L_0,\rho}^{\comp}(T^*\mathbb R^n)$.
\end{defi}
%
\begin{lemm}
  \label{l:globallem}
Let $A\in \Psi^{\comp}_{h,L,\rho}(U)$. Then:

1. $A$ is bounded on $L^2$ uniformly in $h$, pseudolocal, and
$\WFh(A)\subset U$ is compact.

2. If $\varkappa:\widetilde U\to T^*\mathbb R^n$ is a Lagrangian chart
and $B\in I^{\comp}_h(\varkappa),B'\in I^{\comp}_h(\varkappa^{-1})$, then
$$
BAB'=\Op_h(a)+\mathcal O(h^\infty)_{L^2\to L^2}
$$
for some $a\in S_{L_0,\rho}^{\comp}(T^*\mathbb R^n)$,
$\supp a\subset\varkappa(\widetilde U)$, and for each
representation~\eqref{e:represent} of $A$,
\begin{equation}
  \label{e:symbol-transform}
a\circ\varkappa=\sigma_h(B'B)\sum_{\ell=1}^N \sigma_h(B'_\ell B_\ell)(a_\ell\circ\varkappa_\ell)+\mathcal O(h^{1-\rho})_{S^{\comp}_{L,\rho}(U)}.
\end{equation}
Moreover, if $h_j\to 0$
and $(x_j,\xi_j)\in U$ are sequences such that for each $\ell$, either
$(x_j,\xi_j)\notin \pi_1(\WF'_h (B_\ell))\cap \pi_2(\WF'_h(B'_\ell))$ for all $j$
or $a_\ell\circ\varkappa_\ell$ is $\mathcal O(h^\infty)$
along $(x_j,\xi_j,h_j)$ for all $\ell$ in the sense of Definition~\ref{d:rapid-decay}, then
$a\circ\varkappa$ is $\mathcal O(h^\infty)$ along $(x_j,\xi_j,h_j)$ as well.
\end{lemm}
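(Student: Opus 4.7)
The plan is to deduce both parts directly from the local calculus on $\mathbb{R}^n$ developed in Lemmas~\ref{l:l2-bdd}--\ref{l:gauge}, using the FIO composition rules recalled in \S\ref{s:fios}.

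For Part 1, I would start from a representation \eqref{e:represent} and handle each summand $B'_\ell \Op_h(a_\ell) B_\ell$ separately. Uniform $L^2$-boundedness of $\Op_h(a_\ell)$ is Lemma~\ref{l:l2-bdd}, and $B_\ell, B'_\ell$ are uniformly $L^2$-bounded as compactly supported FIOs, so boundedness of $A$ is immediate. For pseudolocality, the symbol calculus of Lemma~\ref{l:quant-basic} (cf.\ \eqref{e:funny-pseudolocal}) shows $\WF'_h(\Op_h(a_\ell))\subset\Delta(T^*\mathbb R^n)$ and is a compact subset of that diagonal; composing with the wavefront bounds $\WF'_h(B_\ell)\subset\Graph(\varkappa_\ell)$ and $\WF'_h(B'_\ell)\subset\Graph(\varkappa_\ell^{-1})$ yields $\WF'_h(B'_\ell\Op_h(a_\ell)B_\ell)\subset\Delta(T^*M)$, giving pseudolocality. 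Pushing the diagonal wavefront set forward by $\varkappa_\ell^{-1}$ produces a compact subset of $U$, so $\WFh(A)$ is compactly contained in $U$.

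For Part 2, I would write
\[
BAB'=\sum_{\ell=1}^N (BB'_\ell)\Op_h(a_\ell)(B_\ell B')+\mathcal O(h^\infty)_{\mathcal D'\to C_0^\infty}.
\]
By the FIO composition rule, $B_\ell B'\in I^{\comp}_h(\varkappa_\ell\circ\varkappa^{-1})$ and $BB'_\ell\in I^{\comp}_h((\varkappa_\ell\circ\varkappa^{-1})^{-1})$. Because $\varkappa$ and $\varkappa_\ell$ are Lagrangian charts, both send $L$ to $L_0$, so the composed transformation $\varkappa_\ell\circ\varkappa^{-1}$ preserves $L_0$. This is exactly the hypothesis of Lemma~\ref{l:gauge}, which yields $(BB'_\ell)\Op_h(a_\ell)(B_\ell B')=\Op_h(b_\ell)+\mathcal O(h^\infty)$ with $b_\ell\in S^{\comp}_{L_0,\rho}(T^*\mathbb R^n)$ and
\[
b_\ell=\bigl(a_\ell\circ(\varkappa_\ell\circ\varkappa^{-1})\bigr)\,\sigma_h(BB'_\ell B_\ell B')+\mathcal O(h^{1-\rho})_{S^{\comp}_{L_0,\rho}}.
\]
Setting $a=\sum_\ell b_\ell$ gives the required representation $BAB'=\Op_h(a)+\mathcal O(h^\infty)$; the constraint $\supp a\subset\varkappa(\widetilde U)$ follows from the wavefront bounds $\WF'_h(B),\WF'_h(B')\subset\Graph(\varkappa^{\pm 1})$.

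The principal-symbol formula \eqref{e:symbol-transform} follows by composing $b_\ell\circ\varkappa^{-1}$ from the right with $\varkappa$ and simplifying the factor $\sigma_h(BB'_\ell B_\ell B')\circ\varkappa$: writing $BB'_\ell B_\ell B'=B(B'_\ell B_\ell)B'$ with $B'_\ell B_\ell\in\Psi^{\comp}_h$, an application of Egorov's theorem together with~\eqref{e:symbol-commutes} shows that this factor equals $\sigma_h(B'B)\cdot\sigma_h(B'_\ell B_\ell)$, producing~\eqref{e:symbol-transform}. Finally, the microlocal vanishing statement is inherited from the corresponding clause of Lemma~\ref{l:gauge}: for each $\ell$, either $B_\ell B'$ or $BB'_\ell$ is $\mathcal O(h^\infty)$ near the image of $(x_j,\xi_j)$ under $\varkappa$ (when the wavefront hypothesis fails), or $a_\ell\circ\varkappa_\ell$ vanishes to all orders along $(x_j,\xi_j,h_j)$, in which case so does $a_\ell\circ(\varkappa_\ell\circ\varkappa^{-1})$ at $\varkappa(x_j,\xi_j)$; either way $b_\ell$ is $\mathcal O(h^\infty)$ at $\varkappa(x_j,\xi_j)$. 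The main obstacle will be to carefully track the symbol identity \eqref{e:symbol-transform} through the various FIO-pseudo-FIO compositions; once that bookkeeping is done the rest is a direct application of the local lemmas.
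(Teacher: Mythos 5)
Your proposal is correct and follows essentially the same route as the paper: decompose $A$ via \eqref{e:represent}, reduce Part~1 to Lemma~\ref{l:l2-bdd} and the pseudolocality of $\Op_h(a_\ell)$ (from the discussion after~\eqref{e:funny-pseudolocal}), and for Part~2 regroup $BAB'$ as $\sum(BB'_\ell)\Op_h(a_\ell)(B_\ell B')$, invoke Lemma~\ref{l:gauge} for the composed Lagrangian-chart transitions $\varkappa_\ell\circ\varkappa^{-1}$, and recover~\eqref{e:symbol-transform} from the identity $\sigma_h(BB'_\ell B_\ell B')=(\sigma_h(B'_\ell B_\ell)\sigma_h(B'B))\circ\varkappa^{-1}$, which is exactly the ``corollary of~\eqref{e:symbol-commutes}'' the paper uses. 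The only cosmetic difference is that you attribute the last symbol identity to ``Egorov's theorem,'' whereas the paper derives it directly from~\eqref{e:symbol-commutes}; these are the same computation, as~\eqref{e:symbol-commutes} is already noted in the text to be a form of Egorov's theorem.
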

\begin{proof}
1. This follows immediately from Lemma~\ref{l:l2-bdd}
and the properties of $\Op_h(a)$, $a\in S^{\comp}_{L_0,\rho}(T^*\mathbb R^n)$ established
in the paragraph following~\eqref{e:funny-pseudolocal}.

2. We write $A$ in the form~\eqref{e:represent}, then
$$
BAB'=\sum_{j=1}^N (BB'_\ell)\Op_h(a_\ell)(B_\ell B')+\mathcal O(h^\infty)_{\mathcal D'\to C_0^\infty}.
$$
Now, we have $B_\ell B'\in I^{\comp}_h(\varkappa'_j)$, where
$\varkappa'_\ell=\varkappa_\ell\circ\varkappa^{-1}:\varkappa(\widetilde U\cap U_\ell)\to\mathbb T^*\mathbb R^n$
is a symplectomorphism onto its image preserving the foliation $L_0$ and
$U_\ell$ is the domain of $\varkappa_\ell$. Similarly $BB'_\ell \in I^{\comp}_h((\varkappa'_j)^{-1})$.
It remains to apply Lemma~\ref{l:gauge}. To see~\eqref{e:symbol-transform},
we use the following corollary of~\eqref{e:symbol-commutes}:
$\sigma_h(BB'_\ell B_\ell B')=(\sigma_h(B'_\ell B_\ell)\sigma_h(B'B))\circ\varkappa^{-1}$.
\end{proof}
We now define the principal symbol and ($h$-dependent) microsupport of an operator in $\Psi^{\comp}_{h,L,\rho}(U)$:
\begin{defi}
  \label{d:lag-prince}
Let $A\in \Psi^{\comp}_{h,L,\rho}(U)$. We define the principal symbol
$$
\sigma_h^L(A)\in S^{\comp}_{L,\rho}(U)/h^{1-\rho}S^{\comp}_{L,\rho}(U)
$$
by the following formula valid for any representation~\eqref{e:represent}:
\begin{equation}
  \label{e:symboldef}
\sigma_h^L(A)=\sum_{\ell=1}^N \sigma_h(B'_\ell B_\ell)(a_\ell\circ\varkappa_\ell).
\end{equation}
Moreover, if $h_j\to 0$ and $(x_j,\xi_j)\in U$ are some sequences, then we say
that $A=\mathcal O(h^\infty)$ microlocally along $(x_j,\xi_j,h_j)$, if for each
choice of $\varkappa,B,B'$ in part~2 of Lemma~\ref{l:globallem} and the corresponding
symbol $a\in S^{\comp}_{L_0,\rho}(T^*\mathbb R^n)$, the symbol
$a\circ\varkappa$ is $\mathcal O(h^\infty)$ along $(x_j,\xi_j,h_j)$.
\end{defi}
It follows from Lemma~\ref{l:globallem} that $\sigma_h^L(A)$ does not depend
on the choice of the representation~\eqref{e:represent} of $A$.

To construct an operator with given principal symbol and microsupport, we use the \emph{quantization map}
\begin{equation}
  \label{e:op-h-l}
a\in S^{\comp}_{L,\rho}(U)\ \mapsto\ \Op_h^L(a):=\sum_\ell B'_\ell \Op_h(a_\ell)B_\ell,
\end{equation}
where the sum above has finitely many nonzero terms for each $a$ and
\begin{itemize}
\item $\varkappa_\ell: U_\ell\to T^*\mathbb R^n$ are Lagrangian charts and $U_\ell$,
$\ell\in\mathbb N$, form a locally finite covering of $U$;
\item $B_\ell\in I^{\comp}_h(\varkappa_\ell)$, $B'_\ell\in I^{\comp}_h(\varkappa_\ell^{-1})$
are Fourier integral operators such that $\sigma_h(B'_\ell B_\ell)\in C_0^\infty(U_\ell)$
form a partition of unity:
$$
\sum_\ell \sigma_h(B'_\ell B_\ell)=1\quad\text{on }U;
$$
\item $a_\ell=(\chi_\ell a)\circ\varkappa_\ell^{-1}\in S^{\comp}_{L_0,\rho}(T^*\mathbb R^n)$,
where $\chi_\ell\in C_0^\infty(U_\ell)$ are some functions equal to 1 near
$\supp \sigma_h(B'_\ell B_\ell)$.
\end{itemize}
One can choose $\varkappa_\ell$ with the required properties by Lemma~\ref{l:canonical};
for existence of $B_\ell,B'_\ell$ see the discussion following~\eqref{e:quantized}. The quantization map is not canonical
as it depends on the choice of $\varkappa_\ell,B_\ell,B'_\ell,\chi_\ell$.

Note that if $A\in\Psi^{\comp}_h(M)$ is a pseudodifferential operator in the standard calculus
and $\WFh(A)\subset U$, then $A\in\Psi^{\comp}_{h,L,\rho}(U)$ and $\sigma_h^L(A)=\sigma_h(A)$.
Also, if $a\in S^0_h(T^*M)$ is a symbol in the standard class supported inside $U$,
then
$\Op_h^L(a)\in\Psi^{\comp}_h(M)$. This follows from the composition property
of Fourier integral operators together with~\eqref{e:symbol-commutes}.

The basic properties of the symbol map and a quantization map are given by
\begin{lemm}
  \label{l:globalprop}
1. For each $a\in S^{\comp}_{L,\rho}(U)$,
$$
\sigma_h^L(\Op_h^L(a))=a+\mathcal O(h^{1-\rho})_{S^{\comp}_{L,\rho}(U)}.
$$

2. For each $A\in\Psi^{\comp}_{h,L,\rho}(U)$, we have
$\sigma_h^L(A)=\mathcal O(h^{1-\rho})_{S^{\comp}_{L,\rho}(U)}$ if and only
if $A\in h^{1-\rho}\Psi^{\comp}_{h,L,\rho}(U)$.

3. For each $A\in\Psi^{\comp}_{h,L,\rho}(U)$,
there exists $a\in S^{\comp}_{L,\rho}(U)$ such that $A=\Op_h^L(a)+\mathcal O(h^\infty)_{\mathcal D'\to C_0^\infty}$.

4. For each $a\in S^{\comp}_{L,\rho}(U)$, if $h_j\to 0$ and $(x_j,\xi_j)\in U$
are sequences such that $a$ is $\mathcal O(h^\infty)$ along $(x_j,\xi_j,h_j)$
(in the sense of Definition~\ref{d:rapid-decay}), then
$\Op_h^L(a)$ is $\mathcal O(h^\infty)$ microlocally along $(x_j,\xi_j,h_j)$
(in the sense of Definition~\ref{d:lag-prince}).

5. Let $A,B\in\Psi^{\comp}_{h,L,\rho}(U)$. Then $AB,A^*\in\Psi^{\comp}_{h,L,\rho}(U)$ and
$$
\begin{aligned}
\sigma_h^L(AB)&=\sigma_h^L(A)\sigma_h^L(B)+\mathcal O(h^{1-\rho})_{S^{\comp}_{L,\rho}(U)},\\
\sigma_h^L(A^*)&=\overline{\sigma_h^L(A)}+\mathcal O(h^{1-\rho})_{S^{\comp}_{L,\rho}(U)}.
\end{aligned}
$$
\end{lemm}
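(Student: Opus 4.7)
The plan is to prove the five parts in interlocking order, reducing everything to the flat-foliation case $L=L_0$ on $T^*\mathbb R^n$ via Lagrangian charts.

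Part~1 is a direct substitution of~\eqref{e:op-h-l} into~\eqref{e:symboldef}: since $\chi_\ell\equiv 1$ on $\supp\sigma_h(B'_\ell B_\ell)$ and $\sum_\ell \sigma_h(B'_\ell B_\ell)=1$ on $U$, the resulting sum collapses to $a$, modulo an element of $h^{1-\rho}S^{\comp}_{L,\rho}(U)$ which is invisible in the quotient. Part~4 follows by termwise application of the microlocal vanishing conclusion of Lemma~\ref{l:gauge}: if $a$ is $\mathcal O(h^\infty)$ along $(x_j,\xi_j,h_j)$, then $a_\ell=(\chi_\ell a)\circ\varkappa_\ell^{-1}$ is $\mathcal O(h^\infty)$ along $(\varkappa_\ell(x_j,\xi_j),h_j)$, so each summand $B'_\ell\Op_h(a_\ell)B_\ell$ is microlocally $\mathcal O(h^\infty)$ along $(x_j,\xi_j,h_j)$ in the sense of Definition~\ref{d:lag-prince}.

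For Part~2, one direction is immediate from~\eqref{e:symboldef}: if $A=h^{1-\rho}A'$ with $A'\in\Psi^{\comp}_{h,L,\rho}(U)$, then $\sigma_h^L(A)=h^{1-\rho}\sigma_h^L(A')$. For the converse, assuming $\sigma_h^L(A)\in h^{1-\rho}S^{\comp}_{L,\rho}(U)$, choose a locally finite cover by Lagrangian charts $(\varkappa_k,B_k,B'_k)$. By Lemma~\ref{l:globallem}(2), $B_kAB'_k=\Op_h(\alpha_k)+\mathcal O(h^\infty)$ with $\alpha_k\in S^{\comp}_{L_0,\rho}$, and the identity~\eqref{e:symbol-transform} transports the hypothesis to $\alpha_k\in h^{1-\rho}S^{\comp}_{L_0,\rho}$; reassembling via a quantization of the form~\eqref{e:op-h-l} yields $A=h^{1-\rho}A'$ with $A'\in\Psi^{\comp}_{h,L,\rho}(U)$. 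Part~3 is then a Borel-type construction: set $a^{(0)}$ a representative of $\sigma_h^L(A)$, so by Parts~1 and~2 one has $A-\Op_h^L(a^{(0)})\in h^{1-\rho}\Psi^{\comp}_{h,L,\rho}(U)$; iterating produces $a^{(k)}\in h^{k(1-\rho)}S^{\comp}_{L,\rho}(U)$ with $A-\sum_{k<N}\Op_h^L(a^{(k)})\in h^{N(1-\rho)}\Psi^{\comp}_{h,L,\rho}(U)$, and a standard Borel summation yields the single symbol $a\sim\sum_k a^{(k)}$ required.

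Part~5 is the main obstacle. For the product $AB$, I would cover $\WFh(A)\cup\WFh(B)$ by finitely many Lagrangian charts $\varkappa_\ell:U_\ell\to T^*\mathbb R^n$ together with FIOs $B_\ell,B'_\ell$ whose symbols $\sigma_h(B'_\ell B_\ell)$ form a partition of unity. Writing $A=\sum_\ell B'_\ell\Op_h(a_\ell)B_\ell$ and $B=\sum_m B'_m\Op_h(b_m)B_m$, the product expands to a double sum whose typical term is $B'_\ell\Op_h(a_\ell)(B_\ell B'_m)\Op_h(b_m)B_m$ with $B_\ell B'_m\in I^{\comp}_h(\varkappa_\ell\circ\varkappa_m^{-1})$, a canonical transformation preserving $L_0$. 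Inserting a microlocal parametrix for $B_\ell B'_m$ and applying Lemma~\ref{l:gauge} rewrites $(B_\ell B'_m)\Op_h(b_m)$ as $\Op_h(\tilde b_{\ell m})(B_\ell B'_m)+\mathcal O(h^\infty)$ for some $\tilde b_{\ell m}\in S^{\comp}_{L_0,\rho}$; Lemma~\ref{l:quant-basic}(1) then yields $\Op_h(a_\ell)\Op_h(\tilde b_{\ell m})=\Op_h(a_\ell\#\tilde b_{\ell m})+\mathcal O(h^\infty)$, so the typical term becomes $B'_\ell\Op_h(a_\ell\#\tilde b_{\ell m})(B_\ell B'_m)B_m$, manifestly of the form~\eqref{e:represent}. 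Extracting leading terms via Lemma~\ref{l:quant-basic}(1) and summing against the partition of unity yields $\sigma_h^L(AB)=\sigma_h^L(A)\sigma_h^L(B)$ modulo $h^{1-\rho}$; the adjoint statement is analogous using Lemma~\ref{l:quant-basic}(2). The main difficulty is bookkeeping: ensuring the $\mathcal O(h^{1-\rho})$ remainders combine correctly across charts, that antiderivatives for all the FIOs involved are consistently chosen so that the composition rule for $I^{\comp}_h$ applies cleanly, and that the assembled symbol lies in $S^{\comp}_{L,\rho}(U)$ globally rather than merely chart-by-chart.
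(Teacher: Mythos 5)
The overall architecture (reduce to the flat case $L_0$ on $T^*\mathbb R^n$ via Lagrangian charts, apply Lemmas~\ref{l:gauge}--\ref{l:quant-basic}, then Borel-sum) is right and Parts~1, 3, and 4 match the paper's proof in substance. However, your Part~2 has a real gap at the ``reassembly'' step, and Part~5 has a smaller fixable issue.

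For Part~2: you compute $B_kAB'_k=\Op_h(\alpha_k)+\mathcal O(h^\infty)$ for a locally finite cover of charts, deduce $\alpha_k\in h^{1-\rho}S^{\comp}_{L_0,\rho}$, and then claim that ``reassembling via a quantization of the form~\eqref{e:op-h-l} yields $A=h^{1-\rho}A'$.'' This reassembly does not produce $A$. If the $B_k,B'_k$ are those of~\eqref{e:op-h-l}, then $\sum_k\sigma_h(B'_kB_k)=1$, but $\sum_k B'_kB_k\neq 1+\mathcal O(h^\infty)$ as operators, and moreover $\sum_k B'_k\Op_h(\alpha_k)B_k=\sum_k(B'_kB_k)A(B'_kB_k)$, which omits the cross-terms from expanding $A=(\sum B'_kB_k)A(\sum B'_kB_k)$; so this sum is not $A$, and trying to use $A=\Op_h^L(\sigma_h^L(A))+\mathcal O(h^{1-\rho})_{\Psi^{\comp}_{h,L,\rho}}$ to close the loop is circular (that is Part~1 plus Part~2 itself). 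The paper's proof avoids this by first applying a genuine pseudodifferential partition of unity: write $A=\sum_k\phi_k A+\mathcal O(h^\infty)$ with each $\WFh(\phi_k)$ small enough to fit in a single chart, note that $\sigma_h^L(\phi_k A)=\sigma_h(\phi_k)\sigma_h^L(A)=\mathcal O(h^{1-\rho})$ (this only needs~\eqref{e:symboldef}, not Part~5), and then for each $k$ choose $B_k,B'_k$ with $B'_kB_k=1+\mathcal O(h^\infty)$ microlocally near $\WFh(\phi_kA)$ so that $\phi_kA=B'_k(B_k\phi_kAB'_k)B_k+\mathcal O(h^\infty)$; the factor $h^{1-\rho}$ is extracted from the inner symbol $\alpha_k$ and the sum of the resulting terms is manifestly of the form~\eqref{e:represent}.

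For Part~5: when you ``insert a microlocal parametrix for $B_\ell B'_m$'' to push it to the right of $\Op_h(b_m)$, you are implicitly assuming $B_\ell B'_m$ is elliptic near the relevant wavefront, which need not hold. The standard fix is to factor $B_\ell B'_m=D\tilde P+\mathcal O(h^\infty)$ with $D\in I^{\comp}_h(\varkappa_\ell\circ\varkappa_m^{-1})$ quantizing the transformation elliptically near $\pi_2(\WF'_h(B_\ell B'_m))$ and $\tilde P\in\Psi^{\comp}_h(\mathbb R^n)$; then $\tilde P\Op_h(b_m)\in\Op_h(S^{\comp}_{L_0,\rho})$ by Lemma~\ref{l:quant-basic}, and $D\Op_h(\cdot)=\Op_h(\cdot')D+\mathcal O(h^\infty)$ by Lemma~\ref{l:gauge} applied to $D\Op_h(\cdot)C$ with $C$ a microlocal inverse of $D$ (which exists since $D$ is elliptic). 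Aside from that subtlety your expansion $AB=\sum_{\ell m}B'_\ell\Op_h(a_\ell)(B_\ell B'_m)\Op_h(b_m)B_m$ and the observation that $(B_\ell B'_m)B_m\in I^{\comp}_h(\varkappa_\ell)$ are exactly the right ingredients, and the symbol identities then follow from Lemma~\ref{l:quant-basic} together with~\eqref{e:symbol-transform}, as the paper indicates.
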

\begin{proof}
1. This follows immediately from~\eqref{e:symboldef}.

2. Assume that $\sigma_h^L(A)=\mathcal O(h^{1-\rho})_{S^{\comp}_{L,\rho}(U)}$;
we need to show that $A\in h^{1-\rho}\Psi^{\comp}_{h,L,\rho}(U)$ (the reverse implication
follows directly from~\eqref{e:symboldef}). Using a pseudodifferential partition of unity,
we may assume that $\WFh(A)$ is contained in a some open subset $\widetilde U\subset U$
such that there exists a Lagrangian chart $\varkappa:\widetilde U\to T^*\mathbb R^n$
and Fourier integral operators
$$
B\in I^{\comp}_h(\varkappa),\
B'\in I^{\comp}_h(\varkappa^{-1});\quad
B'B=1+\mathcal O(h^\infty)\quad\text{microlocally near }\WFh(A).
$$
Then
$$
A=B'(BAB')B+\mathcal O(h^\infty)_{\mathcal D'\to C_0^\infty}.
$$
However, by part~2 of Lemma~\ref{l:globallem} we have
$$
BAB'=\Op_h(a)+\mathcal O(h^\infty)_{L^2\to L^2},\quad
a\in S^{\comp}_{L_0,\rho}(T^*\mathbb R^n),
$$
and since $\sigma_h^L(A)=\mathcal O(h^{1-\rho})_{S^{\comp}_{L,\rho}(U)}$, we have
$a=\mathcal O(h^{1-\rho})_{S^{\comp}_{L_0,\rho}(T^*\mathbb R^n)}$. Therefore,
$a=h^{1-\rho}b$ for some $b\in S^{\comp}_{L_0,\rho}(T^*\mathbb R^n)$ and
$$
A=h^{1-\rho}(B'\Op_h(b)B+\mathcal O(h^\infty)_{\mathcal D'\to C_0^\infty})\in h^{1-\rho}\Psi^{\comp}_{h,L,\rho}(U).
$$

3. Put $a_0=\sigma_h^L(A)$. Then $A=\Op_h^L(a_0)+\mathcal O(h^{1-\rho})_{\Psi^{\comp}_{h,L,\rho}(U)}$, therefore
$A=\Op_h^L(a_0)+h^{1-\rho}A_1$ for some $A_1\in\Psi^{\comp}_{h,L,\rho}(U)$. By induction we construct a family of
operators $A_j\in \Psi^{\comp}_{h,L,\rho}(U)$, $j\in\mathbb N_0$, such that $A_0=A$
and $A_j=\Op_h^L(\sigma_h^L(A_j))+h^{1-\rho}A_{j+1}$. Then we have
$A=\Op_h^L(a)+\mathcal O(h^\infty)_{\mathcal D'\to C_0^\infty}$ where $a\in S^{\comp}_{h,L,\rho}(U)$ is the following asymptotic sum:
$$
a\sim\sum_{j=0}^\infty h^{(1-\rho)j}\sigma_h(A_j).
$$

4,5. These follow from Lemma~\ref{l:quant-basic} and part~2 of Lemma~\ref{l:globallem}.
\end{proof}

\subsection{Further properties}

We start with an improved bound on the $L^2$ operator norm:
\begin{lemm}
  \label{l:l2-improved}
Let $A\in\Psi^{\comp}_{h,L,\rho}(U)$. Then as $h\to 0$,
$$
\|A\|_{L^2(M)\to L^2(M)}\leq \sup_U |\sigma_h^L(A)|+o(1).
$$
\end{lemm}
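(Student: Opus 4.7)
The plan is to establish this bound by a $TT^*$ together with a square-root construction in the calculus $\Psi^{\comp}_{h,L,\rho}(U)$, in the spirit of the classical sharp Garding inequality. Fix $\epsilon>0$, set $N:=\sup_U|\sigma_h^L(A)|$, and aim to show $\|A\|_{L^2\to L^2}\leq N+\epsilon+o(1)$; sending $\epsilon\downarrow 0$ at the end then yields the lemma.

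The first step is to localize $A$ so that the identity operator (which does not belong to the compactly microsupported calculus) can be replaced by an honest cutoff. Since $\WFh(A)\subset U$ is compact by Lemma~\ref{l:globallem}(1), I would choose $\chi\in C_0^\infty(U;[0,1])$ with $\chi=1$ near $\WFh(A)$ and $\sup\chi=1$, and set $B:=\Op_h(\chi)\in\Psi_h^{\comp}(M)$. Pseudolocality gives $A=AB+\mathcal{O}(h^\infty)_{L^2\to L^2}$, while the standard sharp $L^2$ bound for honest compactly supported pseudodifferential operators yields $\|B\|_{L^2\to L^2}\leq 1+o(1)$.

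The heart of the proof is to construct a non-negative square in the calculus that dominates $A^*A$. Set $f:=(N+\epsilon)^2-|\sigma_h^L(A)|^2$, so that $f\geq\epsilon(2N+\epsilon)>0$ on all of $U$; this strict lower bound is precisely what makes $\sqrt f$ smooth with derivatives inheriting the anisotropic bounds of $|\sigma_h^L(A)|^2\in S^{\comp}_{L,\rho}(U)$, and hence $b:=\chi\sqrt f\in S^{\comp}_{L,\rho}(U)$ with $b^2=\chi^2 f$. By the composition and adjoint rules of Lemma~\ref{l:globalprop}(5), the self-adjoint operator
$$E:=(N+\epsilon)^2 B^*B-B^*A^*A B\ \in\ \Psi^{\comp}_{h,L,\rho}(U)$$
has principal symbol $\chi^2 f=b^2$ modulo $h^{1-\rho}$, and the same is true of $\Op_h^L(b)^*\Op_h^L(b)$. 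Hence $E-\Op_h^L(b)^*\Op_h^L(b)\in h^{1-\rho}\Psi^{\comp}_{h,L,\rho}(U)$ by Lemma~\ref{l:globalprop}(2), and its $L^2$ operator norm is $\mathcal{O}(h^{1-\rho})$ by Lemma~\ref{l:globallem}(1).

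Putting everything together, for any $u\in L^2(M)$ one has $\langle Eu,u\rangle=\|\Op_h^L(b)u\|^2+\mathcal{O}(h^{1-\rho})\|u\|^2\geq-\mathcal{O}(h^{1-\rho})\|u\|^2$, which rearranges to $\|ABu\|^2\leq(N+\epsilon)^2\|Bu\|^2+\mathcal{O}(h^{1-\rho})\|u\|^2\leq((N+\epsilon)^2+o(1))\|u\|^2$, and thus $\|Au\|\leq\|ABu\|+\mathcal{O}(h^\infty)\|u\|\leq(N+\epsilon+o(1))\|u\|$. The step that I expect to require the most care is the verification that $\sqrt f$ really defines a symbol in $S^{\comp}_{L,\rho}$: derivatives of $\sqrt f$ are rational expressions in derivatives of $f$ divided by powers of $\sqrt f$, and only the strict positivity provided by the cushion $(N+\epsilon)^2$ instead of $N^2$ keeps these denominators uniformly bounded --- which is precisely why one must work at each fixed $\epsilon>0$ and send $\epsilon\downarrow 0$ only after establishing the bound.
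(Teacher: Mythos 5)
Your proof is correct, and the core engine — a square root $\sqrt{C_\epsilon^2-|\sigma_h^L(A)|^2}$ whose quantization yields a nonnegative square dominating $A^*A$ up to $\mathcal O(h^{1-\rho})$ — is the same one the paper uses. The difference is in how the two arguments handle the fact that this square-root symbol is \emph{not} compactly supported (it equals $C_\epsilon$ outside $\supp\sigma_h^L(A)$, so it cannot be fed directly to $\Op_h^L$). You resolve this by first replacing $A$ with $AB$, where $B=\Op_h(\chi)$ is a standard compactly microsupported cutoff equal to $1$ near $\WFh(A)$, and then working with the self-adjoint operator $E=(N+\epsilon)^2B^*B-B^*A^*AB$, whose principal symbol $\chi^2 f=b^2$ is compactly supported; you then compare $E$ against $\Op_h^L(b)^*\Op_h^L(b)$ and use Lemma~\ref{l:globalprop}(2) together with the $L^2$-boundedness from Lemma~\ref{l:globallem}(1). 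The paper instead sidesteps the extra cutoff by defining $B:=C_\epsilon-\Op_h^L(C_\epsilon-b)$, i.e.\ a constant plus an operator in $\Psi^{\comp}_{h,L,\rho}(U)$ (exploiting that $C_\epsilon-b\in S^{\comp}_{L,\rho}(U)$ even though $b$ itself is not compactly supported), then verifies directly that $A^*A+B^*B=C_\epsilon^2+\mathcal O(h^{1-\rho})_{L^2\to L^2}$, which gives the bound immediately upon pairing with $u$. Both arguments are valid; the paper's is a little shorter because it requires neither a separate localization step nor the estimate $\|B\|_{L^2\to L^2}\le 1+o(1)$, while yours is arguably more systematic in that every operator you compose stays strictly inside the $\Psi^{\comp}_{h,L,\rho}$ class where the calculus lemmas are stated.
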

\begin{proof}
Take $\varepsilon>0$ and let $a:=\sigma_h^L(A)$. It suffices to prove that
\begin{equation}
  \label{e:l2-bdd-new}
\limsup_{h\to 0}\|A\|_{L^2(M)\to L^2(M)}\leq C_\varepsilon:=\sup_U |a|+\varepsilon.
\end{equation}
Define the function $b$ by
$$
b=\sqrt{C_\varepsilon^2-|a|^2}.
$$
Note that $b=C_\varepsilon$ outside of $\supp a$ and
$C_\varepsilon-b\in S^{\comp}_{L,\rho}(U)$.
Take the following quantization of $b$:
$$
B:=C_\varepsilon-\Op_h^L(C_\varepsilon-b)
$$
and note that $B$ is bounded on $L^2$.
Since $|a|^2+|b|^2=C_\varepsilon^2$, we have
$$
A^*A+B^*B=C_\varepsilon^2+\mathcal O(h^{1-\rho})_{L^2\to L^2}.
$$
By applying this to $u\in L^2$ and taking the scalar product with $u$ itself, we get
$$
\|Au\|_{L^2}^2\leq C_\varepsilon^2\|u\|_{L^2}^2+\mathcal O(h^{1-\rho})\|u\|_{L^2}^2,
$$
which implies~\eqref{e:l2-bdd-new}.
\end{proof}
We next give a version of the elliptic parametrix construction:
\begin{lemm}
  \label{l:lag-elliptic}
Assume that $A,B\in\Psi^{\comp}_{h,L,\rho}(U)$ and $B$ is elliptic on the microsupport
of $A$ in the following sense: there exists $\varepsilon>0$ such that
for each sequences $h_j\to 0$ and $(x_j,\xi_j)\in U$,
if $|\sigma_h^L(B)(x_j,\xi_j;h_j)|\leq\varepsilon$, then $A$ is $\mathcal O(h^\infty)$
microlocally along $(x_j,\xi_j;h_j)$. Then there exists
$$
Q\in\Psi^{\comp}_{h,L,\rho}(U),\quad
A=QB+\mathcal O(h^\infty)_{\mathcal D'\to C_0^\infty}.
$$
\end{lemm}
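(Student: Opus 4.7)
The plan is to adapt the standard elliptic parametrix construction to the Lagrangian-foliation calculus. Set $a = \sigma_h^L(A)$ and $b = \sigma_h^L(B)$, and fix a smooth function $F\colon \mathbb{C} \to \mathbb{C}$, with bounded derivatives of all orders, such that $F(z) = 1/z$ whenever $|z| \geq \varepsilon/2$. I would start by defining $q_0 := a \cdot F(b)$ and verifying that $q_0 \in S^{\comp}_{L,\rho}(U)$: since $a$ is compactly supported and, by the chain rule, $L$-derivatives of $F(b)$ are bounded while the transverse derivatives grow at most like $h^{-\rho k}$ (inheriting this from $b$), the product $a\cdot F(b)$ satisfies the estimates of Definition~\ref{d:symbols}.

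Next, put $Q_0 := \Op_h^L(q_0)$. By Lemma~\ref{l:globalprop}(5), $\sigma_h^L(Q_0 B) = q_0\, b + \mathcal{O}(h^{1-\rho})_{S^{\comp}_{L,\rho}} = a - a\, g(b) + \mathcal{O}(h^{1-\rho})$, where $g(z) := 1 - z F(z)$ is smooth and supported in $\{|z| < \varepsilon/2\}$. The crucial point is that $a\, g(b) = \mathcal{O}(h^\infty)$ as an element of $S^{\comp}_{L,\rho}(U)$. On $\supp g(b)$ we have $|b| < \varepsilon/2 < \varepsilon$, so by the hypothesis $A$ is $\mathcal{O}(h^\infty)$ microlocally at every point there; unpacking Definitions~\ref{d:rapid-decay}--\ref{d:lag-prince} via Lemma~\ref{l:globallem}(2) (using that $\sigma_h(B'B)$ can be taken nonzero on any prescribed compact set) shows that $a$ and all its derivatives vanish to infinite order in $h$ uniformly on $\{|b| \leq \varepsilon/2\}$, and the Leibniz rule transfers this to $a\, g(b)$. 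Consequently $\sigma_h^L(A - Q_0 B) = \mathcal{O}(h^{1-\rho})$, so by Lemma~\ref{l:globalprop}(2) we may write $A - Q_0 B = h^{1-\rho} R_1$ with $R_1 \in \Psi^{\comp}_{h,L,\rho}(U)$.

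To iterate, I would first observe that $Q_0$ is microlocally $\mathcal{O}(h^\infty)$ wherever $A$ is, because $q_0 = a F(b)$ inherits pointwise rapid decay from $a$; by Lemma~\ref{l:globalprop}(5) this microlocal vanishing is preserved under multiplication by $B$, and hence by passage to $R_1$. Thus the same ellipticity hypothesis applies to $R_1$ with the same $\varepsilon$, and repeating the construction yields operators $Q_k \in \Psi^{\comp}_{h,L,\rho}(U)$ with $A - (Q_0 + h^{1-\rho} Q_1 + \dots + h^{k(1-\rho)} Q_k)B \in h^{(k+1)(1-\rho)} \Psi^{\comp}_{h,L,\rho}(U)$ at every step. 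A Borel-style asymptotic summation, exactly as in the proof of Lemma~\ref{l:globalprop}(3), then produces a single $Q \sim \sum_k h^{k(1-\rho)} Q_k$ in $\Psi^{\comp}_{h,L,\rho}(U)$ satisfying $A - QB = \mathcal{O}(h^\infty)_{\mathcal{D}'\to C_0^\infty}$.

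The main obstacle I expect is not the algebra of the iteration but the bookkeeping at step two, namely verifying that the microlocal ellipticity assumption propagates to each remainder $R_k$. This requires carefully translating between the operator-level notion ``microlocally $\mathcal{O}(h^\infty)$'' (Definition~\ref{d:lag-prince}) and its symbol-level counterpart (Definition~\ref{d:rapid-decay}), and checking that composition with $B$ preserves the microlocal vanishing of a factor on $\{|b| \leq \varepsilon\}$; once that is in hand, the remainder of the argument is a direct adaptation of the classical parametrix scheme.
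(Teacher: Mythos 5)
Your overall strategy is the same as the paper's: construct $q_0$ as (a regularized version of) $a/b$, show $A-\Op_h^L(q_0)B$ drops by $h^{1-\rho}$, verify that the ellipticity hypothesis propagates to the remainder, and iterate with a Borel sum. The paper also chooses $q_0 = a_0\big(1-\chi(|b_0|)\big)/b_0$ with $\chi\in C_0^\infty(-\varepsilon,\varepsilon)$ equal to $1$ near $0$, which is functionally the same kind of regularization as your $q_0 = a\,F(b)$.

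There is, however, a genuine gap in your iteration step, and it traces back to the fact that you only require $F$ to be bounded near $0$, rather than to \emph{vanish identically} there. You argue that ``$Q_0$ is microlocally $O(h^\infty)$ wherever $A$ is, because $q_0 = aF(b)$ inherits pointwise rapid decay from $a$''. This presupposes that the specific representative $a=\sigma_h^L(A)$ (as a function, not as an equivalence class) is $O(h^\infty)$ in the sense of Definition~\ref{d:rapid-decay} along sequences where $|b|\le\varepsilon/2$. But the hypothesis only gives that $A$ is microlocally $O(h^\infty)$ there, and the principal symbol is defined only modulo $h^{1-\rho}S^{\comp}_{L,\rho}$; unpacking Lemma~\ref{l:globallem}(2), what one actually extracts is $a=\mathcal O(h^{1-\rho})$ along such sequences, not $\mathcal O(h^\infty)$. (For instance, $A=h^{1-\rho}A_1$ with $A_1\in\Psi^{\comp}_h$ is microlocally $O(h^\infty)$ wherever $\WFh(A_1)$ is empty, but a natural representative of $\sigma_h^L(A)$ is $h^{1-\rho}\sigma_h(A_1)$, which is nowhere $O(h^\infty)$.) Consequently $q_0=aF(b)$ is only $O(h^{1-\rho})$ on $\{|b|\le\varepsilon/2\}$, $Q_0B$ is only microlocally $O(h^{1-\rho})$ there, and $R_1=h^{-(1-\rho)}(A-Q_0B)$ is merely $O(1)$ — the ellipticity hypothesis does not pass to $R_1$ and the iteration stalls.

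The paper sidesteps this cleanly: because $1-\chi(|b_0|)$ vanishes identically on $\{|b_0|<\varepsilon''\}$ for some $\varepsilon''>0$, $q_0$ is \emph{identically zero} there (regardless of what $a_0$ does), so by Lemma~\ref{l:globalprop}(4) the operator $Q_0=\Op_h^L(q_0)$ is microlocally $O(h^\infty)$ along any sequence with $|b_0|\le\varepsilon''/2$, and the ellipticity of $B$ on the microsupport of $R_1$ follows. Your argument can be repaired in one line: choose $F$ so that $F\equiv 0$ on $\{|z|\le\varepsilon/4\}$ and $F(z)=1/z$ on $\{|z|\ge\varepsilon/2\}$ (equivalently, $F(z)=(1-\chi(z))/z$), after which $q_0$ vanishes identically near $\{b=0\}$ and the iteration closes for the same reason as in the paper. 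Note also that citing Lemma~\ref{l:globalprop}(5) for the preservation of microlocal vanishing under composition is not quite right — that lemma only controls principal symbols modulo $h^{1-\rho}$; the correct mechanism is the full asymptotic expansion in Lemma~\ref{l:quant-basic} together with the conjugation-to-chart statement in Lemma~\ref{l:globallem}(2), exactly as the paper uses.
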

\begin{proof}
Let $a_0=\sigma_h^L(A),b_0=\sigma_h^L(B)$. We first show that
there exists $q_0\in S^{\comp}_{L,\rho}(U)$ such that
$a_0=q_0b_0+\mathcal O(h^{1-\rho})_{S^{\comp}_{L,\rho}(U)}$.
For that, let $\chi\in C_0^\infty(-\varepsilon,\varepsilon)$ be equal to 1 near the origin.
Then from the ellipticity assumption we have
$$
a_0 \chi(|b_0|)=\mathcal O(h^{1-\rho})_{S^{\comp}_{L,\rho}(U)}
$$
and it remains to put $q_0:=a_0\big(1-\chi(|b_0|)\big)/b_0$.

Put $Q_0=\Op_h^L(q_0)$, then
$$
A=Q_0B+\mathcal O(h^{1-\rho})_{\Psi^{\comp}_{h,L,\rho}(U)}.
$$
We write $A=Q_0B+h^{1-\rho}R_1$ for some $R_1\in \Psi^{\comp}_{h,L,\rho}(U)$.
By part~4 of Lemma~\ref{l:globalprop}, $B$ is elliptic on the microsupport of $Q_0$;
therefore, it is elliptic on the microsupport of $R_1$.
Repeating the above process, we find a sequence $Q_j=\Op_h^L(q_j),R_j\in \Psi^{\comp}_{h,L,\rho}(U)$
such that $A=R_0$ and
$$
R_j=Q_j B+h^{1-\rho}R_{j+1},\quad
j=0,1,\dots
$$
It remains to take $Q=\Op_h^L(q)$, where $q\in S^{\comp}_{L,\rho}(U)$ is the asymptotic sum
$$
q\sim\sum_{j=0}^\infty h^{(1-\rho)j}q_j.\qedhere
$$
\end{proof}
%
Finally, we give a version of Egorov's theorem for the $\Psi^{\comp}_{h,L,\rho}$ calculus:
%
\begin{lemm}
  \label{l:egorov}
Let $L$ be a Lagrangian foliation on $U\subset T^*M$, $P\in\Psi^{\comp}_h(M)$,
the principal symbol $p=\sigma_h(P)$ be real-valued (and $h$-independent), and
\begin{equation}
  \label{e:egorov-condition}
L_{(x,\xi)}\ \subset\ \ker dp(x,\xi)\quad\text{for each }(x,\xi)\in U.
\end{equation}
Let $A\in\Psi^{\comp}_{h,L,\rho}(U)$ and take $T>0$ such that
$e^{-tH_p}(\WFh(A))\subset U$ for all $t\in [0,T]$.
Then there exists a family of operators depending smoothly on $t$
$$
A_t\in\Psi^{\comp}_{h,L,\rho}(U),\quad
t\in [0,T],\quad
A_0=A+\mathcal O(h^\infty)_{\mathcal D'\to C_0^\infty},
$$
such that $\sigma_h^L(A_t)=\sigma_h^L(A)\circ e^{tH_p}+\mathcal O(h^{1-\rho})_{S^{\comp}_{L,\rho}(U)}$ and
\begin{equation}
  \label{e:egorov-equation}
ih\partial_t A_t+[P,A_t]=\mathcal O(h^\infty)_{\mathcal D'\to C_0^\infty}.
\end{equation}
Moreover, if $t$ is fixed and $h_j\to 0$, $(x_j,\xi_j)\in U$ are sequences such that
$A$ is $\mathcal O(h^\infty)$ microlocally along $(x_j,\xi_j,h_j)$, then
$A_t$ is $\mathcal O(h^\infty)$ microlocally along $(e^{-tH_p}(x_j,\xi_j),h_j)$.
\end{lemm}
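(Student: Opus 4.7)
The approach is to solve the Heisenberg equation $ih\partial_t A_t + [P, A_t] = \mathcal{O}(h^\infty)$ iteratively, starting from an ansatz given by propagating the symbol of $A$ along the Hamiltonian flow of $p$. Two consequences of the hypothesis $L \subset \ker dp$ are essential. First, for any $Y \in L$ we have $\omega(H_p, Y) = -dp(Y) = 0$; since $L$ is Lagrangian (hence maximal isotropic), $H_p$ itself lies in $L$. Therefore $e^{tH_p}$ is tangent to the leaves of $L$ and preserves the foliation, so composition with $e^{tH_p}$ maps $S^{\comp}_{L,\rho}(U)$ continuously into $S^{\comp}_{L,\rho}$ on the appropriate image for $t \in [0,T]$. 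Second, in any Lagrangian chart where $L$ becomes $L_0 = \Span(\partial_\eta)$, the principal symbol $p$ depends only on $y$: $\partial_\eta p = 0$.

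Using part~3 of Lemma~\ref{l:globalprop}, I would write $A = \Op_h^L(\tilde a) + \mathcal{O}(h^\infty)_{\mathcal{D}' \to C_0^\infty}$ for some $\tilde a \in S^{\comp}_{L,\rho}(U)$, and set the leading-order ansatz
\[
A^{(0)}_t := \Op_h^L(\tilde a \circ e^{tH_p}).
\]
To estimate the residual $E^{(0)}_t := ih\partial_t A^{(0)}_t + [P, A^{(0)}_t]$, I would work in a Lagrangian chart and apply Lemma~\ref{l:quant-basic}. The $j = 1$ Moyal term $-ih\{p, a^{(0)}_t\} = -ih\, H_p a^{(0)}_t$ exactly cancels $ih\partial_t a^{(0)}_t = ih\, H_p a^{(0)}_t$. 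Because $\partial_\eta p = 0$ kills every $\partial_\eta^k p$, the only surviving nonzero contributions come from $\partial_\eta^k a^{(0)}_t \cdot \partial_y^k p$ (of size $h^k$ since $p$ is classical) and from subleading $h$-order terms of the full symbol of $P$; the worst of these is $\mathcal{O}(h^{2-\rho})$, so $E^{(0)}_t \in h^{2-\rho}\Psi^{\comp}_{h,L,\rho}(U)$, smoothly in $t$.

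Next I would inductively build corrections $C^{(k)}_t = \Op_h^L(c^{(k)}_t)$ for $k \geq 1$: the leading symbol of $ih\partial_t C^{(k)}_t + [P, C^{(k)}_t]$ is $ih(\partial_t - H_p)c^{(k)}_t$, so I solve the inhomogeneous transport equation $(\partial_t - H_p) c^{(k)}_t = g^{(k)}_t$ with $c^{(k)}_0 = 0$, where $g^{(k)}_t$ is $-(ih)^{-1}$ times the leading symbol of $E^{(k-1)}_t$. Duhamel's formula $c^{(k)}_t = \int_0^t g^{(k)}_s \circ e^{(t-s)H_p}\,ds$ produces a symbol in $S^{\comp}_{L,\rho}$ (since $H_p \in L$ preserves the class), gaining a factor of $h^{1-\rho}$ over the source. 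Thus $E^{(k)}_t \in h^{(2-\rho)+k(1-\rho)}\Psi^{\comp}_{h,L,\rho}(U)$, shrinking to $\mathcal{O}(h^\infty)$. Borel-summing $\tilde a \circ e^{tH_p} + \sum_{k \geq 1} c^{(k)}_t$ to a symbol $a_t \in S^{\comp}_{L,\rho}(U)$ smooth in $t$ and setting $A_t := \Op_h^L(a_t)$ yields the Heisenberg equation modulo $\mathcal{O}(h^\infty)$; the identity $A_0 = A + \mathcal{O}(h^\infty)$ holds since $c^{(k)}_0 = 0$; and $\sigma_h^L(A_t) = \sigma_h^L(A) \circ e^{tH_p} + \mathcal{O}(h^{1-\rho})$ is immediate. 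The microlocal vanishing statement along $(e^{-tH_p}(x_j,\xi_j), h_j)$ follows from part~4 of Lemma~\ref{l:globalprop} together with the flow identity $(\tilde a \circ e^{tH_p})(e^{-tH_p}(x,\xi)) = \tilde a(x,\xi)$ and the fact that Duhamel integrals preserve rapid vanishing along flowed sequences.

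The hard part is the sharpened commutator bound $[P, A^{(0)}_t] = \Op_h^L(-ih\{p, a^{(0)}_t\}) + \mathcal{O}(h^{2-\rho})_{\Psi^{\comp}_{h,L,\rho}}$, which improves on the $\mathcal{O}(h^{1-\rho})$ precision that the generic principal symbol calculus of $\Psi^{\comp}_{h,L,\rho}$ provides and which drives the entire iteration. This improvement depends entirely on $L \subset \ker dp$, i.e., $\partial_\eta p = 0$ in Lagrangian coordinates, which kills the otherwise dominant Moyal contribution $-ih\, \partial_\eta p \cdot \partial_y a \sim h^{1-\rho}$. Since the generic change-of-chart identities in Lemmas~\ref{l:chvar} and~\ref{l:gauge} only preserve symbols modulo $h^{1-\rho}$, the refined computation must be performed carefully within a single Lagrangian chart and reassembled globally via a partition of unity, tracking that the $h^{2-\rho}$ refinement survives the FIO conjugations implicit in $\Op_h^L$.
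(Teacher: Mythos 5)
Your proposal is correct and follows essentially the same route as the paper: you derive $H_p\in L$ from the Lagrangian condition, set the ansatz $A^{(0)}_t=\Op_h^L(\tilde a\circ e^{tH_p})$, establish the sharpened commutator estimate $[P,\widetilde A]=\tfrac{h}{i}\Op_h^L(H_p\sigma_h^L(\widetilde A))+\mathcal O(h^{2-\rho})$ by working in a Lagrangian chart where $\partial_\eta p=0$ kills the dominant Moyal contribution, then iterate via Duhamel gaining $h^{1-\rho}$ per step and Borel-sum. The only difference from the paper's proof is cosmetic bookkeeping (your additive corrections $C^{(k)}_t\in h^{k(1-\rho)}\Psi^{\comp}_{h,L,\rho}$ versus the paper's weighted sum $\sum_j h^{(1-\rho)j}A^{(j)}_t$), and you are right to flag the chart-reassembly step as the point where the in-chart $\mathcal O(h^2)$ error degrades to $\mathcal O(h^{2-\rho})$ globally, which is exactly the content of the paper's footnote to~\eqref{e:nice-commutator}.
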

\begin{proof}
First of all, by~\eqref{e:egorov-condition} the Hamiltonian vector field $H_p$ lies in $L$.
Therefore, the Hamiltonian flow $e^{tH_p}$ preserves $L$, and
for each $a\in S^{\comp}_{L,\rho}(U)$, both $H_pa$ and $a\circ e^{tH_p}$ (as long as
$e^{-tH_p}(\supp a)\subset U$) lie in $S^{\comp}_{L,\rho}(U)$ as well.
Next, we have for each $\widetilde A\in\Psi^{\comp}_{h,L,\rho}(U)$
\begin{equation}
  \label{e:nice-commutator}
[P,\widetilde A]={h\over i}\Op_h^L(H_p \sigma_h^L(\widetilde A))+\mathcal O(h^{2-\rho})_{\Psi^{\comp}_{h,L,\rho}(U)}.
\end{equation}
Indeed, by a pseudodifferential partition of unity and part~2 of Lemma~\ref{l:globallem}, it suffices to prove%
\footnote{We do not obtain the error $\mathcal O(h^2)$ in~\eqref{e:nice-commutator}
because right-hand side is in $h\Psi^{\comp}_{h,L,\rho}(U)$
and thus Lemma~\ref{l:globallem} produces an $\mathcal O(h\cdot h^{1-\rho})$ error.}
that for each function $f\in C_0^\infty(\mathbb R^n)$,
$$
f(y)\#\tilde a-\tilde a\# f(y)={h\over i}\{f(y),\tilde a\}+\mathcal O(h^2)_{S^{\comp}_{h,L,\rho}(T^*\mathbb R^n)}
$$
which follows immediately from Lemma~\ref{l:quant-basic}.

Take $a\in S^{\comp}_{L,\rho}(U)$ such that $A=\Op_h^L(a)+\mathcal O(h^\infty)_{\mathcal D'\to C_0^\infty}$.
Consider the family of operators
$$
A^{(0)}_t:=\Op_h^L(a^{(0)}_t),\quad
a^{(0)}_t=a\circ e^{tH_p},\quad
t\in [0,T].
$$
Then~\eqref{e:nice-commutator} implies
$$
ih\partial_t A^{(0)}_t+[P,A^{(0)}_t]=h^{2-\rho}R^{(1)}_t,\quad
R^{(1)}_t\in\Psi^{\comp}_{h,L,\rho}(U).
$$
Next, put for $t\in [-T,T]$,
$$
A^{(1)}_t:=\Op_h^L(a^{(1)}_t),\quad
a^{(1)}_t:=\int_0^t i\sigma_h^L(R^{(1)}_s)\circ e^{(t-s)H_p}\,ds.
$$
Then
$$
a^{(1)}_0=0,\quad
\partial_t a^{(1)}_t=H_pa^{(1)}_t+i\sigma_h^L(R^{(1)}_t)
$$
and thus~\eqref{e:nice-commutator} implies
$$
ih\partial_t A^{(1)}_t+[P,A^{(1)}_t]+hR^{(1)}_t=h^{2-\rho}R^{(2)}_t,\quad
R^{(2)}_t\in\Psi^{\comp}_{h,L,\rho}(U).
$$
Arguing by induction, we construct operators
$A^{(j)}_t=\Op_h^L(a^{(j)}_t),R^{(j)}_t\in\Psi^{\comp}_{h,L,\rho}(U)$, $j\in\mathbb N_0$, such that
$R^{(0)}_t=0$, $A^{(0)}_0=A+\mathcal O(h^\infty)_{\mathcal D'\to C_0^\infty}$,
$A^{(j)}_0=0$ for $j>0$, and
$$
ih\partial_t A^{(j)}_t+[P,A^{(j)}_t]+hR^{(j)}_t=h^{2-\rho}R^{(j+1)}_t.
$$
It remains to put
$$
A_t:=\Op_h^L(a_t),\quad
a_t\sim\sum_{j=0}^\infty h^{(1-\rho)j} a_t^{(j)}.\qedhere
$$
\end{proof}

\section{Hyperbolic manifolds}
  \label{s:hyperbolic}

In this section, we assume that $(M,g)$ is an $n$-dimensional convex co-compact hyperbolic
manifold, that is, a quotient $M=\Gamma\backslash\mathbb H^n$ of the hyperbolic space $\mathbb H^n$
by a convex co-compact (geometrically finite) subgroup $\Gamma$ of the isometry group $\PSO(1,n)$ of $\mathbb H^n$.
We refer the reader to~\cite{Borthwick} for the formal definition and properties
of these manifolds in the important special case of dimension $n=2$
and to~\cite{perry} for the case of general dimension.

We will use the calculus of~\S\ref{s:second-microlocalization} to obtain fine
microlocal bounds on the scattering resolvent on $M$, and prove Theorem~\ref{t:fup-reduction}
using these bounds.

\subsection{Dynamical properties}
  \label{s:dynamical}

Define the function $p\in C^\infty(T^*M\setminus 0)$ by
\begin{equation}
  \label{e:p-symbol}
p(x,\xi)=|\xi|_g,\quad
(x,\xi)\in T^*M\setminus 0,
\end{equation}
and let $X$ be the Hamiltonian vector field of $p$. Then
\begin{equation}
  \label{e:g-flow}
e^{tX}:T^*M\setminus 0\to T^*M\setminus 0
\end{equation}
is the homogeneous rescaling of the geodesic flow.
Here homogeneity means that $[X,\xi\cdot\partial_\xi]=0$ where $\xi\cdot\partial_\xi$ is the generator
of dilations.

In what follows, we will identify the cotangent bundle $T^*M$ with the tangent bundle $TM$ using the metric $g$.

\smallsection{Stable/unstable decomposition}
For $(x,\xi)\in T^*M\setminus 0$, we decompose the tangent space at $(x,\xi)$
as follows:
\begin{equation}
  \label{e:sudec}
T_{(x,\xi)}(T^*M)=\mathbb RX\oplus \mathbb R(\xi\cdot\partial_\xi)\oplus E_s(x,\xi)\oplus E_u(x,\xi)
\end{equation}
where $E_s,E_u$ are the $n-1$ dimensional stable and unstable bundles,
defined in the case $|\xi|_g=1$ for instance in~\cite[(3.14)]{rrh}
(recalling the identification $T^*M\simeq TM$),
and in general by requiring that they are homogeneous.
Note that $E_s,E_u$ are the images of the stable/unstable bundles
of $\mathbb H^n$ (which are also denoted $E_s,E_u$) under the
covering map
\begin{equation}
  \label{e:pi-Gamma}
\pi_\Gamma:T^*\mathbb H^n\to T^*M,
\end{equation}
and they are tangent to the level sets of $p$.

The subbundles $E_s,E_u$ are invariant under the flow $e^{tX}$.
Moreover, the projection map $T_{(x,\xi)}T^*M\to T_x M$
is an isomorphism from $E_s(x,\xi)$ onto the space
$\{\eta\in T_xM\mid \langle\xi,\eta\rangle=0\}$. Therefore, 
we can canonically pull back the metric $g_x$ to $E_s(x,\xi)$. Same
is true for $E_u(x,\xi)$, and we have (see for instance~\cite[\S3.3]{rrh})
\begin{equation}
  \label{e:stun}
|de^{tX}(x)v|_g=\begin{cases}
e^t |v|_g,&\quad v\in E_u(x,\xi);\\
e^{-t}|v|_g,&\quad v\in E_s(x,\xi).
\end{cases}
\end{equation}
For each $(x,\xi)\in T^*M\setminus 0$, consider the \emph{weak stable/unstable subspaces}
\begin{equation}
  \label{e:L-s-L-u}
L_s(x,\xi):=\mathbb R X(x,\xi)\oplus E_s(x,\xi),\quad
L_u(x,\xi):=\mathbb R X(x,\xi)\oplus E_u(x,\xi).
\end{equation}
Define the maps
\begin{equation}
  \label{e:B-pm}
B_\pm:T^*\mathbb H^n\setminus 0\to\mathbb S^{n-1}
\end{equation}
as follows: for $(x,\xi)\in T^*\mathbb H^n$,
$B_\pm(x,\xi)$ is the limit of the projection to the ball model of $\mathbb H^n$ of the geodesic $e^{tX}(x,\xi)$
as $t\to\pm\infty$~-- see for instance~\cite[\S3.4]{rrh}.
Then the lifts of $L_s,L_u$ to $T^*\mathbb H^n\setminus 0$ are given by~\cite[(3.25)]{rrh}
\begin{equation}
  \label{e:desker}
\begin{aligned}
\pi_\Gamma^*L_s(x,\xi)&=\ker dB_+(x,\xi)\cap \ker dp(x,\xi),\\
\pi_\Gamma^*L_u(x,\xi)&=\ker dB_-(x,\xi)\cap \ker dp(x,\xi).
\end{aligned}
\end{equation}
\begin{lemm}
  \label{l:our-ls}
$L_s$ and $L_u$ are Lagrangian foliations on $T^*M\setminus 0$ in the sense of
Definition~\ref{d:l-foli}.
\end{lemm}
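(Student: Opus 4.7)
The plan is to verify for $L_s$ the two conditions of Definition~\ref{d:l-foli}; the argument for $L_u$ is identical after replacing $t\to+\infty$ by $t\to-\infty$. The dimension count is immediate from~\eqref{e:L-s-L-u}: $\dim L_s=1+(n-1)=n={1\over 2}\dim T^*M$.

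For isotropy with respect to the canonical symplectic form $\omega$ on $T^*M$, I decompose pairs in $L_s(x,\xi)$ along $\mathbb R X\oplus E_s$. The pairing $\omega(X,X)$ vanishes trivially, and for $v\in E_s$ the fact that $X$ is the Hamiltonian vector field of $p$ yields $\omega(X,v)=\pm dp(v)$, which vanishes because $E_s\subset\ker dp$ (read off from~\eqref{e:desker} after lifting to $\mathbb H^n$). The main case is $\omega(v,w)$ for $v,w\in E_s(x,\xi)$, where I would combine flow invariance $\omega(de^{tX}v,de^{tX}w)=\omega(v,w)$ with the contraction estimate~\eqref{e:stun}. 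With respect to a fixed Sasaki-type Riemannian metric $G$ on $T(T^*M)$ against which $\omega$ is pointwise uniformly bounded, one has $|de^{tX}v|_G,|de^{tX}w|_G=O(e^{-t})$ as $t\to+\infty$; the left-hand side of the invariance identity is then $O(e^{-2t})$ while the right-hand side is independent of $t$, forcing $\omega(v,w)=0$.

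For integrability I would use~\eqref{e:desker}: in a neighborhood of any point of $T^*\mathbb H^n\setminus 0$, the functions $(B_+^1,\dots,B_+^{n-1},p)$ (in a local chart on $\mathbb S^{n-1}$ about $B_+(x,\xi)$) constitute a submersion onto $\mathbb R^{n-1}\times\mathbb R$ whose common level sets are smooth $n$-dimensional submanifolds with tangent distribution equal to $\pi_\Gamma^*L_s$; geometrically these are the homogeneously rescaled stable horospheres and they foliate $T^*\mathbb H^n\setminus 0$. By Frobenius, $\pi_\Gamma^*L_s$ is integrable, and since $L_s$ is defined by a $\Gamma$-equivariant construction and $\pi_\Gamma$ is a local diffeomorphism, integrability descends to $T^*M\setminus 0$. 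The main obstacle is the metric comparison in the isotropy step: the norms in~\eqref{e:stun} are defined on $E_s$ via projection to $TM$, whereas $\omega$ is a two-form on all of $T(T^*M)$, so one must ensure uniform control of $\omega$ in a reference frame along orbits that may escape to infinity in $M$. This is handled by performing the estimate first on $T^*\mathbb H^n$, where transitivity of $\PSO(1,n)$ on the unit cosphere bundle supplies the required uniformity, and then transferring the conclusion through $\pi_\Gamma$.
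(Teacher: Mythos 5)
Your proof is correct and follows essentially the same approach as the paper: isotropy via flow-invariance of $\omega$ combined with the contraction~\eqref{e:stun} and uniformity from transitivity of $\PSO(1,n)$ on $T^*\mathbb H^n$, and integrability read off directly from~\eqref{e:desker}. The detour through a Sasaki-type metric $G$ is harmless but unnecessary; the paper bounds $\omega$ against $|\cdot|_g$ on $E_s$ directly, with the constant made uniform by the same homogeneity argument you invoke in your final paragraph.
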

\begin{proof}
We consider the case of $L_s$; the case of $L_u$ is handled similarly.
Using the covering map $\pi_\Gamma$, we reduce to the case $M=\mathbb H^n$.
The fact that $L_s$ is integrable follows immediately from~\eqref{e:desker}.
Since $\dim L_s=n$, it remains to show that
$\omega(Y_1,Y_2)=0$ for each $Y_1,Y_2\in L_s(x,\xi)$, where $\omega$
is the symplectic form on $T^*\mathbb H^n$. When $Y_1=X(x,\xi)$, this is immediate
since $L_s(x,\xi)\subset\ker dp(x,\xi)$. Therefore,
we may assume that $Y_1,Y_2\in E_s(x,\xi)$. Since $e^{tX}$ is a Hamiltonian
flow, it is a symplectomorphism, and we find
$$
\omega(Y_1,Y_2)=\omega(de^{tX}(x,\xi)Y_1,de^{tX}(x,\xi)Y_2)\leq
C|de^{tX}(x,\xi)Y_1|_g\cdot |de^{tX}(x,\xi)Y_2|_g
$$
where the constant $C$ in the last inequality is independent of $t$
since the isometry group $\PSO(1,n)$ acts transitively on $\mathbb H^n$
and the lifted action on $T^*\mathbb H^n\setminus 0$ preserves
$\omega$, $E_s$, and the induced metric on $E_s$.
Letting $t\to +\infty$ and using~\eqref{e:stun}, we get
$\omega(Y_1,Y_2)=0$ as required.
\end{proof}
The next lemma states that the result of propagating
a compactly supported symbol up to almost twice the Ehrenfest time
lies in the anisotropic class $S^{\comp}_{L,\rho}$ from Definition~\ref{d:symbols},
where $L=L_s$ or $L=L_u$ depending on the direction of propagation.
See Appendix~\ref{s:hyperbolic-technical} for the proof.
\begin{lemm}
  \label{l:propagated-okay}
Let $\chi_1,\chi_2\in C_0^\infty(T^*M\setminus 0)$ be independent of $h$
and fix $\rho\in [0,1)$. Then we have uniformly in $t\in [0,\rho\log(1/h)]$,
$$
\chi_2(\chi_1\circ e^{tX})\in S^{\comp}_{L_s,\rho}(T^*M\setminus 0),\quad
\chi_2(\chi_1\circ e^{-tX})\in S^{\comp}_{L_u,\rho}(T^*M\setminus 0).
$$
\end{lemm}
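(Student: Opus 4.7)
The symbol-class condition \eqref{e:symbols-def} demands, after invoking Leibniz against the $h$- and $t$-independent factor $\chi_2$, that
\begin{equation}\label{e:aim-p}
\bigl|Y_1\cdots Y_m Z_1\cdots Z_k(\chi_1\circ e^{tX})\bigr|\leq C\, e^{kt}\quad\text{on }\supp\chi_2,
\end{equation}
uniformly in $t\ge 0$, for any choices of $Y_i\in C^\infty(U;L_s)$ and arbitrary $Z_j$ (with $C$ depending on the fields but not $t$). Substituting $t=\rho\log(1/h)$ gives $e^{kt}=h^{-\rho k}$, matching the exponent in Definition \ref{d:symbols}. The reverse-time case with $L_u$ in place of $L_s$ is symmetric.

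To prove \eqref{e:aim-p} I would lift via $\pi_\Gamma$ to $T^*\mathbb{H}^n$; this is legitimate because $\chi_1,\chi_2$ are compactly supported on $T^*M$, so only finitely many $\Gamma$-translates contribute. On $T^*\mathbb{H}^n$ the group $G:=\mathrm{PSO}(1,n)$ acts transitively on each level set of $p$, and the rescaled geodesic flow $e^{tX}$ is right translation by a one-parameter subgroup $a_t\subset G$. Choose a frame of left-$G$-invariant vector fields on the unit cotangent bundle; in this frame the Jacobian $de^{tX}$ is constant (independent of the base point), with eigenvalues $e^{-t}$, $1$, $e^t$ on the stable horocyclic, Cartan/radial, and unstable horocyclic summands respectively. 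Consequently \emph{all} higher-order Lie derivatives of $e^{tX}$ by left-invariant fields vanish identically. Passing to a fixed smooth $(h,t)$-independent frame on $T^*\mathbb{H}^n$ introduces only bounded coefficients (bounded with all derivatives) on the preimage of $\supp\chi_2$ inside a single fundamental domain, by co-compactness of the relevant piece of $\Gamma\backslash \mathbb{H}^n$.

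Expanding by Faà di Bruno, each term of $Y_1\cdots Y_m Z_1\cdots Z_k(\chi_1\circ e^{tX})$ has the form $(d^p\chi_1)|_{e^{tX}(x,\xi)}$ contracted with a product of factors $d^je^{tX}(V_{i_1},\dots,V_{i_j})$ where each $V_{i_\ell}$ is one of the $Y_i$ or one of the $Z_j$. By the previous paragraph, each such factor is bounded by $C e^t$ if any of its arguments has a nontrivial $E_u$-component, and by $C$ otherwise. Since every $Y_i\in L_s$ contributes a uniformly bounded factor, while each of the $k$ arbitrary $Z_j$'s contributes at most $e^t$, the product is $O(e^{kt})$, giving \eqref{e:aim-p}. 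The main obstacle is precisely this higher-derivative estimate: a generic Gronwall bound on the variational equation would give $e^{Ckt}$ with $C>1$, which is inadequate at times near twice the Ehrenfest time $\rho\log(1/h)$. The rigidity of hyperbolic geometry — the fact that $e^{tX}$ is an actual one-parameter subgroup of $G$ and not merely an Anosov flow — is what replaces the nonlinear ODE by exact linear-algebraic control.
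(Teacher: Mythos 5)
Your approach is essentially the paper's. Both arguments exploit the same rigidity: after lifting to a homogeneous space on which $G=\PSO(1,n)$ acts, the geodesic flow is right translation by a one-parameter subgroup, so in a left-$G$-invariant frame its Jacobian is constant in the base point with eigenvalues $e^{-t},1,e^t$, and all higher covariant derivatives vanish. The paper records this via the commutation relations $[\widetilde X,U^\pm_j]=\pm U^\pm_j$, $[\widetilde X,R_{ab}]=0$, which give the exact identity
$$
U^+_{j_1}\cdots U^+_{j_m}U^-_{r_1}\cdots U^-_{r_k}\big((\pi_S^*\chi_1)\circ e^{t\widetilde X}\big)
=e^{(k-m)t}\,\big(U^+_{j_1}\cdots U^+_{j_m}U^-_{r_1}\cdots U^-_{r_k}(\pi_S^*\chi_1)\big)\circ e^{t\widetilde X};
$$
this is the same content as your Fa\`a di Bruno argument, stated more sharply.

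There is, however, one point that would fail as written for $n\geq 3$: there is no left-$G$-invariant frame on the unit cotangent bundle $S^*\mathbb H^n\simeq G/\SO(n-1)$, since $\SO(n-1)$ acts nontrivially on the complement of the geodesic direction in $\mathfrak g/\mathfrak{so}(n-1)$. The fields $U^\pm_j$ live on the coframe bundle $F^*\mathbb H^n\simeq G$ (or, after quotienting by $\Gamma$, on $F^*M\simeq\Gamma\backslash G$), not on $S^*\mathbb H^n$. The paper therefore works throughout on $F^*M$ and pulls functions back by the submersion $\pi_S:F^*M\to S^*M$, using $\pi_S^*E_s=\Span(\{U^+_j\}\cup\{R_{ab}\})$, $\pi_S^*E_u=\Span(\{U^-_j\}\cup\{R_{ab}\})$. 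Also, working on the quotient $F^*M=\Gamma\backslash G$ (where the left-invariant fields descend because they commute with the left $\Gamma$-action) is a bit cleaner than lifting to $\mathbb H^n$ and then arguing about fundamental domains. Your proof works directly for $n=2$ (where $\SO(1)$ is trivial and $S^*\mathbb H^2\simeq G$); for $n\geq 3$, simply replace ``unit cotangent bundle'' by ``coframe bundle'' and include the rotation fields $R_{ab}$ (eigenvalue $1$) in the invariant frame, and the argument goes through.
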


\smallsection{Infinity and trapping}
Since $M$ is a convex co-compact hyperbolic manifold, it is \emph{even asymptotically hyperbolic}
in the sense of~\cite[Definition~1.2]{guillarmou}; more precisely,
it is the interior of a compact manifold with boundary
$\overline M$ such that near $\partial\overline M$,
$$
g={d\tilde x^2+g_1(\tilde x^2,\tilde y)\over \tilde x^2},
$$
where $\tilde x\geq 0$ is a boundary defining function
and $(\tilde x,\tilde y)\in (0,\varepsilon)\times \partial\overline M$
are some product coordinates
on a collar neighborhood of $\partial\overline M$.

It is shown for example in~\cite[Lemma~7.1]{qeefun} that there exists a function
$r:M\to\mathbb R$ such that $\{r\leq R\}$ is compact for all $R$
and the sets $\{r\leq R\}$ are strictly convex for all $R\geq 0$; that is,
if we restrict $r$ to any geodesic on $M$ and denote by dots derivatives
with respect to the geodesic parameter, then at each point of the geodesic
we have
\begin{equation}
  \label{e:convex}
r\geq 0,\
\dot r=0\quad\Longrightarrow\quad
\ddot r>0.
\end{equation}
In fact, it suffices to take $r:=\tilde x^{-1}-r_1$ for a
boundary defining function $\tilde x$ of $\overline M$ and large enough constant $r_1>0$.

We now define the \emph{incoming/outgoing tails} $\Gamma_\pm$ by
\begin{equation}
  \label{e:GpmDef}
\Gamma_\pm=\{(x,\xi)\in T^*M\setminus 0\mid r(e^{tX}(x,\xi)) \text{ is bounded as }t\to\mp\infty\}.
\end{equation}
Define also the \emph{trapped set} $K=\Gamma_+\cap\Gamma_-$.
It follows from~\eqref{e:convex} that $\Gamma_\pm$ are closed subsets of $T^*M\setminus 0$ and $K\subset \{r<0\}$,
see for instance~\cite[\S4.1]{qeefun}. We assume that $K\neq\emptyset$ (in the case when $K=\emptyset$,
$M$ is known to have an arbitrarily large essential spectral gap, see for
instance~\cite[(1.1)]{vasy2}).

Recall that $M=\Gamma\backslash\mathbb H^n$, where $\Gamma$ is a convex co-compact group of hyperbolic
isometries.
Define the \emph{limit set} $\Lambda_\Gamma\subset \mathbb S^{n-1}$
as follows: for each $x\in \mathbb H^n$,
\begin{equation}
  \label{e:Lambda-Gamma}
\Lambda_\Gamma=\overline{\{\gamma.x\mid \gamma\in\Gamma\}}\cap \mathbb S^{n-1}
\end{equation}
where we use the ball model of the hyperbolic space and the closure is taken
in the closed ball in $\mathbb R^n$. The resulting
set is closed and independent of the choice of $x$; see for instance~\cite{Patterson}
and~\cite[Lemma~2.8]{Borthwick} for the case of $n=2$ and~\cite{Sullivan} for general $n$.

For each $(x,\xi)\in T^*\mathbb H^n\setminus 0$, we have
(see Appendix~\ref{s:hyperbolic-technical} for the proof)
\begin{equation}
  \label{e:Gpm-formula}
\begin{aligned}
\pi_\Gamma(x,\xi)\in\Gamma_+\quad\iff\quad B_-(x,\xi)\in\Lambda_\Gamma,\\
\pi_\Gamma(x,\xi)\in\Gamma_-\quad\iff\quad B_+(x,\xi)\in\Lambda_\Gamma,
\end{aligned}
\end{equation}
where the maps $B_\pm$ are defined in~\eqref{e:B-pm}
and $\pi_\Gamma$ is defined in~\eqref{e:pi-Gamma}.

The following statement, when combined with~\eqref{e:Gpm-formula},
implies that for a trajectory $(x(t),\xi(t))$ of $e^{tX}$ on $T^*M\setminus 0$
which stays in a fixed compact set for all $t\in [0,T]$,
the point $(x(0),\xi(0))$ is $\mathcal O(e^{-T})$ close to $\Gamma_-$
and the point $(x(T),\xi(T))$ is $\mathcal O(e^{-T})$ close to $\Gamma_+$.
See Appendix~\ref{s:hyperbolic-technical} for
the proof.
\begin{lemm}
  \label{l:close-to-trapping}
Let $V\subset T^*\mathbb H^n\setminus 0$ be a compact set. Then there exists a constant
$C$ such that for each $t\geq 0$,
$$
(x,\xi)\in V,\quad
\pi_\Gamma(e^{\pm tX}(x,\xi))\in\pi_\Gamma(V)\quad\Longrightarrow\quad
d(B_\pm(x,\xi),\Lambda_\Gamma)\leq Ce^{-t}.
$$
Here $d(\cdot,\cdot)$ is the Euclidean distance function on $\mathbb S^{n-1}$.
\end{lemm}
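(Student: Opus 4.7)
The plan is to combine strict convexity of the escape function $r$ with the exponential expansion of the unstable bundle $E_u$ to show, by contradiction, that the ``$+$'' hypothesis forces $B_+(x,\xi)$ within $Ce^{-t}$ of $\Lambda_\Gamma$; the ``$-$'' case follows by applying the same argument to the time-reversed flow $X\mapsto -X$, which interchanges $B_\pm$ and $\Gamma_\pm$. First, since $\pi_\Gamma(V)$ is compact in $T^*M\setminus 0$, its base projection lies in $\{r\leq R_0\}$ for some $R_0$; by hypothesis, $r\leq R_0$ along the projected geodesic $\pi_\Gamma e^{sX}(x,\xi)$ at both $s=0$ and $s=t$. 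The strict convexity~\eqref{e:convex} forces $r$ along any geodesic to have at most one critical point, a local minimum, hence no local maximum on $[0,t]$; consequently $r\leq R_0$ throughout, and the whole forward trajectory stays in a fixed compact set $W\subset T^*M\setminus 0$ for $s\in[0,t]$.

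Now set $\epsilon:=d(B_+(x,\xi),\Lambda_\Gamma)$ and assume for contradiction that $\epsilon>C_0 e^{-t}$ for a large $C_0$ to be chosen. By~\eqref{e:desker}, $L_s=\mathbb RX\oplus E_s$ lies in $\ker dB_+$, and $dB_+|_{E_u}$ is an isomorphism from $E_u(x,\xi)$ onto $T_{B_+(x,\xi)}\mathbb S^{n-1}$ with norm and inverse uniformly bounded on $W$. Parametrize the $E_u$-leaf through $(x,\xi)$ by its $B_+$-image on a fixed neighborhood of $B_+(x,\xi)$ in $\mathbb S^{n-1}$ (the claim is trivial when $\epsilon$ exceeds the radius of this neighborhood). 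Picking a nearest $\ell^*\in\Lambda_\Gamma$ to $B_+(x,\xi)$ and pulling it back yields a point $(x',\xi')$ on this leaf with $B_+(x',\xi')=\ell^*$ and $d((x,\xi),(x',\xi'))\asymp\epsilon$. By~\eqref{e:Gpm-formula}, $(x',\xi')\in\Gamma_-$; applying the preceding convexity argument at $(x',\xi')$, using that $r(x')\leq R_0+o(1)$ by continuity in $\epsilon$, keeps its forward orbit in a slight enlargement of $W$ through time $t$. But~\eqref{e:stun} gives $d(e^{tX}(x,\xi),e^{tX}(x',\xi'))\gtrsim\epsilon e^t>C_0$, exceeding the diameter of this enlargement once $C_0$ is sufficiently large, contradicting the conclusion of the previous paragraph. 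Therefore $\epsilon\leq C_0 e^{-t}$, as required.

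The main obstacle lies in the parametrization step: verifying that the $E_u$-leaf through $(x,\xi)$ is indeed a chart for $\mathbb S^{n-1}$ via $dB_+|_{E_u}$ on a neighborhood of controlled size, and that the base point $x'$ stays close enough to $x$ for the convexity argument to apply at $(x',\xi')$ as well. Both follow from the local product structure of the hyperbolic geodesic flow near $W$ together with the elementary fact that an $\epsilon$-sized $E_u$-displacement in $T(T^*M)$ shifts the base point by at most $C\epsilon$ in $M$. With these technicalities addressed, the exponential contradiction argument is routine.
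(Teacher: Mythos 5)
The argument breaks down at its final step, and the failure is a genuine confusion between the universal cover $T^*\mathbb H^n$ and the quotient $T^*M$.

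You construct $(x',\xi')$ on the unstable leaf of $(x,\xi)$ in $T^*\mathbb H^n$, with $B_+(x',\xi')\in\Lambda_\Gamma$. The convexity argument then shows that $\pi_\Gamma(e^{sX}(x',\xi'))$ stays in a fixed compact subset of $T^*M$ for all $s\geq 0$, and by hypothesis $\pi_\Gamma(e^{tX}(x,\xi))$ also lies in a compact subset of $T^*M$. You then invoke~\eqref{e:stun} to say that $d\big(e^{tX}(x,\xi),e^{tX}(x',\xi')\big)\gtrsim\epsilon e^t$ and declare a contradiction with the ``diameter'' of the compact set. But that distance estimate lives in $T^*\mathbb H^n$, while the compactness conclusions are about projections to $T^*M$. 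A compact set in $T^*M$ has a preimage in $T^*\mathbb H^n$ consisting of infinitely many $\Gamma$-translates; two points on the same unstable leaf can be arbitrarily far apart in $T^*\mathbb H^n$ and still project into a small compact subset of $T^*M$. Indeed this is exactly what happens along the unstable manifold of any trapped point, which intersects $\pi_\Gamma^{-1}(K)$ in a Cantor set accumulating along the whole leaf. So there is no contradiction, and the proof does not go through. (There is also a smaller gap you gesture at but do not resolve: when $\epsilon:=d(B_+(x,\xi),\Lambda_\Gamma)$ is of order one, you would need to show $t$ is bounded, which requires a separate argument, not just the remark that the claim is ``trivial.'')

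The paper's proof sidesteps this precisely by deforming \emph{at the endpoint} in a bounded region: it picks $\gamma'\in\Gamma$ with $\gamma'.e^{tX}(x,\xi)\in V$, applies a \emph{bounded} unstable horocyclic displacement $e^{\mathbf sU^-}$ at that point to hit some $y\in\Lambda_\Gamma$ (using Lemma~\ref{l:reallytech} and the fact that $B_-(\gamma'.(x,\xi))$ stays $\varepsilon$-far from some $y\in\Lambda_\Gamma$), and then transports this back to time~$0$ via the commutation relation $[\widetilde X,U^-]=-U^-$, which contracts the horocyclic parameter by $e^{-t}$. The $\Gamma$-equivariance of $B_\pm$ and of $\Lambda_\Gamma$ then gives the estimate directly, with no contradiction argument and no step that requires comparing lift distances to quotient compactness. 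To repair your proof you would need to import an analogue of that group-element step: work at $e^{tX}$ inside a bounded region of the cover, not at time $0$.
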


\subsection{Scattering resolvent}
  \label{s:scattering-resolvent}

Consider the Laplace--Beltrami operator $\Delta$ on $(M,g)$ and its $L^2$ resolvent
$$
R(\lambda)=\Big(-\Delta-{(n-1)^2\over 4}-\lambda^2\Big)^{-1}:L^2(M)\to H^2(M),\quad
\Im\lambda>0,
$$
which may have finitely many poles corresponding to eigenvalues of $-\Delta$ on the interval
$\big[0,{(n-1)^2\over 4}\big)$.
Then
$R(\lambda)$ continues meromorphically with poles of finite rank as a family
of operators
\begin{equation}
  \label{e:R-lambda}
R(\lambda):L^2_{\comp}(M)\to H^2_{\loc}(M),\quad
\lambda\in\mathbb C.
\end{equation}
A related question of continuation of Eisenstein series was studied by
Patterson~\cite{patterson1,patterson2} in dimension 2 and
Perry~\cite{perry,perry2} in higher dimensions. The continuation of $R(\lambda)$ was established by
Mazzeo--Melrose~\cite{mazzeo-melrose} and Guillarmou~\cite{guillarmou} for general
(even) asymptotically hyperbolic manifolds, and by
Guillop\'e--Zworski~\cite{guillope-zworski} for manifolds of constant curvature near infinity.
We refer the reader to~\cite[Chapter~6]{Borthwick} for the proof in dimension 2
and an overview of the history of the subject.

To study essential spectral gaps, we write
\begin{equation}
  \label{e:nu-def}
\lambda=h^{-1}-i\nu,\quad
\nu\in [-1,\beta-\varepsilon],
\end{equation}
where the semiclassical parameter $h>0$ needs to be small enough for the argument to work.
(Resolvent bounds for negative $\Re\lambda$ follow from bounds
for positive $\Re\lambda$ since $R(\lambda)^*=R(-\bar\lambda)$.)
We introduce the semiclassical resolvent
$$
R_h(\omega):=h^{-2}R(\lambda),\quad
\omega:=h\lambda=1-ih\nu.
$$
To derive high frequency estimates near infinity,
we use the construction of the meromorphic continuation
of the resolvent due to Vasy~\cite{vasy1,vasy2}.
See in particular~\cite[\S5.1]{vasy2}
and also~\cite[Lemma~2.1]{fwl}, \cite[\S4.4]{nhp}.
The book \cite[Chapter~5]{dizzy} provides a detailed account of a slightly
modified version of Vasy's method, which could be used
in the present paper,
and~\cite{vfd} gives a short self-contained introduction in the nonsemiclassical case.
(For the constant curvature case considered here, one could alternatively apply
complex scaling, see
Zworski~\cite{zworski-inventiones} and Datchev~\cite{trumpet-of-death}.)
Specifically, we write
$$
R_h(\omega)f=\psi_1 (\mathcal P_h(\omega)^{-1}\psi_2 f)|_M,\quad
f\in C_0^\infty(M).
$$
Here $\psi_j\in C^\infty(M)$, $j=1,2$, are certain nonvanishing functions depending on $\omega,h$
and $\mathcal P_h(\omega)\in\Psi^2_h(\Mext)$ is a certain family of semiclassical pseudodifferential
operators on a compact manifold $\Mext$ containing $M$ as an open subset; we have
\begin{equation}
  \label{e:p-h-def}
(\mathcal P_h(\omega)u)|_M=\psi_2\Big(-h^2\Delta-{h^2(n-1)^2\over 4}-\omega^2\Big)\psi_1 (u|_M),\quad
u\in C^\infty(\Mext).
\end{equation}
Moreover, $\mathcal P_h(\omega)$ is a Fredholm operator
between the spaces
$$
\{u\in H^s_h(\Mext)\mid \mathcal P_h(\omega)u\in H^{s-1}_h(\Mext)\}\ \to\ H^{s-1}_h(\Mext)
$$
provided that $s>0$ is large enough depending on $\beta$; the inverse
$\mathcal P_h(\omega)^{-1}:H^{s-1}_h(\Mext)\to H^s_h(\Mext)$ is meromorphic in $\omega$ with poles
of finite rank (if we treat $\omega$ and $h$ as independent parameters).

For each fixed $r_0>0$ we may arrange so that $\psi_1=\psi_2=1$ on $\{r\leq r_0\}$,
see for instance the paragraph preceding~\cite[(3.14)]{vasy2}. Therefore, to
show the resolvent bound~\eqref{e:essential-gap2}
it suffices to prove the estimate
\begin{equation}
  \label{e:est0}
\|u\|_{H^s_h(\Mext)}\leq Ch^{-1-2\max(0,\nu)-\varepsilon}\|f\|_{H^{s-1}_h(\Mext)}
\end{equation}
when $h$ is small enough depending on $\varepsilon$ and
\begin{equation}
  \label{e:eq0}
\mathcal P_h(\omega)u=f,\quad
u\in H^s_h(\Mext),\quad
f\in H^{s-1}_h(\Mext).
\end{equation}
(The resulting $H^{s-1}_h\to H^s_h$ estimate on $\chi R_h(\omega)\chi$
can be converted to an $L^2\to L^2$ estimate using the elliptic parametrix
of $-h^2\Delta-{h^2(n-1)^2\over 4}-\omega^2$ near the fiber infinity,
see for instance~\cite[Proposition~3.3]{nhp}.)

We use the following outgoing estimates on the operator $\mathcal P_h(\omega)$.
Their meaning is as follows: since $R_h(\omega)$ is the \emph{outgoing} resolvent
(in the sense that it maps compactly supported functions on $M$
to functions with outgoing behavior at the infinity of $M$),
it should be \emph{semiclassically outgoing}, that is propagate
singularities in the forward direction along the geodesic flow. In particular
if $\tilde u=R_h(\omega)\tilde f$ and $\tilde f=\mathcal O(h^\infty)$,
then $\WFh(\tilde u)$ is contained in the outgoing tail $\Gamma_+$
(as follows from~\eqref{e:pest-1} below). Moreover, if we control
$u$ near the trapped set then we can bound its norm everywhere
(as follows from~\eqref{e:pest-2} below).
\begin{lemm}
  \label{l:outgoing}
For each $u,f$ satisfying~\eqref{e:eq0}, we have the following estimates:

1. Assume that $A_1\in\Psi^0_h(\Mext)$, $\WFh(A_1)\subset \{r\leq r_0\}\subset\overline T^*M$, and
\begin{equation}
  \label{e:pest-1-cond}
\WFh(A_1)\cap\Gamma_+\cap \{|\xi|_g=1\}=\emptyset.
\end{equation}
Then
\begin{equation}
  \label{e:pest-1}
\|A_1u\|_{H^s_h(\Mext)}\leq Ch^{-1}\|f\|_{H^{s-1}_h(\Mext)}+\mathcal O(h^\infty)\|u\|_{H^s_h(\Mext)}.
\end{equation}

2. Assume that $A_2\in\Psi^{\comp}_h(\Mext)$ is elliptic on $K\cap \{|\xi|_g=1\}$. Then
\begin{equation}
  \label{e:pest-2}
\|u\|_{H^s_h(\Mext)}\leq C\|A_2u\|_{L^2}+Ch^{-1}\|f\|_{H^{s-1}_h(\Mext)}.
\end{equation}
\end{lemm}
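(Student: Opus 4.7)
The plan is to derive both estimates from Vasy's construction of $\mathcal P_h(\omega)$ via three standard semiclassical tools: (i) elliptic estimates off the characteristic set $\Sigma:=\{|\xi|_g=1\}$, (ii) propagation of singularities along the Hamiltonian flow of $\sigma_h(\mathcal P_h)$, which on $\Sigma\cap T^*M$ is a positive reparametrization of the geodesic flow $e^{tX}$, and (iii) radial point estimates at the source $L_-$ and sink $L_+$ in $T^*\partial\Mext$ built in by Vasy's extension. The hypothesis that $s$ is large enough is exactly what makes the radial source estimate applicable, yielding the $h^{-1}$ loss; the elliptic and propagation steps are $h^0$, so no further $h$-losses accumulate.

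For Part 1, fix $(x_0,\xi_0)\in\WFh(A_1)$. If $(x_0,\xi_0)\notin\Sigma$, elliptic regularity gives a bound better than claimed. Otherwise $(x_0,\xi_0)\in\{r\leq r_0\}\cap\Sigma\setminus\Gamma_+$, so by the very definition of $\Gamma_+$ together with the convexity property~\eqref{e:convex}, $r(e^{-tX}(x_0,\xi_0))\to\infty$ as $t\to+\infty$. Hence the backward trajectory exits $\{r\leq r_0\}$ in uniformly bounded time (uniformly on the compact set $\WFh(A_1)\setminus\Gamma_+$) and, in the Vasy extension, limits onto the radial source $L_-$. Applying the semiclassical radial source estimate to obtain $h^{-1}$ control microlocally near $L_-$, and then propagating singularities forward along the flow by a uniformly bounded time to $(x_0,\xi_0)$, yields the bound. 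Covering $\WFh(A_1)$ by finitely many such propagation tubes and summing the estimates gives~\eqref{e:pest-1}.

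For Part 2, I decompose microlocally using a partition of unity near $\Sigma$: write $1\equiv\chi_++\chi_-$ microlocally on $\Sigma$ with $\chi_+$ supported in a small neighborhood of $\Gamma_+$ and $\chi_-$ away from $\Gamma_+$. Elliptic estimates handle the off-$\Sigma$ region and Part 1 controls $\Op_h(\chi_-)u$, so it suffices to bound $\Op_h(\chi_+)u$. Every trajectory in $\Gamma_+$ has $\alpha$-limit contained in $K$, so the backward trajectory from any point of $\Gamma_+\cap\{r\leq r_0\}$ enters a prescribed neighborhood of $K\cap\Sigma$ in uniformly bounded time. Since $A_2$ is elliptic on $K\cap\Sigma$, propagation of singularities backward from $\WFh(\chi_+)$ in the bounded region down into this neighborhood yields
$$
\|\Op_h(\chi_+)u\|_{H^s_h}\leq C\|A_2u\|_{L^2}+Ch^{-1}\|f\|_{H^{s-1}_h}+\mathcal O(h^\infty)\|u\|_{H^s_h}.
$$
The portion of $\Gamma_+$ escaping to the infinite end of $\Mext$ (along $e^{tX}$ as $t\to+\infty$) is controlled by combining forward propagation from $K$ with the radial sink estimate at $L_+$, whose threshold on $s$ is compatible with the standing assumption. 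Summing, and absorbing the $\mathcal O(h^\infty)$ term into the left-hand side for $h$ small, gives~\eqref{e:pest-2}.

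The main obstacle I expect is not conceptual but careful bookkeeping: verifying that the threshold conditions in the radial point estimates at $L_\pm$ hold uniformly for all $\nu\in[-1,\beta-\varepsilon]$, and that the propagation time from $\WFh(A_1)$ back to a neighborhood of $L_-$ (resp.\ from $\WFh(\chi_+)$ back to $K$) is uniformly bounded, so that constants and $h$-powers come out as stated. This is essentially the analysis of~\cite{vasy1,vasy2} adapted to the present range of $\nu$, and motivates the hypothesis ``$s$ large enough'' in the Fredholm setup of~\eqref{e:p-h-def}.
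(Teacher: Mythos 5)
Your proof is correct and follows the same strategy as the paper's sketch: elliptic estimates off the characteristic set, semiclassical radial point estimates near the radial source/sink (with the threshold on $s$ depending on $\beta$), and propagation of singularities, applied via the trichotomy on backwards Hamilton flow lines of $\mathcal P_h(\omega)$. Your slightly different bookkeeping in Part~2 (decomposing into $\chi_\pm$ and treating the end separately via the sink estimate) is just a re-organization of the trichotomy the paper states directly, namely that each backwards flow line either passes through the elliptic set of $\mathcal P_h(\omega)$, converges to the radial sets, or passes through the elliptic set of $A_2$.
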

\noindent\textbf{Remark.} The estimates~\eqref{e:pest-1}, \eqref{e:pest-2}
make it possible to treat the infinite ends of our manifold as a black box;
see~\cite[\S4]{nhp} for a more formal treatment. In particular, our results
would apply to any manifold with the same trapping structure as a convex co-compact
hyperbolic quotient and infinite ends which satisfy~\eqref{e:pest-1}, \eqref{e:pest-2};
this includes Euclidean ends~\cite[\S4.3]{nhp}
and general even asymptotically hyperbolic ends~\cite[\S4.4]{nhp}.
\begin{proof}[Sketch of proof]
Both of these statements follow from the elliptic estimate~\cite[Proposition~3.2]{nhp}, propagation of singularities~\cite[Proposition~3.4]{nhp},
and radial points estimates~\cite[Propositions~2.10 and~2.11]{vasy1} applied to the dynamical picture
of the Hamiltonian flow of the principal symbol of $\mathcal P_h(\omega)$
as studied in~\cite{vasy1,vasy2}. More precisely, condition~\eqref{e:pest-1-cond}
guarantees that each point in $\WFh(A_1)$ either lies in the elliptic set of $\mathcal P_h(\omega)$
or the corresponding backwards Hamiltonian flow line converges to the radial sets,
near which $u$ is controlled when $s$ is large enough depending on $\beta$;
this yields~\eqref{e:pest-1}. Next, each backwards
Hamiltonian flow line of $\mathcal P_h(\omega)$ either passes through its elliptic set,
or converges to the radial sets, or passes through the elliptic set of $A_2$;
this yields~\eqref{e:pest-2}.

The proof (in a modified setting using domains with boundary,
which however works equally well for our purposes) is described in detail in~\cite[\S6.2.3]{dizzy}.
We also refer the reader to~\cite[Lemma~4.4]{fwl} and~\cite[Lemma~4.1]{nhp}
for more details on the dynamics of the flow
and to~\cite[Lemmas~4.4 and~4.6]{nhp} for slightly different proofs involving
a semiclassically outgoing parametrix for the resolvent.
\end{proof}
Finally, we write a pseudodifferential equation (see~\eqref{e:eq1} below) which is a direct consequence of~\eqref{e:eq0}
but more convenient for Lemma~\ref{l:general-propagation} below
because the principal symbol of the associated operator
is the function $p$ given by~\eqref{e:p-symbol}.
Consider the set
\begin{equation}
  \label{e:W-0}
W_0:=\{r\leq r_0,\ |\xi|_g\in [1/2,2]\}\subset T^*M.
\end{equation}
Take $P\in\Psi^{\comp}_h(M)$ such that $P^*=P$ and
\begin{equation}
  \label{e:the-P}
\begin{gathered}
P^2=-h^2\Delta+{h^2(n-1)^2\over 4}+\mathcal O(h^\infty)\quad\text{microlocally near }
W_0,\\
\sigma_h(P)(x,\xi)=p(x,\xi)=|\xi|_g\quad\text{near }W_0.
\end{gathered}
\end{equation}
We can construct such an operator following~\cite[Lemma~4.6]{grigis-sjostrand}: first take
$P_0\in\Psi^{\comp}_h(M)$ such that $P_0^*=P_0$ and $\sigma_h(P_0)=p$ near $W_0$. Denote
$\mathbf P:=-h^2\Delta+{h^2(n-1)^2\over 4}$, then
$\sigma_h(P_0^2)=\sigma_h(\mathbf P)$ near $W_0$ and thus
$$
\mathbf P=P_0^2+hR_0+\mathcal O(h^\infty)\quad\text{microlocally near }W_0
$$
for some $R_0\in\Psi^{\comp}_h(M)$ and $R_0^*=R_0$. We next construct $P_1\in\Psi^{\comp}_h(M)$
such that $P_1^*=P_1$ and 
$$
\mathbf P=(P_0+hP_1)^2+h^2R_1+\mathcal O(h^\infty)\quad\text{microlocally near }W_0
$$
for some $R_1\in\Psi^{\comp}_h(M)$ and $R_1^*=R_1$; to do that, it suffices to put
$\sigma_h(P_1)=\sigma_h(R_0)/2p$ near $W_0$. Arguing by induction, we construct a family
of operators $P_j\in \Psi^{\comp}_h(M)$ such that $P_j^*=P_j$ and
$$
\mathbf P=(P_0+hP_1+\dots+h^m P_m)^2+\mathcal O(h^{m+1})\quad\text{microlocally near }W_0;
$$
it remains to take as $P$ the asymptotic sum $P\sim\sum_{j=0}^\infty h^jP_j$.

By~\eqref{e:p-h-def}, and since $\psi_1=\psi_2=1$ near $\{r\leq r_0\}$, we have
$$
\|B(P^2-\omega^2)u\|_{L^2}\leq C\|f\|_{H^{s-1}_h(\Mext)}+\mathcal O(h^\infty)\|u\|_{H^s_h(\Mext)}
$$
for $u,f$ satisfying~\eqref{e:eq0} and
each $B\in\Psi^{\comp}_h(M)$ such that $\WF_h(B)$ is contained in some small neighborhood
of $W_0$. We write $(P^2-\omega^2)=(P+\omega)(P-\omega)$ and note
that $P+\omega$ is elliptic on $W_0$; therefore, the elliptic estimate~\cite[Proposition~3.2]{nhp}
gives
\begin{equation}
  \label{e:eq1}
\|A(P-\omega)u\|_{L^2}\leq C\|f\|_{H^{s-1}_h(\Mext)}+\mathcal O(h^\infty)\|u\|_{H^s_h(\Mext)}
\end{equation}
for each $A\in\Psi^{\comp}_h(M)$ such that $\WFh(A)\subset W_0$.

\subsection{Second microlocalization of the resolvent}

We now take the first step towards proving a spectral gap, which is to use the
calculus of~\S\ref{s:second-microlocalization} and the Lagrangian
foliations $L_u,L_s$ of~\eqref{e:L-s-L-u} to obtain fine microlocal estimates on
solutions to~\eqref{e:eq0}.
We start with a general propagation estimate:
\begin{lemm}
  \label{l:general-propagation}
Let $a,b\in S^{\comp}_{L,\rho}(T^*M\setminus 0)$ where $L\in \{L_u,L_s\}$,
$\rho\in[0,1)$, and fix $T>0$. Assume that $|a|\leq 1$ everywhere and
$$
e^{-TX}(\supp a)\ \subset\ \{b=1\};\quad
e^{-tX}(\supp a)\ \subset\ W_0,\quad t\in [0,T],
$$
where $W_0\subset T^*M\setminus 0$ is defined in~\eqref{e:W-0}. Then
for each $\varepsilon_0>0$ and each $u,f$ satisfying~\eqref{e:eq0} we have
$$
\|\Op_h^L(a) u\|_{L^2}\leq (e^{\nu T}+\varepsilon_0)\|\Op_h^L(b)u\|_{L^2}+Ch^{-1}\|f\|_{H^s_h(\Mext)}+\mathcal O(h^\infty)\|u\|_{H^s_h(\Mext)}.
$$
where $\nu$ is defined in~\eqref{e:nu-def} and $\Op_h^L$ is a quantization procedure described in~\eqref{e:op-h-l}.
\end{lemm}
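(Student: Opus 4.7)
The plan is a propagation / positive commutator argument in the anisotropic calculus of \S\ref{s:second-microlocalization}, glued to an elliptic factorization at the terminal time $T$. I would first invoke the Lagrangian Egorov theorem (Lemma~\ref{l:egorov}) with the operator $P$ from~\eqref{e:the-P}, whose principal symbol $p$ satisfies the required hypothesis $L\subset\ker dp$ by~\eqref{e:desker}. This produces a smooth family $A_t\in\Psi^{\comp}_{h,L,\rho}(T^*M\setminus 0)$, $t\in[0,T]$, with $A_0=\Op_h^L(a)+\mathcal O(h^\infty)$, principal symbol $\sigma_h^L(A_t)=a\circ e^{tX}+\mathcal O(h^{1-\rho})$, and satisfying $ih\partial_tA_t+[P,A_t]=\mathcal O(h^\infty)$. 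The support hypothesis $e^{-tX}(\supp a)\subset W_0$ ensures that $\WFh(A_t)\subset W_0$ for all $t\in[0,T]$, which is precisely where the pseudodifferential equation~\eqref{e:eq1} is available.

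Next I would run the standard energy identity. Differentiating $\|A_tu\|^2$, using the Heisenberg equation for $A_t$ together with self-adjointness of $P$, and writing $Pu=\omega u+f_1$ with $f_1=(P-\omega)u$ and $\Im\omega=-h\nu$, yields
\begin{equation*}
\partial_t\|A_tu\|^2 \;=\; -2\nu\,\|A_tu\|^2 \;-\; \tfrac{2}{h}\Im\langle A_tu,A_tf_1\rangle \;+\; \mathcal O(h^\infty)\|u\|^2.
\end{equation*}
To bound $A_tf_1$ I would insert a standard cutoff $\tilde A\in\Psi^{\comp}_h(M)$, microlocally the identity on a neighborhood of $\bigcup_{t\in[0,T]}e^{-tX}(\supp a)\subset W_0$: then $A_tf_1=A_t\tilde A(P-\omega)u+\mathcal O(h^\infty)\|u\|$, and~\eqref{e:eq1} combined with $L^2$-boundedness of $A_t$ (Lemma~\ref{l:globallem}) gives $\|A_tf_1\|\le C\|f\|_{H^{s-1}_h(\Mext)}+\mathcal O(h^\infty)\|u\|$. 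An AM-GM absorption of the cross term followed by Gronwall on $[0,T]$ then produces
\begin{equation*}
\|A_0u\|^2 \;\le\; e^{2\nu T}\|A_Tu\|^2 \;+\; Ch^{-2}\|f\|_{H^{s-1}_h(\Mext)}^2 \;+\; \mathcal O(h^\infty)\|u\|^2,
\end{equation*}
whose square root delivers the $Ch^{-1}\|f\|$ remainder of the conclusion.

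Finally I would factor $A_T$ through $\Op_h^L(b)$. Since $\supp\sigma_h^L(A_T)\subset e^{-TX}(\supp a)\subset\{b=1\}$, the operator $\Op_h^L(b)$ is elliptic on the microsupport of $A_T$ (its principal symbol equals $1+\mathcal O(h^{1-\rho})$ there), and the Lagrangian elliptic parametrix construction of Lemma~\ref{l:lag-elliptic} produces $Q\in\Psi^{\comp}_{h,L,\rho}$ with $A_T=Q\,\Op_h^L(b)+\mathcal O(h^\infty)$. Inspection of that construction shows that $\sigma_h^L(Q)$ coincides with $\sigma_h^L(A_T)=a\circ e^{TX}$ on $\WFh(A_T)$ and is negligible off of it, so thanks to the hypothesis $|a|\le 1$ one has $|\sigma_h^L(Q)|\le 1+\mathcal O(h^{1-\rho})$ globally. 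The sharp norm estimate Lemma~\ref{l:l2-improved} then yields $\|Q\|_{L^2\to L^2}\le 1+o(1)$ as $h\to 0$, and hence $\|A_Tu\|\le(1+o(1))\|\Op_h^L(b)u\|+\mathcal O(h^\infty)\|u\|$. Combining this with the previous paragraph and choosing $h$ small enough that $e^{\nu T}\cdot o(1)<\varepsilon_0$ closes the estimate. The main technical obstacle, I expect, is precisely this last step: a crude Calder\'on--Vaillancourt-type bound on $Q$ would introduce a multiplicative constant strictly greater than $1$, destroying the conclusion, so the use of Lemma~\ref{l:l2-improved} is essential, as is meticulous bookkeeping of the $\mathcal O(h^{1-\rho})$ slop in the anisotropic Egorov and parametrix steps to ensure it stays inside the $o(1)$ tolerance.
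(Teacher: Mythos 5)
Your proposal is correct and follows the same strategy as the paper: invoke the Lagrangian Egorov theorem (Lemma~\ref{l:egorov}) with the operator $P$ from~\eqref{e:the-P}, run the energy identity using~\eqref{e:eq1} and $P^*=P$ to arrive at $\|\Op_h^L(a)u\|\leq e^{\nu T}\|A_Tu\|+Ch^{-1}\|f\|+\mathcal O(h^\infty)\|u\|$, then factor $A_T=Q\,\Op_h^L(b)+\mathcal O(h^\infty)$ via Lemma~\ref{l:lag-elliptic} and control $\|Q\|_{L^2\to L^2}\leq 1+o(1)$ with the sharp norm bound of Lemma~\ref{l:l2-improved}. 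Your remark that a crude Calder\'on--Vaillancourt bound on $Q$ would destroy the estimate and that the sharp $L^2$ bound is essential is exactly the right observation.
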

\begin{proof}
Let $P\in\Psi^{\comp}_h(M)$ be the operator defined in~\eqref{e:the-P}.
Consider the family of operators $A_t\in\Psi^{\comp}_{h,L,\rho}(T^*M\setminus 0)$, $t\in [0,T]$, constructed in
Lemma~\ref{l:egorov}, with $A_0=\Op_h^L(a)+\mathcal O(h^\infty)$;
here~\eqref{e:egorov-condition} holds since $\sigma_h(P)=p$ near $W_0$ and
$L_u,L_s\subset\ker dp$.

Using~\eqref{e:egorov-equation}, \eqref{e:eq1}, and the fact that $P^*=P$, we write
$$
\begin{aligned}
\partial_t\|A_t u\|_{L^2}^2&=2\Re\langle \partial_t A_t u,A_t u\rangle\\
&=-{2\over h}\Im\langle [P,A_t] u,A_t u\rangle+\mathcal O(h^\infty)\|u\|_{H^s_h(\Mext)}\cdot \|A_t u\|_{L^2}\\
&={2\over h}\Im\langle A_t Pu, A_t u\rangle+\mathcal O(h^\infty)\|u\|_{H^s_h(\Mext)}\cdot \|A_t u\|_{L^2}\\
&=-2\nu\|A_t u\|^2_{L^2}+(Ch^{-1}\|f\|_{H^{s-1}_h(\Mext)}+\mathcal O(h^\infty)\|u\|_{H^s_h(\Mext)})\|A_tu\|_{L^2}.
\end{aligned}
$$
Integrating this, we get
\begin{equation}
  \label{e:prop1}
\|\Op_h^L(a)u\|_{L^2}\leq e^{\nu T}\|A_Tu\|_{L^2}+Ch^{-1}\|f\|_{H^{s-1}_h(\Mext)}+\mathcal O(h^\infty)\|u\|_{H^s_h(\Mext)}.
\end{equation}
Now, it follows from part~4 of Lemma~\ref{l:globalprop} and Lemma~\ref{l:egorov} that
for each sequences $h_j\to 0$ and $(x_j,\xi_j)\in T^*M\setminus 0$ such that
$e^{TX}(x_j,\xi_j)\notin\supp a(\bullet;h_j)$, the operator $A_T$ is $\mathcal O(h^\infty)$
microlocally along $(x_j,\xi_j,h_j)$ in the sense of Definition~\ref{d:lag-prince}.
We then apply Lemma~\ref{l:lag-elliptic} to write
$$
A_T=Q\Op_h^L(b)+\mathcal O(h^\infty)_{\mathcal D'\to C_0^\infty},\quad
Q\in \Psi^{\comp}_{h,L,\rho}(T^*M\setminus 0).
$$
Moreover, Lemma~\ref{l:egorov} and the proof of Lemma~\ref{l:lag-elliptic} give
$$
\sigma_h^L(Q)=(a\circ e^{TX})/b+\mathcal O(h^{1-\rho})_{S^{\comp}_{L,\rho}(T^*M\setminus 0)}
=a\circ e^{TX}+\mathcal O(h^{1-\rho})_{S^{\comp}_{L,\rho}(T^*M\setminus 0)}.
$$
By Lemma~\ref{l:l2-improved}, we have $\|Q\|_{L^2\to L^2}\leq 1+\varepsilon_1$ for each
$\varepsilon_1>0$ and $h$ small enough depending on $\varepsilon_1$. Therefore,
$$
e^{\nu T}\|A_T u\|_{L^2}\leq (e^{\nu T}+\varepsilon_0)\|\Op_h^L(b)u\|_{L^2}
+\mathcal O(h^\infty)\|u\|_{H^s_h(\Mext)}
$$
which together with~\eqref{e:prop1} finishes the proof.
\end{proof}
We can now prove second microlocal estimates on solutions to~\eqref{e:eq0}.
Roughly speaking, in the case $f=0$ and $t=\rho\log(1/h)$ the estimate~\eqref{e:second1} below states that
 $u$ is concentrated $h^\rho$ close to $\Gamma_+$
(for each $\rho<1$) and the estimate~\eqref{e:second2} states that
the $u$ has to be of size at least $h^{\nu\rho+}$
in an $h^\rho$ neighborhood of $\Gamma_-$~-- see~\eqref{e:g++1} and~\eqref{e:g++2}.
In~\S\ref{s:fun}, we will
see that the combination of these two facts
with the fractal uncertainty principle
implies that $\nu$ cannot be too small, giving an essential spectral gap.
\begin{lemm}
  \label{l:second}
Let $\chi\in C_0^\infty(T^*M\setminus 0;[0,1])$ be equal to 1 near
$K\cap \{|\xi|_g=1\}$. Fix $\rho\in [0,1)$. Then there exists
$T>0$ such that we have for each $\varepsilon_0>0$, uniformly in
$t\in [T,\rho\log(1/h)]$, and $u,f$ satisfying~\eqref{e:eq0}
\begin{gather}
  \label{e:second1}
\big\|\Op_h^{L_u}\big(\chi(1-\chi\circ e^{-tX})\big)u\big\|_{L^2}\ \leq\ Ch^{-1}e^{(\max(0,\nu)+\varepsilon_0)t}\|f\|_{H^{s-1}_h}
+\mathcal O(h^\infty)\|u\|_{H^s_h},\\
  \label{e:second2}
\|u\|_{H^s_h}\ \leq\ Ce^{(\nu+\varepsilon_0)t}\big\|\Op_h^{L_s}\big(\chi(\chi\circ e^{tX})\big)u\big\|_{L^2}+Ch^{-1}e^{(\max(0,\nu)+\varepsilon_0)t}\|f\|_{H^{s-1}_h}.
\end{gather}
Here $\chi(1-\chi\circ e^{-tX})\in S^{\comp}_{L_u,\rho}(T^*M\setminus 0)$,
$\chi(\chi\circ e^{tX})\in S^{\comp}_{L_s,\rho}(T^*M\setminus 0)$ by Lemma~\ref{l:propagated-okay}.
\end{lemm}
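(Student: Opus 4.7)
The plan is to derive both estimates by combining the outgoing resolvent estimates of Lemma~\ref{l:outgoing} with iterated applications of the propagation estimate (Lemma~\ref{l:general-propagation}), using Lemma~\ref{l:close-to-trapping} to convert ``staying in a compact set under the flow'' into $h^\rho$-closeness to $\Gamma_\pm$. The two bounds have a dual structure: \eqref{e:second1} refines \eqref{e:pest-1} by saying that $u$ is microlocalized to an $h^\rho$-neighborhood of $\Gamma_+$, while \eqref{e:second2} refines \eqref{e:pest-2} by bounding $u$ in terms of its mass in an $h^\rho$-neighborhood of $\Gamma_-\cap\supp\chi$.

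For~\eqref{e:second2}, I would first apply~\eqref{e:pest-2} with $A_2=\Op_h^{L_s}(\tilde\chi)$ modulo $\mathcal O(h^\infty)$, where $\tilde\chi\in C_0^\infty(T^*M\setminus 0;[0,1])$ is a fixed cutoff elliptic on $K\cap\{|\xi|_g=1\}$ and supported in $\{\chi=1\}$. I would then invoke Lemma~\ref{l:general-propagation} with $L=L_s$, $a=\tilde\chi$, $b=\chi\cdot(\chi\circ e^{tX})$, and propagation time $t\in[T,\rho\log(1/h)]$. The hypothesis $e^{-tX}(\supp\tilde\chi)\subset\{b=1\}$ reduces to $e^{-tX}(\supp\tilde\chi)\subset\{\chi=1\}$, since on this set $\chi\circ e^{tX}$ equals $\chi$ restricted to $\supp\tilde\chi$, which is $1$. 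The propagation produces the $e^{\nu t}$ factor directly.

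For~\eqref{e:second1}, I would set $a=\chi(1-\chi\circ e^{-tX})\in S^{\comp}_{L_u,\rho}(T^*M\setminus 0)$ (by Lemma~\ref{l:propagated-okay}) and aim to apply~\eqref{e:pest-1} after a backward propagation of time~$t$. The key geometric input, via Lemma~\ref{l:close-to-trapping}, is that points $(z,\zeta)\in e^{-tX}(\supp a)\cap W_0$ satisfy $e^{tX}(z,\zeta)\in\supp\chi$ (compact) together with $\chi(z,\zeta)<1$ (outside a neighborhood of $K$); hence by Lemma~\ref{l:close-to-trapping} they lie within $Ce^{-t}\sim h^\rho$ of $\Gamma_-$, and since $\Gamma_-\cap\Gamma_+=K$ sits in the interior of $\{\chi=1\}$, they are at a positive, $t$-independent distance from $\Gamma_+\cap\{|\xi|_g=1\}$. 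Propagating $a$ backward by $t$ via Lemma~\ref{l:general-propagation} then yields a symbol supported at unit distance from $\Gamma_+\cap\{|\xi|_g=1\}$, to which~\eqref{e:pest-1} applies to give the bound $Ch^{-1}$; the propagation contributes the $e^{\nu t}$ factor, replaced by $1$ when $\nu\leq 0$, which produces the $\max(0,\nu)$ in the exponent.

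The hard part will be verifying the hypothesis of Lemma~\ref{l:general-propagation} that $e^{-sX}(\supp a)\subset W_0$ for all $s\in[0,t]$, which is not automatic over the full logarithmic time since backward trajectories can escape $W_0$. To overcome this, I would iterate Lemma~\ref{l:general-propagation} over short fixed time steps $T_0$: at each step the current symbol is split via a smooth $L_u$-adapted cutoff (built from $\chi\circ e^{-kT_0 X}$) into a ``trapped'' piece whose backward trajectory stays in $W_0$ for the next step (handled by the next application of Lemma~\ref{l:general-propagation}) and an ``escaped'' piece whose trajectory has exited $W_0$ and, by strict convexity~\eqref{e:convex}, does not return, so its support avoids $\Gamma_+$ and it is controlled by~\eqref{e:pest-1}. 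Summing the $\sim t/T_0$ resulting bounds and absorbing the geometric series in $e^{\nu T_0}$ yields the stated estimates; the analogous iteration produces~\eqref{e:second2}, with the roles of $\Gamma_+$ and $\Gamma_-$ (and of $L_u$ and $L_s$) reversed.
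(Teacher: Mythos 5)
Your strategy --- iterate Lemma~\ref{l:general-propagation} over fixed short time steps, at each step splitting the symbol into a piece microsupported away from $\Gamma_+\cap\{|\xi|_g=1\}$ (controlled by the radial source estimate~\eqref{e:pest-1}) and a piece that propagates one more step --- is exactly the paper's approach. There it is implemented as the one-step recursions~\eqref{e:second1.1}--\eqref{e:second1.2} with a fixed, step-independent decomposition $\chi=\chi_1+\chi_2$ ($\chi_2$ supported off $\Gamma_+\cap\{|\xi|_g=1\}$, $\chi_1$ obeying the dynamical containments~\eqref{e:dynamo2-1}--\eqref{e:dynamo2-3} imported from~\cite{rnc}), and then iterated with~\eqref{e:pest-1}, \eqref{e:pest-2} applied at the base cases $F_+(T/2)$ and $F_-(0)$.

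Two cautions. First, your preliminary one-shot sketch for~\eqref{e:second1} invokes Lemma~\ref{l:close-to-trapping} to place $e^{-tX}(\supp a)\cap W_0$ within $O(e^{-t})$ of $\Gamma_-$ and then infers a $t$-independent distance from $\Gamma_+\cap\{|\xi|_g=1\}$; this last inference requires a transversality/stability argument you do not state, and the paper avoids the issue altogether by using the elementary dynamical facts about $e^{\pm t_0X}$ near $K$ (from~\cite{rnc}) rather than Lemma~\ref{l:close-to-trapping}, reserving the latter for the passage from~\eqref{e:killit-1} to the fractal uncertainty principle. Second, when you control the ``escaped'' piece at each iteration step by~\eqref{e:pest-1}, note that that estimate takes $A_1\in\Psi^0_h(\Mext)$, while the escaped piece is an operator in the $\Psi^{\comp}_{h,L_u,\rho}$ calculus; you must first dominate it by an $h$-independent cutoff using Lemma~\ref{l:lag-elliptic}, precisely as in~\eqref{e:chi-2-est}. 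Neither point threatens your plan, since you correctly identify the $W_0$-containment over logarithmic times as the real obstacle and fix it by iteration; but the one-shot heuristic as stated would not survive scrutiny.
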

\begin{proof}
Denote
$$
\begin{aligned}
F_+(t)&:=\big\|\Op_h^{L_u}\big(\chi(1-\chi\circ e^{-tX})\big)u\big\|_{L^2},\\
F_-(t)&:=\big\|\Op_h^{L_s}\big(\chi(\chi\circ e^{tX})\big)u\big\|_{L^2},
\end{aligned}
$$
then it suffices to show that for each $\varepsilon_0>0$ there exists $T>0$ such that for all $t_0\in [T/2,T]$ and
$t\in [0,\rho\log(1/h)]$, we have (with constants uniform in $t_0,t$)
\begin{gather}
  \label{e:second1.1}
F_+(t+t_0)\ \leq\   e^{(\nu+\varepsilon_0)t_0} F_+(t)+Ch^{-1}\|f\|_{H^{s-1}_h}+\mathcal O(h^\infty)\|u\|_{H^s_h},\\
  \label{e:second1.2}
F_-(t)\ \leq\ e^{(\nu+\varepsilon_0)t_0} F_-(t+t_0)+Ch^{-1}\|f\|_{H^{s-1}_h}+\mathcal O(h^\infty)\|u\|_{H^s_h}.
\end{gather}
Indeed, iterating these estimates we get for all $t\in [T,\rho\log(1/h)]$
\begin{align}
  \label{e:second2.1}
F_+(t)\ &\leq\ e^{(\nu+\varepsilon_0)t}F_+(T/2)+Ch^{-1}e^{(\max(0,\nu)+\varepsilon_0)t}\|f\|_{H^{s-1}_h}+\mathcal O(h^\infty)\|u\|_{H^s_h},\\
  \label{e:second2.2}
F_-(0)\ &\leq\ e^{(\nu+\varepsilon_0)t}F_-(t)+Ch^{-1}e^{(\max(0,\nu)+\varepsilon_0)t}\|f\|_{H^{s-1}_h}+\mathcal O(h^\infty)\|u\|_{H^s_h}.
\end{align}
By~\eqref{e:dynamo-1} below, the wavefront set of $\Op_h^{L_u}\big(\chi(1-\chi\circ e^{-TX/2})\big)\in \Psi^{\comp}_h(M)$ 
does not intersect $\Gamma_+\cap \{|\xi|_g=1\}$. By~\eqref{e:pest-1} (where $r_0$ is chosen large enough depending on $\chi$)
we see that
$$
F_+(T/2)\leq Ch^{-1}\|f\|_{H^{s-1}_h}+\mathcal O(h^\infty)\|u\|_{H^s_h}
$$
and~\eqref{e:second1} follows from here and~\eqref{e:second2.1}.

Next, $\Op_h^{L_s}(\chi^2)\in\Psi^{\comp}_h(M)$ is elliptic on $K\cap \{|\xi|_g=1\}$. By~\eqref{e:pest-2}
we get
$$
\|u\|_{H^s_h}\leq CF_-(0)+Ch^{-1}\|f\|_{H^{s-1}_h}
$$
and~\eqref{e:second2} follows from here and~\eqref{e:second2.2}.

\begin{figure}
\includegraphics{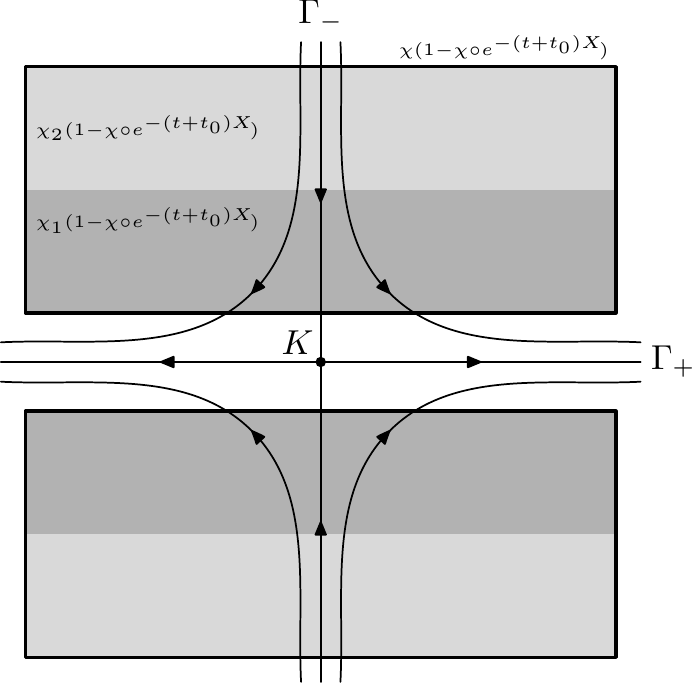}
\qquad
\includegraphics{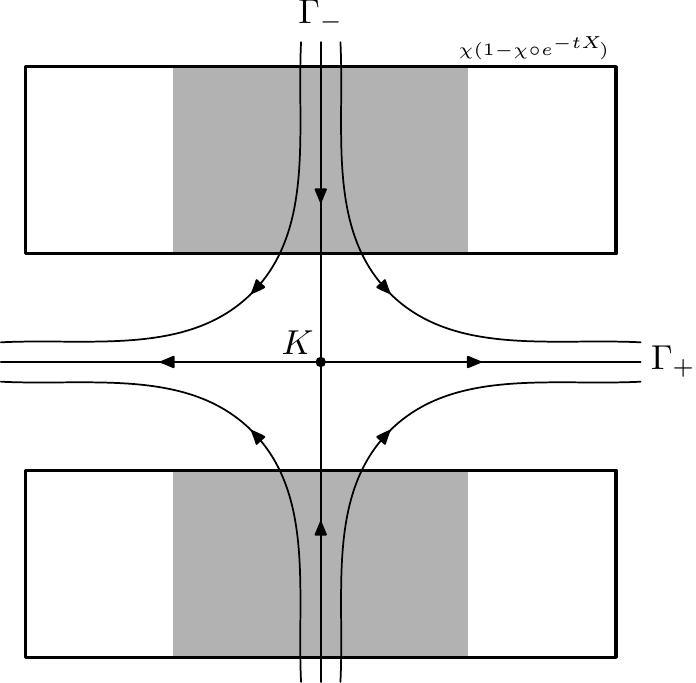}
\caption{An illustration of the proof of~\eqref{e:second1.1}. The function
$\chi(1-\chi\circ e^{-(t+t_0)X})$ is split into two parts.
The part corresponding to $\chi_2$ is estimated by~\eqref{e:pest-1}
and the darker shaded part corresponding to $\chi_1$ is transported backwards by the flow
to the right half of the figure, where it is covered
by $\chi(1-\chi\circ e^{-tX})$.}
\label{f:prop-1}
\end{figure}
We now prove~\eqref{e:second1.1}. We put $T:=NT_0$, where $N$ is a large constant to be chosen
later and for each $(x,\xi)\in \{|\xi|_g=1\}$ and each $t,t_1,t_2\geq T_0>0$ we have
\begin{align}
  \label{e:dynamo-1}
(x,\xi)\in\Gamma_+\cap \supp\chi\ &\Longrightarrow\ e^{-tX}(x,\xi)\notin\supp (1-\chi),\\
  \label{e:dynamo-2}
(x,\xi)\in e^{t_1X}(\supp\chi)\cap e^{-t_2X}(\supp \chi)\ &\Longrightarrow\ (x,\xi)\notin\supp (1-\chi).
\end{align}
The existence of such $T_0$ follows from~\cite[Lemmas~2.3 and~2.4]{rnc} and the fact
that $\chi=1$ near $K\cap \{|\xi|_g=1\}$.

We write $\chi=\chi_1+\chi_2$ where $\chi_j\in C_0^\infty(T^*M\setminus 0;[0,1])$, $\supp\chi_2\cap \Gamma_+\cap \{|\xi|_g=1\}=\emptyset$,
and for each $t\in [T_0,T+3T_0]$, $t_1,t_2\geq T_0$, and $(x,\xi)\in T^*M\setminus 0$
\begin{align}
  \label{e:dynamo2-1}
(x,\xi)\in\supp\chi_1\ &\Longrightarrow\ e^{-tX}(x,\xi)\notin\supp (1-\chi),\\
  \label{e:dynamo2-2}
(x,\xi)\in e^{t_1X}(\supp\chi)\cap e^{-t_2X}(\supp \chi_1)\ &\Longrightarrow\ (x,\xi)\notin\supp (1-\chi),\\
  \label{e:dynamo2-3}
(x,\xi)\in e^{t_1X}(\supp\chi_1)\cap e^{-t_2X}(\supp \chi)\ &\Longrightarrow\ (x,\xi)\notin\supp (1-\chi).
\end{align}
Note that~\eqref{e:dynamo2-2} and~\eqref{e:dynamo2-3} follow immediately from~\eqref{e:dynamo-2}
as long as $\supp\chi_1\subset\supp\chi$.

Take $\chi'_2\in C_0^\infty(T^*M)$ such that $\chi'_2=1$ near $\supp\chi_2$ and
$\supp\chi'_2\cap \Gamma_+\cap \{|\xi|_g=1\}=\emptyset$.
By Lemma~\ref{l:lag-elliptic} and~\eqref{e:pest-1} we have
\begin{equation}
  \label{e:chi-2-est}
\begin{gathered}
\big\|\Op_h^{L_u}\big(\chi_2(1-\chi\circ e^{-(t+t_0)X})\big)u\|_{L^2}
\leq C\|\Op_h^{L_u}(\chi'_2)u\|_{L^2}+\mathcal O(h^\infty)\|u\|_{H^s_h}
\\\leq Ch^{-1}\|f\|_{H^{s-1}_h}+\mathcal O(h^\infty)\|u\|_{H^s_h}.
\end{gathered}
\end{equation}
Next, we have (see Figure~\ref{f:prop-1})
\begin{equation}
  \label{e:prop-containment-1}
e^{-(t_0+T_0)X}\big(\supp(\chi_1(1-\chi\circ e^{-(t+t_0)X}))\big)\ \subset\ \{\chi(1-\chi\circ e^{-tX})=1\}.
\end{equation}
Indeed, let $(x,\xi)\in\supp(\chi_1(1-\chi\circ e^{-(t+t_0)X}))$. Since $t_0+T_0\in [T_0,T+T_0]$,
by~\eqref{e:dynamo2-1} we have $\chi(e^{-(t_0+T_0)X}(x,\xi))=1$. It remains
to show that $\chi(e^{-(t+t_0+T_0)X}(x,\xi))=0$. This follows from~\eqref{e:dynamo2-2} applied to
$e^{-(t+t_0)X}(x,\xi)\in\supp (1-\chi)$, $t_1=T_0$, $t_2=t+t_0$.

We now apply Lemma~\ref{l:general-propagation} to~\eqref{e:prop-containment-1}
(where we choose $r_0$ large enough depending on $\chi$ and $T$
and make $\supp\chi_1\subset \{|\xi_g|\in [1/2,2]\}$) and get for each fixed $\varepsilon_1>0$,
$$
\big\|\Op_h^{L_u}\big(\chi_1(1-\chi\circ e^{-(t+t_0)X})\big)u\big\|_{L^2}
\leq (e^{\nu (t_0+T_0)}+\varepsilon_1)F_+(t)+Ch^{-1}\|f\|_{H^{s-1}_h}+\mathcal O(h^\infty)\|u\|_{H^s_h}.
$$
Together with~\eqref{e:chi-2-est} this implies~\eqref{e:second1.1} as long as we have
\begin{equation}
  \label{e:rookie}
e^{\nu(t_0+T_0)}+\varepsilon_1\leq e^{(\nu+\varepsilon_0)t_0}.
\end{equation}
By choosing $\varepsilon_1$ small enough, this reduces to $\nu T_0<\varepsilon_0t_0$,
which follows from the fact that $t_0\geq T/2=NT_0/2$ if we choose $N$ large enough depending on $\varepsilon_0,\beta$.

\begin{figure}
\includegraphics{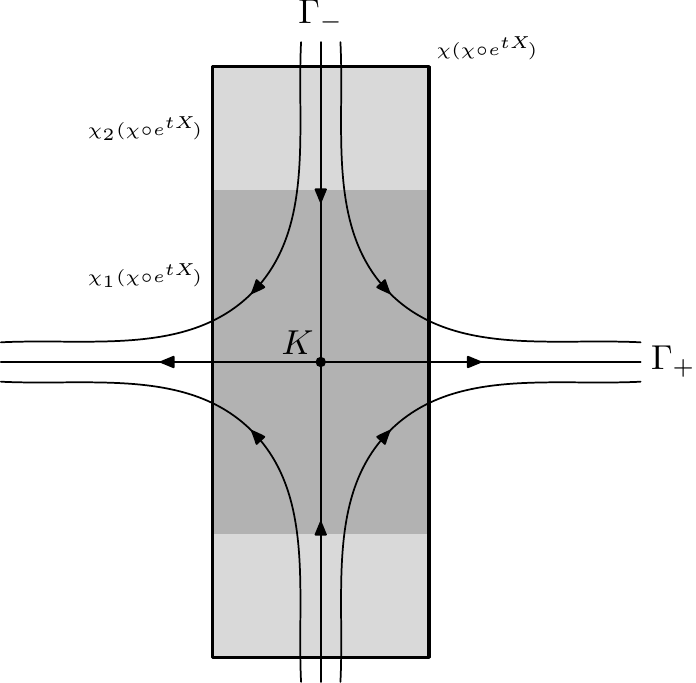}
\qquad
\includegraphics{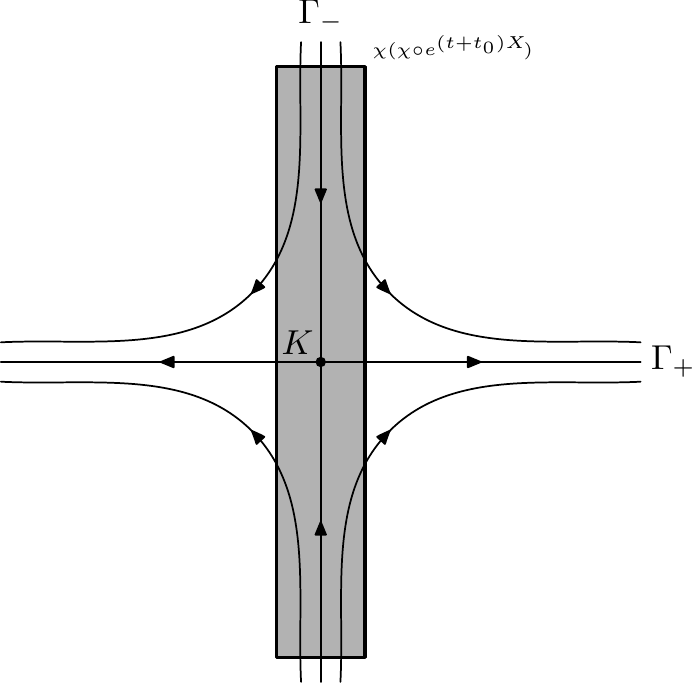}
\caption{An illustration of the proof of~\eqref{e:second1.2}. The function
$\chi(\chi\circ e^{tX})$ is split into two parts.
The part corresponding to $\chi_2$ is estimated by~\eqref{e:pest-1}
and the darker shaded part corresponding to $\chi_1$ is transported backwards by the flow
to the right half of the figure, where it is covered
by $\chi(\chi\circ e^{(t+t_0)X})$.}
\label{f:prop-2}
\end{figure}

To show~\eqref{e:second1.2}, we first note that similarly to~\eqref{e:chi-2-est},
\begin{equation}
  \label{e:chi-2-est2}
\big\|\Op_h^{L_s}\big(\chi_2(\chi\circ e^{tX})\big)u\big\|_{L^2}
\leq Ch^{-1}\|f\|_{H^{s-1}_h}+\mathcal O(h^\infty)\|u\|_{H^s_h}.
\end{equation}
Next, there exists $T_1\in [T_0,3T_0]$ such that (see Figure~\ref{f:prop-2})
\begin{equation}
  \label{e:prop-containment-2}
e^{-(t_0+T_1)X}\big(\supp(\chi_1(\chi\circ e^{tX}))\big)\ \subset\ 
\{\chi(\chi\circ e^{(t+t_0)X}) =1\}.
\end{equation}
Indeed, let $(x,\xi)\in \supp(\chi_1(\chi\circ e^{tX}))$. By~\eqref{e:dynamo2-1}, we have
$\chi(e^{-(t_0+T_1)X}(x,\xi))=1$. It remains to show that $\chi(e^{(t-T_1)X}(x,\xi))=1$.
If $t\leq 2T_0$, then we put $T_1:=t+T_0$ and use~\eqref{e:dynamo2-1}.
If $t\geq 2T_0$, then we put $T_1:=T_0$ and apply~\eqref{e:dynamo2-3}
to $e^{(t-T_0)X}(x,\xi)$, $t_1=t-T_0$, $t_2=T_0$.

Applying Lemma~\ref{l:general-propagation} to~\eqref{e:prop-containment-2} we get for each fixed
$\varepsilon_1>0$,
$$
\big\|\Op_h^{L_s}\big(\chi_1(\chi\circ e^{tX})\big)\|_{L^2}
\leq (e^{\nu(t_0+T_1)}+\varepsilon_1)F_-(t+t_0)+Ch^{-1}\|f\|_{H^{s-1}_h}+\mathcal O(h^\infty)\|u\|_{H^s_h}.
$$
Together with~\eqref{e:chi-2-est2} this implies~\eqref{e:second1.2} as long as we have
$$
e^{\nu(t_0+T_1)}+\varepsilon_1\leq e^{(\nu+\varepsilon_0)t_0}
$$
which is achieved by taking $N$ large enough similarly to~\eqref{e:rookie}.
\end{proof}

\subsection{Reduction to a fractal uncertainty principle}
  \label{s:fun}

In this section, we prove Theorem~\ref{t:fup-reduction}.
We start by constructing symplectomorphisms
\begin{equation}
  \label{e:kappa-M-domains}
\varkappa^\pm:T^*\mathbb H^n\setminus 0\to T^*(\mathbb R^+_w\times\mathbb S^{n-1}_y)
\end{equation}
which map the weak stable/unstable Lagrangian foliations $L_s,L_u$ defined in~\eqref{e:L-s-L-u} to
the vertical foliation on $T^*(\mathbb R^+\times\mathbb S^{n-1})$:
\begin{equation}
  \label{e:lafol}
(\varkappa^+)_*L_u=(\varkappa^-)_* L_s=L_V:=\ker(dw)\cap\ker(dy).
\end{equation}
Recall the symbol $p:T^*\mathbb H^n\setminus 0\to (0,\infty)$ and
the maps $B_\pm:T^*\mathbb H^n\setminus 0\to \mathbb S^{n-1}$ defined in~\eqref{e:p-symbol} and~\eqref{e:B-pm}.
For $(x,\xi)\in T^*\mathbb H^n\setminus 0$, put
$$
G_\pm(x,\xi)=p(x,\xi)\mathcal G(B_\pm(x,\xi),B_\mp(x,\xi))\in T^*_{B_\pm(x,\xi)}\mathbb S^{n-1}
$$
where $\mathcal G$ is defined in~\eqref{e:stpro}. See Figure~\ref{f:stereographic}.

\begin{figure}
\includegraphics{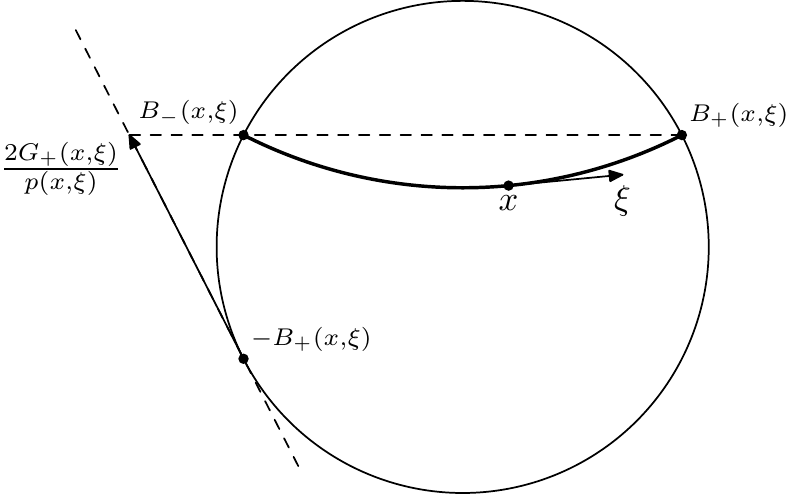}
\caption{The points $B_\pm(x,\xi)$ and the vector $G_+(x,\xi)$ in the
ball model of the hyperbolic space.}
\label{f:stereographic}
\end{figure}

Denote by $\mathcal P(x,y)$ the (two-dimensional version of)
Poisson kernel, defined on the ball model of $\mathbb H^n$ by
\begin{equation}
  \label{e:Pker}
\mathcal P(x,y)={1-|x|^2\over |x-y|^2},\quad
x\in\mathbb H^n,\
y\in\mathbb S^{n-1}.
\end{equation}
The symplectomorphisms $\varkappa^\pm$ are constructed in the following lemma; see Appendix~\ref{s:hyperbolic-technical}
for the proof. Note that~\eqref{e:lafol} follows immediately from~\eqref{e:kappa-M-def} and~\eqref{e:desker}.
\begin{lemm}
  \label{l:kappa-M}
The maps
\begin{equation}
  \label{e:kappa-M-def}
\varkappa^\pm:(x,\xi)\mapsto \big(p(x,\xi),B_\mp(x,\xi),\pm\log\mathcal P(x,B_\mp(x,\xi)),\pm G_\mp(x,\xi)\big)
\end{equation}
are exact symplectomorphisms from $T^*\mathbb H^n\setminus 0$ onto $T^*(\mathbb R^+\times\mathbb S^{n-1})$.
\end{lemm}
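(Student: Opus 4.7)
The plan is to verify the statement for $\varkappa^+$; the case of $\varkappa^-$ follows by an identical argument, or by composition with the antisymplectic time-reversal $(x,\xi)\mapsto(x,-\xi)$ which swaps $B_\pm$ and $G_\pm$. The geometric content is that $(B_-, B_+, p)$ selects an oriented geodesic together with the covector magnitude, while $\theta = \log\mathcal P(x, B_-)$ locates the basepoint on the geodesic via its interpretation as the negative of the Busemann function based at $B_-$, normalized to vanish at the origin of the ball model.

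\smallskip

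\textbf{Step 1 (Diffeomorphism).} First I would invert $\varkappa^+$ explicitly: given $(w,y,\theta,\eta)$, set $p := w$ and $B_- := y$; the half-stereographic map $y' \mapsto \mathcal G(y, y')$ is a diffeomorphism from $\mathbb S^{n-1}\setminus\{y\}$ onto $T^*_y\mathbb S^{n-1}$, so $\eta/p = \mathcal G(y, B_+)$ recovers $B_+$; the triple $(B_-, B_+, p)$ then fixes $\xi$ at any basepoint of the geodesic; and $\theta$ locates that basepoint because $\log\mathcal P(\,\cdot\,, B_-)$ restricts to a monotone diffeomorphism of each geodesic onto $\mathbb R$, strictly decreasing at rate $1$ under the forward unit-speed flow. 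Smoothness in both directions is routine.

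\smallskip

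\textbf{Step 2 (Symplectic identity with explicit antiderivative).} The heart of the proof is the pullback identity
\[
(\varkappa^+)^*(\theta\, dw + \eta\cdot dy) \;=\; \xi\,dx + dF, \qquad F(x,\xi) := p(x,\xi)\,\log\mathcal P\bigl(x, B_-(x,\xi)\bigr),
\]
which simultaneously gives the symplectic property and exactness with a globally defined smooth antiderivative $F$ on $T^*\mathbb H^n\setminus 0$. The left side expands to $\log\mathcal P\cdot dp + p\,\mathcal G(B_-, B_+)\cdot dB_-$, and the chain rule gives $dF = \log\mathcal P\,dp + p\,\partial_x\log\mathcal P\cdot dx + p\,\partial_{B_-}\log\mathcal P\cdot dB_-$; subtracting reduces the identity to two pointwise relations, namely $-p\,\partial_x\log\mathcal P(x,B_-) = \xi$ as a 1-form on $M$, and $\mathcal G(B_-, B_+) = \partial_{B_-}\log\mathcal P(x,B_-)$ as an element of $T^*_{B_-}\mathbb S^{n-1}$.

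\smallskip

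\textbf{Step 3 (The two pointwise identities, and the main obstacle).} The first relation is the classical hyperbolic fact that $\log\mathcal P(\,\cdot\,, y) = -b_y(\,\cdot\,)$, so $\partial_x\log\mathcal P(x, y) = -\hat\xi$, the negative forward unit covector of the geodesic leaving $y$ at $x$, yielding $-p\,\partial_x\log\mathcal P = p\hat\xi = \xi$. The second relation is the main technical point: it crucially uses that $x$ lies on the hyperbolic geodesic joining $y = B_-$ and $y' = B_+$. Since $y$, $y'$, $x$ and the origin lie in a common 2-plane (the one containing the geodesic), the identity reduces to the case $n = 2$, where writing the geodesic as a Euclidean circular arc with center $c = (y+y')/(1+y\cdot y')$ and radius $\rho = \sqrt{(1-y\cdot y')/(1+y\cdot y')}$ and parametrizing $x = c + \rho w$, $|w| = 1$, a short computation gives
\[
\frac{2\bigl(x - (x\cdot y)y\bigr)}{|x-y|^2} \;=\; \frac{y' - (y\cdot y')y}{1 - y\cdot y'} \;=\; \mathcal G(y,y').
\]
Once both identities are established, the pullback formula holds globally, $F$ is globally smooth, and exactness follows without any cohomological obstruction even for $n = 2$.
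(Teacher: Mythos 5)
Your proof is correct and takes essentially the same route as the paper: both use the generating function $\mathcal S(x,w,y)=w\log\mathcal P(x,y)$, reduce symplecticity and exactness to the two pointwise identities $\xi=-p\,\partial_x\log\mathcal P(x,B_-)$ and $\mathcal G(B_-,B_+)=\partial_y\log\mathcal P(x,y)|_{y=B_-}$ on the graph, verify these, and handle bijectivity separately by inverting along geodesics. Your verification of the identities---the Busemann function for the first, and coplanarity followed by an explicit computation on the geodesic circle for the second---is a somewhat more geometric packaging of the algebra the paper carries out directly from the ball-model formulas~\eqref{e:hyperfor}, but the content is the same. One bookkeeping remark: the antiderivative $F=p\log\mathcal P(x,B_-)$ you obtain for $\varkappa^+$ (so $\pm\mathcal S$ for $\varkappa^\pm$) is the one produced by the paper's own convention~\eqref{e:antiderivative} and by~\eqref{e:kappa-parametrized}; the paper's closing sentence asserts the antiderivative is $\mp\mathcal S$, an internal sign slip that also carries into the last sentence of Lemma~\ref{l:kappa-hat}, so you should not be troubled by the apparent disagreement. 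Finally, note that the antisymplectic time reversal $(x,\xi)\mapsto(x,-\xi)$ alone gives $\varkappa^+\circ T=(p,B_+,\log\mathcal P(x,B_+),G_+)$, which still differs from $\varkappa^-$ by the fiberwise sign $(\theta,\eta)\mapsto(-\theta,-\eta)$ on the target; this is easily fixed but means the shortcut is a composition with two antisymplectic maps, not one, so your fallback ``identical argument'' is the safer route.
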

\noindent\textbf{Remark}. The coordinates $\varkappa^\pm(x,\xi)=(w,y,\theta,\eta)$ can be interpreted as follows:
\begin{itemize}
\item $y,\eta$ determine the geodesic $\gamma(t)=e^{tX}(x,\xi)$ up to shifting $t$ and rescaling $\xi$,
in particular $y$ gives the limit of the geodesic $\gamma(t)$ as $t\to\pm\infty$;
\item $w$ is the length of $\xi$, corresponding to the energy of the geodesic $\gamma(t)$;
\item $\theta$ satisfies $\theta(\gamma(t))=\theta(\gamma(0))-t$ and thus determines
the position of $(x,\xi)$ on the geodesic $\gamma(t)$.
\end{itemize}
We next consider the symplectomorphism
\begin{equation}
  \label{e:kappa-hat}
\widehat\varkappa:=\varkappa^+\circ(\varkappa^-)^{-1}:T^*(\mathbb R^+\times\mathbb S^{n-1})\to T^*(\mathbb R^+\times\mathbb S^{n-1}).
\end{equation}
The next lemma, proved in Appendix~\ref{s:hyperbolic-technical}, constructs
a generating function for $\widehat\varkappa$:
\begin{lemm}
  \label{l:kappa-hat}
Consider the following function on $\mathbb R^+_w\times \mathbb S^{n-1}_\Delta$:
\begin{equation}
  \label{e:Theta}
\Theta(w,y,y')=w\log {|y-y'|^2\over 4}
\end{equation}
where $|y-y'|$ denotes Euclidean distance on $\mathbb S^{n-1}\subset\mathbb R^n$.
Then for each $(w,y,\theta,\eta)$ and $(w,y',\theta',\eta')$ in $T^*(\mathbb R^+\times\mathbb S^{n-1})$,
the following two statements are equivalent:
\begin{gather}
  \label{e:kappa-hat1}
(w,y',\theta',\eta')=\widehat\varkappa(w,y,\theta,\eta);\\
  \label{e:kappa-hat2}
\theta-\theta'=\partial_w\Theta(w,y,y'),\quad
\eta=\partial_y\Theta(w,y,y'),\quad
\eta'=-\partial_{y'}\Theta(w,y,y').
\end{gather}
Moreover, the antiderivative for $\widehat\varkappa$ defined as the sum of antiderivatives
for $\varkappa^+$ and $(\varkappa^-)^{-1}$ (see~\S\ref{s:fios})
is equal to the pullback of $\Theta$ to the graph $\Graph(\widehat\varkappa)$.
\end{lemm}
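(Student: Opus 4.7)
My plan is to split the lemma into two parts: first the bijective correspondence \eqref{e:kappa-hat1} $\Leftrightarrow$ \eqref{e:kappa-hat2}, then the antiderivative identification. Both parts are direct computations on $\Graph(\widehat\varkappa)$, parametrized by the intermediate point $(q,\zeta)\in T^*\mathbb H^n$ via the factorization $\widehat\varkappa=\varkappa^+\circ (\varkappa^-)^{-1}$; the only nontrivial geometric input is a Poisson-kernel product identity.

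I first set up the parametrization. On $\Graph(\widehat\varkappa)$, fix $(q,\zeta)\in T^*\mathbb H^n$ and set $(w,y,\theta,\eta):=\varkappa^-(q,\zeta)$ and $(w,y',\theta',\eta'):=\varkappa^+(q,\zeta)$. By~\eqref{e:kappa-M-def} this reads
\begin{equation*}
w=p(q,\zeta),\quad y=B_+(q,\zeta),\quad y'=B_-(q,\zeta),\quad \theta=-\log\mathcal P(q,y),\quad \theta'=\log\mathcal P(q,y'),
\end{equation*}
together with $\eta=-G_+(q,\zeta)=-w\mathcal G(y,y')$ and $\eta'=G_-(q,\zeta)=w\mathcal G(y',y)$. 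The $y$- and $y'$-derivative relations in~\eqref{e:kappa-hat2} follow immediately from~\eqref{e:gide}: one computes $\partial_y\Theta=w\,\partial_y\log|y-y'|^2=-w\mathcal G(y,y')=\eta$, and symmetrically $-\partial_{y'}\Theta=\eta'$. Since the bi-infinite geodesic determined by $(q,\zeta)$ has endpoints $B_\pm(q,\zeta)$ at infinity, the point $q$ lies on the hyperbolic geodesic joining $y$ to $y'$. Hence the Poisson-kernel product identity
\begin{equation*}
\mathcal P(q,y)\,\mathcal P(q,y')=\frac{4}{|y-y'|^2}\qquad\text{whenever }q\text{ lies on the geodesic from }y\text{ to }y'
\end{equation*}
gives $\theta-\theta'=-\log\bigl(\mathcal P(q,y)\mathcal P(q,y')\bigr)=\log(|y-y'|^2/4)=\partial_w\Theta$, completing the direction \eqref{e:kappa-hat1}$\Rightarrow$\eqref{e:kappa-hat2}. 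The reverse direction comes from uniqueness: the relation $\eta=-w\mathcal G(y,y')$ recovers $y'$ from $(y,w,\eta)$ because $\mathcal G(y,\cdot)$ is a diffeomorphism from $\mathbb S^{n-1}\setminus\{y\}$ onto $T_y\mathbb S^{n-1}\simeq\mathbb R^{n-1}$ (it is essentially half the standard stereographic projection based at $y$), after which $\theta'$ and $\eta'$ are read off from the remaining two relations.

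For the antiderivative, I apply the composition rule from~\S\ref{s:fios}: on $\Graph(\widehat\varkappa)$ the prescribed antiderivative is $F^++F^-$, where $F^+$ is the antiderivative for $\varkappa^+$ and $F^-$ that for $(\varkappa^-)^{-1}$. Pulling back the defining relation~\eqref{e:antiderivative} to the $(q,\zeta)$ parametrization, the $\zeta\cdot dq$ terms from the two summands cancel, and I obtain
\begin{equation*}
d(F^++F^-)=\log\bigl(\mathcal P(q,B_+)\mathcal P(q,B_-)\bigr)\,dp+G_+\cdot dB_++G_-\cdot dB_-.
\end{equation*}
The Poisson-kernel identity rewrites the first summand, and~\eqref{e:gide} shows that $G_+\cdot dB_++G_-\cdot dB_-=-p\,d\log|B_+-B_-|^2$; collecting terms shows $d(F^++F^-)$ agrees with $d\Theta$ pulled back to $\Graph(\widehat\varkappa)$ up to the overall sign fixed by the convention in~\eqref{e:antiderivative}. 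The integration constant is pinned down by evaluation at one base point.

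The main technical obstacle is the Poisson-kernel product identity. I will prove it by $\PSO(1,n)$-equivariance: the isometry group of $\mathbb H^n$ acts transitively on ordered pairs of distinct boundary points, and both sides of the identity transform with the same conformal weight under this action, so it suffices to verify it in the model case $y=e_1$, $y'=-e_1$, $q=te_1$ on the ball model, where $\mathcal P(te_1,\pm e_1)=(1\pm t)/(1\mp t)$ and the product equals $1=4/|e_1-(-e_1)|^2$. Everything else is bookkeeping with the differential forms and the sign conventions of~\eqref{e:antiderivative}.
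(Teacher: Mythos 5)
Your argument is correct and amounts to essentially the same computation as the paper's, but packaged more intrinsically. The paper also goes through the intermediate point $(q,\zeta)=((\varkappa^-)^{-1})(w,y,\theta,\eta)$, but then writes out $\widehat\varkappa$ and the relations~\eqref{e:kappa-hat2} as explicit rational-function formulas~\eqref{e:morgul1},~\eqref{e:morgul2} and matches them by ``a direct calculation.'' You instead read off the $\eta$, $\eta'$ relations directly from~\eqref{e:gide} and the $\theta-\theta'$ relation from the Poisson-kernel product identity (which is precisely~\eqref{e:din1} rewritten via $|y-y'|^2=2(1-y\cdot y')$), so you never need the messy coordinate formulas or the auxiliary identity~\eqref{e:din2}. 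The reverse implication is also handled cleanly by noting that $\eta=\partial_y\Theta=-w\mathcal G(y,y')$ determines $y'$ uniquely, so the solution of~\eqref{e:kappa-hat2} must coincide with $\widehat\varkappa(w,y,\theta,\eta)$. Your conformal-covariance proof of the Poisson-kernel identity is a genuine addition: the paper just cites~\eqref{e:din1} without proof. For the antiderivative part both proofs reduce to the same Poisson-kernel identity plus~\eqref{e:gide}; your 1-form computation $d(F^+ +F^-)=\log\bigl(\mathcal P(q,B_+)\mathcal P(q,B_-)\bigr)dp+G_+\cdot dB_++G_-\cdot dB_-$ is correct. One caution: the clause ``up to the overall sign fixed by the convention in~\eqref{e:antiderivative}'' should not be left as a hand-wave, because the sign is genuinely delicate here. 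With the conventions of~\eqref{e:Graph}--\eqref{e:antiderivative} and~\eqref{e:kappa-parametrized} (which one can cross-check against Lemma~\ref{l:gauge-fio}, where the antiderivative of $\varkappa^{-1}$ comes out to $+\psi(x)$), one finds $F^{\varkappa^\pm}=\pm\mathcal S$, and hence $F^{\widehat\varkappa}=F^{\varkappa^+}-F^{\varkappa^-}=w\log\bigl(\mathcal P\mathcal P\bigr)=-\Theta$ rather than $+\Theta$; the paper's phrase ``$\mp\mathcal S$ is an antiderivative'' in the proof of Lemma~\ref{l:kappa-M} appears to carry the opposite convention (this does not affect anything downstream, since Lemma~\ref{l:B-hat} only uses the fact that $\Phi(w,y,w',y',\theta)=(w-w')\theta+\Theta(w',y,y')$ parametrizes $\widehat\varkappa^{-1}$, which follows from~\eqref{e:kappa-hat2} alone). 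So rather than appealing to an ``overall sign,'' you should either track it through explicitly and note the convention being used, or flag that the statement is correct up to the global sign convention for antiderivatives.
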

Using Lemma~\ref{l:kappa-hat} and the theory presented in~\S\ref{s:fios}, we characterize
Fourier integral operators associated to $\widehat\varkappa^{-1}$:
\begin{lemm}
  \label{l:B-hat}
Assume that $B\in I^{\comp}_h(\widehat\varkappa^{-1})$. Then we have
$$
B=A\widetilde{\mathcal B}_\chi+\mathcal O(h^\infty)_{\Psi^{-\infty}}
$$
for some $A\in\Psi^{\comp}_h(\mathbb R^+\times\mathbb S^{n-1})$,
$\chi\in C_0^\infty(\mathbb S^{n-1}_\Delta)$, and
$$
\widetilde{\mathcal B}_\chi v(w,y)=(2\pi h)^{1-n\over 2}\int_{\mathbb S^{n-1}} \Big|{y-y'\over 2}\Big|^{2iw/h}\chi(y,y')v(w,y')\,dy'
$$
where $\mathbb S^{n-1}_\Delta$ is defined in~\eqref{e:s-diag},
$|y-y'|$ denotes the Euclidean distance, and $dy'$ is the standard volume form on the sphere.
\end{lemm}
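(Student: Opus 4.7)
The plan is to recognize $\widetilde{\mathcal B}_\chi$ itself as a Fourier integral operator associated to $\widehat\varkappa^{-1}$, and then define $A$ via a microlocal parametrix of $\widetilde{\mathcal B}_\chi$. First, using \eqref{e:Theta},
\[
\Big|{y-y'\over 2}\Big|^{2iw/h}=e^{i\Theta(w,y,y')/h},
\]
so that $\widetilde{\mathcal B}_\chi v(w,y)=(2\pi h)^{(1-n)/2}\int e^{i\Theta/h}\chi(y,y')v(w,y')\,dy'$. Introducing the auxiliary variable $\theta'\in\mathbb R$ through the identity $v(w,y')=(2\pi h)^{-1}\iint e^{i(w-w')\theta'/h}v(w',y')\,dw'd\theta'$ brings this into the form \eqref{e:fio-general-form},
\[
\widetilde{\mathcal B}_\chi v(w,y)=(2\pi h)^{-(n+1)/2}\int e^{i\Phi/h}\chi(y,y')v(w',y')\,dw'd\theta'dy',
\]
with phase $\Phi(w,y,w',y',\theta'):=\Theta(w,y,y')+(w-w')\theta'$. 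The equation $\partial_{\theta'}\Phi=w-w'=0$ has a full-rank differential, so $\Phi$ is nondegenerate; matching $\partial_w\Phi,\partial_y\Phi,-\partial_{w'}\Phi,-\partial_{y'}\Phi$ against $\theta,\eta,\theta',\eta'$ reproduces the relations \eqref{e:kappa-hat2}, and the pullback of $\Phi$ to the critical set equals $\Theta$. Hence, up to the issue addressed below, $\widetilde{\mathcal B}_\chi\in I^{\comp}_h(\widehat\varkappa^{-1})$ with the antiderivative fixed by Lemma~\ref{l:kappa-hat}, and its amplitude $\chi(y,y')$ makes it elliptic at every point of $\Graph(\widehat\varkappa^{-1})$ where $\chi\ne 0$.

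Given $B\in I^{\comp}_h(\widehat\varkappa^{-1})$, the wavefront set $\WF'_h(B)\subset\Graph(\widehat\varkappa^{-1})$ is compact. Let $\pi$ denote the projection $(w,y,\theta,\eta;w,y',\theta',\eta')\mapsto(y,y')$ and choose $\chi\in C_0^\infty(\mathbb S^{n-1}_\Delta)$ with $\chi\equiv 1$ on a neighborhood of $\pi(\WF'_h(B))$. Then $\widetilde{\mathcal B}_\chi$ is microlocally elliptic on $\WF'_h(B)$, and the parametrix construction summarized in the paragraph following \eqref{e:quantized} produces $\widetilde{\mathcal B}_\chi'\in I^{\comp}_h(\widehat\varkappa)$ such that
\[
\widetilde{\mathcal B}_\chi'\widetilde{\mathcal B}_\chi=1+\mathcal O(h^\infty)
\]
microlocally near the source projection of $\WF'_h(B)$. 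Setting $A:=B\widetilde{\mathcal B}_\chi'$, the FIO composition rule gives $A\in I^{\comp}_h(\widehat\varkappa^{-1}\circ\widehat\varkappa)=I^{\comp}_h(\mathrm{id})$; by the convention in \S\ref{s:fios} the antiderivatives of $\widehat\varkappa$ and $\widehat\varkappa^{-1}$ sum to zero, so $A\in\Psi^{\comp}_h(\mathbb R^+\times\mathbb S^{n-1})$. Then
\[
A\widetilde{\mathcal B}_\chi=B\widetilde{\mathcal B}_\chi'\widetilde{\mathcal B}_\chi=B+\mathcal O(h^\infty)_{\Psi^{-\infty}},
\]
which is the desired identity.

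The main technical obstacle is that $\chi$ is compactly supported only in $(y,y')$, so $\widetilde{\mathcal B}_\chi$ and its parametrix are not compactly microlocalized in the $w$ direction and, strictly speaking, do not lie in the class $I^{\comp}_h$ introduced in \S\ref{s:fios}. I would handle this by using the compactness of $\WF'_h(B)$ to sandwich $B$ between pseudodifferential cutoffs compactly supported in $(w,\theta)$, so that the entire parametrix argument takes place in a fixed bounded region of $T^*(\mathbb R^+\times\mathbb S^{n-1})$; the relevant pieces of $\widetilde{\mathcal B}_\chi$ and $\widetilde{\mathcal B}_\chi'$ then belong to the standard FIO class, and the negligible remainders outside this region contribute only $\mathcal O(h^\infty)$ to $A\widetilde{\mathcal B}_\chi-B$.
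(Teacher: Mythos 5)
Your argument is correct, and it takes a genuinely different route from the paper's proof. The paper works entirely at the level of oscillatory-integral amplitudes: it first computes the kernel of $A\widetilde{\mathcal B}_\chi$ for $A\in\Psi^{\comp}_h$ as an oscillatory integral with phase $(w-w')\theta+\Theta(w',y,y')$ and amplitude inherited from $\sigma_h(A)$, then writes $B$ via \eqref{e:fio-general-form} with that same phase, matches the amplitudes at leading order to find $A_0$ with $B-A_0\widetilde{\mathcal B}_\chi\in hI^{\comp}_h(\widehat\varkappa^{-1})$ by \eqref{e:principal-killed}, and iterates to produce $A\sim\sum h^j A_j$. You instead recognize $\widetilde{\mathcal B}_\chi$ itself (after the auxiliary-variable trick) as a microlocally elliptic member of $I^{\comp}_h(\widehat\varkappa^{-1})$ on $\WF'_h(B)$, build a microlocal left inverse $\widetilde{\mathcal B}_\chi'\in I^{\comp}_h(\widehat\varkappa)$, and set $A:=B\widetilde{\mathcal B}_\chi'$, reducing the whole argument to the FIO composition calculus and the fact that the antiderivatives cancel. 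Your approach is conceptually cleaner and shows more explicitly \emph{why} the result is true (the operator $\widetilde{\mathcal B}_\chi$ is elliptic, so the class $I^{\comp}_h(\widehat\varkappa^{-1})$ is generated by $\widetilde{\mathcal B}_\chi$ over $\Psi^{\comp}_h$), at the cost of needing some care with the fact that $\widetilde{\mathcal B}_\chi$ is only locally compactly supported. The paper avoids that issue silently because its $A\widetilde{\mathcal B}_\chi$ oscillatory integral already carries a compactly supported amplitude. Your final paragraph correctly identifies the one place where more care is needed and gives a workable fix; when you implement it you should note that the cutoff $Q_2$ on the input side must be chosen so that $\widetilde{\mathcal B}_\chi(1-Q_2)$ has wavefront set disjoint from $\WF'_h(A)\times T^*(\mathbb R^+\times\mathbb S^{n-1})$, which is possible because $\widehat\varkappa$ is a bijection and $\WF'_h(A)$ is compact, so that $A\widetilde{\mathcal B}_\chi(1-Q_2)=\mathcal O(h^\infty)_{\Psi^{-\infty}}$.

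Two small points worth flagging for precision. First, your phase $\Phi=\Theta(w,y,y')+(w-w')\theta'$ differs from the paper's $(w-w')\theta+\Theta(w',y,y')$ off the critical set, but both are nondegenerate phase functions parametrizing the same Lagrangian with the same pullback of the antiderivative, so this is harmless. Second, the parametrix construction sketched after \eqref{e:quantized} is stated in the paper for $V_2$ a single point (or a small neighborhood thereof), whereas you need ellipticity on the full compact set $\pi_2(\WF'_h(B))$; the extension is routine (construct an elliptic parametrix $Q'$ of $\widetilde{\mathcal B}_\chi^*\widetilde{\mathcal B}_\chi\in\Psi^{\comp}_h$ near $\pi_2(\WF'_h(B))$ and set $\widetilde{\mathcal B}_\chi':=Q'\widetilde{\mathcal B}_\chi^*$), but deserves a sentence.
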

\begin{proof}
For the function $\Theta$ defined in~\eqref{e:Theta}, we have
$$
\widetilde{\mathcal B}_\chi v(w,y)=(2\pi h)^{1-n\over 2}\int_{\mathbb S^{n-1}}e^{{i\over h}\Theta(w,y,y')}\chi(y,y')v(w,y')\,dy'.
$$
For $A\in\Psi^{\comp}_h(\mathbb R^+\times\mathbb S^{n-1})$, the
operator $A\widetilde{\mathcal B}_\chi$ is given by the following formula
modulo an $\mathcal O(h^\infty)_{\Psi^{-\infty}}$ remainder:
$$
A\widetilde{\mathcal B}_\chi v(w,y)=(2\pi h)^{-{n+1\over 2}}\int\limits_{\mathbb S^{n-1}\times T^*\mathbb R^+}e^{{i\over h}((w-w')\theta+\Theta(w',y,y'))}
b(w,\theta,y,y';h) v(w',y')\,  dy' dw' d\theta
$$
where $b$ is a compactly supported symbol on $\mathbb R^+_w\times\mathbb R_\theta\times \mathbb S^{n-1}_\Delta$
such that
$$
b(w,\theta,y,y';0)=\sigma_h(A)(w,y,\theta,\partial_y\Theta(w,y,y'))\chi(y,y').
$$
To see this, it suffices to choose some local coordinates on $\mathbb S^{n-1}$,
take $A=\Op_h(a)$ for some compactly supported symbol $a(w,y,\theta,\eta;h)$,
write
$$
\begin{aligned}
A\widetilde{\mathcal B}_\chi v(w,y)=\,&(2\pi h)^{1-3n\over 2}\int
e^{{i\over h}((w-w')\theta+(y-y'')\cdot\eta+\Theta(w',y'',y'))}\\
&a(w,y,\theta,\eta;h)\chi(y'',y')v(w',y')\,dw'dy''d\theta d\eta dy'
\end{aligned}
$$
where the integral is taken over $\mathbb R^+_{w'}\times\mathbb S^{n-1}_{y''}\times\mathbb R^n_{\theta,\eta}\times\mathbb S^{n-1}_{y'}$, and apply the method of stationary phase in the $(y'',\eta)$ variables.

Now, let $B\in I^{\comp}_h(\widehat\varkappa^{-1})$. Fix $\chi\in C_0^\infty(\mathbb S^{n-1}_\Delta)$
such that
\begin{equation}
  \label{e:WF-B-cond}
\WFh(B)\ \subset\  U_\chi:=\varphi_\Theta(\{(w,\theta,y,y')\mid \chi(y,y')\neq 0\})
\end{equation}
where $\varphi_\Theta:R^+_w\times\mathbb R_\theta\times \mathbb S^{n-1}_\Delta\to\Graph(\widehat\varkappa^{-1})$
is the diffeomorphism constructed using~\eqref{e:kappa-hat2}.
By Lemma~\ref{l:kappa-hat}, the function
$$
\Phi:(w,y,w',y',\theta)\in \mathbb R^+_w\times\mathbb R^+_{w'}\times \mathbb S^{n-1}_\Delta\times\mathbb R_\theta
\ \mapsto\ (w-w')\theta+\Theta(w',y,y')
$$
parametrizes $\widehat\varkappa^{-1}$ in the sense of~\eqref{e:kappa-parametrized}. Therefore,
by~\eqref{e:fio-general-form}
we may write for some compactly supported symbol $\tilde b$ on the domain of $\Phi$,
modulo $\mathcal O(h^\infty)_{\Psi^{-\infty}}$,
$$
Bv(w,y)=(2\pi h)^{-{n+1\over 2}}\int\limits_{\mathbb S^{n-1}\times T^*\mathbb R^+}
e^{{i\over h}((w-w')\theta+\Theta(w',y,y'))}\tilde b(w,y,w',y',\theta;h)v(w',y')\,dy'dw'd\theta.
$$
Moreover, by~\eqref{e:WF-B-cond} (which implies that $B$ is a Fourier
integral operator associated to a restriction of $\widehat\varkappa^{-1}$)
we can take $\tilde b$ supported inside $\{\chi(y,y')\neq 0\}$.

Take $A_0\in\Psi^{\comp}_h(\mathbb R^+\times\mathbb S^{n-1})$ such that
for all $(w,y,\theta,\eta,w,y',\theta',\eta')\in \Graph(\widehat\varkappa^{-1})$,
$$
\sigma_h(A_0)(w,y,\theta,\eta)={\tilde b(w,y,w,y',\theta;0)\over \chi(y,y')}.
$$
Comparing the oscillatory integral expressions for $A_0\widetilde{\mathcal B}_\chi$ and $B$
and using~\eqref{e:principal-killed}, we get
$$
B-A_0\widetilde{\mathcal B}_\chi\in hI^{\comp}_h(\widehat\varkappa^{-1}).
$$
Moreover, we may choose $A_0$ so that
$\WFh(B-A_0\widetilde{\mathcal B}_\chi)\subset U_\chi$. Replacing
$B$ with $h^{-1}(B-A_0\widetilde{\mathcal B}_\chi)$ and arguing by induction, we construct
$A_j\in \Psi^{\comp}_h(\mathbb R^+\times\mathbb S^{n-1})$ such that
$$
B-\sum_{j=0}^N h^jA_j\widetilde{\mathcal B}_\chi\in h^{N+1}I^{\comp}_h(\widehat\varkappa^{-1}).
$$
Then $B=A\widetilde{\mathcal B}_\chi+\mathcal O(h^\infty)_{\Psi^{-\infty}}$,
where $A\in\Psi^{\comp}_h(\mathbb R^+\times\mathbb S^{n-1})$
is defined by the asymptotic sum $A\sim \sum_{j=0}^\infty h^j A_j$. 
\end{proof}
We now reformulate the fractal uncertainty principle of
Definition~\ref{d:fup} as follows:
\begin{lemm}
  \label{l:fup-revisited}
Assume $\Lambda_\Gamma,\beta,\varepsilon>0,\rho\in (0,1)$
are such that~\eqref{e:fup-standard} holds for all $C_1,\chi$.
With $L_V$ defined in~\eqref{e:lafol}, let
$$
A_\pm\in \Psi^{\comp}_{h,L_V,\rho}(T^*(\mathbb R^+\times\mathbb S^{n-1})),\quad
B\in I^{\comp}_h(\widehat\varkappa^{-1})
$$
and assume that for some constant $C_2$,
$A_+$ and $A_-$ are $\mathcal O(h^\infty)$, in the sense of Definition~\ref{d:lag-prince},
along every sequence $(w_j,y_j,\theta_j,\eta_j,h_j)$ such that
$d(y_j,\Lambda_\Gamma)>C_2h_j^\rho$.
Then
$$
\|A_-BA_+\|_{L^2\to L^2}\leq Ch^{\beta-\varepsilon}.
$$
\end{lemm}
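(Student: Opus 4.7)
The plan is to reduce the bound to the fractal uncertainty principle \eqref{e:fup-standard} by peeling the auxiliary factors off $B$ into $L^2$-bounded pieces, leaving a core of the form $\mathbf 1_{\Lambda_\Gamma(C_1 h^\rho)}\widetilde{\mathcal B}_\chi\mathbf 1_{\Lambda_\Gamma(C_1 h^\rho)}$. First, by Lemma~\ref{l:B-hat} I write $B=A\widetilde{\mathcal B}_\chi+\mathcal O(h^\infty)_{\Psi^{-\infty}}$ with $A\in\Psi^{\comp}_h(\mathbb R^+\times\mathbb S^{n-1})$ and $\chi\in C_0^\infty(\mathbb S^{n-1}_\Delta)$. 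Since $\Psi^{\comp}_h\subset\Psi^{\comp}_{h,L_V,\rho}$ with matching principal symbol, the product $\tilde A_-:=A_-A$ lies in $\Psi^{\comp}_{h,L_V,\rho}$ and, by parts~4 and~5 of Lemma~\ref{l:globalprop}, inherits from $A_-$ the microlocal $\mathcal O(h^\infty)$ decay along sequences with $d(y_j,\Lambda_\Gamma)>C_2 h_j^\rho$. Hence
$$A_-BA_+=\tilde A_-\widetilde{\mathcal B}_\chi A_++\mathcal O(h^\infty)_{L^2\to L^2}.$$

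Next I use Lemma~\ref{l:symbol-construction} on $\mathbb S^{n-1}$ to build two nested smooth cutoffs $\chi_1,\chi_2\in C_0^\infty(\mathbb S^{n-1};[0,1])$ with derivative bounds $|\partial^\alpha\chi_j|\le C_\alpha h^{-\rho|\alpha|}$, where $\chi_2\equiv 1$ on $\Lambda_\Gamma(C_2 h^\rho)$ and $\supp\chi_2\subset\Lambda_\Gamma(2C_2 h^\rho)$, while $\chi_1\equiv 1$ on $\Lambda_\Gamma(2C_2 h^\rho)$ and $\supp\chi_1\subset\Lambda_\Gamma(C_1 h^\rho)$ with $C_1:=3C_2$. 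Viewing $\mathbf M_{1-\chi_2}$ (multiplication by $1-\chi_2$, paired with a standard cutoff $\phi(w,\theta,\eta)\in C_0^\infty$ that equals $1$ on the projection of $\WFh(\tilde A_-)\cup\WFh(A_+)$) as an element of $\Psi^{\comp}_{h,L_V,\rho}$, the principal symbol of $\tilde A_-\mathbf M_{1-\chi_2}$ is $\sigma_h^{L_V}(\tilde A_-)\cdot(1-\chi_2(y))\phi$, which is $\mathcal O(h^\infty)$ on its microsupport (the factor $1-\chi_2$ forces $d(y,\Lambda_\Gamma)>C_2 h^\rho$, where $\tilde A_-$ has the prescribed decay). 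Iterating the asymptotic expansion of part~3 of Lemma~\ref{l:globalprop} promotes this to $\tilde A_-\mathbf M_{1-\chi_2}=\mathcal O(h^\infty)_{L^2\to L^2}$. Because $\chi_2(1-\chi_1)=0$ one has $\mathbf M_{1-\chi_1}=\mathbf M_{1-\chi_2}\mathbf M_{1-\chi_1}$, and therefore
$$\tilde A_-\mathbf M_{1-\chi_1}=\bigl(\tilde A_-\mathbf M_{1-\chi_2}\bigr)\mathbf M_{1-\chi_1}=\mathcal O(h^\infty)_{L^2\to L^2}.$$
The symmetric argument applied on the right gives $\mathbf M_{1-\chi_1}A_+=\mathcal O(h^\infty)_{L^2\to L^2}$, so
$$A_-BA_+=\tilde A_-\mathbf M_{\chi_1}\widetilde{\mathcal B}_\chi\mathbf M_{\chi_1}A_++\mathcal O(h^\infty)_{L^2\to L^2}.$$

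Finally, $\tilde A_-$ and $A_+$ are uniformly $L^2$-bounded by Lemma~\ref{l:globallem}(1), while $0\le\chi_1\le\mathbf 1_{\Lambda_\Gamma(C_1 h^\rho)}$ yields
$$\|\mathbf M_{\chi_1}\widetilde{\mathcal B}_\chi\mathbf M_{\chi_1}\|_{L^2\to L^2}\le\|\mathbf 1_{\Lambda_\Gamma(C_1 h^\rho)}\widetilde{\mathcal B}_\chi\mathbf 1_{\Lambda_\Gamma(C_1 h^\rho)}\|_{L^2\to L^2}.$$
The compact $w$-support inherited from $\tilde A_-,A_+$ confines the action of $\widetilde{\mathcal B}_\chi$ to some interval $w\in[w_0,w_1]\subset\mathbb R^+$; on each $w$-fibre, $\widetilde{\mathcal B}_\chi(w)$ agrees with $\mathcal B_\chi$ at rescaled semiclassical parameter $h'=h/w$ up to a modulus-one phase and a bounded $w$-prefactor. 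Applying \eqref{e:fup-standard} at parameter $h'$ with enlarged constant $C_1 w_1^\rho$ produces the bound $\|\mathbf M_{\chi_1}\widetilde{\mathcal B}_\chi\mathbf M_{\chi_1}\|\le Ch^{\beta-\varepsilon}$, and the lemma follows.

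The main delicacy I expect is the simultaneous use of the standard and anisotropic calculi: the multiplication operators $\mathbf M_{\chi_j}$ only fit into $\Psi^{\comp}_{h,L_V,\rho}$ after being paired with a momentum cutoff $\phi$, so one must check that $\phi$ acts as the identity on every microsupport in play (including after composition with $A$, $\tilde A_-$, $\widetilde{\mathcal B}_\chi$ and $A_+$) in order for the symbolic computation actually to encode the literal multiplication. The iteration in part~3 of Lemma~\ref{l:globalprop} is the mechanism for upgrading principal-symbol vanishing to genuine $\mathcal O(h^\infty)$ operator bounds, and orchestrating these iterations so that all error terms are safely $\mathcal O(h^\infty)$ rather than $\mathcal O(h^{1-\rho})$ is the technical heart of the argument.
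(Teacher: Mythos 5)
Your proposal follows essentially the same route as the paper's proof: reduce via Lemma~\ref{l:B-hat} to $\tilde A_-\widetilde{\mathcal B}_\chi A_+$, insert smooth cutoffs built by Lemma~\ref{l:symbol-construction} that localize to $\Lambda_\Gamma(C_1h^\rho)$, show the complementary pieces are $\mathcal O(h^\infty)$ using the microlocal decay of $A_\pm$, and then fibre over $w$ to invoke \eqref{e:fup-standard}. The only substantive difference is mechanical: rather than forcing the multiplication operator into the anisotropic calculus via a momentum cutoff $\phi$ and ``iterating part~3 of Lemma~\ref{l:globalprop}'' (a step you rightly flag as delicate), the paper reduces $A_-$ to $\Op_h(a_-)$ in a coordinate chart via Lemma~\ref{l:globallem}(2), observes that $a_-$ satisfies the full-derivative rapid decay of Definition~\ref{d:rapid-decay} away from $\Lambda_\Gamma(C_2 h^\rho)$, and then applies the composition formula Lemma~\ref{l:quant-basic} (in its extension to the non-compactly-supported symbol $1-\chi_0(y;h)$) directly to the literal multiplication operator; this avoids the bookkeeping around $\phi$ and gives the $\mathcal O(h^\infty)_{L^2\to L^2}$ bound in one shot.
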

\begin{proof}
We first use Lemma~\ref{l:B-hat} to write $B=A\widetilde{\mathcal B}_\chi+\mathcal O(h^\infty)_{L^2\to L^2}$ for some
$\chi\in C_0^\infty(\mathbb S^{n-1}_\Delta)$
and $A\in\Psi^{\comp}_h(\mathbb R^+\times\mathbb S^{n-1})$.
The operator $A_-A\in\Psi^{\comp}_{h,L_V,\rho}$ satisfies the same microlocal
vanishing assumption as $A_-$, therefore it suffices to show that
\begin{equation}
  \label{e:meimei}
\|A_-\chi'(w)\widetilde{\mathcal B}_\chi A_+\|_{L^2\to L^2}\leq Ch^{\beta-\varepsilon}.
\end{equation}
Here we may insert some cutoff function $\chi'\in C_0^\infty(\mathbb R^+)$
since $A$ is compactly supported.

Using Lemma~\ref{l:symbol-construction}, take a function
$\chi_0(y;h)\in C^\infty(\mathbb S^{n-1})$ such that
$$
\supp\chi_0\subset \Lambda_\Gamma(2C_2h^\rho),\quad
\supp(1-\chi_0)\cap\Lambda_\Gamma(C_2h^\rho)=\emptyset,\quad
|\partial^\alpha_y \chi_0(y)|\leq C_\alpha h^{-\rho|\alpha|}.
$$
Here $\Lambda_\Gamma(\cdot)$ is defined in~\eqref{e:limit-nbhd}. We claim that
\begin{equation}
  \label{e:meimei2}
A_-(1-\chi_0(y;h))=\mathcal O(h^\infty)_{L^2\to L^2},\quad
(1-\chi_0(y;h))A_+=\mathcal O(h^\infty)_{L^2\to L^2}.
\end{equation}
Indeed, by a partition of unity we may assume that $\WFh(A_-)$ is contained 
in the cotangent bundle of a coordinate chart on $\mathbb R^+\times\mathbb S^{n-1}$.
By part~2 of Lemma~\ref{l:globallem} (where $B,B'$ are pullback operators),
we can write $A_-=\Op_h(a_-)+\mathcal O(h^\infty)_{L^2\to L^2}$ for some $a_-\in S^{\comp}_{L_0,\rho}$.
Moreover, by the assumption on $A_-$, we see that $a_-$ is $\mathcal O(h^\infty)$,
in the sense of Definition~\ref{d:rapid-decay}, along every sequence
$(w_j,y_j,\theta_j,\eta_j,h_j)$ such that $y_j\notin\Lambda_\Gamma(C_2h_j^\rho)$.
Then the first estimate in~\eqref{e:meimei2} follows from Lemma~\ref{l:quant-basic}
(or rather, its trivial adaptation to the non-compactly supported symbol $1-\chi_0(y;h)$);
the second estimate in~\eqref{e:meimei2} is proved similarly.

By~\eqref{e:meimei2}, since $\|A_\pm\|_{L^2\to L^2}$ is bounded uniformly in $h$,
in order to prove~\eqref{e:meimei} it suffices to show
\begin{equation}
  \label{e:meimei3}
\|\chi_0(y;h)\chi'(w)\widetilde{\mathcal B}_\chi\chi_0(y;h)\|_{L^2(\mathbb R^+\times\mathbb S^{n-1})\to L^2(\mathbb R^+\times\mathbb S^{n-1})}\leq Ch^{\beta-\varepsilon}.
\end{equation}
We calculate
$$
\chi_0(y;h)\chi'(w)\widetilde{\mathcal B}_\chi\chi_0(y;h)v(w,y)=\chi'(w)4^{-iw/h}\mathcal B'_w(v(w,\cdot))(y),
$$
where $\mathcal B'_w:L^2(\mathbb S^{n-1})\to L^2(\mathbb S^{n-1})$ is given by
$$
\mathcal B'_w f(y)=(2\pi h)^{1-n\over 2}\int_{\mathbb S^{n-1}}|y-y'|^{2iw/h}
\chi(y,y')\chi_0(y;h)\chi_0(y';h)f(y')\,dy'.
$$
Replacing $h$ by $h/w$ in~\eqref{e:fup-standard} and using that $\chi_0$ is bounded
and supported in $\Lambda_\Gamma(2C_2h^\rho)$, we see that uniformly in $w\in \supp\chi'$,
$$
\|\mathcal B'_w\|_{L^2(\mathbb S^{n-1})\to L^2(\mathbb S^{n-1})}\leq Ch^{\beta-\varepsilon}
$$
and~\eqref{e:meimei3} follows from here by integration.
\end{proof}
We are now ready to give
\begin{proof}[Proof of Theorem~\ref{t:fup-reduction}]
In order to prove~\eqref{e:essential-gap2}, it suffices to show the estimate~\eqref{e:est0}
for all functions $u,f$ satisfying~\eqref{e:eq0},
where (see~\eqref{e:nu-def})
$$
\omega=1-ih\nu,\quad\nu\in [-1,\beta-\varepsilon].
$$
Take $\chi_\pm\in C_0^\infty(T^*M\setminus 0;[0,1])$
such that $\chi_\pm=1$ near $K\cap \{|\xi|_g=1\}$.
We also take $\varepsilon_0>0$ small enough
depending on $\varepsilon$ and fix $\rho\in (0,1)$ so that~\eqref{e:fup-standard} is satisfied
with $\varepsilon$ replaced by $\varepsilon_0$.
Put
$$
\begin{gathered}
t:=\rho\log(1/h),\quad
\nu^+:=\max(0,\nu);\\
A'_+:=\Op_h^{L_u}\big(\chi_+(\chi_+\circ e^{-tX})\big),\quad
A_0:=\Op_h^{L_u}(\chi_+),\quad
A'_-:=\Op_h^{L_s}\big(\chi_-(\chi_-\circ e^{tX})\big).
\end{gathered}
$$
Note that by Lemma~\ref{l:propagated-okay},
$$
A'_+\in\Psi^{\comp}_{h,L_u,\rho}(T^*M\setminus 0),\quad
A_0\in\Psi^{\comp}_h(M),\quad
A'_-\in\Psi^{\comp}_{h,L_s,\rho}(T^*M\setminus 0).
$$
By Lemma~\ref{l:second} we obtain
\begin{gather}
  \label{e:together1}
\|(A_0-A'_+)u\|_{L^2}\leq Ch^{-1-\rho(\nu^++\varepsilon_0)}\|f\|_{H^{s-1}_h}
+\mathcal O(h^\infty)\|u\|_{H^s_h},\\
  \label{e:together2}
\|u\|_{H^s_h}\leq Ch^{-\rho(\nu+\varepsilon_0)}\|A'_-u\|_{L^2}
+Ch^{-1-\rho(\nu^++\varepsilon_0)}\|f\|_{H^{s-1}_h}.
\end{gather}
We choose $\chi_\pm$ so that $\chi_+=1$ near $\supp\chi_-$.
Let $Q\in\Psi^{\comp}_h(M)$ be an elliptic parametrix of $A_0$
near $\supp\chi_-$ (see for instance~\cite[\S E.2.2]{dizzy} or~\cite[Proposition~2.4]{zeta});
in particular,
$$
QA_0=1+\mathcal O(h^\infty)\quad\text{microlocally near }\supp\chi_-.
$$
Since $A'_-$ is pseudolocal and its wavefront set is contained
inside $\supp\chi_-$, we have
\begin{equation}
  \label{e:together4}
\|A'_-(1-QA_0)u\|_{L^2}=\mathcal O(h^\infty)\|u\|_{H^s_h}.
\end{equation}
Take $\varepsilon_0<\varepsilon/2$. We claim that it suffices to prove the bound
\begin{equation}
  \label{e:prodib}
\|A'_-QA'_+\|_{L^2(M)\to L^2(M)}\leq Ch^{\beta-\varepsilon_0}.
\end{equation}
Indeed, putting together~\eqref{e:together1}--\eqref{e:prodib} and using that $\nu\leq\beta-\varepsilon$, we have
$$
\begin{aligned}
\|u\|_{H^s_h}&\leq Ch^{-\rho(\nu+\varepsilon_0)}\|A'_-u\|_{L^2}
+Ch^{-1-\rho(\nu^++\varepsilon_0)}\|f\|_{H^{s-1}_h}\\
&\leq Ch^{-\rho(\nu+\varepsilon_0)}\big(\|A'_-QA'_+u\|_{L^2}+
\|A'_-Q(A_0-A'_+)u\|_{L^2}\\
&\quad+\|A'_-(1-QA_0)u\|_{L^2}\big)
+Ch^{-1-\rho(\nu^++\varepsilon_0)}\|f\|_{H^{s-1}_h}\\
&\leq Ch^{\varepsilon-2\varepsilon_0}\|u\|_{H^s_h}
+Ch^{-1-2(\nu^++\varepsilon_0)}\|f\|_{H^{s-1}_h}
+\mathcal O(h^\infty)\|u\|_{H^s_h}
\end{aligned}
$$
giving~\eqref{e:est0} for $h$ small enough.

It remains to deduce~\eqref{e:prodib} from the fractal uncertainty principle. We may assume that $\WFh(Q)$ is contained in
a small neighborhood of $\supp\chi_-$; by an appropriate choice of $\chi_-$,
we may assume that $\WFh(Q)$ is contained in a small neighborhood
of $K\cap \{|\xi|_g=1\}$. By a partition of unity, it suffices to show that
for each $(x_0,\xi_0)\in K\cap \{|\xi|_g=1\}$, there exists a neighborhood $V$
of $(x_0,\xi_0)$ such that~\eqref{e:prodib} holds for all
$Q\in\Psi^{\comp}_h(M)$ with $\WFh(Q)\subset V$.

Fix $(x_0,\xi_0)\in K\cap \{|\xi|_g=1\}$. Composing the maps $\varkappa^\pm$ constructed in Lemma~\ref{l:kappa-M}
with a local inverse of the covering map $\pi_\Gamma$ defined in~\eqref{e:pi-Gamma}, we obtain exact symplectomorphisms
\begin{equation}
  \label{e:kappa-0}
\varkappa_0^\pm:U\to U'_\pm,\quad
U\subset T^* M\setminus 0,\quad
U'_\pm\subset T^*(\mathbb R^+\times\mathbb S^{n-1}),
\end{equation}
for some small neighborhood $U$ of $(x_0,\xi_0)$ and
some small neighborhoods $U'_\pm$ of
$$
(1,y_0^\pm,\theta_0^\pm,\eta_0^\pm):=\varkappa_0^\pm(x_0,\xi_0).
$$
Here $(w,y)$ are coordinates on $\mathbb R^+\times\mathbb S^{n-1}$
and $(\theta,\eta)$ are the corresponding dual variables.

Take $\mathcal B_\pm\in I^{\comp}_h(\varkappa_0^\pm)$,
$\mathcal B'_\pm\in I^{\comp}_h((\varkappa_0^\pm)^{-1})$ quantizing
$\varkappa_0^\pm$ near $V\times \varkappa_0^\pm(V)$ in the sense of~\eqref{e:quantized},
where $V\Subset U$ is a small neighborhood of $(x_0,\xi_0)$.
Since $\WFh(Q)\subset V$ and $A'_\pm$ are pseudolocal, we have
$$
A'_-QA'_+=(\mathcal B'_-\mathcal B_-)A'_-(\mathcal B'_-\mathcal B_-)(\mathcal B'_+\mathcal B_+)QA'_+(\mathcal B'_+\mathcal B_+)+\mathcal O(h^\infty)_{L^2\to L^2};
$$
therefore, to prove~\eqref{e:prodib} it suffices to show that
\begin{equation}
  \label{e:prodib2}
\|A_- BA_+\|_{L^2(\mathbb R^+\times\mathbb S^{n-1})\to L^2(\mathbb R^+\times\mathbb S^{n-1})}\leq Ch^{\beta-\varepsilon_0},
\end{equation}
where, with $\widehat\varkappa$ defined in~\eqref{e:kappa-hat},
$$
A_-=\mathcal B_-A'_-\mathcal B'_-,\quad
A_+=\mathcal B_+QA'_+\mathcal B'_+,\quad
B=\mathcal B_-\mathcal B'_+\in I^{\comp}_h(\widehat\varkappa^{-1}).
$$
By the construction of $\Psi^{\comp}_{h,L,\rho}$ calculus in~\S\ref{s:calculus-general},
together with~\eqref{e:lafol}, we see that
$$
A_\pm\in\Psi^{\comp}_{h,L_V,\rho}(T^*(\mathbb R^+\times\mathbb S^{n-1})).
$$
Moreover, $A_\pm=\mathcal O(h^\infty)$ in the sense of Definition~\ref{d:lag-prince}
along each sequence $(w_j,y_j,\theta_j,\eta_j,h_j)$ such that
\begin{equation}
  \label{e:killit-1}
(w_j,y_j,\theta_j,\eta_j)\ \notin\ \varkappa_0^\pm(U\cap e^{\pm\rho\log(1/h_j)X}(\supp\chi_\pm)).
\end{equation}
By Lemma~\ref{l:close-to-trapping} and~\eqref{e:kappa-M-def}, we see that~\eqref{e:killit-1} is satisfied when
$d(y_j,\Lambda_\Gamma)> C_2h_j^\rho$ for some fixed constant $C_2$.
Now~\eqref{e:prodib2} follows from Lemma~\ref{l:fup-revisited}.
\end{proof}
\noindent\textbf{Remark}. As follows from the proof of Lemma~\ref{l:kappa-M}
in Appendix~\ref{s:hyperbolic-technical}, the Fourier integral operators $\mathcal B_\pm$ used in the proof
of Theorem~\ref{t:fup-reduction}
are microlocalized versions of the Poisson operators
$$
v(w,y)\ \mapsto\ u(x)=\int_{\mathbb R^+}\int_{\mathbb S^{n-1}} \mathcal P(x,y)^{\mp i w/h} v(w,y)\,dy dw.
$$
Therefore, conjugation by $\mathcal B_\pm$ is related to the representation
of resonant states as images under the Poisson operator of distributions supported on the limit
set, see for instance~\cite[(14.9)]{Borthwick} or~\cite{Bunke-Olbrich2}.

\section{Fractal uncertainty principle}
  \label{s:fup}
  
In this section, we prove Theorem~\ref{t:ae-reduction}.
We will not use directly the geometry or the dynamics of the manifold $M$,
instead relying on the additive structure of the limit set $\Lambda_\Gamma$
defined in~\eqref{e:Lambda-Gamma} and basic harmonic analysis.

\subsection{Basic properties}

We start with some basic facts regarding the fractal uncertainty principle
of Definition~\ref{d:fup}. First of all, since $\mathcal B_\chi(h)$ is a semiclassical Fourier
integral operator associated to a canonical transformation
(see~\eqref{e:fio-general-form} and Lemma~\ref{l:kappa-hat}), it is bounded
on $L^2$ uniformly in $h$. This gives the bound
\begin{equation}
  \label{e:tb-1}
\|\indic_{\Lambda_\Gamma(C_1h^\rho)}\mathcal B_\chi\indic_{\Lambda_\Gamma(C_1 h^\rho)}\|_{L^2\to L^2}\leq C.
\end{equation}
Combined with Theorem~\ref{t:fup-reduction}, this bound translates to the well-known statement
that there are no resonances in the upper half-plane away from the imaginary axis, which
is a direct consequence of self-adjointness of the Laplacian on $L^2(M)$.

To formulate the next bound, we introduce the parameter
\begin{equation}
  \label{e:delta}
\delta\in [0,n-1]
\end{equation}
defined as the exponent of convergence of Poincar\'e series: that is,
$\delta$ is the smallest number such that for all $x,x'\in\mathbb H^n$ and $\Re s>\delta$, we have
$$
\Sigma_p(s;x,x'):=\sum_{\gamma\in\Gamma} \exp\big(-sd_{\mathbb H^n}(x,\gamma.x')\big)<\infty.
$$
Here $d_{\mathbb H^n}(\cdot,\cdot)$ stands for the distance function on $\mathbb H^n$
induced by the hyperbolic metric.

The constant $\delta$ is the Hausdorff dimension of the limit set $\Lambda_\Gamma$, see~\cite[Theorem~14.14]{Borthwick}
for the case $n=2$ and~\cite{Sullivan} for general dimensions. It is also the Minkowski dimension of $\Lambda_\Gamma$;
in fact, we have the following more precise estimate (which is a form of Ahlfors-David regularity):
\begin{equation}
  \label{e:AD-estimate-Lebesgue}
\begin{gathered}
C^{-1}\alpha^{n-1-\delta}(\alpha')^\delta\ \leq\ \mu_L(\Lambda_\Gamma(\alpha)\cap B(y_0,\alpha'))
\ \leq\ C\alpha^{n-1-\delta}(\alpha')^\delta,\\
0<\alpha\leq \alpha'\leq 1,\quad
y_0\in\Lambda_\Gamma,
\end{gathered}
\end{equation}
where $\Lambda_\Gamma(\alpha)$ is defined in~\eqref{e:limit-nbhd},
$B(y_0,\alpha')$ denotes the ball of radius $\alpha'$ centered at $y_0$,
$\mu_L$ denotes the Lebesgue measure on $\mathbb S^{n-1}$, and the constant $C>0$ does not depend
on $y_0,\alpha,\alpha'$.
See~\S\ref{s:minkowski} for the proof. By putting $\alpha'=1$ in~\eqref{e:AD-estimate-Lebesgue},
we obtain in particular~\eqref{e:LG-upper-1}.

Given~\eqref{e:LG-upper-1}, we may use Schur's lemma~\cite[Lemma~18.1.12]{ho3}:
the estimate
$$
\sup_{y\in\Lambda_\Gamma(C_1h^\rho)}\int_{\Lambda_\Gamma(C_1h^\rho)} |\chi(y,y')|\,dy'\leq Ch^{\rho(n-1-\delta)}
$$
implies by~\eqref{e:B-chi} that
\begin{equation}
  \label{e:tb-2}
\|\indic_{\Lambda_\Gamma(C_1h^\rho)}\mathcal B_\chi\indic_{\Lambda_\Gamma(C_1 h^\rho)}\|_{L^2\to L^2}\leq Ch^{1-n\over 2}h^{\rho(n-1-\delta)}.
\end{equation}
Since we may choose $\rho$ arbitrarily close to 1, this gives the fractal uncertainty principle
with exponent ${n-1\over 2}-\delta$. By Theorem~\ref{t:fup-reduction},
the bounds \eqref{e:tb-1} and~\eqref{e:tb-2} together translate (with a loss of $\varepsilon$)
to the standard spectral gap~\eqref{e:standard-gap}.

We finally show that the fractal uncertainty principle cannot hold with
$\beta>{n-1\over 2}-{\delta\over 2}$. This threshold corresponds to the Jakobson--Naud
conjecture, see~\S\ref{s:spfup}. More precisely, we claim that for each $\varepsilon>0$,
there exists $\chi\in C_0^\infty(\mathbb S^{n-1}_\Delta)$ and
a family $v(h)\in L^2(\mathbb S^{n-1})$ such that for $h$ small enough,
\begin{equation}
  \label{e:JN}
\|\indic_{\Lambda_\Gamma(h)}\mathcal B_\chi(h) \indic_{\Lambda_\Gamma(h)}v(h)\|_{L^2}\geq h^{{n-1\over 2}-{\delta\over 2}+\varepsilon}\|v(h)\|_{L^2}.
\end{equation}
To prove~\eqref{e:JN}, take small $\tilde\varepsilon>0$,
fix $y_0,y_1\in\Lambda_\Gamma$, $y_0\neq y_1$, and let $v(h)=\indic_{B(y_0,h^{1+\tilde\varepsilon})}$.
Then
\begin{equation}
  \label{e:vvvnnn}
\|v(h)\|_{L^2}\leq C h^{(1+\tilde\varepsilon)(n-1)/2},\quad
\indic_{\Lambda_\Gamma(h)}v(h)=v(h).
\end{equation}
Using~\eqref{e:B-chi}, we compute
$$
\mathcal B_\chi(h) v(y;h)=(2\pi h)^{1-n\over 2}|y-y_0|^{2i/h}\int_{\mathbb S^{n-1}} \Big({|y-y'|\over |y-y_0|}\Big)^{2i/h}\chi(y,y')v(y')\,dy'.
$$
We take $h$-independent $\chi$ such that $\chi(y,y')=1$ for $(y,y')$ near $(y_1,y_0)$. For $y$ in a fixed neighborhood
of $y_1$ and $y'\in\supp v$, we have $\big({|y-y'|\over |y-y_0|}\big)^{2i/h}=1+\mathcal O(h^{\tilde\varepsilon})$, therefore
for $\tilde\varepsilon_1$ small enough,
$$
|\mathcal B_\chi(h) v(y;h)|\geq C^{-1}h^{1-n\over 2} h^{(1+\tilde\varepsilon)(n-1)}\quad\text{for }
|y-y_1|<\tilde\varepsilon_1.
$$
By~\eqref{e:AD-estimate-Lebesgue}, we then have
$$
\|\indic_{\Lambda_\Gamma(h)}\mathcal B_\chi(h)v(h)\|_{L^2}\geq C^{-1}
h^{1-n\over 2}h^{(1+\tilde\varepsilon)(n-1)}h^{n-1-\delta\over 2},
$$
which together with~\eqref{e:vvvnnn} implies~\eqref{e:JN} as long as $\tilde\varepsilon<{2\over n-1}\varepsilon$.

\subsection{Reduction to additive energy}
  \label{s:fup-ae}

We now prove Theorem~\ref{t:ae-reduction}. We will take $\rho$ very close to 1, in particular $\rho>1/2$.
Using Lemma~\ref{l:symbol-construction}, take $\psi_0(y;h)\in C^\infty(\mathbb S^{n-1};[0,1])$ such that
\begin{gather}
\label{e:psi0-1}
\supp(1-\psi_0)\cap \Lambda_\Gamma(C_1h^\rho)=\emptyset,\quad
\supp\psi_0\subset \Lambda_\Gamma(h^{\rho/2});\\
\label{e:psi0-3}
\sup_{\mathbb S^{n-1}}|\partial^\alpha\psi_0|\leq C_\alpha h^{-{\rho\over 2}|\alpha|}.
\end{gather}
To show~\eqref{e:fup-standard}, it suffices to prove that
$$
\|\sqrt{\psi_0}\,\mathcal B_\chi(h)\indic_{\Lambda_\Gamma(C_1h^\rho)}\|_{L^2\to L^2}\leq Ch^{\beta-\varepsilon},
$$
in fact it is enough to prove the following $T^*T$-bound:
$$
\|\indic_{\Lambda_\Gamma(C_1 h^\rho)}\mathcal B_\chi(h)^*\psi_0\mathcal B_\chi(h)\indic_{\Lambda_\Gamma(C_1h^\rho)}\|_{L^2\to L^2}\leq Ch^{2(\beta-\varepsilon)}.
$$
By Schur's lemma~\cite[Lemma~18.1.12]{ho3} and~\eqref{e:B-chi} it is enough to prove the Schwartz kernel bound
\begin{equation}
  \label{e:skb1}
\sup_{y''\in\Lambda_\Gamma(C_1 h^\rho)}\int\limits_{\Lambda_\Gamma(C_1 h^\rho)}|\mathcal K(y,y'';h)|\,dy
\leq Ch^{2(\beta-\varepsilon)}
\end{equation}
where the integral kernel $\mathcal K(y,y'';h)$ of the operator $\mathcal B_\chi(h)^*\psi_0\mathcal B_\chi(h)$
is given by
\begin{equation}
  \label{e:skb2}
\mathcal K(y,y'';h)=h^{1-n}\int\limits_{\mathbb S^{n-1}}
\bigg({|y'-y''|\over |y'-y|}\bigg)^{2i/h} \chi(y',y'')\overline{\chi(y',y)}\psi_0(y';h)\,dy'.
\end{equation}
Morally speaking, $\mathcal K(y,y'';h)$ is the correlation on $\Lambda_\Gamma(h^{\rho/2})$ between Lagrangian
states corresponding to two levels in~\eqref{e:hor-lag} given by $y_j=y$ and $y_j=y''$.

To capture cancellations in the expression~\eqref{e:skb1}, we use the following precise
version of the method of nonstationary phase (see for instance~\cite[Theorem~7.7.1]{ho1} for
the standard version):
\begin{lemm}
  \label{l:ibp}
Let $U\subset \mathbb R^m$ be open and bounded,
$\widetilde\varphi\in C^\infty(U;\mathbb R)$, and $a\in C_0^\infty(U)$.
Assume that the following inequalities hold:
\begin{equation}
  \label{e:ibp1}
\begin{aligned}
C_1^{-1}\tilde h^{-1}\leq |d\widetilde\varphi(x)|\leq C_1\tilde h^{-1}&\quad\text{for all }x\in\supp a;\\
|\partial^\alpha \widetilde\varphi(x)|\leq C_{|\alpha|}\tilde h^{-\tilde\rho|\alpha|}&\quad\text{for all }x\in\supp a\text{ and }|\alpha|\geq 2;\\
|\partial^\alpha a(x)|\leq C_{|\alpha|}\tilde h^{-\tilde\rho|\alpha|}&\quad\text{for all }x\in U\text{ and all }\alpha.
\end{aligned}
\end{equation}
Here $\tilde\rho,\tilde h\in (0,1)$ and $C_0,C_1,C_2,\dots$ are positive constants. Then for each $N\in\mathbb N_0$
\begin{equation}
  \label{e:ibp2}
\bigg|\int_U e^{i\widetilde\varphi(x)}a(x)\,dx\bigg|\leq
C'_N\tilde h^{N(1-\tilde\rho)},
\end{equation}
where the constant $C'_N$ depends only on $U,C_0,C_1,\dots,C_{N+1}$.
\end{lemm}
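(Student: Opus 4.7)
The plan is to prove \eqref{e:ibp2} by iterating a standard nonstationary phase integration by parts $N$ times. On $\supp a$, where \eqref{e:ibp1} guarantees that $|d\widetilde\varphi|\geq C_1^{-1}\tilde h^{-1}$, define the first-order differential operator
$$L:=\frac{1}{i|d\widetilde\varphi|^2}\sum_{j=1}^m (\partial_j\widetilde\varphi)\,\partial_j,$$
so that $L(e^{i\widetilde\varphi})=e^{i\widetilde\varphi}$. Its formal transpose is $L^t f=-\frac{1}{i}\sum_j\partial_j(b_j f)$ with $b_j:=(\partial_j\widetilde\varphi)/|d\widetilde\varphi|^2$. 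Since $a\in C_0^\infty(U)$, integrating by parts $N$ times yields $\int_U e^{i\widetilde\varphi}a\,dx=\int_U e^{i\widetilde\varphi}(L^t)^N a\,dx$, and the bound \eqref{e:ibp2} follows from $\sup_U|(L^t)^N a|\leq C_N\tilde h^{N(1-\tilde\rho)}$ together with the finiteness of $|U|$.

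To bookkeep the derivatives cleanly I would introduce, on $\supp a$, the symbol classes $\mathcal S^k$ consisting of smooth functions $f$ with $|\partial^\alpha f|\leq C_\alpha\tilde h^{k-\tilde\rho|\alpha|}$ for every $\alpha$. By hypothesis $a\in\mathcal S^0$, and combining the two $\widetilde\varphi$-bounds in \eqref{e:ibp1} into the single estimate $|\partial^\mu\widetilde\varphi|\leq C\tilde h^{-1-\tilde\rho(|\mu|-1)}$ for $|\mu|\geq 1$ (which reduces to the inequality $\max(1,\tilde\rho|\mu|)\leq 1+\tilde\rho(|\mu|-1)$, valid since $\tilde\rho\leq 1$), one gets $\partial_j\widetilde\varphi\in\mathcal S^{-1}$, and, as discussed below, $|d\widetilde\varphi|^{-2}\in\mathcal S^{2}$, hence $b_j\in\mathcal S^{1}$. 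Since $\mathcal S^k\cdot\mathcal S^\ell\subset\mathcal S^{k+\ell}$ and $\partial_j:\mathcal S^k\to\mathcal S^{k-\tilde\rho}$, writing
$$L^t f=-\frac{1}{i}\sum_j\bigl((\partial_j b_j)f+b_j\partial_j f\bigr),$$
we see that the first summand is in $\mathcal S^{1-\tilde\rho}\cdot\mathcal S^k$ and the second in $\mathcal S^{1}\cdot\mathcal S^{k-\tilde\rho}$, both contained in $\mathcal S^{k+1-\tilde\rho}$. Hence $L^t:\mathcal S^k\to\mathcal S^{k+1-\tilde\rho}$, and iterating gives $(L^t)^N a\in\mathcal S^{N(1-\tilde\rho)}$, which is exactly the sup-norm bound required.

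The only genuinely nontrivial point is the claim $|d\widetilde\varphi|^{-2}\in\mathcal S^2$. This is a Fa\`a di Bruno computation: with $f=|d\widetilde\varphi|^2$, the derivative $\partial^\alpha(1/f)$ is a sum over partitions $|\beta_1|+\cdots+|\beta_r|=|\alpha|$, $|\beta_i|\geq 1$, of terms of the form $f^{-1-r}\prod_i\partial^{\beta_i}f$. The pointwise bound $|\partial^\mu\widetilde\varphi|\leq C\tilde h^{-1-\tilde\rho(|\mu|-1)}$ applied twice yields $|\partial^{\beta_i}f|\leq C\tilde h^{-2-\tilde\rho|\beta_i|}$; combining with $f^{-1-r}\leq C\tilde h^{2+2r}$ the exponents telescope to $\tilde h^{2-\tilde\rho|\alpha|}$, as needed. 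I expect this bookkeeping to be the main (modest) obstacle; everything else is routine. Finally, $(L^t)^N a$ involves derivatives of $\widetilde\varphi$ of order at most $N+1$ and of $a$ of order at most $N$, so $C'_N$ depends only on $C_0,C_1,\dots,C_{N+1}$.
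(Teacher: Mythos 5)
Your proof is correct and follows the same approach as the paper: repeated integration by parts with the operator $L$ satisfying $L(e^{i\widetilde\varphi})=e^{i\widetilde\varphi}$, together with the key bound $\bigl|\partial^\alpha\bigl(\partial_j\widetilde\varphi/|d\widetilde\varphi|^2\bigr)\bigr|\leq C'_\alpha\tilde h^{1-\tilde\rho|\alpha|}$ on $\supp a$. The paper simply asserts that estimate as a consequence of the first two lines of \eqref{e:ibp1}; your symbol-class bookkeeping and the Fa\`a di Bruno computation spell out the details it leaves implicit, arriving at the same conclusion with the same dependence of $C'_N$ on $C_0,\dots,C_{N+1}$.
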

\noindent\textbf{Remark.} Using coordinate charts and a partition of unity for $a(x)$,
we see that Lemma~\ref{l:ibp} also applies when $U$ is a manifold;
we will typically use it for $U=\mathbb S^{n-1}$.
\begin{proof}
Consider the first order differential operator
$$
L=-i\sum_{j=1}^m {\partial_j\widetilde\varphi\over |d\widetilde\varphi|^2}\partial_j,\quad
|d\widetilde\varphi|^2:=\sum_{j=1}^m|\partial_j\widetilde\varphi|^2.
$$
Then $e^{i\widetilde\varphi}=L(e^{i\widetilde\varphi})$. Integrating
by parts $N$ times, we obtain
$$
\bigg|\int_{U} e^{i\widetilde\varphi(x)}a(x)\,dx\bigg|=
\bigg|\int_{U} e^{i\widetilde\varphi(x)}(L^t)^Na(x)\,dx\bigg|
\leq \mu_L(U)\sup_x |(L^t)^Na(x)|,
$$
where $\mu_L$ is the Lebesgue measure and
$$
L^t f=i\sum_{j=1}^m\partial_j\Big({\partial_j\widetilde\varphi\over |d\widetilde\varphi|^2}f\Big).
$$
Now, the first two bounds in~\eqref{e:ibp1} imply that
$$
\sup_{x\in\supp a}\Big|\partial^\alpha\Big({\partial_j\widetilde\varphi\over |d\widetilde\varphi|^2}\Big)\Big|\leq
C'_\alpha\tilde h^{1-\tilde\rho|\alpha|}
$$
where the constants $C'_\alpha$ depend only on $C_0,C_1,\dots,C_{|\alpha|+1}$.
This together with the last bound in~\eqref{e:ibp1} implies the estimate~\eqref{e:ibp2}.
\end{proof}
Armed with Lemma~\ref{l:ibp}, we establish decay of the kernel $\mathcal K$.
We first consider the case when $y$ and $y''$ are sufficiently far away from each other
so that the corresponding Lagrangian leaves almost do not correlate:
\begin{lemm}
  \label{l:ibpl1}
We have uniformly in $y,y''\in\mathbb S^{n-1}$ such that $|y-y''|>{1\over 2}h^{1/2}$,
$$
\mathcal K(y,y'';h)=\mathcal O(h^\infty).
$$
\end{lemm}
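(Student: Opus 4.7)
The plan is to rewrite $\mathcal{K}(y,y'';h)$ as an oscillatory integral and apply the nonstationary phase bound of Lemma~\ref{l:ibp} with a rescaled semiclassical parameter. From~\eqref{e:skb2},
\[
\mathcal{K}(y,y'';h) = h^{1-n}\int_{\mathbb{S}^{n-1}} e^{i\Phi(y')/h}\, a(y';h)\,dy',
\]
where $\Phi(y') := 2\log|y'-y''|-2\log|y'-y|$ and $a(y';h) := \chi(y',y'')\overline{\chi(y',y)}\psi_0(y';h)$. Since $\chi\in C_0^\infty(\mathbb{S}^{n-1}_\Delta)$, the support of $a$ keeps $y'$ bounded away from both $y$ and $y''$, so $\Phi$ is smooth there with derivative bounds uniform in~$(y,y'')$.

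The key geometric observation is that $\Phi$ vanishes identically when $y=y''$, which forces all of its tangential $y'$--derivatives to shrink with $|y-y''|$. From \eqref{e:gide} one computes $d_{y'}\Phi = 2(\partial y')^{T}\bigl[\mathcal{G}(y',y)-\mathcal{G}(y',y'')\bigr]$, and since $y\mapsto\mathcal{G}(y',y)$ is a smooth local diffeomorphism onto $T_{y'}\mathbb{S}^{n-1}$ for $y\neq y'$, this gives the two--sided bound $|d_{y'}\Phi|\sim|y-y''|$ uniformly on $\supp a$. More generally, writing $\Phi=F(y',y'')-F(y',y)$ with $F(y',z):=2\log|y'-z|$ and integrating $\partial_z F$ along a short arc from $y$ to $y''$, I obtain
\[
|\partial_{y'}^{\alpha}\Phi(y')|\le C_{\alpha}\,|y-y''|\qquad\text{for every }|\alpha|\ge 1,\ y'\in\supp a.
\]

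I then apply Lemma~\ref{l:ibp} in local coordinate charts on $\mathbb{S}^{n-1}$ with $\tilde h := h/|y-y''|$ and $\tilde\rho := \rho$ (by the standing assumption $\rho>1/2$ of this section). The first condition in~\eqref{e:ibp1} is $|d(\Phi/h)|\sim\tilde h^{-1}$; for $|\alpha|\ge 2$ the higher phase derivatives satisfy $|\partial^{\alpha}(\Phi/h)|\le C_{\alpha}\tilde h^{-1}\le C_{\alpha}\tilde h^{-\rho|\alpha|}$ since $\rho|\alpha|\ge 1$; and combining \eqref{e:psi0-3} with $|y-y''|>h^{1/2}/2$ (hence $\tilde h\le 2h^{1/2}$) yields $|\partial^{\alpha}a|\le C_{\alpha}h^{-\rho|\alpha|/2}\le C_{\alpha}\tilde h^{-\rho|\alpha|}$. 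Lemma~\ref{l:ibp} then gives, for every $N$,
\[
|\mathcal{K}(y,y'';h)|\le C_{N}\,h^{1-n}\tilde h^{N(1-\rho)}\le C_{N}\,h^{1-n+N(1-\rho)/2},
\]
which is $\mathcal{O}(h^{\infty})$ as $N\to\infty$.

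The main technical point is the improved bound $|\partial^{\alpha}\Phi|\lesssim|y-y''|$ for $|\alpha|\ge 2$: without this, only the generic $O(1)$ estimate would be available, and the phase condition in Lemma~\ref{l:ibp} would force $\tilde\rho\ge 1$ already at $|\alpha|=2$, blocking any decay. This gain is precisely what allows us to rescale to $\tilde h\sim h^{1/2}$ in the slow--oscillating regime $|y-y''|\sim h^{1/2}$ and still close the nonstationary phase argument.
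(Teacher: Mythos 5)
Your proof is correct and follows the same route as the paper: rewrite $\mathcal K$ as an oscillatory integral, use~\eqref{e:gide} and the $C^N$ bound $\|\varphi\|_{C^N}\lesssim|y-y''|$ to show that the phase is nonstationary with gradient of size $|y-y''|/h$, and apply Lemma~\ref{l:ibp} with $\tilde h=h/|y-y''|$, $\tilde\rho=\rho$. You have simply spelled out the derivative estimates and the verification of the hypotheses of Lemma~\ref{l:ibp} (including the role of $\rho>1/2$) in more detail than the paper does.
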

\begin{proof} We rewrite~\eqref{e:skb2} as
\begin{equation}
  \label{e:K-rewritten}
\begin{gathered}
\mathcal K(y,y'';h)=h^{1-n}\int_{\mathbb S^{n-1}}e^{i\varphi(y,y',y'')/h}a(y,y',y'';h)\,dy',\\
\varphi=2(\log |y'-y''|-\log |y'-y|),\quad
a=\chi(y',y'')\overline{\chi(y',y)}\psi_0(y';h).
\end{gathered}
\end{equation}
Due to the cutoff $\chi$, the amplitude $a$ is supported inside some fixed compact set
which does not intersect $\{y=y'\}$ and $\{y'=y''\}$; in particular $\varphi$
is smooth near $\supp a$. We next have for all $N$,
$$
\|\varphi\|_{C^N_{y'}(\supp a)}\leq C_N |y-y''|,
$$
where the constants $C_N$ are independent of $y,y'',h$.
Moreover, for some constant $C$ independent of $y,y',y'',h$,
$$
|\partial_{y'}\varphi(y,y',y'')|\geq C^{-1}|y-y''|\quad\text{for }(y,y',y'')\in\supp a.
$$
This follows immediately from~\eqref{e:gide}
and the fact that the map $y\mapsto \mathcal G(y',y)$ is a diffeomorphism
from $\mathbb S^{n-1}\setminus \{y'\}$ to $T_{y'}^*\mathbb S^{n-1}$.

It remains to apply Lemma~\ref{l:ibp} with
$$
\widetilde\varphi:={\varphi\over h},\quad \tilde h:={h\over|y-y''|}<2h^{1/2},\quad
\tilde\rho:=\rho,
$$ 
and use~\eqref{e:psi0-3}.
\end{proof}
Given Lemma~\ref{l:ibpl1}, in order to show~\eqref{e:skb1} it suffices to prove
the following bound:
\begin{equation}
  \label{e:skb3}
\sup_{y_0\in\Lambda_\Gamma}\,\sup_{y''\in B(y_0,h^{1/2})}\int\limits_{\Lambda_\Gamma(C_1 h^\rho)\cap B(y_0,h^{1/2})}|\mathcal K(y,y'';h)|\,dy
\leq Ch^{2(\beta-\varepsilon)}.
\end{equation}
We claim that it is enough to prove the following $L^4$ estimate:
\begin{equation}
  \label{e:horror1}
\sup_{y_0\in\Lambda_\Gamma}\,\sup_{y''\in B(y_0,h^{1/2})}
\int\limits_{B(y_0,h^{1/2})}|\mathcal K(y,y'';h)|^4\,dy
\leq Ch^{8(\beta-\varepsilon)-3\rho(n-1-\delta)-{3\delta\over 2}}.
\end{equation}
Indeed, \eqref{e:skb3} follows by H\"older's inequality from~\eqref{e:horror1} 
and the following corollary of~\eqref{e:AD-estimate-Lebesgue}:
\begin{equation}
  \label{e:eddie}
\|\indic_{\Lambda_\Gamma(C_1h^\rho)\cap B(y_0,h^{1/2})}\|_{L^{4/3}}\leq Ch^{{3\rho\over 4}(n-1-\delta)+{3\delta\over 8}}.
\end{equation}
The proof of~\eqref{e:horror1} is based on taking
the Taylor expansion of the phase function $\varphi$ in~\eqref{e:K-rewritten} around $y=y''=y_0$.
The first term in the expansion is linear in $y-y''$ and gives the Fourier transform of a distorted
version of $\psi_0$; the next terms are $\mathcal O(|y-y_0|^2+|y''-y_0|^2)=\mathcal O(h)$ and can be put into the amplitude
in the integral. 
The $L^4$ norm of the Fourier transform can next be estimated via the additive energy of the distorted
support of $\psi_0$. The proof below relies on this argument, though it does not explicitly use the Fourier transform.
Note that to reduce our integral to Fourier transform we needed to restrict
to $y,y''=y_0+\mathcal O(h^{1/2})$. To show that the contributions
of other $y,y''$ are negligible in Lemma~\ref{l:ibpl1}
we needed the derivative bounds~\eqref{e:psi0-3}, explaining the need
to make $\psi_0$ live on an $h^{\rho/2}$ neighborhood of the limit set
rather than on an $h^{\rho}$ neighborhood.

Using Lemma~\ref{l:symbol-construction}, take $\psi_1(y;h)\in C^\infty(\mathbb S^{n-1};[0,1])$ such that
\begin{equation}
  \label{e:psi1}
\begin{gathered}
\supp(1-\psi_1)\cap B(y_0,h^{1/2})=\emptyset,\quad
\supp\psi_1\subset B(y_0,2h^{1/2});\\
\sup_{\mathbb S^{n-1}}|\partial^\alpha \psi_1|\leq C_\alpha h^{-|\alpha|/2}.
\end{gathered}
\end{equation}
Then to prove~\eqref{e:horror1} is enough to show that
\begin{equation}
  \label{e:horror2}
\sup_{y_0\in\Lambda_\Gamma}\,\sup_{y''\in B(y_0,h^{1/2})}
\int\limits_{\mathbb S^{n-1}}\psi_1(y;h)|\mathcal K(y,y'';h)|^4\,dy
\leq Ch^{8(\beta-\varepsilon)-3\rho(n-1-\delta)-{3\delta\over 2}}.
\end{equation}
By Fubini's Theorem and~\eqref{e:skb2} we have
$$
\int\limits_{\mathbb S^{n-1}}\psi_1(y;h)|\mathcal K(y,y'';h)|^4\,dy
=\int\limits_{(\mathbb S^{n-1})^4}\mathcal K_1(y_1,y_2,y_3,y_4,y'';h)\,dy_1dy_2dy_3dy_4,
$$
where
$$
\begin{aligned}
\mathcal K_1&=h^{4(1-n)}\psi_0(y_1;h)\psi_0(y_2;h)\psi_0(y_3;h)\psi_0(y_4;h)\mathcal K_2,\\
\mathcal K_2&=\Big({|y_1-y''|\cdot |y_3-y''|\over |y_2-y''|\cdot |y_4-y''|}\Big)^{2i/h}
\chi(y_1,y'')\overline{\chi(y_2,y'')}\chi(y_3,y'')\overline{\chi(y_4,y'')}\mathcal K_3,\\
\mathcal K_3&=\int_{\mathbb S^{n-1}}\Big({|y_2-y|\cdot |y_4-y|\over |y_1-y|\cdot |y_3-y|}\Big)^{2i/h}\,\,
\overline{\chi(y_1,y)}\chi(y_2,y)\overline{\chi(y_3,y)}\chi(y_4,y)\psi_1(y;h)\,dy.
\end{aligned}
$$
The next statement shows that $\mathcal K_1$ is very small unless $y_1,y_2,y_3,y_4$
satisfy a certain additive relation. The measure of the set of quadruples $(y_1,y_2,y_3,y_4)$
which do satisfy this relation will later be estimated using additive energy.
\begin{lemm}
  \label{l:ibpl2}
Let $\eta_j=\mathcal G(y_0,y_j)\in T^*_{y_0}\mathbb S^{n-1}$, with $\mathcal G$ defined in~\eqref{e:stpro}, and assume that
\begin{equation}
  \label{e:etacon}
|\eta_1-\eta_2+\eta_3-\eta_4|\geq h^{\rho/2}.
\end{equation}
Then $\mathcal K_1(y_1,y_2,y_3,y_4,y'';h)=\mathcal O(h^\infty)$,
uniformly in $y_0,y_1,y_2,y_3,y_4,y''$.
\end{lemm}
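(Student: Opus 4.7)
The plan is to apply the non-stationary phase estimate of Lemma~\ref{l:ibp} to $\mathcal K_3$, viewed as the oscillatory integral
$$
\mathcal K_3 = \int_{\mathbb S^{n-1}} e^{i\varphi(y)/h}\, a(y;h)\, dy,\quad
\varphi(y) = 2\log\frac{|y_2-y|\,|y_4-y|}{|y_1-y|\,|y_3-y|},
$$
with amplitude $a(y;h) = \overline{\chi(y_1,y)}\chi(y_2,y)\overline{\chi(y_3,y)}\chi(y_4,y)\psi_1(y;h)$. The remaining factors of $\mathcal K_1$ (the four $\psi_0$'s, the four $\chi(y_j,y'')$'s, and a modulus-one phase) are all bounded by $1$, and the prefactor $h^{4(1-n)}$ is merely polynomial in $h$, so any $\mathcal O(h^\infty)$ bound on $\mathcal K_3$ upgrades immediately to $\mathcal O(h^\infty)$ for $\mathcal K_1$.

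The crux is a lower bound on $|\partial_y\varphi|$ throughout $\supp a$. Using the identity~\eqref{e:gide} one computes $\partial_y\varphi(y_0) = \mathcal G(y_0,y_1) - \mathcal G(y_0,y_2) + \mathcal G(y_0,y_3) - \mathcal G(y_0,y_4) = \eta_1-\eta_2+\eta_3-\eta_4$. Because $\chi$ is compactly supported in $\mathbb S^{n-1}_\Delta$, the higher $y$-derivatives of $\varphi$ remain uniformly bounded on the region of integration (uniformly in $y_0,y_1,\dots,y_4$), and a first-order Taylor expansion in $y$ about $y_0$ combined with $\supp\psi_1 \subset B(y_0,2h^{1/2})$ (from~\eqref{e:psi1}) yields $|\partial_y\varphi(y)| = \sigma + O(h^{1/2})$, where $\sigma := |\eta_1-\eta_2+\eta_3-\eta_4| \geq h^{\rho/2}$. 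Since $\rho<1$ forces $h^{\rho/2} \gg h^{1/2}$, one concludes $|\partial_y\varphi(y)| \geq \sigma/2$ on $\supp a$ for all $h$ small, uniformly in the parameters.

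With this lower bound in hand, the plan is to invoke Lemma~\ref{l:ibp} with $\tilde h := h/\sigma \leq h^{1-\rho/2}$, $\tilde\varphi := \varphi/h$, and $\tilde\rho \in (0,1)$ chosen slightly larger than $1/(2-\rho)$. The derivative bounds to verify are $|\partial^\alpha\varphi| \leq C_\alpha$ on $\supp a$ (which follows from $\chi$ keeping $y$ away from each $y_j$) and $|\partial^\alpha a| \leq C_\alpha h^{-|\alpha|/2}$ (from~\eqref{e:psi1} together with boundedness of $\chi$'s derivatives). The choice $\tilde\rho > 1/(2-\rho)$ is precisely what reconciles these bounds with the scales $\tilde h^{-1}$ and $\tilde h^{-\tilde\rho|\alpha|}$ demanded by the lemma. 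The output is $|\mathcal K_3| \leq C_N \tilde h^{N(1-\tilde\rho)} \leq C_N h^{cN}$ for a fixed $c>0$ and every $N$, uniformly in $y_0,y_1,\dots,y_4,y''$.

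The main subtlety will be this calibration of $\tilde h$ and $\tilde\rho$: the amplitude $\psi_1$ oscillates on the Taylor scale $h^{1/2}$, while the phase oscillates on the finer scale $h/\sigma$, and the constraint that the latter dominate the former is exactly what forces $\rho<1$ (note that $1/(2-\rho) \to 1$ as $\rho \to 1$, at which point the hypotheses of Lemma~\ref{l:ibp} would cease to hold). Everything else — uniformity in the parameters, the reduction from $\mathcal K_1$ to $\mathcal K_3$, and the compactness built into $\supp\chi \subset \mathbb S^{n-1}_\Delta$ — is routine given the structure already put in place.
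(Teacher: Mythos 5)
Your proof is correct and follows essentially the same route as the paper's: reduce to an $\mathcal O(h^\infty)$ bound on $\mathcal K_3$, compute $\partial_y\varphi(y_0)=\eta_1-\eta_2+\eta_3-\eta_4$ via~\eqref{e:gide}, use the localization $\supp\psi_1\subset B(y_0,2h^{1/2})$ to propagate the gradient lower bound over the whole domain, and then apply Lemma~\ref{l:ibp} with $\tilde h=h/\sigma$ and $\tilde\rho$ near $1/(2-\rho)$. The only cosmetic difference is that the paper takes $\tilde\rho=1/(2-\rho)$ exactly rather than ``slightly larger''; both satisfy the hypotheses of Lemma~\ref{l:ibp} uniformly, since the worst case is precisely $\tilde h=h^{1-\rho/2}$, where the phase's second derivative ($\sim h^{-1}$) and the amplitude's derivatives ($\sim h^{-|\alpha|/2}$) match $\tilde h^{-\tilde\rho|\alpha|}$ when $(2-\rho)\tilde\rho\geq 1$.
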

\begin{proof}
It is enough to show that $\mathcal K_3=\mathcal O(h^\infty)$. For that, we write
$$
\begin{gathered}
\mathcal K_3=\int_{\mathbb S^{n-1}}e^{i\varphi/h}a\,dy,\quad
\varphi(y_1,y_2,y_3,y_4,y)=2\sum_{j=1}^4 (-1)^j\log |y_j-y|,\\
a(y_1,y_2,y_3,y_4,y;h)=\overline{\chi(y_1,y)}\chi(y_2,y)\overline{\chi(y_3,y)}\chi(y_4,y)\psi_1(y;h).
\end{gathered}
$$
Put $\eta:=\eta_1-\eta_2+\eta_3-\eta_4\in T^*_{y_0}\mathbb S^{n-1}$.
By~\eqref{e:gide},
$$
\partial_y\varphi(y_1,y_2,y_3,y_4,y_0)=\eta.
$$
Since $\psi_1$ is supported in $B(y_0,2h^{1/2})$, we have for some global constant $C$,
$$
|\eta|-Ch^{1/2}\leq|\partial_y \varphi|\leq |\eta|+Ch^{1/2}\quad\text{on }\supp a.
$$
By~\eqref{e:etacon}, we get for $h$ small enough,
$$
|\eta|/2\leq |\partial_y \varphi|\leq 2|\eta|\quad\text{on }\supp a.
$$
It remains to apply Lemma~\ref{l:ibp} with
$$
\widetilde\varphi:={\varphi\over h},\quad
\tilde h:={h\over |\eta|}\leq h^{1-\rho/2},\quad
\tilde\rho:={1\over 2-\rho},
$$
and use~\eqref{e:psi1}.
\end{proof}
Since $\chi$ is supported away from the diagonal, there exists a constant $C_1$
such that on $\supp \mathcal K_1$, we have $|\mathcal G(y_0,y_j)|\leq C_1$
for $j=1,2,3,4$. By~\eqref{e:psi0-1},
the additive energy bound~\eqref{e:ae-estimate} with $\alpha=Ch^{\rho/2}$
implies that uniformly in $y''$,
\begin{equation}
  \label{e:ae-mesb}
\mu_L\big(\{(y_1,y_2,y_3,y_4)\in \supp\mathcal K_1\mid
|\eta_1-\eta_2+\eta_3-\eta_4|\leq h^{\rho/2}\}\big)\leq Ch^{2\rho(n-1)-{3\rho\over 2}\delta+{\rho\over 2}\beta_E}
\end{equation}
where $\eta_j:=\mathcal G(y_0,y_j)\in T^*_{y_0}\mathbb S^{n-1}$.
We also have by~\eqref{e:psi1}
$$
\sup|\mathcal K_1|\leq Ch^{{7\over 2}(1-n)}.
$$
Together with~\eqref{e:ae-mesb} and Lemma~\ref{l:ibpl2}, this gives
$$
\sup_{y_0\in\Lambda_\Gamma}\sup_{y''\in B(y_0,h^{1/2})}
\int_{\mathbb S^{n-1}}\psi_1(y;h)|\mathcal K(y,y'';h)|^4\,dy\leq
Ch^{(2\rho-{7\over 2})(n-1)-{3\rho\over 2}\delta+{\rho\over 2}\beta_E}.
$$
This implies~\eqref{e:horror2} as long as
$$
\Big(2\rho-{7\over 2}\Big)(n-1)-{3\rho\over 2}\delta+{\rho\over 2}\beta_E\geq
8(\beta-\varepsilon)-3\rho(n-1-\delta)-{3\delta\over 2}.
$$
Recalling~\eqref{e:beta-ae}, this inequality becomes
$$
\Big(5(n-1)-{9\over 2}\delta+{1\over 2}\beta_E\Big)(1-\rho)\leq 8\varepsilon.
$$
The last inequality holds when $\rho$ is close to 1 depending on $\varepsilon$,
finishing the proof of Theorem~\ref{t:ae-reduction}.

\section{General bounds on additive energy}
  \label{s:ae-combinatorial}
  
In this section, we prove a new bound (Theorem~\ref{t:ae-combinatorial})
on the additive energy of general Ahlfors-David regular sets
(not just those arising from hyperbolic quotients).

There is substantial conflict of notation between
the current section and the rest of the paper. However, this should not cause a problem
since the two are completely decoupled from each other.
This makes it possible to use simpler notation in the current section.
\renewcommand{\colon}{\ |\ }
\newcommand{\E}{\mathcal E_A}

We first recall the definition of Ahlfors-David regularity:

\begin{ADRegularDefn}
Let $(\metSpace,\metricChar)$ be a complete metric space with more than one element. We say a closed set $\mathcal X \subset \metSpace$ is \textbf{$\setdim$--regular} with constant $C_{\mathcal X}$ if for all $x\in \mathcal X$ we have
\begin{equation*}
C_{\mathcal X}^{-1}r^\setdim\ \leq\ \mu_{\setdim}(\mathcal X\cap B(x,r))\ \leq\ C_{\mathcal X} r^{\setdim},\quad
0<r<\diam(\mathcal M)
\end{equation*}
where $B(x,r)$ is the metric ball of radius $r$ centered at $x$ and $\mu_{\setdim}$ is the $\setdim$--dimensional Hausdorff measure.
\end{ADRegularDefn}

\begin{example}[Cantor set]
Let $\X\subset[0,1]$ be the middle third Cantor set. Then $\X$ is $(\log2/\log3)$--regular.
\end{example}
%
%
\begin{example}[The limit set of a hyperbolic group]\label{HGroupExample}
Let $\Gamma$ be an $n$-dimensional convex co-compact hyperbolic group, let $\Lambda_{\Gamma}\subset S^{n-1}$ be the limit set, and let $\setdim=\setdim(\Gamma)$ be the critical exponent. Then $\Lambda_{\Gamma}$ is $\setdim$--regular. See~\S\ref{s:introad} and~\cite[Theorem 7]{Sullivan}.
\end{example}
In this section, it is convenient to use the following definition of additive energy, which is different from Definition~\ref{d:ae}. In~\S\ref{s:minkowski} we will reduce one quantity to the other.
\begin{defn}[Additive energy]
\label{d:aespecial}
Let $\X\subset[0,1]^n$ and let $\mu$ be an outer measure on~$\X$ with $0<\mu(\X)<\infty$.
For $\h>0$, define the (scale $\h$) \textbf{additive energy}
\begin{equation}
  \label{e:AESpecial}
\E(\X,\mu,\h)=\mu^4(\{(x_1,x_2,x_3,x_4)\in \X^4\colon |x_1-x_2 +x_3-x_4|<\h\}).
\end{equation}
If the measure $\mu$ is apparent from context, we will write $\E(\X,\h)$ in place of $\E(\X,\mu,\h)$.
\end{defn}

We will now restrict attention to the metric space $[0,1]$. The main result of this section is
\begin{theo}[Regular sets have small additive energy]\label{ADRegularSmallAddEnergyThm}\label{t:ae-combinatorial}
Let $\X\subset[0,1]$ be a $\setdim$--regular set with regularity constant $C_\X$ and $\setdim<1$. Then
\begin{equation}
\E(\X,\h) =\E(\X,\mu_{\setdim},\h)\leq \widetilde C\h^{\setdim+\beta_\X}
\end{equation}
for some $\beta_\X>0$ and some $\widetilde C>0$. In particular, we can choose
\begin{equation}\label{finalEpsBd}
\beta_\X= \betaXBound,
\end{equation}
where $\mathbf{K}$ is a large absolute constant; the constant $\widetilde C$ depends only on $\setdim$ and $C_\X$.
\end{theo}
The exact bound \eqref{finalEpsBd} is not important and can certainly be improved. The key point is that $\beta_\X$ does not depend on $\h$. 

Heuristically, Theorem~\ref{ADRegularSmallAddEnergyThm} says that if we choose points $x_1,x_2,x_3\in \X$ at random (using normalized $\setdim$--dimensional Hausdorff measure on $\X$), then the probability that there will exist a point $x_4$ with $|x_1-x_2+x_3-x_4|<\h$ is at most $\widetilde C\h^{\beta_\X}$. For $\beta_\X>0$ and $\h<\!\!<\widetilde C^{-1/\beta_\X}$, this quantity is much smaller than 1.

\noindent\textbf{Remarks}. (i) If $I\subset\RR$ is an interval of finite length and $\X\subset I$ is a $\setdim$--regular set, we can apply Theorem \ref{t:ae-combinatorial} to $\X$ by rescaling $I$ appropriately. More interestingly, if $I\subset\RR$ is a (possibly infinite) interval and $\X\subset I$ is a is a $\setdim$--regular set, then $\X^\prime=\X\cap[0,1]$ might not be $\setdim$--regular (for example, it might be the union of a $\setdim$--regular set $\X_0$ and a point far from $\X_0$), but in many instances we can still apply Theorem \ref{t:ae-combinatorial}. This is discussed further in~\S\ref{s:minkowski}.

\noindent (ii) Theorem~\ref{ADRegularSmallAddEnergyThm} is a statement about the additive energy of $\setdim$--regular sets. In Proposition \ref{VarADRegularSmallAddEnergyThm} below, we state an alternate version of the theorem that bounds the additive energy of sets that are unions of intervals of length $\h$ (and which satisfy conditions analogous to being $\setdim$--regular).

The main application of Theorem~\ref{ADRegularSmallAddEnergyThm} will be a bound on the additive energy of the limit set of a Fuchsian group. Informally, the result is as follows. Let $\Lambda_{\Gamma}\subset S^1$ be the limit set of a Fuchsian group with critical exponent $\setdim(\Gamma)$ and let $\mathcal{G}(y_0,\cdot): S^1\to\RR$ be the stereographic projection defined by~\eqref{e:stpro}. Then the image of $\Lambda_{\Gamma}$ is a $\setdim(\Gamma)$-regular set. The set $\mathcal{G}(y_0,\Lambda_{\Gamma})$ transforms naturally under a certain type of group operation, and this allows us to restrict attention to the interval $[0,1]$. We then apply Theorem~\ref{ADRegularSmallAddEnergyThm} to bound the additive energy of $\mathcal{G}(y_0,\Lambda_{\Gamma})\cap[0,1]$. The exact statement and proof are in~\S\ref{s:ae}. 

\subsection{Ideas behind the proof of Theorem~\ref{t:ae-combinatorial}}
\label{s:ae-ideas}

\subsubsection{Ahlfors-David regularity and arithmetic progressions}
A $\setdim$--regular subset of $[0,1]$ cannot contain long arithmetic progressions. More precisely, suppose $P\subset \X$ is an arithmetic progression of length $|P|$ and spacing $t>0$. Let $I\subset[0,1]$ be the interval of length $t|P|$ centered around $P$. If we place an interval of radius $t/2$ around each point of $P$, then $\mu_\delta(\X\cap I)\geq C_\X^{-1}|P| (t/2)^{\setdim}$. On the other hand,
$\mu_\delta(\X\cap I)\leq C_\X(|P| t)^\setdim$. If $|P|$ is sufficiently large (depending on $C_\X$ and $\setdim$), we arrive at a contradiction, provided that $\delta<1$. In fact more is true. If $P$ is not contained in $\X$ but merely meets $\X$ in many points, the argument still applies as well. Finally, the argument is not affected if we perturb the points of $P$ slightly. We say that $\X$ \emph{strongly avoids long arithmetic progressions}. 

Note that this argument relies on the fact that $\delta<1$. If instead $\X$ is a subset of $[0,1]^n$ for $n>1$ (or a more general metric space) 
and $\delta\geq 1$ then the argument fails. In~\S\ref{higherDimRemark} we will discuss this phenomenon further.

\subsubsection{Small doubling and additive structure}
If $A\subset\ZZ$ is a finite set and $|A+A|<K|A|$, what can we say about $A$? There is a family of theorems in additive combinatorics that say that if $K$ is small then $A$ must have additive structure. The most famous of these is Fre{\u\i}man's theorem~\cite{freiman}, which says that $A$ must be contained in a generalized arithmetic progression. For our purposes however, we will obtain stronger results by using a variant of Fre{\u\i}man's theorem due to
Sanders~\cite{Sanders}, which makes the weaker claim that $A$ has large intersection with a convex progression.

When combined with the ideas discussed above, we conclude that if $\X$ is a regular set then $\X$ cannot have maximally large additive energy. Unfortunately, the sort of bounds that one obtains from this argument are very weak---far too weak to obtain the polynomial
in $\alpha$ improvement of Theorem~\ref{ADRegularSmallAddEnergyThm}\footnote{However, if the polynomial Fre{\u\i}man-Ruzsa conjecture is proved then this theorem may be employed directly, and the subsequent steps would not be needed.}.
%
\subsubsection{Multiscale analysis of Ahlfors-David regular sets}
If $\X$ is a regular set, we can examine $\X$ at many intermediate scales between $\h$ and 1; there will be roughly $|\log \h|$ scales total. We use the arguments above to get a small gain in the scale-$\h$ additive energy of $\X$ at each intermediate scale. These gains will compound with each intermediate scale, and the total gain will be large enough to obtain Theorem~\ref{ADRegularSmallAddEnergyThm}.
%
%
%

\subsection{Ahlfors-David regular sets and additive structure}

We start the proof of Theorem~\ref{ADRegularSmallAddEnergyThm} by exploring the implications of $\delta$-regularity for the additive structure of the set $\mathcal X$.
\begin{defn}
An \textbf{arithmetic progression} is a set of the form
$$
\{a-\ell q, a-(\ell-1)q,\ldots, a, a+q,\ldots,a+\ell q\}\subset\ZZ,
$$
where $a,q\in\ZZ$, $q\neq 0$, and $\ell\geq 0$.
\end{defn}
%
\begin{defn}
  \label{strongAvoid}
Let $A\subset\ZZ$ be a finite set. We say that $A$ \textbf{strongly avoids long arithmetic progressions} (with parameter $S$) if for all $\epsilon>0$, there is a number $S=S(\epsilon)$ so that for all arithmetic progressions $P\subset\ZZ$ with $|P\cap A|\geq\epsilon|P|$, we have $|P|\leq S(\varepsilon).$ 
\end{defn}
If $\X\subset t\ZZ$ for $0<t<1$, then we say that $\X$ strongly avoids long arithmetic progressions if $t^{-1}\X$ does, i.e.~we simply re-scale $\X$ so that it lies on the integer lattice.
%
\begin{defn}  \label{convexProgression}
A  ($d$-dimensional) \textbf{centered convex progression} is a triple $(B,\Lambda,\varphi)$, where $B\subset\RR^d$ is a centrally symmetric convex set, $\Lambda\subset\RR^d$ is a lattice, and $\varphi : \Lambda\to\ZZ$ is a linear map. We will primarily be interested in the image $\varphi(B\cap\Lambda)$. Sets of this form are generalizations of 
arithmetic progressions. If $\varphi$ is injective on $B\cap\Lambda$, we say that $(B,\Lambda,\varphi)$ is \textbf{proper}.
\end{defn}
The next lemma shows that a $d$-dimensional centered convex progression
$(B,\Lambda,\varphi)$
of \emph{cardinality} $N:=|\varphi(B\cap\Lambda)|$ can be embedded into
a centered convex progression $(B',\Lambda',\varphi')$ whose \emph{size}
$|B'\cap\Lambda'|$ is bounded by $N$ times a $d$-dependent constant.
\begin{lemm}[Cardinality vs. size]\label{containingProgressionInWellBehavedSet}
Let $(B,\Lambda, \varphi)$ be a $d$-dimensional centered convex progression. Then there is some $d^\prime\leq d$ and a $d^\prime$-dimensional centered convex progression $(B^\prime,\Lambda^\prime,\varphi^\prime)$ with
\begin{equation}
|B^\prime\cap\Lambda^\prime|\leq  2^{(d+2)^2} |\varphi(B\cap\Lambda)|\label{BpSize}
\end{equation}
such that
\begin{equation}
\varphi(B\cap\Lambda)\subset \varphi^\prime(B^\prime\cap\Lambda^\prime).\label{BpInclusion}
\end{equation}
\end{lemm}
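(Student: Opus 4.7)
I would proceed by strong induction on the dimension $d$. The base case $d = 0$ is immediate: $B \cap \Lambda = \{0\}$, so taking $(B',\Lambda',\varphi') = (B,\Lambda,\varphi)$ satisfies the bound with constant $1$. For the inductive step, let $K := \Lambda \cap \ker\tilde\varphi$, where $\tilde\varphi \colon \RR^d \to \RR$ denotes the $\RR$-linear extension of $\varphi$. The first key observation would be that $(B,\Lambda,\varphi)$ is proper if and only if $2B \cap K = \{0\}$: indeed, $\varphi(x_1) = \varphi(x_2)$ iff $x_1 - x_2 \in K$, and central symmetry of $B$ gives $(B\cap\Lambda) - (B\cap\Lambda) \subseteq 2B \cap \Lambda$. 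In the proper case, take $(B',\Lambda',\varphi') = (B,\Lambda,\varphi)$ and the bound holds with constant $1$.

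Otherwise, pick a nonzero $v \in 2B \cap K$ that is primitive in $\Lambda \cap \RR v$, and project along $\RR v$: set $\pi \colon \RR^d \to \RR^d/\RR v$ and form the $(d{-}1)$-dimensional centered convex progression $(\bar B, \bar \Lambda, \bar\varphi) := (\pi(B), \pi(\Lambda), \bar\varphi)$. Here $\bar\Lambda$ has rank $d-1$ by primitivity of $v$, and $\bar\varphi$ is well-defined because $\RR v \subseteq \ker\tilde\varphi$. Since $\varphi = \bar\varphi \circ \pi$, the containment $\varphi(B \cap \Lambda) \subseteq \bar\varphi(\bar B \cap \bar\Lambda)$ is immediate. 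Applying the inductive hypothesis to $(\bar B, \bar\Lambda, \bar\varphi)$ produces a proper progression $(B',\Lambda',\varphi')$ of dimension at most $d-1$ with $\bar\varphi(\bar B \cap \bar\Lambda) \subseteq \varphi'(B' \cap \Lambda')$ and $|B' \cap \Lambda'| \leq 2^{(d+1)^2} |\bar\varphi(\bar B \cap \bar\Lambda)|$.

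The hard part, which I expect to be the main obstacle, will be to establish the counting bound $|\bar\varphi(\bar B \cap \bar\Lambda)| \leq 2^{2d+3} |\varphi(B \cap \Lambda)|$, since then the inductive constants compose as $2^{(d+1)^2} \cdot 2^{2d+3} = 2^{(d+2)^2}$. Unpacking definitions, $\bar\varphi(\bar B \cap \bar\Lambda) = \{\varphi(l) : l \in \Lambda,\ \pi(l) \in \pi(B)\}$, so one must show that any $\varphi$-value attainable by some lattice point whose $\pi$-image lies in $\pi(B)$ is already attainable, up to bounded multiplicity, by some $l' \in B \cap \Lambda$. The mechanism would be an integer-shift trick using $v/2 \in B$: given $b \in B$ and $l \in \Lambda$ with $\pi(b) = \pi(l)$, write $l - b = sv$ and replace $l$ by $l' := l - \lfloor s + \tfrac12 \rfloor v \in \Lambda$, so that $l' = b + tv$ with $|t| \leq \tfrac12$ and hence $l' \in B + \tfrac12[-v,v] \subseteq 2B$ by convexity and central symmetry. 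This gives $\bar\varphi(\bar B \cap \bar\Lambda) \subseteq \varphi(2B \cap \Lambda)$.

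The final and most delicate step bounds $|\varphi(2B \cap \Lambda)|$ by a dimensional constant times $|\varphi(B \cap \Lambda)|$. I would handle this by partitioning $2B \cap \Lambda$ over the fibers of $\varphi$ (each fiber contained in a coset of $K$ meeting $4B$, hence of size at most $|4B \cap K|$), invoking the Minkowski doubling estimate $|2B \cap \Lambda| \leq 5^d |B \cap \Lambda|$, and using a second application of the inductive hypothesis to the $(d{-}1)$-dimensional centered convex progression $(B \cap (K\otimes\RR),\,K,\,0)$ to control ratios of the form $|4B \cap K|/|2B \cap K|$ uniformly in $d$. The bookkeeping then delivers the required factor $2^{2d+3}$, completing the induction.
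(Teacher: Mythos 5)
Your proposal attacks the lemma from scratch by induction on $d$, which is a genuinely different route from the paper: the paper treats it as a corollary of two results of Tao--Vu, first replacing $(B,\Lambda,\varphi)$ by a proper progression via \cite[Corollary~4.2]{Tao2} and then converting the cardinality bound into a size bound via the dilation lemma \cite[Lemma~3.3]{Tao2}. Your overall induction structure is sound, and your integer-shift observation (that every value of $\bar\varphi$ on $\bar B\cap\bar\Lambda$ is already attained on $2B\cap\Lambda$) is correct and clean. But the decisive step is the one you flagged as the hard part, and it has a real gap.

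Bounding each fiber of $\varphi|_{2B\cap\Lambda}$ by $|4B\cap K|$ and summing gives $|2B\cap\Lambda|\le |\varphi(2B\cap\Lambda)|\cdot|4B\cap K|$, i.e.\ a \emph{lower} bound on $|\varphi(2B\cap\Lambda)|$, which is of no use here; you need an upper bound. The proposed ``second application of the inductive hypothesis to $(B\cap(K\otimes\RR),K,0)$'' does not help either: since that progression has $\varphi\equiv 0$, the inductive hypothesis only asserts the existence of a \emph{new} progression of size $\le 2^{(d+1)^2}$ whose image contains $\{0\}$, which is vacuous and yields no control on $|B\cap K|$ or on $|4B\cap K|/|2B\cap K|$ (the latter ratio is bounded simply by lattice doubling in $K$, not by the inductive hypothesis). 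So the ingredients you list do not combine to an upper bound on $|\varphi(2B\cap\Lambda)|$.

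The step can be repaired by a packing argument rather than a fiber bound: for each $c\in\varphi(2B\cap\Lambda)$ pick $x_c\in 2B\cap\Lambda$ with $\varphi(x_c)=c$; the translates $x_c+(B\cap K)$ are pairwise disjoint subsets of $3B\cap\Lambda$, giving $|\varphi(2B\cap\Lambda)|\cdot|B\cap K|\le|3B\cap\Lambda|$. Combining with $|B\cap\Lambda|\le|\varphi(B\cap\Lambda)|\cdot|2B\cap K|$ and the lattice doubling estimates $|3B\cap\Lambda|\le 7^d|B\cap\Lambda|$, $|2B\cap K|\le 5^{d-1}|B\cap K|$ gives $|\varphi(2B\cap\Lambda)|\le 7^d\,5^{d-1}\,|\varphi(B\cap\Lambda)|$. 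This closes the induction with \emph{some} dimensional constant, but not with the factor $2^{2d+3}$ you claim, so the final bound $2^{(d+2)^2}$ in the statement would not be reached by this route (though, for the paper's application, only the shape ``$\exp[O(d^2)]$'' actually matters).
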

\begin{proof}
We recall \cite[Corollary 4.2]{Tao2}:
\begin{prop}
Let  $(B,\Lambda,\varphi)$ be a $d$--dimensional centered convex progression. Then there exists a $d'$-dimensional \textbf{proper} centered convex progression  $(B^{\prime\prime},\Lambda^{\prime\prime},\varphi^{\prime\prime})$ for some $d'\leq d$ such that we have the inclusions
\begin{align}
&\varphi(B\cap\Lambda)\subset \varphi^{\prime\prime}\big((2^{d-d'+1}B^{\prime\prime})\cap\Lambda^{\prime\prime}\big),\label{BBppInclusion1}\\
& \varphi\big((2B^{\prime\prime})\cap\Lambda^{\prime\prime}\big)\subset \varphi(B\cap\Lambda),\label{BBppInclusion2}
\end{align}
where $tB=\{x\in\RR^d\colon t^{-1}x\in B\}$.
\end{prop}
Let $B^\prime:=2^{d-d'+1}B^{\prime\prime}$, $\Lambda^\prime:=\Lambda^{\prime\prime}$, and $\varphi^\prime:=\varphi^{\prime\prime}$. Then \eqref{BpInclusion} is satisfied. 
Since $(B^{\prime\prime},\Lambda^{\prime\prime},\varphi^{\prime\prime})$ is proper, we have 
\begin{equation}
  \label{sizeContainment}
\big|(2^{-d+d^\prime-1}B^\prime)\cap\Lambda^\prime\big|=\big|\varphi^\prime\big((2^{-d+d^\prime-1}B^\prime)\cap\Lambda^\prime\big)\big|\leq |\varphi(B\cap\Lambda)|. 
\end{equation}
By~\cite[Lemma 3.3]{Tao2}, we have
\begin{equation}\label{ContainBpInDilate}
|B^\prime\cap\Lambda^\prime| \leq 2^{(d+2)^2} \big|(2^{-d+d^\prime-1}B^\prime)\cap\Lambda^\prime\big|.
\end{equation}
Combining \eqref{sizeContainment} and \eqref{ContainBpInDilate} we obtain \eqref{BpSize}.
\end{proof}
We next recall \cite[Lemma 3.36]{Tao}:
\begin{prop}
Let $B\subset\RR^d$ be a centrally symmetric convex set and let $\Lambda\subset\RR^d$ be a lattice. Suppose that the $\RR$ span of $B\cap\Lambda$ has dimension $r$. Then there exists an $r$--tuple $w=(w_1,\ldots,w_r)$ with $w_1,\ldots,w_r$ linearly independent vectors in $\Lambda$, and an $r$--tuple of integers $N=(N_1,\ldots,N_r)$ so that
\begin{equation}
[-N,N]\cdot w\ \subset\ B\cap\Lambda\ \subset\ [-r^{2r}N,r^{2r}N]\cdot w.
\end{equation}
Here
\begin{equation*}
[-N,N]\cdot w= \{ \ell_1w_1+\ldots+\ell_rw_r\colon -N_j\leq \ell_j\leq N_j,\ j=1,\ldots,r\}.
\end{equation*}
\end{prop}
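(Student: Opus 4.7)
The plan is to prove this classical statement from the geometry of numbers via successive minima together with a Mahler-style basis reduction. First, one reduces to the case $d=r$ by passing to $V:=\mathrm{span}_\RR(B\cap\Lambda)$ and replacing $B,\Lambda$ by $B\cap V$, $\Lambda\cap V$. One may further replace $\Lambda$ by the sublattice generated by $B\cap\Lambda$ (which does not change $B\cap\Lambda$), so we may assume $\Lambda$ is a full-rank lattice in $\RR^r$ and that $B\cap\Lambda$ generates $\Lambda$ as a $\ZZ$-module. The Minkowski functional $\|\cdot\|_B(v):=\inf\{t>0\colon v\in tB\}$ is then a norm on $\RR^r$.

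Next, define the successive minima $0<\lambda_1\leq\cdots\leq\lambda_r\leq 1$ of $\|\cdot\|_B$ with respect to $\Lambda$, and select linearly independent lattice vectors $v_j\in\lambda_jB\cap\Lambda$. The $v_j$ need not form a $\ZZ$-basis of $\Lambda$, so the key intermediate step is a Mahler-type reduction (upper-triangular modification of $v_1,\ldots,v_r$) that produces a $\ZZ$-basis $w_1,\ldots,w_r$ of $\Lambda$ with $\|w_j\|_B\leq c_r\lambda_j$ for some $r$-dependent constant $c_r$ (classically $c_r$ grows at most like $r^r$). With this basis in hand, set $N_j:=\lfloor(rc_r\lambda_j)^{-1}\rfloor$ (and $N_j=0$ when $\lambda_j$ is too large, in which case the inclusion is trivial). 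The lower inclusion $[-N,N]\cdot w\subset B\cap\Lambda$ follows immediately from the triangle inequality:
\[\Bigl\|\sum_j n_jw_j\Bigr\|_B\leq\sum_j|n_j|\,\|w_j\|_B\leq\sum_j\frac{1}{rc_r\lambda_j}\cdot c_r\lambda_j=1\qquad\text{whenever }|n_j|\leq N_j.\]

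For the upper inclusion, every $v\in B\cap\Lambda$ may be written uniquely as $v=\sum n_jw_j$ with $n_j\in\ZZ$ because $w_1,\ldots,w_r$ is a $\ZZ$-basis of $\Lambda$. To bound the coefficients one passes to the dual basis $w_1^*,\ldots,w_r^*\in\Lambda^*$ (so that $n_j=w_j^*(v)$) and uses Mahler's duality for successive minima, which yields $\|w_j^*\|_{B^\circ}\leq C_r/\lambda_j$ for a constant $C_r$ depending only on $r$; here $B^\circ$ is the polar body. Since $\|v\|_B\leq 1$, this gives $|n_j|=|w_j^*(v)|\leq\|w_j^*\|_{B^\circ}\,\|v\|_B\leq C_r/\lambda_j\leq rc_rC_rN_j$, and the crude bound $rc_rC_r\leq r^{2r}$ is easily satisfied for any reasonable version of the reduction. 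Hence $v\in[-r^{2r}N,r^{2r}N]\cdot w$, completing the proof.

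The main obstacle is the Mahler reduction step: the successive-minima vectors $v_j$ need not span $\Lambda$ over $\ZZ$, and producing an actual $\ZZ$-basis $w_j$ whose $B$-norms remain controlled by the $\lambda_j$ (up to a polynomial-in-$r$ factor) requires care. Everything else—the norm estimate for the lower inclusion and the dual-basis estimate for the upper inclusion—is routine once the basis is in hand, and the very generous exponent $r^{2r}$ leaves ample room to absorb all the $r$-dependent constants from the reduction and from Mahler's duality.
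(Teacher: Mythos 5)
The paper does not prove this statement; it is quoted from Tao--Vu, \emph{Additive Combinatorics}, Lemma~3.36, so there is no in-paper argument to compare against, and your proposal must stand on its own.

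Your route via successive minima and Mahler reduction is a natural classical direction, but there is a real gap in the treatment of the integers $N_j$. The conclusion of the proposition forces $N_j\geq 1$ for every $j$: if some $N_j=0$ then the upper inclusion $B\cap\Lambda\subset[-r^{2r}N,r^{2r}N]\cdot w$ would force every $v\in B\cap\Lambda$ to have vanishing $w_j$-coefficient, placing $B\cap\Lambda$ in an $(r-1)$-dimensional subspace and contradicting the hypothesis. In turn $N_j\geq 1$ together with the lower inclusion puts $w_j\in B$, so the $w_j$ must be a $\ZZ$-basis of $\Lambda'=\ZZ\langle B\cap\Lambda\rangle$ lying entirely inside $B$. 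Your formula $N_j=\lfloor(rc_r\lambda_j)^{-1}\rfloor$ returns $N_j=0$ whenever $\lambda_j>(rc_r)^{-1}$, which does occur since $\lambda_r$ can be close to $1$, and the parenthetical ``in which case the inclusion is trivial'' is exactly backwards: the lower inclusion becomes trivial there, but the upper inclusion fails. Nor is this a bookkeeping slip: the Mahler reduction only gives $\|w_j\|_B\leq\max(1,j/2)\,\lambda_j$, which can exceed $1$ for $j\geq 3$, so the reduced basis vectors need not lie in $B$ at all, and no choice of $N_j\geq 1$ can then be reconciled with a triangle-inequality proof of the lower inclusion. A working argument has to choose the $w_j$ and $N_j$ by a more directional greedy procedure rather than reading $N_j$ off the successive minima and invoking subadditivity of $\|\cdot\|_B$.

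Two further problems. The bound $\|w_j^*\|_{B^\circ}\leq C_r/\lambda_j$ is asserted but does not follow from Mahler's transference theorem as cited: transference compares the successive minima of $(B,\Lambda)$ with those of $(B^\circ,\Lambda^*)$ and says nothing about the norms of a particular dual basis; one would additionally need that the reversed dual of your Mahler basis is itself a reduced basis for $(\Lambda^*,B^\circ)$, which is not automatic from $\|w_j\|_B\leq c_r\lambda_j$ alone and requires a separate argument (e.g.\ a transference statement for HKZ-type bases in the spirit of Lagarias--Lenstra--Schnorr). Also, the closing chain $C_r/\lambda_j\leq rc_rC_rN_j$ uses the floor inequality in the wrong direction, since $N_j\leq(rc_r\lambda_j)^{-1}$ rather than $\geq$; this is recoverable up to a factor of $2$ when $N_j\geq 2$ but fails in exactly the problematic range $N_j\in\{0,1\}$ already identified above.
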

%
%
\begin{corr}\label{trappedInAGAP}
Let $B\subset\RR^d$ be a centrally symmetric convex set and let $\Lambda\subset\RR^d$ be a lattice. Suppose that the $\RR$ span of $B\cap\Lambda$ has dimension $r$. Then there exists an $r$--tuple $w=(w_1,\ldots,w_r)$ with $w_1,\ldots,w_r$ linearly independent vectors in $\Lambda$, and an $r$--tuple of integers $N=(N_1,\ldots,N_r)$ so that
\begin{equation}
B\cap\Lambda\ \subset\ [-N,N]\cdot w,
\end{equation}
and
\begin{equation}
\prod_{j=1}^r (2N_j+1) \leq 3^r r^{2r^2}|B\cap\Lambda|. 
\end{equation}
\end{corr}
%
\begin{lemm}[Arithmetic progressions inside convex progressions]\label{convexProgressionsContainArithmeticProgressions}
Let  $(B,\Lambda,\varphi)$ be a $d$--dimensional centered convex progression and let
$$
A\subset \varphi(B\cap\Lambda),\quad
|A|\geq\epsilon|\varphi(B\cap\Lambda)|.
$$
Then there exists a (one dimensional) arithmetic progression $P\subset\ZZ$ with
\begin{equation}\label{PisBig}
|P|\geq |A|^{1/d}
\end{equation}
and
\begin{equation}\label{PMeetsA}
|P\cap A|\geq d^{-3(d+2)^2}\epsilon|P|.
\end{equation}
\end{lemm}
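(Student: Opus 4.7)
The plan is to reduce $(B,\Lambda,\varphi)$ to a \emph{proper} centered convex progression (one where $\varphi$ is injective on $B\cap\Lambda$), trap the latter inside a box-shaped generalized arithmetic progression in $\ZZ^r$ for some $r\leq d$, and then extract the desired one-dimensional arithmetic progression by slicing this box along a coordinate direction on which the linear form $\varphi$ is nonzero.

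First, I would apply Lemma~\ref{containingProgressionInWellBehavedSet} to pass to a proper centered convex progression $(B',\Lambda',\varphi')$ of some dimension $d'\leq d$ with $\varphi(B\cap\Lambda)\subset\varphi'(B'\cap\Lambda')$ and $|B'\cap\Lambda'|\leq 2^{(d+2)^2}|\varphi(B\cap\Lambda)|$; properness yields $|\varphi'(B'\cap\Lambda')|=|B'\cap\Lambda'|$. Corollary~\ref{trappedInAGAP} then supplies linearly independent vectors $w_1,\ldots,w_r\in\Lambda'$ (with $r\leq d'$) and integers $N_1,\ldots,N_r\geq 0$ so that $B'\cap\Lambda'\subset[-N,N]\cdot w$ and $\prod_j(2N_j+1)\leq 3^r r^{2r^2}|B'\cap\Lambda'|$. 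Setting $c_j:=\varphi'(w_j)\in\ZZ$, consider the linear map $\pi(\ell)=\sum_j\ell_j c_j$ from the integer box $[-N,N]\subset\ZZ^r$ to $\ZZ$. Linear independence of the $w_j$ together with injectivity of $\varphi'$ on $B'\cap\Lambda'$ makes $\pi$ injective on $E:=\{\ell\in[-N,N]:\sum_j\ell_j w_j\in B'\cap\Lambda'\}$, so $A$ lifts bijectively to a set $A^*\subset E$ with $|A^*|=|A|$.

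Chaining the bounds yields the density estimate $|A^*|/\prod_j(2N_j+1)\geq \sigma:=\epsilon/(3^d d^{2d^2}\cdot 2^{(d+2)^2})$. Let $S^c=\{j:c_j\neq 0\}$. Since $\pi$ factors through the coordinate projection onto $S^c$, injectivity of $\pi$ on $E$ forces $|A|\leq\prod_{j\in S^c}(2N_j+1)$, so some $j^*\in S^c$ satisfies $2N_{j^*}+1\geq|A|^{1/|S^c|}\geq|A|^{1/d}$ (the degenerate case $S^c=\emptyset$ forces $|A|\leq 1$ and is trivial). Pigeonholing $A^*$ among the coordinate lines parallel to the $j^*$-axis produces a line $L$ with $|L\cap A^*|\geq\sigma(2N_{j^*}+1)$; since $c_{j^*}\neq 0$, the restriction $\pi|_L$ is injective, so its image $P:=\pi(L)$ is a one-dimensional arithmetic progression of length $2N_{j^*}+1\geq|A|^{1/d}$ and common difference $c_{j^*}$, with $|P\cap A|\geq\sigma|P|$. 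It then remains to verify $\sigma\geq d^{-3(d+2)^2}\epsilon$, which reduces to the numerical inequality $3^d d^{2d^2}\cdot 2^{(d+2)^2}\leq d^{3(d+2)^2}$: trivial for $d=1$ (where $\varphi(B\cap\Lambda)$ is already a 1D AP and one simply takes $P=\varphi(B\cap\Lambda)$), and a short check using $\log_2 d\geq 1$ for $d\geq 2$.

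The main obstacle I anticipate is correctly handling the possibility that some $c_j$ vanish, so that slicing the box along the $j$-direction would collapse onto a single point in $\ZZ$ and fail to produce an arithmetic progression of the required length. The introduction of $S^c$ addresses this, and the inequality $|A|\leq\prod_{j\in S^c}(2N_j+1)$—which crucially uses the properness of $(B',\Lambda',\varphi')$—is precisely what forces the exponent $1/d$ rather than the a priori weaker $1/r$ in $|P|\geq|A|^{1/d}$.
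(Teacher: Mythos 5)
Your proof is correct and follows essentially the same path as the paper: pass to a proper convex progression via Lemma~\ref{containingProgressionInWellBehavedSet}, trap it in a box via Corollary~\ref{trappedInAGAP}, slice along a coordinate direction on which $\varphi'$ is nonzero, and pigeonhole. The only cosmetic difference is how the directions with $\varphi'(w_i)=0$ are handled — the paper resets those $N_i$ to zero, while you keep the box and instead bound $|A|$ by the product over $S^c$ directly — but the argument is otherwise identical.
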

\begin{proof}
Let $(B^\prime, \Lambda^\prime,\varphi^\prime)$ be a centered convex progression obeying \eqref{BpSize} and \eqref{BpInclusion}. In particular, we have $A\subset\varphi^\prime(B^\prime\cap\Lambda^\prime)$ and
\begin{equation}
|B^\prime\cap\Lambda^\prime|\leq 2^{(d+2)^2}\epsilon^{-1}|A|.
\end{equation}
By Corollary \ref{trappedInAGAP}, there is a number $r\leq d$, an $r$--tuple $w=(w_1,\ldots,w_r)$ of
linearly independent vectors in~$\Lambda'$, and an $r$--tuple of integers  $N=(N_1,\ldots,N_r)$ so that
\begin{equation}
  \label{varphiInvInBox}
(\varphi^\prime)^{-1}(A)\cap B^\prime\cap\Lambda^\prime\ \subset\
B^\prime\cap\Lambda^\prime\ \subset\ [-N,N]\cdot w,
\end{equation}
and
\begin{equation}\label{boundOnSizeOfNi}
\prod_{j=1}^r (2N_j+1)\ \leq\ 3^r r^{2r^2}|B^\prime\cap\Lambda^\prime|\ \leq\ 2^{(d+3)^2} d^{2d^2}\epsilon^{-1} |A|. 
\end{equation}
Now, \eqref{varphiInvInBox} implies that
\begin{equation}\label{AInBox}
A \subset \varphi'([-N,N]\cdot w).
\end{equation}
We can assume that $\varphi'(w_i)\neq 0$ for each index $i$ for which $N_i\neq 0$, since if $\varphi'(w_i)=0$ then we could set $N_i=0$ and both \eqref{boundOnSizeOfNi} and \eqref{AInBox} remain satisfied.

By re-indexing if necessary, we can assume that $N_1\geq N_j$ for all $j=2,\ldots,r$. Partition the set $[-N,N]\cdot w$ into $\prod_{j=2}^r (2N_j+1)$ disjoint sets of the form
\begin{equation*}
\Big\{\ell w_1 + \sum_{j=2}^r x_jw_j \,\,\Big|\,\, -N_1 \leq \ell \leq N_1\Big\}.
\end{equation*}
Each of these sets has cardinality 
\begin{equation*}
(2N_1+1) \geq  \Big(\prod_{j=1}^r (2N_j+1)\Big)^{1/r}\geq|A|^{1/d}.
\end{equation*}
By \eqref{boundOnSizeOfNi}, \eqref{AInBox}, and pigeonholing, at least one of these sets $Y$ must satisfy
\begin{equation}
|Y\cap (\varphi^\prime)^{-1}(A)|\ \geq\ 2^{-(d+3)^2} d^{-2d^2}\epsilon|Y|\ \geq\  d^{-3(d+2)^2}\epsilon|Y|.
\end{equation}
For the last inequality we may assume that $d\geq 2$, since otherwise we may put $P:=\varphi(B\cap\Lambda)$.

Let $P=\varphi^\prime(Y)$. By assumption $\varphi^\prime$ is injective on $Y$, so $P$ satisfies \eqref{PisBig} and~\eqref{PMeetsA}.
\end{proof}
The following Proposition is a direct corollary of~\cite[Theorem 1.4]{Sanders}:
\begin{prop}[Additive structure]
  \label{SchoenProp}
Let $A\subset\ZZ$ and suppose $|A+A|\leq K|A|$. Then there is a $d\leq K_0 \log^4K$ dimensional centered convex progression $(B,\ZZ^d,\varphi)$ and an offset $x\in\ZZ$ so that
\begin{equation}
|\varphi(B\cap\ZZ^d)|\leq \exp[K_0 \log^4 K]\cdot |A|
\end{equation}
and
\begin{equation}
|(A-x)\cap \varphi(B\cap\ZZ^d)|\geq 	\exp[-K_0\log^4 K]\cdot |A|.
\end{equation}
Here $K_0>0$ is an absolute constant.
\end{prop}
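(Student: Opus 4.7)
My approach is to invoke Sanders' quantitative Fre{\u\i}man-type theorem \cite[Theorem~1.4]{Sanders} and translate its Bohr-set conclusion into the centered convex progression language of Definition~\ref{convexProgression}. Since Sanders' result is most naturally stated in finite abelian groups, the first step is to replace $A$ by a Fre{\u\i}man $8$-isomorphic copy living inside a cyclic group of size $O(K|A|)$ using Ruzsa's modelling lemma; this preserves the doubling hypothesis $|A+A|\leq K|A|$ and allows Sanders' theorem to be applied directly.

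In the model group, Sanders' theorem produces a Bohr set $B_0$ of rank $d = O(\log^4 K)$ and density at least $\exp[-O(\log^4K)]$ inside $2A-2A$. A standard Ruzsa covering argument, combined with the Pl\"unnecke--Ruzsa inequality $|nA-mA|\leq K^{n+m}|A|$, then shows that some translate $A-x$ meets a closely related Bohr set in at least $\exp[-O(\log^4K)]|A|$ points, and that this Bohr set itself has cardinality at most $\exp[O(\log^4K)]|A|$. Lifting back to $\ZZ$ through the Fre{\u\i}man isomorphism and realizing the Bohr set as $\varphi(B\cap\ZZ^d)$ for a centrally symmetric convex set $B\subset\RR^d$ and a linear map $\varphi:\ZZ^d\to\ZZ$ (whose rows are the defining frequencies of the Bohr set) produces the required centered convex progression, and both inequalities in the proposition follow immediately from the corresponding estimates on $|B_0|$ and on $|(A-x)\cap B_0|$.

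The only real issue here is bookkeeping: combining the constants arising in the modelling step, Sanders' theorem, and the covering argument into the single exponent $K_0\log^4 K$ stated in the proposition. Each individual contribution is already of this form, so the combination is as well after absorbing absolute constants into $K_0$. No substantive new combinatorics is needed beyond Sanders' theorem, which is why the proposition is legitimately called a direct corollary rather than an independent result.
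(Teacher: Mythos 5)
Your overall conclusion---that the proposition is ``legitimately a direct corollary rather than an independent result''---matches the paper, but the route you describe is not the one the paper takes. The paper's entire proof is the single sentence preceding the proposition: it cites Sanders \cite[Theorem~1.4]{Sanders}, which in the BAMS survey is \emph{already stated} for subsets of $\ZZ$ (or torsion-free abelian groups) in the ``some translate of $A$ has large intersection with a centered convex (coset) progression'' form, with dimension and size bounds of shape $\exp[O(\log^{3+o(1)}K)]$. Deducing Proposition~\ref{SchoenProp} from that is genuinely trivial: one replaces the ambient lattice by $\ZZ^d$ via a linear change of coordinates, observes $|\varphi(B\cap\ZZ^d)|\leq|B\cap\ZZ^d|$, and absorbs the $o(1)$ into a $\log^4K$ with a larger absolute constant. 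No modelling, no Bohr sets, no covering.

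What you propose instead is essentially a re-proof of Sanders' Theorem~1.4 from its technical kernel (the Bohr-set form of the Bogolyubov--Ruzsa lemma): Ruzsa modelling into $\ZZ/N\ZZ$, Bohr set in $2A-2A$, Pl\"unnecke--Ruzsa plus Ruzsa covering, then lift. That chain is logically sound in outline and is indeed how one passes from the Bohr-set statement to a Freiman-type statement over $\ZZ$, so there is no fatal gap. But it is considerable extra work that the paper deliberately avoids, and it contains imprecisions you should fix if you want to use it as a stand-alone argument. Ruzsa's modelling lemma for a Fre{\u\i}man~$8$-isomorphism gives a cyclic group of size $K^{O(1)}|A|$ (with the exponent depending on the isomorphism order), not $O(K|A|)$; this is harmless here since $K^{O(1)}=\exp[O(\log K)]$ is absorbed by $\exp[O(\log^4K)]$, but as written it is wrong. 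More importantly, the lifting step---turning a Bohr set in $\ZZ/N\ZZ$ into $\varphi(B\cap\ZZ^d)$ after pulling back through the Fre{\u\i}man isomorphism while preserving the intersection and size bounds---is where essentially all the content lives, and your sketch treats it as bookkeeping. (Also, $\varphi:\ZZ^d\to\ZZ$ is given by a single row vector; ``whose rows are the defining frequencies'' does not parse for a map into $\ZZ$.) In short: you take a longer and more foundational route than the paper does, correctly in spirit, but the paper's claim of ``direct corollary'' is literal and you should not invent work that Sanders has already done.
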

Applying Lemma~\ref{convexProgressionsContainArithmeticProgressions}, we obtain the following corollary.
\begin{corr}
  \label{corrAdditiveStrBigGAPs}
Let $A\subset \ZZ$ and suppose that $|A+A|\leq K|A|$. Then there is an arithmetic progression $P$ so that
\begin{equation}
|P|\geq |A|^{(K_1 \log^4K)^{-1}},
\end{equation}
and
\begin{equation}\label{ACapPSize}
|A\cap P |\geq e^{-K_1 \log^9 K}|P|.
\end{equation}
Here $K_1>0$ is an absolute constant. The term $ \log^9$ in  \eqref{ACapPSize} could be replaced with $\log^{8+o(1)}$, but we will not worry about these small optimizations.
\end{corr}
%
\begin{corr}
  \label{corrAPAP}
Let $A\subset\ZZ$. Suppose that $A$ strongly avoids long arithmetic progressions (with parameter $S(\varepsilon)$), and $|A+A|\leq K|A|$. Then
\begin{equation}\label{corrBoundOnSizeA}
|A|\leq \Big(S(e^{-K_1 \log^9 K})\Big)^{K_1\log^4 K},
\end{equation}
where $K_1$ is the absolute constant from Corollary~\ref{corrAdditiveStrBigGAPs}.
\end{corr}
%
\begin{prop}[Ahlfors-David regular sets avoid arithmetic progressions]
  \label{ADStronglyAvoids} 
Let $\X\subset[0,1]$ be a $\setdim$--regular set with regularity constant $C$ and let $0<\h<1$. Then $\X(\h)\cap \h\ZZ$ strongly avoids long arithmetic progressions
(here $\X(\h)$ is the $\h$-neighborhood of $\X$). In particular, we can take $S(\epsilon)=(10C^2\epsilon^{-1})^{\frac{1}{1-\setdim}}.$
\end{prop}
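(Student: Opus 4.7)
The plan is a direct volume-packing argument comparing the lower and upper bounds from Ahlfors--David regularity \eqref{defnADRegular}, using $\delta<1$ to turn the comparison into a bound on the length of the progression.

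Set up notation as follows: write $P=\{a+jq : 0\leq j<\ell\}$ with integer spacing $q\geq 1$, set $H:=\{p\in P:\alpha p\in\mathcal X(\alpha)\}$ and $m:=|H|$, so the hypothesis reads $m\geq\varepsilon\ell$. For each $p\in H$ fix $x_p\in\mathcal X$ with $|x_p-\alpha p|\leq\alpha$. The target is the pointwise inequality $m\leq 10\,C_{\mathcal X}^2\,\ell^{\,\delta}$, which combined with $\varepsilon\ell\leq m$ immediately gives the stated bound on $\ell$. The main step is to consider the family of balls $\{B(\alpha p,r)\}_{p\in H}$, with the radius $r$ chosen according to $q$: take $r=2\alpha$ when $q=1$ and $r=\alpha q$ when $q\geq 2$. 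In either case the inclusion $B(x_p,r-\alpha)\subset B(\alpha p,r)$ together with the lower bound in \eqref{defnADRegular} gives $\mu_\delta(\mathcal X\cap B(\alpha p,r))\geq C_{\mathcal X}^{-1}(r-\alpha)^\delta$, while a trivial count of $P$-points inside an interval of length $2r$ bounds the multiplicity of the family by $5$ for $q=1$ and by $3$ for $q\geq 2$. Fixing any $p_*\in H$, all the $x_p$ lie within distance $\alpha q(\ell-1)+2\alpha$ of $x_{p_*}$, so the union $\bigcup_p B(\alpha p,r)$ sits inside a single ball $B(x_{p_*},R)$ with $R\leq\min(\alpha q\ell+r,\,1)$, the cap by $1$ using $\mathcal X\subset[0,1]$; the upper bound in \eqref{defnADRegular} then yields $\mu_\delta(\mathcal X\cap B(x_{p_*},R))\leq C_{\mathcal X} R^\delta$.

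Comparing the two estimates gives
\begin{equation*}
m\cdot C_{\mathcal X}^{-1}(r-\alpha)^\delta \ \leq\ (\text{mult.})\cdot C_{\mathcal X}\, R^\delta.
\end{equation*}
When $\alpha q\ell\leq 1$ the enclosing radius is $R=\alpha q\ell+r$ and a direct substitution of the two cases cancels all $\alpha q$-dependence and yields $m\leq 10\,C_{\mathcal X}^2\,\ell^\delta$, with the worst constant coming from $q=1$ where the multiplicity factor $5$ is paired with $2^\delta\leq 2$. In the complementary regime $\alpha q\ell>1$ the ball is capped at $R=1$; but then the elementary length observation that an arithmetic progression in $\mathbb R$ of spacing $\alpha q$ cannot have more than $O((\alpha q)^{-1})$ points within distance $\alpha$ of the unit interval $[0,1]$ forces $\alpha q\ell\leq O(\varepsilon^{-1})$, and reinserting this into the measure estimate leads to the same bound.

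The only delicate point is tracking the multiplicative constant tightly enough to arrive at $10$ rather than something larger, which is precisely why one needs the two-case split on $q$ (choosing $r=\alpha q$ fails for $q=1$ since then $r-\alpha=0$, while choosing $r=2\alpha$ for large $q$ would introduce a spurious factor $q^\delta$ in $R^\delta$). The argument itself is extremely short, and there is no conceptual obstacle beyond this bookkeeping.
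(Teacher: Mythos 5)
Your argument is correct and it is the same volume-packing mechanism the paper uses: lower-bound the $\mu_\setdim$-mass of $\X$ in small balls centered at the points of $\h P$ that land near $\X$, upper-bound the mass on the enclosing interval, and let the bounded overlap of the small balls do the rest. The difference is that the paper avoids both of your case splits by taking the radius to be twice the spacing $t=\h q$ for every $q$: then the ``inner'' ball has radius $2t-\h\geq t>0$ automatically (so $q=1$ is not special), the overlap is a fixed constant, and because $P\subset[0,1]\cap\h\ZZ$ the enclosing interval already has length at most $2|P|t$, so the regime you call $\h q\ell>1$ needs no separate treatment. In your write-up that second regime is asserted rather than proved (``reinserting this into the measure estimate leads to the same bound''); it does close, because $\h q\ell>1$ gives $\h(q-1)\ell>1/2$ and hence $(r-\h)^{-\setdim}\le (2\ell)^\setdim$, but you should supply that line rather than wave at it. Also, the claim that the substitution ``cancels all $\h q$-dependence'' is not literally true for $q\ge 2$: a factor $(q/(q-1))^\setdim$ survives and must itself be bounded by $2^\setdim$, and the factor $((\ell+1)/\ell)^\setdim$ by $(3/2)^\setdim$, before you land on $10$. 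None of this is a conceptual obstacle, but the paper's uniform choice $r=2t$ reaches the same constant with strictly less bookkeeping.
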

\begin{proof}
Let $P\subset[0,1]\cap \h\ZZ$ be a proper arithmetic progression. In particular, $P$ has spacing $t\geq \h$.
Assume that $|P\cap\X(\h)|>\epsilon|P|$.
For each point $y\in P\cap \X(\h)$, the ball $B(y,2t)$ contains the ball of radius
$t$ centered at some point of $\X$. 
By \eqref{defnADRegular} we have
\begin{equation}
\sum_{y\in P\cap\X(\h)}\mu_{\setdim}(\X\cap B(y,2t))\geq C^{-1}\epsilon|P|t^{\setdim}.
\end{equation}
On the other hand, the balls $B(y,2t)$ are at most five-fold overlapping, and $P$ is contained in an interval $J$ of length $(|P|+3)t\leq 2|P|t$ (unless $|P|<3$ in which case $|P|<S(\epsilon)$ automatically). By \eqref{defnADRegular} we have
\begin{equation}
\sum_{y\in P\cap\X(\h)}\mu_{\setdim}(\X\cap B(y,2t))\leq 5\mu_{\setdim}(\X\cap J)\leq  10C(t|P|)^{\setdim}.
\end{equation}
We conclude that $|P|\leq (10C^2\epsilon^{-1})^{\frac{1}{1-\setdim}}$.
\end{proof}
Combining Corollary~\ref{corrAPAP} and Proposition~\ref{ADStronglyAvoids}, we get
%
%
\begin{corr}\label{ADRegularSetsInIntervals}
Let $\X\subset[0,1]$ be a $\setdim$--regular set with regularity constant $C$. Let $0<\h<1$ and $A\subset \X(\h)\cap\h\ZZ$, and suppose $|A+A|\leq K|A|$. Then
\begin{equation}
|A|\leq \exp\big[K_3(1-\setdim)^{-1}(\log C)\log^{13}K\big]
\end{equation}
for some absolute constant $K_3$.
\end{corr}
%
%
%

\subsection{Ahlfors-David regular trees}
The key to proving Theorem \ref{ADRegularSmallAddEnergyThm} will be to analyze $\X$ at many scales. Heuristically, if $\h=M^{-N}$ for $M,N$ positive integers, then $\X$ can naturally be analyzed at the $N$ scales $1,M^{-1},\ldots,M^{-N}$. On each scale, we will get a small gain in the scale-$\h$ additive energy of $\X$. In order to keep track of $\X(\h)$ at these different scales we will construct an object called a tree.

\begin{defn} [Trees]
A \textbf{(rooted) tree} of height $\ell\in\mathbb Z_{\geq 0}$ is a connected acyclic graph with a distinguished vertex (called the root). Once we have specified the root of a tree, each vertex has a well-defined height (i.e.~its distance from the root), and we say that one vertex $v$ is a parent of another vertex $v^\prime$ if $v$ and $v^\prime$ are adjacent and $v$ has smaller height. 

More formally, a (rooted) tree is a quadruple $(V,H,p,\ell)$, where
\begin{itemize}
\item $V$ is a finite set of \textbf{vertices};
\item $H:V\to \{0,\dots,\ell\}$ is the \textbf{height} function, and
$H^{-1}(0)$ consists of a single vertex called the \textbf{root};
\item $p:V\setminus H^{-1}(0)\to V$ is the \textbf{parent} function,
and $p(H^{-1}(t))\subset H^{-1}(t-1)$ for
$t=1,\dots,\ell$.
\end{itemize}
We denote by $\mathcal {V}_t(T):=H^{-1}(t)$ the set of vertices of height $t$.
\end{defn}

Let $T$ be a tree of height $\ell$. The set of \emph{leaves} of $T$ is defined as
$$
\mathcal L(T)=H^{-1}(\ell)\ \subset\ V.
$$

For each vertex $v\in V$, we say that $v'\in V$ is a \emph{child} of $v$, if $p(v')=v$.
More generally, we say that $v'$ is \emph{below} $v$, and write $v'\prec v$, if there is a sequence of vertices $v_1,\ldots,v_m$ so that $v_1=v$, $v_m=v'$, and $v_{i+1}$ is a child of $v_i$ for each $i=1,\ldots,m-1$.

If $T$ is a tree and $v\in V$, we define $T_v$ to be the \emph{subtree} of $T$ rooted at $v$. This is a tree of height $\ell-H(v)$. Its vertices are the set $\{v'\in V\mid v'\prec v\}$. The height function is $v'\mapsto H(v')-H(v)$,
and the parent function is inherited from the original tree.
%
\begin{defn}[Regular trees]
Let $\ell,B,C\geq 0$. We say a tree $T$ is an ``Ahlfors-David regular tree of height~$\ell$, branching~$B$, and regularity constant~$C$'' if $T$ is a tree of height~$\ell$ and 
for each $v\in T$,
\begin{equation}\label{ADregularTreeEqn}
C^{-1} B^{\ell-H(v)}\leq |\mathcal{L}(T_v)|\leq C B^{\ell-H(v)},
\end{equation}
where $T_v$ is the sub-tree of $T$ rooted at $v$.
\end{defn}

\noindent\textbf{Remark}.
If $T$ is an Ahlfors-David regular tree with height $\ell$, branching $B$, and regularity constant $C$, then each vertex of $T$ has between $C^{-2} B$ and $C^2B$ children. However, much more is true--if a vertex of $T$ has (relatively) few children, then \emph{its} children must have many children, and vice versa. Thus the tree $T$ might not be perfectly balanced, but it cannot become extremely unbalanced either.

\begin{lemm}\label{subTreeIsADReg}
Let $T$ be an Ahlfors-David regular tree  with height $\ell$, branching $B$, and regularity constant $C$. Let $v\in T$. Then $T_v$ (the sub-tree rooted at $v$) is an Ahlfors-David regular tree of height $\ell-H(v)$, branching $B$, and regularity constant $C$.
\end{lemm}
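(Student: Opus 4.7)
The plan is to verify the three structural requirements of an Ahlfors-David regular tree directly from the definitions, since the statement is essentially a consistency check between the notions of ``subtree'', ``leaves below a vertex'', and ``height''. The only care needed is to translate between the height function on $T$ and the one inherited by $T_v$.

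First I would fix $v\in T$ and denote by $T_v$ the subtree rooted at $v$, with vertex set $\{v'\in V\mid v'\preceq v\}$, height function $H_v(v'):=H(v')-H(v)$, and parent function inherited from $T$. By the definition of $T_v$, its height is $\max_{v'\prec v}H_v(v')\leq \ell-H(v)$, and this maximum is attained by any leaf of $T$ lying below $v$ (such leaves exist by \eqref{ADregularTreeEqn} applied to $v$, which forces $|\mathcal L(T_v)|\geq C^{-1}B^{\ell-H(v)}\geq 1$ at least when $B\geq 1$; the degenerate cases $B=0$ or $\ell=H(v)$ are handled trivially). Hence $T_v$ has height exactly $\ell-H(v)$ and its branching parameter is inherited as $B$.

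Next I would verify the regularity bound \eqref{ADregularTreeEqn} for $T_v$. Pick any $w\in T_v$. Observe that the subtree $(T_v)_w$ coincides with $T_w$: by transitivity of $\prec$, the set of vertices below $w$ in $T_v$ is the same as the set of vertices below $w$ in $T$, and the height and parent functions on this set agree up to the additive shift by $H(v)$, which does not affect the set of leaves. In particular $\mathcal L((T_v)_w)=\mathcal L(T_w)$ as sets. The height of $w$ in $T_v$ is $H_v(w)=H(w)-H(v)$, so the height of $(T_v)_w$ in $T_v$ is
$$
(\ell-H(v))-H_v(w)=\ell-H(w).
$$
Applying the regularity assumption \eqref{ADregularTreeEqn} of $T$ at the vertex $w$ gives
$$
C^{-1}B^{\ell-H(w)}\leq |\mathcal L(T_w)|\leq CB^{\ell-H(w)},
$$
which is exactly the required bound for $T_v$ at $w$.

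The main obstacle, if one can call it that, is purely notational: one must be careful not to confuse $H$ with $H_v$ and to observe that taking a subtree twice (first to form $T_v$, then to form $(T_v)_w$) agrees with taking $T_w$ directly. Once this identification is in place, the Ahlfors-David regularity of $T_v$ with the same constants $B$ and $C$ follows immediately from the corresponding property of $T$.
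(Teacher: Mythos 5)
Your proof is correct, and it fills in a verification that the paper omits entirely: Lemma~\ref{subTreeIsADReg} is stated without proof, being regarded as immediate from the definitions. Your argument is the natural one — the identification $\mathcal L((T_v)_w)=\mathcal L(T_w)$ and the consistent height shift $(\ell-H(v))-H_v(w)=\ell-H(w)$ are exactly the two observations needed — so there is no divergence of method to discuss.

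One tiny imprecision: in your aside on the existence of leaves, $C^{-1}B^{\ell-H(v)}$ need not be $\geq 1$ (e.g.\ when $\ell=H(v)$ and $C>1$ it equals $C^{-1}<1$). What you actually want is that $|\mathcal L(T_v)|$ is a nonnegative integer bounded below by the strictly positive quantity $C^{-1}B^{\ell-H(v)}$, hence $|\mathcal L(T_v)|\geq 1$. This fixes the slip without changing the argument; it is also somewhat beside the point, since the paper simply defines $T_v$ to have height $\ell-H(v)$ as part of the subtree construction rather than deriving it as a maximum.
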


If $T$ is a tree of height $\ell$ and $j\in\mathbb N$, then we can define the $j$-th \emph{power} of $T$, denoted~$T^j$,
as the following tree of height $\ell$:
\begin{itemize}
\item the vertices of $T^j$ are ordered pairs
$(v_1,\dots,v_j)$, where $v_1,\dots,v_j$ are vertices of the tree $T$ of the same height;
\item the height of a vertex $(v_1,\dots,v_j)$ is equal to the height of each of $v_i$;
\item the parent of a vertex $(v_1,\dots,v_j)$ is equal to $(p(v_1),\dots,p(v_j))$,
where $p$ is the parent function of the original tree.
\end{itemize}

If $T$ is an Ahlfors-David regular tree with height $\ell$, branching $B$, and regularity constant $C$, then $T^j$ is an Ahlfors-David regular tree with with height $\ell$, branching $B^j$, and regularity constant $C^j$.
%
%
%
%
%
\subsection{Discretization}
  \label{s:discretization}

The trees discussed in the previous section are useful for describing the multi-scale structure of $\setdim$--regular sets.

Let $\X\subset[0,1]$ be a $\setdim$--regular set with regularity constant $C$. Define
\begin{equation}\label{defnC1}
C_1:=(10C^2)^{1\over 1-\setdim}.
\end{equation}
Let $M,N$ be positive integers; we will fix $M$ and study asymptotic behavior as $N\to\infty$. We will describe a process that divides $[0,1)$ into sub-intervals of length roughly $M^{-j},\ j=0,\ldots N,$ and assembles these intervals into a tree.  

For each $j=0,\ldots,N$, divide $[0,1)$ into $M^j$ intervals of the form $[iM^{-j}, (i+1)M^{-j})$. If $I$ is an interval of this form, we say $I$ is \emph{empty} if $I\cap \X=\emptyset$. Otherwise $I$ is \emph{non-empty}. If several non-empty intervals are adjacent, merge them into a single (longer) interval. By Lemma~\ref{ADStronglyAvoids} with $\alpha:=M^{-j}$ and $\varepsilon:=1$, at most $C_1$ intervals can be merged into a single interval in this fashion. Let $\mathcal{I}_j$ be the set of non-empty intervals obtained after the merger process is complete. Each interval in $\mathcal{I}_j$ has length between $M^{-j}$ and $C_1M^{-j}$. Furthermore, if $I$ is an interval in $\mathcal{I}_j$, then there is a gap of size $\geq M^{-j}$ on either side of $I$ that is disjoint from $\X$.

\begin{lemm}\label{uniqueParent}
Let $I\in \mathcal{I}_j$. Then there is a unique interval $\tilde I\in\mathcal{I}_{j-1}$ that intersects $I$. Furthermore, $I\subset\tilde I$.
\end{lemm}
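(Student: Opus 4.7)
The plan is to exploit the fact that the scale-$M^{-j}$ dyadic partition is a refinement of the scale-$M^{-(j-1)}$ dyadic partition, and that each merged interval in $\mathcal{I}_{j-1}$ is a union of consecutive non-empty dyadic intervals at scale $M^{-(j-1)}$.

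First, I would write $I\in\mathcal{I}_j$ as a union of consecutive non-empty scale-$M^{-j}$ dyadic intervals $J_1,\ldots,J_k$, where each $J_m=[i_mM^{-j},(i_m+1)M^{-j})$ with $J_m\cap\mathcal{X}\neq\emptyset$ and $i_{m+1}=i_m+1$; this is exactly how $I$ is produced by the merging process. Because the scale-$M^{-j}$ dyadic partition refines the scale-$M^{-(j-1)}$ one (every block $[i'M^{-(j-1)},(i'+1)M^{-(j-1)})$ equals the union of the $M$ intervals $[iM^{-j},(i+1)M^{-j})$ for $i'M\leq i<(i'+1)M$), each $J_m$ is contained in a unique scale-$M^{-(j-1)}$ dyadic interval $K_m:=[\lfloor i_m/M\rfloor M^{-(j-1)},(\lfloor i_m/M\rfloor+1)M^{-(j-1)})$.

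Next, since $J_m\cap\mathcal{X}\neq\emptyset$ and $J_m\subset K_m$, each $K_m$ is non-empty. Moreover, as $i_m$ increases by $1$ from $m$ to $m+1$, the index $\lfloor i_m/M\rfloor$ either stays the same or increases by $1$, so the distinct $K_m$'s form a consecutive block of scale-$M^{-(j-1)}$ dyadic intervals, all non-empty. Consequently, the merging step at scale $M^{-(j-1)}$ combines all of them (together with any further adjacent non-empty scale-$M^{-(j-1)}$ dyadic intervals) into a single element $\tilde I\in\mathcal{I}_{j-1}$, and $I\subset K_1\cup\cdots\cup K_k\subset\tilde I$.

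Finally, for uniqueness, I would observe that distinct elements of $\mathcal{I}_{j-1}$ are pairwise disjoint: they arise from disjoint dyadic intervals, and if two of them were adjacent, the merging step would have combined them. Hence if $I$ met some $\tilde I'\in\mathcal{I}_{j-1}$ other than $\tilde I$, then $\tilde I\cap\tilde I'\supset I\cap\tilde I'\neq\emptyset$, a contradiction. There is no real obstacle here beyond keeping the indexing honest; the statement is essentially a bookkeeping consequence of the fact that at each scale we discard empty dyadic intervals and then merge adjacent survivors.
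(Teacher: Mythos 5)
Your proof is correct, and it takes a genuinely different route from the paper's. The paper argues by contradiction: it supposes $I$ meets two distinct intervals $\tilde I,\tilde I'\in\mathcal I_{j-1}$, observes that (since distinct elements of $\mathcal I_{j-1}$ are never adjacent) there is an empty scale-$M^{1-j}$ dyadic interval between them, and then notes that this empty interval would be entirely contained in $I$ --- contradicting the fact that, by the merging construction, every scale-$M^{-j}$ dyadic subinterval of $I$ must meet $\mathcal X$. You instead argue forwards: decompose $I$ into its constituent non-empty scale-$M^{-j}$ dyadic blocks $J_1,\dots,J_k$, pass to the parent scale-$M^{1-j}$ dyadic blocks $K_1,\dots,K_k$, note that these are consecutive and non-empty and hence all merge into one $\tilde I\in\mathcal I_{j-1}$, and then invoke pairwise disjointness of $\mathcal I_{j-1}$ for uniqueness. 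Both proofs rest on the same two structural facts about the merging process (merged intervals are made of consecutive non-empty dyadics; distinct merged intervals are separated by an empty gap), but your version is constructive and exhibits the parent $\tilde I$ explicitly, whereas the paper's version is shorter at the cost of being indirect. One small stylistic point: your uniqueness sentence leans on the already-established containment $I\subset\tilde I$ --- that is fine, but it would read more cleanly to first record that distinct elements of $\mathcal I_{j-1}$ are pairwise disjoint and cover $\bigcup_{\tilde I}\tilde I\supset I$, and then say that $I\subset\tilde I$ forces any element of $\mathcal I_{j-1}$ meeting $I$ to meet $\tilde I$ and hence equal it.
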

\begin{proof}
First, note that $I\subset\bigcup_{\tilde I\in \mathcal{I}_{j-1}}\tilde I$. Thus it suffices to show that $I$ intersects at most one interval from $\mathcal{I}_{j-1}$. Suppose that $I$ intersects two intervals, $\tilde I=[\tilde i_0 M^{1-j},\ \tilde i_1 M^{1-j})$ and $\tilde I^\prime=[\tilde i_0^\prime M^{1-j},\ \tilde i_1^\prime M^{1-j})$ from $\mathcal{I}_{j-1}$. If we write $I=[i_0M^{-j},i_1M^{-j})$, then $i_0 \leq M\tilde i_1\leq Mi_0^\prime\leq i_1$. 

Since no two intervals in $\mathcal{I}_{j-1}$ can be adjacent, there must be some interval of the form $[i^\prime M^{1-j},(i^\prime+1)M^{1-j})$ that is disjoint from $\X$, with $\tilde i_1 < i^\prime < i^\prime+1 < \tilde i_0^\prime$. This implies that $[i^\prime M^{1-j},(i^\prime+1)M^{1-j})\subset I$, and in particular, $[(Mi^\prime) M^{-j},(Mi^\prime+1)M^{-j})\subset I.$ But this implies that $[(Mi^\prime) M^{-j},(Mi^\prime+1)M^{-j})\cap I\cap\X=\emptyset,$ which is a contradiction---by the construction of the intervals in $\mathcal{I}_j$, every sub-interval of $I$ of the form $[iM^{-j},(i+1)M^{-j})$ must intersect $\X$. We conclude that there is at most one interval from $\mathcal{I}_{j-1}$ that intersects $I$.  
\end{proof}

We now construct the tree $T_{\X;M,N}$ as follows. The root vertex of $T_{\X;M,N}$ corresponds to the interval $[0,1)\in \mathcal I_0$. For each $j=1,\ldots,N$, the vertices of $T_{\X;M,N}$ of height $j$ correspond to the intervals in $\mathcal{I}_j$.
The parent of an interval in $\mathcal I_j$ is the unique interval in $\mathcal I_{j-1}$ containing that interval.
If $v$ is a vertex of $T_{\X;M,N}$, let $I_v$ be the corresponding interval. 
Note that $v'\prec v$ if and only if $I_{v^\prime}\subset I_v$. See Figure~\ref{f:thetree}.

\begin{figure}
\includegraphics{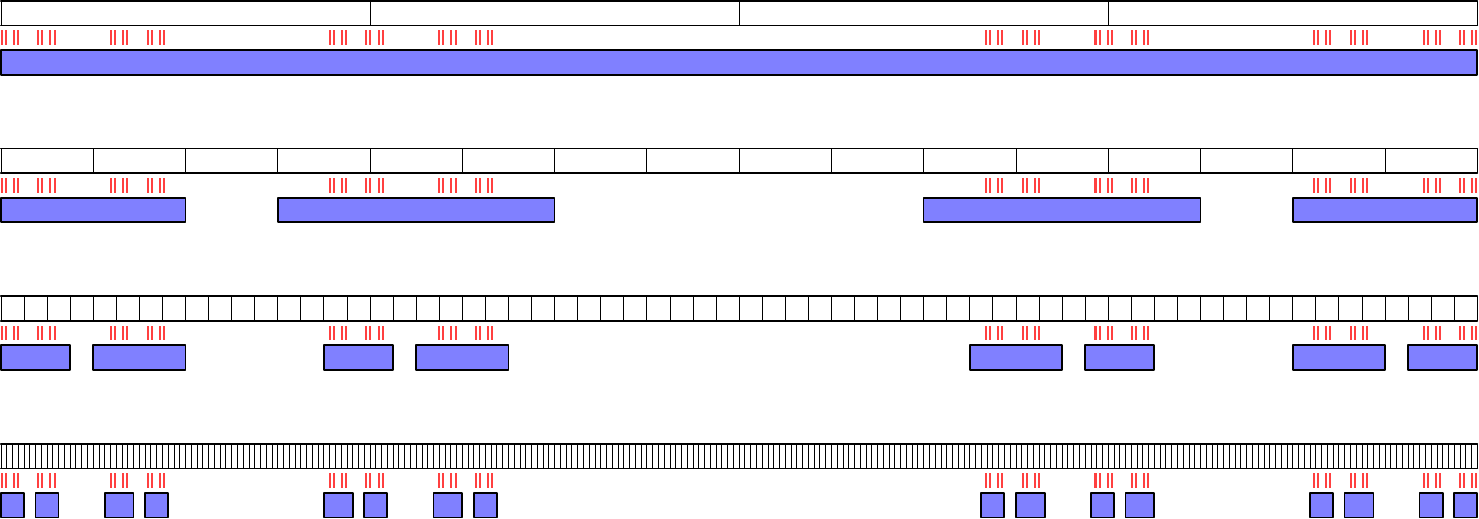}
\caption{The discretization of the middle third Cantor set (pictured in red) with $M=4$.
The intervals of the discretization for $j=1,\dots, 4$ are pictured in blue and
form a tree.}
\label{f:thetree}
\end{figure}

\begin{lemm}
  \label{l:treecons}
Let $\X$ be a $\setdim$--regular set with regularity constant $C$. Then $T_{\X;M,N}$ is an Ahlfors-David regular tree with height $N$, branching $M^{\setdim}$ and regularity constant 
\begin{equation}\label{defnC2}
C_2:=C^2 C_1^\setdim=10^{\setdim\over 1-\setdim} C^{\frac{2}{1-\setdim}}. 
\end{equation}
\end{lemm}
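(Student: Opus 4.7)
The plan is to reduce the tree condition \eqref{ADregularTreeEqn} to a two-sided bound on $\mu_\delta(\mathcal X \cap I)$ for every interval $I$ that arises in the construction, and then deduce the estimate on $|\mathcal L(T_v)|$ by a double-counting argument.

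The key auxiliary statement to prove is: for every $k\in\{0,1,\dots,N\}$ and every $I\in\mathcal I_k$,
\begin{equation*}
C^{-1}M^{-k\delta}\ \leq\ \mu_\delta(\mathcal X\cap I)\ \leq\ C\,C_1^\delta\,M^{-k\delta}.
\end{equation*}
The upper bound is immediate: pick any $x_0\in\mathcal X\cap I$; since $|I|\leq C_1M^{-k}$, we have $I\subset B(x_0,C_1M^{-k})$, and the right inequality in \eqref{defnADRegular} gives the claim. The lower bound uses the merger step crucially. By construction each side of $I$ is bordered by a gap of length at least $M^{-k}$ that is disjoint from $\mathcal X$; therefore if $y\in\mathcal X$ satisfies $|y-x_0|<M^{-k}$, then $y$ cannot lie in such a gap or in a neighboring interval of $\mathcal I_k$, so $y\in I$. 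Hence $\mathcal X\cap B(x_0,M^{-k})\subset\mathcal X\cap I$, and the left inequality of \eqref{defnADRegular} produces the lower bound.

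The double-counting step is then straightforward. Fix a vertex $v$ at height $j=H(v)$. Iterating Lemma~\ref{uniqueParent}, every interval $I_{v'}\in\mathcal I_N$ that meets $I_v$ is in fact contained in $I_v$, and the $I_{v'}$ as $v'$ ranges over $\mathcal L(T_v)$ are pairwise disjoint with $\bigcup_{v'\in\mathcal L(T_v)}I_{v'}\supset I_v\cap\mathcal X$. Therefore
\begin{equation*}
\mu_\delta(\mathcal X\cap I_v)\ =\ \sum_{v'\in\mathcal L(T_v)}\mu_\delta(\mathcal X\cap I_{v'}).
\end{equation*}
Applying the two-sided bound above to the left-hand side (at scale $M^{-j}$) and to each term on the right-hand side (at scale $M^{-N}$), we obtain
\begin{equation*}
C^{-1}M^{-j\delta}\ \leq\ |\mathcal L(T_v)|\cdot C\,C_1^\delta\,M^{-N\delta},\qquad
|\mathcal L(T_v)|\cdot C^{-1}M^{-N\delta}\ \leq\ C\,C_1^\delta\,M^{-j\delta},
\end{equation*}
which rearrange to $C_2^{-1}(M^\delta)^{N-j}\leq|\mathcal L(T_v)|\leq C_2(M^\delta)^{N-j}$ with $C_2=C^2C_1^\delta$ as in \eqref{defnC2}.

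There is no serious obstacle; the only delicate point is the lower measure bound, which hinges on the fact that the merger defining $\mathcal I_k$ automatically secures gaps of length $\geq M^{-k}$ on both sides, allowing an inscribed ball $B(x_0,M^{-k})$ to capture all of $\mathcal X$ near $x_0$. Note that the parameter $C_1$ from \eqref{defnC1}, chosen so that no more than $C_1$ adjacent length-$M^{-k}$ subintervals are merged, is exactly the quantity from Proposition~\ref{ADStronglyAvoids} (with $\varepsilon=1$), so the whole construction is internally consistent and the bound propagates uniformly across all $k$ and all intervals.
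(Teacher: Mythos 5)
Your proof is correct and follows essentially the same approach as the paper: both establish the two-sided bound $C^{-1}M^{-k\delta}\leq\mu_\delta(\mathcal X\cap I)\leq C\,C_1^\delta M^{-k\delta}$ for $I\in\mathcal I_k$ via the sandwich $B(x_0,M^{-k})\cap\mathcal X\subset I\subset B(x_0,C_1 M^{-k})$ (using the merger-induced gaps for the inner inclusion), and then combine this with the disjoint decomposition $\mu_\delta(\mathcal X\cap I_v)=\sum_{v'\in\mathcal L(T_v)}\mu_\delta(\mathcal X\cap I_{v'})$ from Lemma~\ref{uniqueParent}. The only difference is that you isolate the measure estimate as an explicit auxiliary claim, whereas the paper carries it out inline.
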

\begin{proof}
Let $v$ be a vertex of $T=T_{\X;M,N}$ and let $I_v\subset[0,1)$ be the corresponding interval.
Choose any $x_0\in I_v\cap \mathcal X$.
Since $I_v$ has length no more than $C_1M^{-H(v)}$, it is contained
in the ball $B(x_0,C_1M^{-H(v)})$.
On the other hand,%
\footnote{This is one of the places where we use the the merging of consecutive intervals; otherwise,
one of the intervals may intersect $\mathcal X$ at an endpoint and the resulting tree
may not be regular.}
$B(x_0,M^{-H(v)})\cap \mathcal X\subset I_v$.
Since $\mathcal X$ is $\delta$-regular,
we have
\begin{equation}
C^{-1} M^{-H(v)\setdim}\leq \mu_{\setdim}(\X\cap I_v)\leq C(C_1 M^{-H(v)})^{\setdim}.
\end{equation}
For each leaf $v^\prime\in \mathcal{L}(T)$ with $v^\prime\prec v$, let $I_{v^\prime}$ be the corresponding interval. Again, we have
\begin{equation}
C^{-1} M^{-N\setdim}\leq \mu_{\setdim}(\X\cap I_{v^\prime})\leq C(C_1 M^{-N})^{\setdim}.
\end{equation}
On the other hand, by Lemma \ref{uniqueParent} we have
\begin{equation*}
\mu_{\setdim}(\X\cap I_v)=\sum_{v^\prime\in \mathcal{L}(T_v)}\mu_{\setdim}(\X\cap I_{v^\prime}).
\end{equation*}
It follows that
\begin{equation*}
C_2^{-1}(M^\setdim)^{N-H(v)}\leq |\mathcal{L}(T_v)|\leq C_2(M^\setdim)^{N-H(v)},\ \quad C_2 = C^2 C_1^\setdim. \qedhere
\end{equation*}
\end{proof}
%
We will mainly be interested in the tree $T_{\X;M,N}^3$. A vertex of this tree is a triple of intervals $(I_1,I_2,I_3)$ where each interval $I_i$ meets $\X$ and all three intervals lie in $\mathcal I_j$ for some $j$ (and thus they have comparable lengths).
%
%
%
%

\subsection{Additive structure and pruning the tree}
  \label{AddEnergTreePrunSec}

Let $\X$ be a $\setdim$--regular set with regularity constant $C$ and let $T_{\X;M,N}$ be the Ahlfors-David regular tree described in~\S\ref{s:discretization}. Let $v=(I_1,I_2,I_3)$ be a vertex of $T_{\X;M,N}^3$. Consider the interval
$$
I_1-I_2+I_3:=\{x_1-x_2+x_3\mid x_1\in I_1,\ x_2\in I_2,\ x_3\in I_3\}.
$$
We say that $v$ \emph{misses} $\X$ if $(I_1-I_2+I_3)\cap \X(M^{-H(v)})=\emptyset$. Otherwise $v$ \emph{hits} $\X$.

The following result shows that if $M$ is sufficiently large then we have an improvement in additive energy on each level of the tree.

\begin{prop}\label{missProp}
Let $\X$ be a $\setdim$--regular set with regularity constant $C$, and let $T_{\X;M,N}$ be the Ahlfors-David regular tree described above.
Assume that $M\geq M_0$, where
\begin{equation}\label{defnC3}
M_0:=\exp\big[K_5\delta^{-1}(1-\delta)^{-14}(1+\log^{14}C)\big],
\end{equation}
and $K_5$ is a large absolute constant. Let $v\in T_{\X;M,N}^3$ be a vertex that is not a leaf. Then at least one of the children of $v$ misses $\X$. 
\end{prop}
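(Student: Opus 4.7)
The plan is to proceed by contradiction: assume every child of $v = (I_1, I_2, I_3)$ in $T_{\X;M,N}^3$ hits $\X$, and deduce $M < M_0$. Write $j = H(v)$ and $\h = M^{-(j+1)}$. For each $i \in \{1, 2, 3\}$, let $A_i \subset \ZZ$ be the set of integers $k$ such that the atomic interval $[k\h, (k+1)\h)$ is contained in some child of $I_i$ in $\mathcal I_{j+1}$, i.e.\ $[k\h, (k+1)\h) \subset I_i$ and intersects $\X$. By Lemma~\ref{l:treecons} the number of children of $I_i$ in $\mathcal I_{j+1}$ lies in $[C_2^{-2} M^\setdim, C_2^2 M^\setdim]$; since each child consists of between $1$ and $C_1$ atomic intervals (by the merging step and Proposition~\ref{ADStronglyAvoids}), one obtains $C_2^{-2} M^\setdim \leq |A_i| \leq C_1 C_2^2 M^\setdim$ and $\h A_i \subset \X(\h) \cap \h\ZZ$.

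The first step is to bound the tripartite sumset $|A_1 - A_2 + A_3|$. For any $(a_1, a_2, a_3) \in A_1 \times A_2 \times A_3$, let $v' = (I_1', I_2', I_3')$ be the unique child of $v$ with $[a_i\h, (a_i+1)\h) \subset I_i'$; the contradiction hypothesis supplies a point of $\X$ in $(I_1' - I_2' + I_3') \cap \X(\h)$, which since each $I_i'$ has length at most $C_1\h$ must lie within $(C_1+2)\h$ of $(a_1-a_2+a_3)\h$. Hence $A_1 - A_2 + A_3 \subset \h^{-1}\X((C_1+2)\h)$, and this set is contained in an integer interval of length $O(C_1 M)$. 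The standard Ahlfors-David estimate $\mu_L(\X(\h) \cap J) \lesssim_C \h^{1-\setdim}|J|^\setdim$ then yields
$$ |A_1 - A_2 + A_3| \leq K_0 M^\setdim, \quad \log K_0 = O\bigl((1-\setdim)^{-1}(1+\log C)\bigr). $$

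The main obstacle is that Corollary~\ref{ADRegularSetsInIntervals} demands a \emph{symmetric} doubling bound on a single $A_i$, whereas the previous step only furnishes a tripartite estimate. The bridge is as follows. Fixing any $a_2^* \in A_2$, the covering $A_1 + A_3 \subset (A_1 - A_2 + A_3) + \{a_2^*\}$ gives $|A_1 + A_3| \leq K_0 C_2^2 |A_1|$, and the Pl\"unnecke--Ruzsa inequality (Tao--Vu Corollary~6.29) with pivot $A_1$ and two copies of $A_3$ yields $|A_3 + A_3| \leq (K_0 C_2^2)^2 |A_1| \leq K_0^2 C_2^8 |A_3|$, so $A_3$ has doubling constant $K := K_0^2 C_2^8$ with $\log K = O((1-\setdim)^{-1}(1+\log C))$. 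Corollary~\ref{ADRegularSetsInIntervals} applied to $\h A_3 \subset \X(\h) \cap \h\ZZ$ then gives
$$ |A_3| \leq \exp\bigl[K_3(1-\setdim)^{-1}(\log C)\log^{13} K\bigr] \leq \exp\bigl[K_4(1-\setdim)^{-14}(1+\log^{14}C)\bigr], $$
which combined with $|A_3| \geq C_2^{-2} M^\setdim$ and $\log C_2 = O((1-\setdim)^{-1}(1+\log C))$ yields $\setdim\log M \leq K_5(1-\setdim)^{-14}(1+\log^{14}C)$ after taking logarithms; this contradicts $M \geq M_0$ from~\eqref{defnC3} once the absolute constant $K_5$ there is chosen large enough.
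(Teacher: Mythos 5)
Your proof is correct and follows essentially the same route as the paper: discretize each $I_i$ at scale $M^{-H(v)-1}$ to obtain sets $A_i$ in $\X(\h)\cap\h\ZZ$, bound the tripartite sumset $|A_1-A_2+A_3|$ by $O_C(M^\delta)$ using the hitting hypothesis together with Ahlfors--David regularity, convert this to a doubling bound $|A_i+A_i|\le K|A_i|$ with $\log K=O((1-\delta)^{-1}(1+\log C))$ via a Ruzsa-type sumset inequality, and contradict Corollary~\ref{ADRegularSetsInIntervals} once $M\ge M_0$. The only cosmetic differences are that the paper defines $A_i:=I_i\cap\X(\h)\cap\h\ZZ$ and an explicit $A_4$ rather than quoting the AD estimate directly, uses the lower bound on $|A_i|$ coming from $\delta$-regularity of $\X$ rather than from the tree constant $C_2$, and gets the doubling of $A_1$ via the Ruzsa sum inequality $|A_1+A_1|\le|A_1+A_3|^2/|A_3|$ (equivalent to the Pl\"unnecke--Ruzsa step you use); your ``$(C_1+2)\h$'' should be something like $(3C_1+1)\h$, but that affects only absorbed constants.
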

\begin{proof}
Suppose $M\geq M_0$. Let $v\in T_{\X;M,N}^3$ be a vertex that is not a leaf and suppose all of the children of $v$ hit $\X$; we will obtain a contradiction.  Write $v=(v_1,v_2,v_3)$ and let $I_1,I_2,I_3\in \mathcal{I}_{H(v)}$ be the corresponding triple of intervals. Let $\h_1 = M^{-H(v)-1}$ and define
$$
\begin{aligned}
A_i&:=I_i \cap \X(\h_1) \cap \h_1\ZZ,\quad i=1,2,3;\\
A_4&:=(I_1-I_2+I_3) \cap \X(4C_1\h_1) \cap \h_1\ZZ.
\end{aligned}
$$
Here $C_1$ is defined in~\eqref{defnC1}. 
\begin{lemm}
If every child of $v$ hits $\X$, then
\begin{equation}
  \label{e:ally}
A_1-A_2+A_3\ \subset\ A_4. 
\end{equation}
\end{lemm}
\begin{proof}
Take $a_i\in A_i$, $i=1,2,3$. Then $|a_i-b_i|\leq\alpha_1$
for some $b_i\in\X$; since $I_i$ is surrounded
by size $M\alpha_1$ intervals which do not intersect $\X$,
we have $b_i\in I_i\cap\X$. Then $b_i$ lies in some child $I_i'$
of $I_i$, which is an interval of size no more than $C_1\alpha_1$.
We have $I_1'-I_2'+I_3'\cap\X(\alpha_1)\neq\emptyset$ and thus
$b_1-b_2+b_3\in\X((3C_1+1)\alpha_1)$. Then $a_1-a_2+a_3\in \X(4C_1\alpha_1)$
and~\eqref{e:ally} follows.
\end{proof}
We next claim that
\begin{equation}
  \label{e:zuotian}
\begin{aligned}
|A_i|&\geq (2C^2)^{-1}M^\delta,\quad i=1,2,3;\\
|A_4|&\leq 44C^2C_1M^\delta.
\end{aligned}
\end{equation}
These inequalities follow immediately from~\eqref{defnADRegular} and the following observations:
\begin{itemize}
\item The balls $B(a,\alpha_1)$ centered at $a\in A_i$ cover $\mathcal X\cap I_i$,
and $\mathcal X\cap I_i$
contains the intersection of $\mathcal X$ with a ball of radius $M\alpha_1$ centered at a point of $\mathcal X$.
\item For each $a\in A_4$, the ball $B(a,5C_1\alpha_1)$
contains a ball of radius $C_1\alpha_1$ centered at a point of $\mathcal X$,
the balls $B(a,5C_1\alpha_1)$ for different $a$ have overlapping at most~$11C_1$,
and their union is contained in an interval of length $(3M+10)C_1\alpha_1\leq 4MC_1\alpha_1$.
\end{itemize}
We now use the Ruzsa sum inequality~\cite[(4.6)]{Ruzsa} (see also Petridis~\cite{Petridis}):
$$
|A+C|\leq {|A+B|\cdot |B+C|\over |B|}
$$
valid for nonempty finite sets $A,B,C\subset\mathbb Z$.
Putting $A:=A_1,B:=A_3,C:=A_1$ and using~\eqref{e:zuotian} and the fact that
$|A_1+A_3|\leq |A_4|$ we obtain
$$
|A_1+A_1|\leq {|A_4|^2\over |A_1|\cdot |A_3|}\cdot |A_1|\leq C_3|A_1|,
$$
where
$$
C_3:=7744C^8C_1^2\leq (10C^2)^{6\over 1-\delta}.
$$
Apply Corollary~\ref{ADRegularSetsInIntervals} to the set $A_1$ with $K:=C_3$. We conclude that
\begin{equation}
|A_1|\leq \exp\big[K_4(1-\delta)^{-1}(\log C)\big((1-\delta)^{13}(1+\log C)^{13}\big)\big],
\end{equation}
where $K_4$ is an absolute constant.
If $M\geq M_0$, we obtain a contradiction with the first bound in~\eqref{e:zuotian}.
\end{proof}

%
%
\subsection{Analyzing the pruned tree}
  \label{s:pruned}

To take advantage of Proposition~\ref{missProp}, we prove the following general
fact about pruned subtrees of Ahlfors--David regular trees.

For two trees $T=(V,H,p,\ell),T'=(V',H',p',\ell)$ of same height, we say that $T'$ is a \emph{subtree} of $T$
if $V'\subset V$ and $H',p'$ are the restrictions of $H,p$ to $V'$. We say that
$T'$ is a \emph{pruned} subtree, if for each $v\in V'$ which is not a leaf,
there exists a child of $v$ in $T$ which does not lie in $V'$. See Figure~\ref{f:pruned}.

\begin{lemm}[Pruned trees have few leaves]\label{treePrudingLem}
Let $T$ be an Ahlfors-David regular tree with height $\ell$, branching $B$, and regularity constant $C$. Let $T^\prime$
be a pruned subtree of~$T$.
Then
\begin{equation}
|\mathcal{L}(T^\prime)|\leq\big(1-C^{-2}B^{-1}\big)^{\ell} |\mathcal{L}(T)|.
\end{equation}
\end{lemm}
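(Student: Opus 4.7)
The plan is to prove the bound by induction on the height $\ell$, with the key observation that at every non-leaf vertex of $T'$, a quantitatively nonnegligible fraction of leaves of $T$ below that vertex is excluded from $T'$.

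First, I would set $\lambda := 1 - C^{-2}B^{-1}$ and check the base case $\ell = 0$ (where $T$ consists of the root alone) trivially. For the inductive step, let $r$ denote the root of $T$ (which must also be the root of $T'$, since $T'$ contains the root by virtue of being a subtree rooted at some vertex; in the proof I will treat $T'$ as rooted at $r$, otherwise we immediately reduce to a subtree of lower height). Let $r_1, \dots, r_k$ be the children of $r$ in $T'$. Since $T'$ is pruned and $r$ is a non-leaf, the definition gives a child $r''$ of $r$ in $T$ that is not in $T'$.

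The central quantitative input is Ahlfors-David regularity: by \eqref{ADregularTreeEqn} applied at heights $0$ and $1$, we have $|\mathcal L(T)| \leq CB^\ell$ and $|\mathcal L(T_{r''})| \geq C^{-1}B^{\ell-1}$, so
\begin{equation*}
\sum_{i=1}^k |\mathcal L(T_{r_i})| \;\leq\; |\mathcal L(T)| - |\mathcal L(T_{r''})| \;\leq\; \big(1 - C^{-2}B^{-1}\big)|\mathcal L(T)| \;=\; \lambda\,|\mathcal L(T)|.
\end{equation*}

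Next I would verify that for each $i$, the subtree $T'_{r_i}$ is a pruned subtree of $T_{r_i}$: this is immediate because every non-leaf of $T'_{r_i}$ is also a non-leaf of $T'$, and the child of it in $T$ that is excluded from $T'$ is automatically excluded from $T'_{r_i}$. Since $T_{r_i}$ is an Ahlfors-David regular tree of height $\ell-1$ with the same branching $B$ and regularity constant $C$ (by Lemma~\ref{subTreeIsADReg}), the inductive hypothesis yields $|\mathcal L(T'_{r_i})| \leq \lambda^{\ell-1}|\mathcal L(T_{r_i})|$. Summing and combining with the displayed inequality gives
\begin{equation*}
|\mathcal L(T')| \;=\; \sum_{i=1}^k |\mathcal L(T'_{r_i})| \;\leq\; \lambda^{\ell-1}\sum_{i=1}^k |\mathcal L(T_{r_i})| \;\leq\; \lambda^\ell\,|\mathcal L(T)|,
\end{equation*}
completing the induction.

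There is no real obstacle here; the only point that requires care is the bookkeeping around the pruning property being inherited by sub-subtrees $T'_{r_i}$, and the use of both the upper and lower Ahlfors-David bounds \eqref{ADregularTreeEqn} at adjacent heights $H(r)$ and $H(r)+1$ to convert a single missing child into the multiplicative loss factor $\lambda$ per level.
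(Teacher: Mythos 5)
Your proof is correct and follows essentially the same route as the paper's: induction on $\ell$, extracting a missing height-1 vertex, converting it via the Ahlfors-David bounds at heights $0$ and $1$ into a multiplicative loss of $1-C^{-2}B^{-1}$, and applying the inductive hypothesis to the subtrees rooted at the remaining height-1 vertices. The only (cosmetic) difference is the order of operations — you bound $\sum_i|\mathcal L(T_{r_i})|$ before invoking the inductive hypothesis, while the paper invokes it first — and you spell out the inheritance of the pruning property to $T'_{r_i}$, which the paper leaves implicit.
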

\begin{proof}
We will prove the lemma by induction on $\ell$, the height of the tree. If $\ell=0$ the result is trivial. Now assume the result has been proved for all Ahlfors-David regular trees  with height $\ell-1$, branching $B$, and regularity constant $C$. Let $T$ be an Ahlfors-David regular tree  with height $\ell$, branching $B$, and regularity constant $C$, and let $T'$ be a pruned subtree of $T$.
Then at least one of the vertices in $\mathcal{V}_1(T)$ is missing from~$T^\prime$. We call this vertex $v^*$.

By Lemma \ref{subTreeIsADReg}, each of the trees $\{T_v\colon v\in \mathcal{V}_1(T^\prime)\}$ is a regular tree with height $\ell-1$, branching $B$, and regularity constant $C$. Thus we can apply the induction hypothesis to each such tree to obtain
\begin{figure}
\includegraphics{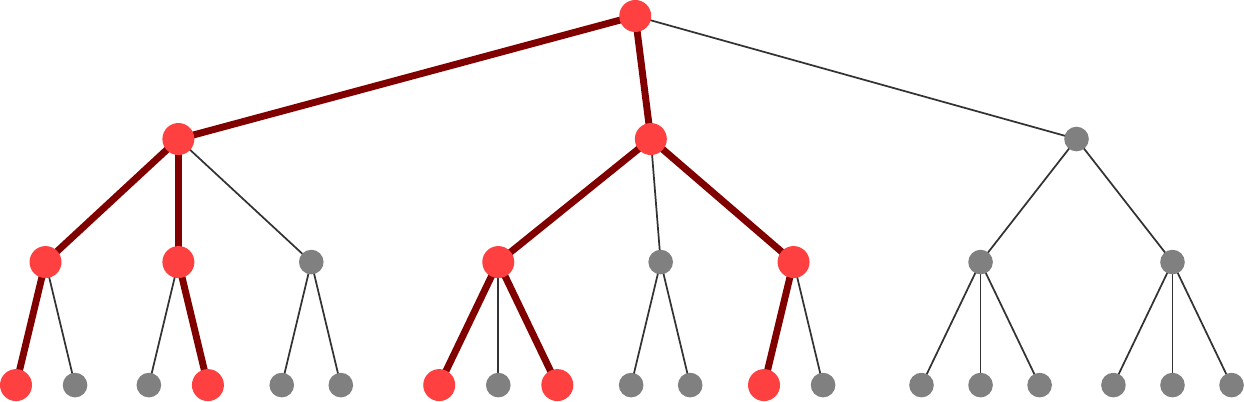}
\caption{An Ahlfors-David regular tree and a pruned subtree (in red).}
\label{f:pruned}
\end{figure}
\begin{equation*}
\begin{split}
|\mathcal L(T^\prime)|&=\sum_{v\in \mathcal{V}_1(T^\prime)}|\mathcal{L}(T_v^\prime)|\\
&\leq   (1-C^{-2}B^{-1})^{\ell-1} \sum_{v\in \mathcal{V}_1(T^\prime)}|\mathcal{L}(T_v)|\\
&\leq   (1-C^{-2}B^{-1})^{\ell-1}\Big(|\mathcal{L}(T)|-|\mathcal{L}(T_{v^*})|\Big).
\end{split}
\end{equation*}
By \eqref{ADregularTreeEqn} we have
\begin{equation}
|\mathcal{L}(T_{v^*})|\geq C^{-2}B^{-1} |\mathcal{L}(T)|.
\end{equation}
Thus
\begin{equation*}
|\mathcal{L}(T)|-|\mathcal{L}(T_{v^*})|\leq (1-C^{-2}B^{-1})\cdot|\mathcal{L}(T)|,
\end{equation*}
so
\begin{equation*}
|\mathcal{L}(T^\prime)|\leq (1-C^{-2}B^{-1})^{\ell}|\mathcal{L}(T)|.\qedhere
\end{equation*}
\end{proof}
%

%
%
\subsection{Finishing the proof of Theorem~\ref{ADRegularSmallAddEnergyThm}}

Let $\X$ be a $\setdim$--regular set with regularity constant $C$ and let $\h>0$. Let
$$
M=\lceil M_0 \rceil,\quad
N=\bigg\lfloor {\log(1/\h)\over \log M}\bigg\rfloor,
$$
where $M_0$ is defined in \eqref{defnC3}, and let $T_{\X;M,N}$ be the associated Ahlfors-David regular tree
constructed in~\S\ref{s:discretization}.

Let $T^\prime$ be the subtree of $T_{\X;M,N}^3$ which consists of triples of intervals
that hit $\mathcal X$ (see~\S\ref{AddEnergTreePrunSec}). By Proposition~\ref{missProp},
$T'$ is pruned in the sense of~\S\ref{s:pruned}. By Lemmas~\ref{subTreeIsADReg}
and~\ref{treePrudingLem} and the inequality $(1-t)\leq e^{-t}$, we have
\begin{equation}
  \label{boundOnPrunedTree}
 \begin{split}
|\mathcal L(T^\prime)|&\leq\Big(1-\big(C_2^6M^{3\setdim}\big)^{-1}\Big)^{N}C_2^3M^{3\setdim N} \\
&\leq C_2^3 M^{N(3\setdim-\rho)}, \quad \rho=(C_2^6 M^{3\setdim}\log M)^{-1}.
\end{split}
\end{equation}
Expanding the definition of~$C_2$ and~$M_0$ from~\eqref{defnC2} and~\eqref{defnC3}, we have
\begin{equation*}
\rho\geq \delta\exp\big[-K_6(1-\delta)^{-14}(1+\log^{14}C)\big],
\end{equation*}
where $K_6$ is a large absolute constant. 

Now, assume that $(x_1,x_2,x_3)\in\X^3$ satisfies $x_1-x_2+x_3\in \X(\h)$. Then
$x_i\in I_i$, $i=1,2,3$, where $I_i$ are intervals corresponding
to some leaves $v_i$ of $T_{\X;M,N}$. Moreover, since
$\h\leq M^{-N}$, the vertex $(v_1,v_2,v_3)$ hits
$\X$ and thus is a leaf of $T'$. By~\eqref{defnADRegular}, for each leaf $(v_1,v_2,v_3)$ of $T'$ we have
$$
\mu_\delta^4\big(\big\{(x_1,x_2,x_3,x_4)\in \X^4\mid |x_1-x_2+x_3-x_4|\leq \alpha,\
x_i\in I_{v_i},\ i=1,2,3\big\}\big)\leq C_4\alpha^{4\delta}
$$
for some constant $C_4$ depending on $C$ and $\setdim$ but not on $\alpha$. Combining this with~\eqref{boundOnPrunedTree}
and recalling~\eqref{e:AESpecial}, we conclude that
$$
\mathcal E_A(\mathcal X,\mu_\delta,\alpha)\leq C_4C_2^3\alpha^{\delta+\rho}\leq C_4C_2^3\alpha^{\delta+\beta_\X}
$$
where $\beta_\X=\betaXBound$ for $\mathbf{K}$ a large absolute constant.
This finishes the proof of Theorem~\ref{ADRegularSmallAddEnergyThm}.

\subsection{Further remarks}\label{furtherRemarks}

\subsubsection{A discretized additive energy bound}

We have chosen to phrase Theorem \ref{ADRegularSmallAddEnergyThm} in the language of Ahlfors-David regular sets. However, we only examine these sets at scales between $\h$ and 1. Our proof of Theorem \ref{ADRegularSmallAddEnergyThm} also gives the following variant:
\begin{prop}\label{VarADRegularSmallAddEnergyThm}
Let $\X\subset[0,1]$ be a union of intervals of length $\h$. Suppose that for all $\h\leq r\leq 1$ and all $x\in \X$ we have the bounds
\begin{equation}
C_{\X}^{-1} r^{\setdim}\h^{1-\setdim}\leq \mu_L(\X\cap B(x,r))\leq C_{\X} r^{\setdim}\h^{1-\setdim},
\end{equation}
where $\mu_L$ is the one-dimensional Lebesgue measure. Then
\begin{equation}
\mu_L^4\big(\{(x_1,x_2,x_3,x_4)\in\X\colon |x_1-x_2+x_3-x_4|<\h\}\big)\leq\widetilde C\h^{4-3\setdim+\beta_{\X}},
\end{equation}
where $\beta_X$ is as given in \eqref{finalEpsBd} and $\widetilde C$ is some constant.
\end{prop}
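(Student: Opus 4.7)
The plan is to transcribe the proof of Theorem~\ref{ADRegularSmallAddEnergyThm} essentially verbatim, paying attention only to two cosmetic changes: the measure is Lebesgue rather than Hausdorff, and all invocations of the regularity hypothesis must happen at scales $r\in[\alpha,1]$. Throughout, the factor $\alpha^{1-\delta}$ in the hypothesis enters every use of the bound as a common multiplicative constant and thus cancels out of every dimensionless ratio that is actually used in the argument.

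First I would rebuild the discretization of~\S\ref{s:discretization}: set $M=\lceil M_0\rceil$, $N=\lfloor\log(1/\alpha)/\log M\rfloor$, form the collections $\mathcal I_j$ of non-empty $M^{-j}$-intervals after merging adjacencies, and assemble the tree $T_{\X;M,N}$. The analog of Lemma~\ref{ADStronglyAvoids} (used to bound the number of merged intervals, and more importantly to justify Corollary~\ref{ADRegularSetsInIntervals}) goes through unchanged, because its proof only evaluates the hypothesis at a single scale $t\geq\alpha$ equal to the progression spacing, and multiplying both sides of the chain of inequalities by $\alpha^{\delta-1}$ reduces the argument to the Ahlfors-David case. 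Consequently Lemma~\ref{l:treecons} applies with identical constants, giving an Ahlfors-David regular tree with height $N$, branching $M^\delta$, and regularity constant $C_2$: for each vertex $v$ one has $\mu_L(\X\cap I_v)\asymp M^{-H(v)\delta}\alpha^{1-\delta}$, and dividing by the per-leaf value $\asymp M^{-N\delta}\alpha^{1-\delta}=\alpha$ produces the tree-regularity relation for $|\mathcal L(T_v)|$.

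Next I would rerun Proposition~\ref{missProp} on $T_{\X;M,N}^3$. Its proof uses only the hypothesis at scales $\alpha_1=M^{-H(v)-1}$ and $M\alpha_1$, both of which exceed $\alpha$ whenever $v$ is not a leaf (so $H(v)\leq N-1$). The lower and upper bounds on $|A_i|$ and $|A_4|$ scale by the same factor $\alpha^{1-\delta}$ that cancels when forming the doubling ratio; thus the same contradiction with Corollary~\ref{ADRegularSetsInIntervals} appears, and every non-leaf vertex of $T_{\X;M,N}^3$ has at least one child missing $\X$. Lemma~\ref{treePrudingLem} then yields $|\mathcal L(T')|\leq C_2^3 M^{N(3\delta-\rho)}$ with the same $\rho=\betaXBound$.

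Finally I would bound the contribution of each leaf of $T'$ directly, using Lebesgue measure in place of the AD-regularity estimate that appears in the last line of the proof of Theorem~\ref{ADRegularSmallAddEnergyThm}. For a leaf $(v_1,v_2,v_3)$ each interval $I_{v_i}$ has length $O(\alpha)$ and the hypothesis at scale $r=\alpha$ gives $\mu_L(\X\cap I_{v_i})\leq C\alpha$. The constraint $|x_1-x_2+x_3-x_4|<\alpha$ confines $x_4$ to an interval of length $O(\alpha)$, whose intersection with $\X$ again has Lebesgue measure $\leq C\alpha$. Hence each leaf contributes at most $C\alpha^4$ to the integral in the proposition, and summing yields the bound $C\alpha^4\cdot \alpha^{-3\delta+\rho}=C\alpha^{4-3\delta+\beta_\X}$. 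The main point of delicacy is verifying that the proof of Proposition~\ref{missProp} really does not touch the hypothesis at scales below $\alpha$; once that is confirmed, everything else is bookkeeping.
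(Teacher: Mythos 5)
Your proposal is correct and follows exactly the path the paper indicates: the paper gives no separate proof of Proposition~\ref{VarADRegularSmallAddEnergyThm}, remarking only that the proof of Theorem~\ref{ADRegularSmallAddEnergyThm} ``also gives'' it, and your write-up is the natural (essentially the only) way to flesh that remark out. You have correctly identified the two points that require checking: (a) the common factor $\h^{1-\setdim}$ cancels from every ratio actually used --- in the merging/strong-avoidance argument of Proposition~\ref{ADStronglyAvoids}, in the bounds $|A_i|\gtrsim M^\setdim$ and $|A_4|\lesssim M^\setdim$ of Proposition~\ref{missProp}, and hence in the doubling estimate fed into Corollary~\ref{ADRegularSetsInIntervals}; and (b) no invocation of the regularity hypothesis occurs at a scale below $\h$, which holds because the discretization stops at level $N=\lfloor\log(1/\h)/\log M\rfloor$ so that all relevant scales $M^{-j}$, $\alpha_1=M^{-H(v)-1}$, $M\alpha_1$, etc.\ stay $\geq M^{-N}\geq\h$. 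Your final per-leaf bound $\lesssim\h^4$ is the correct Lebesgue replacement for the $\mu_\delta^4$-bound in the paper's last step (integrate $x_4$ first over the $O(\h)$-interval, then $x_1,x_2,x_3$ over $\X\cap I_{v_i}$ each of Lebesgue measure $O(\h)$), and combined with $|\mathcal L(T')|\leq C_2^3 M^{N(3\setdim-\rho)}\lesssim\h^{-3\setdim+\rho}$ this gives the stated $\widetilde C\h^{4-3\setdim+\beta_\X}$. Two very minor imprecisions, neither affecting correctness: the proof of Proposition~\ref{ADStronglyAvoids} uses the hypothesis at \emph{two} scales ($2t$ and $\approx t|P|$), not one; and $I_{v_i}$ has length up to $C_1M\h$ rather than exactly $\h$, so the final per-leaf bound uses the hypothesis at scale $\asymp\h$ and requires $\h$ small enough that $C_1M\h\leq 1$.
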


\subsubsection{Higher dimensions}
  \label{higherDimRemark}

Most of the arguments in this section extend to higher dimensions without difficulty. The real issue is extending Theorem \ref{ADRegularSmallAddEnergyThm} to $\setdim\geq 1$. This may be challenging because Proposition \ref{ADStronglyAvoids} may not be true if $\setdim\geq 1$. For example, a unit line segment in $[0,1]^2$ is a 1--regular set, but it contains arbitrarily long arithmetic progressions. A unit line segment in $[0,1]^2$ also has maximal additive energy, so no variant of Theorem  \ref{ADRegularSmallAddEnergyThm} can hope to hold for that example. If $\setdim\geq 1$, it is not clear what the proper hypotheses for the theorem should be.

One possible avenue is the following. In \cite{bond}, the authors study $1$--regular sets that satisfy an additional property called $(\rho,C_1)$--unrectifiability. If a 1--regular set is $(\rho,C_1)$ unrectifiable, then for all rectangles $R$ of dimensions $r_1\geq r_2$, we have $\mu_1(\X\cap R)\leq C r_1^{1-\rho}r_2^\rho$. If $\rho>0$ then sets with this property strongly avoid arithmetic progressions. It is possible that this property can be generalized to other $\setdim$--regular sets for $\setdim>1$.

\subsubsection{Improving the bounds on $\beta_\X$}

It is likely that the bound on $\beta_\X$ from Theorem \ref{ADRegularSmallAddEnergyThm} can be substantially improved. A modest improvement would be to replace the bound in \eqref{finalEpsBd} by $C_{\X}^{-K/(1-\setdim)^K}$ where $K$ is an absolute constant.

However, the following example shows that the constant $\beta_\X$ must go to 0 as $C\to\infty$. Let $C>1$ be an integer and let 
\begin{equation}
\X=\{x\in[0,1]\colon\ x\ \textrm{has a base } C\ \textrm{expansion of the form}\ x=0.a_10a_30a_5\ldots\}.
\end{equation}
Then $\X$ is a $1/2$--regular set with regularity constant $C$. On the other hand, 
\begin{equation*}
\mathcal{E}_A(\X,\h)\geq \h^{1/2+\frac{1}{10\log C}}.
\end{equation*}
A similar example can be constructed for other values of $\setdim$. This shows that in Theorem~\ref{ADRegularSmallAddEnergyThm} we cannot take $\beta_\X>0$ to be independent of $C$. 

\section{Regularity and additive energy of limit sets}
  \label{s:ae}

In this section, we study regularity of limit sets of convex co-compact groups and
their stereographic projections (\S\ref{s:ae-dynamical}). We next use the results
of~\S\ref{s:ae-combinatorial} to prove Theorem~\ref{t:ad-reduced} (\S\ref{s:minkowski}).

\subsection{Regularity of limit sets}
  \label{s:ae-dynamical}
  
In this section, we consider a convex co-compact hyperbolic quotient
$M=\Gamma\backslash\mathbb H^n$ and
use the Ahlfors-David regularity of the limit set $\Lambda_\Gamma\subset\mathbb S^{n-1}$
to establish Ahlfors-David regularity of the stereographic projections
$\mathcal G(y_0,\Lambda_\Gamma)\subset T_{y_0}\mathbb S^{n-1}$, $y_0\in\Lambda_\Gamma$,
where $\mathcal G$ is defined in~\eqref{e:stpro}:
\begin{lemm}
  \label{l:adred}
Let $\mathbf C$ be the constant in Ahlfors-David regularity of the limit set,
as defined in~\eqref{e:ad-regular-limit}.
Then for each $y_0,y_1\in \Lambda_\Gamma$, $y_0\neq y_1$, we have
\begin{equation}
  \label{e:adred}
\mathbf K_0^{-1}\mathbf C^{-1}r^\delta\ \leq\ \mu_\delta\big(\mathcal G(y_0,\Lambda_\Gamma)\cap B(\mathcal G(y_0,y_1),r)\big)
\ \leq\ \mathbf K_0\mathbf Cr^\delta,\quad
r>0
\end{equation}
where $\mu_\delta$ is the $\delta$--Hausdorff measure
on $T_{y_0}\mathbb S^{n-1}$ and $\mathbf K_0$ is a global constant
(depending only on the dimension).
\end{lemm}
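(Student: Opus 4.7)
The strategy hinges on the explicit distance identity
\begin{equation*}
|\mathcal{G}(y_0, y) - \mathcal{G}(y_0, y')| \;=\; \frac{2|y - y'|}{|y_0 - y|\cdot |y_0 - y'|},
\end{equation*}
which expresses that $\mathcal{G}(y_0, \cdot)$ is a Möbius change of coordinates sending $\mathbb{S}^{n-1}\setminus\{y_0\}$ conformally onto $T_{y_0}\mathbb{S}^{n-1}$. I would prove it by direct algebra: expand $|\mathcal{G}(y_0, y) - \mathcal{G}(y_0, y')|^2$ from the definition~\eqref{e:stpro} and substitute $|y - y'|^2 = 2(1 - y\cdot y')$ for the three pairs of points; the dot-product terms collapse into the clean factor $(1-y_0\cdot y)(1-y_0\cdot y')(1 - y\cdot y')$. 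Infinitesimally this identity shows that $\mathcal{G}(y_0,\cdot)$ acts as a dilation with conformal factor $2/|y_0 - y|^2$ near any $y \neq y_0$.

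With the identity in hand, write $R = \mathcal{G}(y_0, y_1)$, $d_0 = |y_0 - y_1|$, and study the preimage $S_r := \{y \in \Lambda_\Gamma : |\mathcal{G}(y_0, y) - R| \leq r\}$. The proof splits into two regimes. In the \emph{small-ball regime} $r \leq c/d_0$, the identity forces $S_r \subset \Lambda_\Gamma \cap B(y_1, O(rd_0^2))$, on which $|y_0 - y| \asymp d_0$ uniformly, so that $\mathcal{G}(y_0,\cdot)$ dilates by a factor uniformly comparable to $2/d_0^2$. Both bounds in~\eqref{e:adred} then follow immediately by applying the Ahlfors-David regularity of $\Lambda_\Gamma$ at $y_1$ at scale $rd_0^2$ and pushing forward.

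In the \emph{large-ball regime} $r > c/d_0$ the preimage can include points arbitrarily close to $y_0$, where the conformal factor blows up. I would partition $\Lambda_\Gamma \setminus \{y_0\}$ into dyadic annuli $A_k = \{y \in \Lambda_\Gamma : 2^{-k-1} \leq |y - y_0| < 2^{-k}\}$. On each $A_k$ the identity shows that $\mathcal{G}(y_0, \cdot)$ is Lipschitz with constant $\sim 2^{2k}$, while the Ahlfors-David regularity of $\Lambda_\Gamma$ centered at $y_0 \in \Lambda_\Gamma$ gives $\mu_\delta(A_k) \leq \mathbf{C}\,2^{-k\delta}$. Summing the resulting image measures yields the upper bound $\mu_\delta(\mathcal{G}(y_0, \Lambda_\Gamma)\cap B(R, r)) \leq \mathbf{K}_0 \mathbf{C} r^\delta$ as a convergent geometric series (convergence uses $\delta > 0$).

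The main obstacle is the matching lower bound in the large-ball regime, since a single annulus $A_k$ maps to an annulus around the origin of the image, and one must ensure it actually meets the off-center ball $B(R,r)$. The key observation is that $|R|^2 = 4/d_0^2 - 1 \leq 4r^2/c^2$, so $B(R,r)$ already captures a fixed fraction of the concentric annulus at radius $\sim r$ around the origin; selecting $k$ with $2^{-k} \sim 1/r$ places the image $\mathcal{G}(y_0, A_k)$ in exactly that radial range, and invoking the lower Ahlfors-David bound on $A_k$ together with an argument that the image is spread out angularly (rather than concentrated on the side opposite $R$) produces the required $\gtrsim \mathbf{K}_0^{-1}\mathbf{C}^{-1} r^\delta$. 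A minor remaining point is the threshold $r < \mathrm{diam}(\mathbb{S}^{n-1})$ in Definition~\ref{d:ad-regular}, which is harmless: for very large image radii $r$ the preimage exhausts $\Lambda_\Gamma$ and the bounds follow directly from a single application of AD regularity at the macroscopic scale.
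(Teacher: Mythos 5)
Your first two paragraphs match the paper's own small-radius argument: the paper also uses that $\Phi := \mathcal{G}(y_0,\cdot)$ is conformal with factor $\tfrac12(1+|\Phi(y)|^2)$, controls the distortion on a unit ball around $\eta_1 = \mathcal{G}(y_0,y_1)$, and pulls back to $\Lambda_\Gamma$ via AD regularity. (Your threshold $r\lesssim 1/d_0$ vs.\ the paper's $r<1$ is a cosmetic difference.) The dyadic-annulus upper bound in the large-ball regime is also sound and is in fact an alternative to what the paper does.

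The lower bound in the large-ball regime is where the proposal has a genuine gap, and it is not the ``minor remaining point'' you suggest. You propose to take a single annulus $A_k = \{2^{-k-1}\le |y-y_0| < 2^{-k}\}$ with $2^{-k}\sim 1/r$ and invoke ``the lower Ahlfors-David bound on $A_k$.'' No such bound exists: Definition~\ref{d:ad-regular} controls measures of \emph{balls} centered on $\Lambda_\Gamma$, and the resulting two-sided estimate $\mu_\delta(\Lambda_\Gamma\cap A_k) \geq \mathbf{C}^{-1}2^{-k\delta} - \mathbf{C}2^{-(k+1)\delta}$ is negative whenever $\mathbf{C}^2 \geq 2^\delta$, i.e.\ for all but trivially small regularity constants. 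A single annulus may carry no mass at all. One can repair this by summing over a block of $\sim \delta^{-1}\log\mathbf{C}$ consecutive annuli, but then the lower bound degrades to $\gtrsim \mathbf{C}^{-3}r^\delta$ rather than the claimed $\mathbf{K}_0^{-1}\mathbf{C}^{-1}r^\delta$ with $\mathbf{K}_0$ independent of $\mathbf{C}$; and the ``angular spread'' of $\mathcal{G}(y_0,\Lambda_\Gamma)$ around the origin, which you flag, is a second unresolved issue in dimensions $n\ge 3$ that AD regularity alone does not furnish.

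The paper sidesteps all of this by exploiting a property strictly stronger than AD regularity: the conformal self-similarity coming from the $\Gamma$-action. Via the horocyclic map of Lemma~\ref{l:horocyclic} and the resulting identity~\eqref{e:huangpu1}, the set $\mathcal{G}(y_0,\Lambda_\Gamma) - \mathcal{G}(y_0,y_1)$, after a dilation by the Poisson-kernel ratio $\mathcal{P}(x',y_0')/\mathcal{P}(x,y_0)$, becomes an exact isometric copy of $\mathcal{G}(y_0',\Lambda_\Gamma) - \mathcal{G}(y_0',y_1')$ for some other pair of limit-set points. Choosing the group element $\gamma$ so that the rescaled radius $r'$ is $<1$ reduces the large-$r$ case to the small-$r$ case with \emph{no loss in the constant}, which is how the clean $\mathbf{K}_0\mathbf{C}$ regularity constant (with $\mathbf{K}_0$ depending only on the dimension) is achieved. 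This group-theoretic input is essential: one can build AD regular subsets of $\mathbb{S}^1$ that are not limit sets for which the image under $\mathcal{G}(y_0,\cdot)$ is \emph{not} AD regular with a constant proportional to $\mathbf{C}$, so the clean bound cannot follow from Definition~\ref{d:ad-regular} alone.
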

The case of bounded $r$ in the above lemma is an immediate consequence of the regularity
of $\Lambda_{\Gamma}$. To show that the constants in the regularity statement do not deteriorate when $r\to \infty$, we will prove that if we shrink the set $\mathcal G(y_0,\Lambda_{\Gamma})-\mathcal G(y_0,y_1)$, we obtain
an isometric image of
the set $\mathcal G(y'_{0},\Lambda_{\Gamma})-\mathcal G(y'_0,y'_1)$ for some other $y'_0,y'_1\in\Lambda_{\Gamma}$. For that we
use the group action, or equivalently, argue on the quotient manifold
$M$ rather than on $\mathbb H^{n}$.

We first write the sets $\mathcal G(y_0,\Lambda_\Gamma)\subset T_{y_0}\mathbb S^{n-1}$
as subsets of the unstable spaces on $M$
via a map $\mathscr U_-$ constructed using horocyclic flows (see Appendix~\ref{s:hyperbolic-technical} for the proof):
\begin{lemm}
  \label{l:horocyclic}
Let $S^*\mathbb H^n\subset T^*\mathbb H^n$ be the unit cotangent bundle and $E_u$ the unstable foliation,
see~\eqref{e:sudec}. Then there exists a smooth map
$$
\mathscr U_-:\{(x,\xi,\eta)\mid (x,\xi)\in S^*\mathbb H^n,\ \eta\in E_u(x,\xi)\}\to S^*\mathbb H^n
$$
such that for $(\tilde x,\tilde\xi):=\mathscr U_-(x,\xi,\eta)$ and $B_\pm:S^*\mathbb H^n\to \mathbb S^{n-1}$ defined in~\eqref{e:B-pm},
\begin{gather}
  \label{e:scroo-1}
B_-(\tilde x,\tilde\xi)=B_-(x,\xi),\quad
\mathcal P(\tilde x,B_-(x,\xi))=\mathcal P(x,B_-(x,\xi)),\\
  \label{e:scroo-2}
\mathcal G\big(B_-(x,\xi),B_+(\tilde x,\tilde\xi)\big)-\mathcal G\big(B_-(x,\xi),B_+(x,\xi)\big)=
\mathcal P(x,B_-(x,\xi))\mathcal T_-(x,\xi)\eta,
\end{gather}
where $\mathcal T_-(x,\xi):E_u(x,\xi)\to T_{B_-(x,\xi)}\mathbb S^{n-1}$ is some linear isometry
and $\mathcal P$ is the Poisson kernel defined in~\eqref{e:Pker}.
Moreover, the map $\mathscr U_-$ commutes with the natural action of the isometry group $\PSO(1,n)$
and thus descends to a map
$$
\mathscr U_-:\{(x,\xi,\eta)\mid (x,\xi)\in S^*M,\ \eta\in E_u(x,\xi)\}\to S^* M.
$$
\end{lemm}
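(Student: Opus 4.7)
The plan is to construct $\mathscr{U}_-$ as an orbit map of the horocyclic subgroup stabilizing $B_-(x,\xi)$. For each $y\in\mathbb{S}^{n-1}$, let $N_y\subset\PSO(1,n)$ be the abelian unipotent subgroup (isomorphic to $\mathbb{R}^{n-1}$) that stabilizes $y$ and acts simply transitively on each horosphere based at $y$; equivalently, $N_y$ is the nilpotent factor in the Iwasawa decomposition $\PSO(1,n)=KAN_y$ adapted to $y$. This group preserves $B_-$ and every level set of the Poisson kernel $\mathcal{P}(\cdot, y)$, and for $y=B_-(x,\xi)$ the orbit of $(x,\xi)$ under $N_y$ is precisely the strong unstable leaf of the geodesic flow through $(x,\xi)$. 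Writing $y_-:=B_-(x,\xi)$ and identifying $\operatorname{Lie}(N_{y_-})\cong E_u(x,\xi)$ via the derivative of the orbit map $g\mapsto g\cdot(x,\xi)$ at the identity, I would define $\mathscr{U}_-(x,\xi,\eta):=\exp(\bar\eta)\cdot(x,\xi)$, where $\bar\eta\in\operatorname{Lie}(N_{y_-})$ corresponds to $\eta\in E_u(x,\xi)$. Smoothness and $\PSO(1,n)$-equivariance of $\mathscr{U}_-$ are automatic, and the two conclusions in~\eqref{e:scroo-1} are immediate: $N_{y_-}$ fixes $y_-$ pointwise, and it preserves the horosphere $\{\mathcal{P}(\cdot,y_-)=\mathcal{P}(x,y_-)\}$ through $x$.

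For~\eqref{e:scroo-2}, I would reduce to a model case by $\PSO(1,n)$-equivariance: conjugate by a Möbius isometry sending $y_-$ to $\infty$ in the upper half-space model $\mathbb{H}^n=\{(x',x_n):x_n>0\}$. In this model $N_\infty$ acts by horizontal translations $(x',x_n)\mapsto(x'+v,x_n)$, horospheres based at $\infty$ are horizontal planes, and $\mathcal{P}(x,\infty)=x_n$. Because the hyperbolic metric on the horosphere of height $x_n$ is $(dx')^2/x_n^2$, a hyperbolic-unit horizontal vector $\eta\in E_u(x,\xi)$ corresponds, via the orbit map, to the Euclidean horizontal translation of length $x_n$; that is, $\mathscr{U}_-(x,\xi,\eta)=(x+x_n\eta_E,\xi)$ with $\eta_E\in\mathbb{R}^{n-1}$ the Euclidean representative of $\eta$. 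The forward endpoint of the new geodesic is $B_+(\tilde x,\tilde\xi)=x'+x_n\eta_E$, while $B_+(x,\xi)=x'$. Under the standard conformal identification $T_\infty\mathbb{S}^{n-1}\simeq\mathbb{R}^{n-1}$, the map $\mathcal{G}(\infty,\cdot)$ is (up to this normalization) the identity on the boundary $\mathbb{R}^{n-1}$, so the displacement in stereographic coordinates equals $x_n\eta_E=\mathcal{P}(x,y_-)\cdot\mathcal{T}_-(x,\xi)\eta$, where $\mathcal{T}_-(x,\xi)$ is the composition $E_u(x,\xi)\to\mathbb{R}^{n-1}\to T_{y_-}\mathbb{S}^{n-1}$ of the two identifications. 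Each factor is a linear isometry, so $\mathcal{T}_-$ is as well.

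The commutation of $\mathscr{U}_-$ with the $\PSO(1,n)$-action follows from the intrinsic definition: for $\gamma\in\PSO(1,n)$ one has $N_{\gamma\cdot y_-}=\gamma N_{y_-}\gamma^{-1}$ and the Lie-algebra identifications transform compatibly under $d\gamma$, so $\gamma\cdot\mathscr{U}_-(x,\xi,\eta)=\mathscr{U}_-(\gamma\cdot(x,\xi),d\gamma\cdot\eta)$. Since $\Gamma\subset\PSO(1,n)$ in particular commutes with $\mathscr{U}_-$, the map descends to a smooth map on $S^*M$. The main technical point to verify carefully is the precise identification $\operatorname{Lie}(N_{y_-})\cong E_u(x,\xi)$ that produces the Poisson-kernel-weighted formula; this reduces to the elementary observation that on horospheres based at $\infty$ in the upper half-space, the hyperbolic metric rescales Euclidean lengths by the factor $x_n=\mathcal{P}(x,\infty)$, which is exactly the coefficient appearing on the right-hand side of~\eqref{e:scroo-2}.
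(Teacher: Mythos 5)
Your construction of $\mathscr U_-$ is the same as the paper's, phrased in a different language: the flow $e^{\mathbf s U_-}$ in the paper's proof is exactly the right action of the nilpotent subgroup on the frame bundle $F^*\mathbb H^n\simeq\PSO(1,n)$, which is the left action of the conjugated subgroup $N_{y_-}$ on $S^*\mathbb H^n$. So the definition, the smoothness, the $\PSO(1,n)$-equivariance, and the verification of \eqref{e:scroo-1} all proceed by the same mechanism; the only genuine difference is how you verify \eqref{e:scroo-2}. The paper does this by an explicit computation in the hyperboloid model using Minkowski-space formulas; you pass to the upper half-space model, where $N_{y_-}$ becomes horizontal translation and the computation is essentially one line. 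This is a cleaner route, and what it buys is that the appearance of the factor $\mathcal P(x,y_-)=x_n$ becomes self-evident from the scaling of the hyperbolic metric on horospheres, rather than emerging from algebra.

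The step you should make more explicit is the reduction to the upper half-space model. This is not an application of equivariance under isometries of the ball model: the Cayley transform is an isometry \emph{between models}, and both $\mathcal G$ and $\mathcal P$ are defined via the Euclidean structure of the ball model. You therefore need to verify (i) that the boundary map of a suitably normalized Cayley transform $C$ agrees with $\mathcal G(y_-,\cdot)$ up to an orthogonal map of $T_{y_-}\mathbb S^{n-1}$, so that $\mathcal G(y_-,C^{-1}(\tilde y'))-\mathcal G(y_-,C^{-1}(\tilde y''))$ is $\tilde y'-\tilde y''$ up to rotation (this follows from \eqref{e:gide} and the conformal transformation law for $|y-y'|^2$ under Möbius maps), and (ii) that the same $C$ transports $\mathcal P(\cdot,y_-)$ to the function $\tilde x\mapsto\tilde x_n$ (which follows from $\mathcal P(x,y)=e^{-B_y(x,0)}$ and the pullback of the Busemann function). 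These normalizations must be consistent — a rescaling $z\mapsto\lambda z$ of the half-space model would destroy both — and that consistency is what ultimately makes $\mathcal T_-$ an isometry. You flag this as ``the main technical point to verify carefully,'' which is the right instinct; once (i) and (ii) are pinned down, the argument is complete.
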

\begin{figure}
\includegraphics{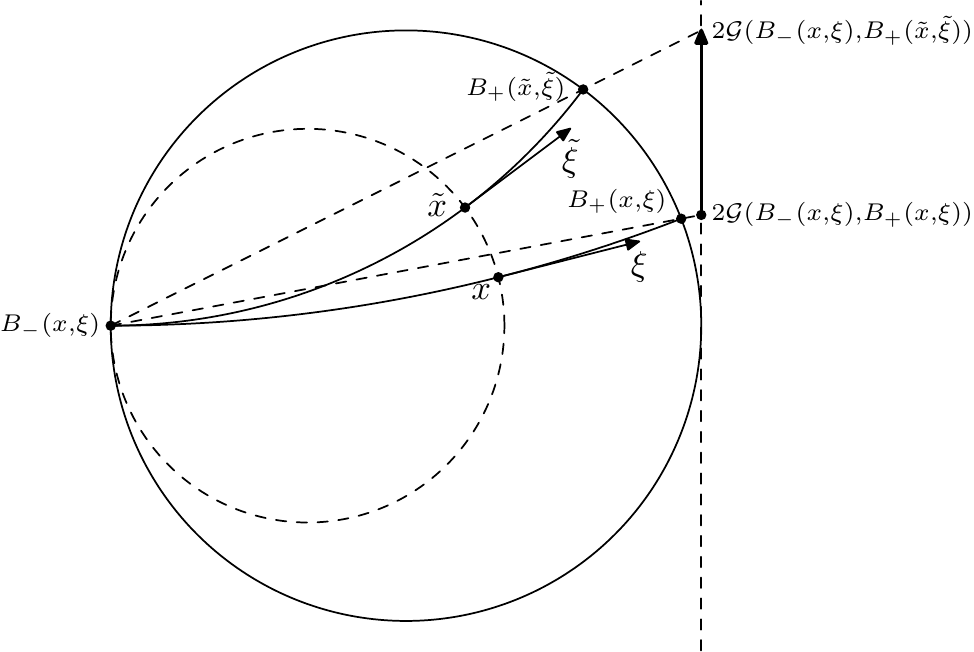}
\caption{The points $(x,\xi)\in S^*\mathbb H^n$ and $(\tilde x,\tilde\xi)=\mathscr U_-(x,\xi,\eta)$.
The dashed circle is a horocycle and the solid arcs are geodesics through $(x,\xi)$ and $(\tilde x,\tilde\xi)$.}
\label{f:horocyclic}
\end{figure}
\noindent\textbf{Remarks}. (i) For each $(x,\xi)\in S^*M$,
the set $\{\mathscr U_-(x,\xi,\eta)\mid \eta\in E_u(x,\xi)\}$ is the unstable
manifold passing through $(x,\xi)$, and the differential of the map $\eta\mapsto \mathscr U_-(x,\xi,\eta)$
at $\eta=0$ is the embedding $E_u(x,\xi)\to T_{(x,\xi)}(S^*M)$. See Figure~\ref{f:horocyclic}. The map
$\mathcal T_-$ is related to the parametrization of $E_u(x,\xi)$ by the orthogonal complement
$\mathcal E(x,\xi)\subset T_x M$ of~$\xi$ (see the paragraph preceding~\eqref{e:stun}) and
to the parallel transport map $\mathcal E(x,\xi)\to T_{B_-(x,\xi)}\mathbb S^{n-1}$ (see~\cite[\S3.6]{rrh}), but
we do not need an explicit expression for $\mathcal T_-$ here.

\noindent (ii) In dimension $2$, $\mathscr U_-$ is given by the
flow of the unstable horocyclic vector field $U_-$
(see for instance~\cite[(2.1)]{rrh}):
$$
\mathscr U_-(x,\xi,\eta)=e^{sU_-}(x,\xi),\quad
\eta=sU_-(x,\xi)\in E_u(x,\xi),\quad
s\in\mathbb R.
$$

\smallskip

For each point $(x,\xi)\in K\cap S^*M$, where $K=\Gamma_+\cap\Gamma_-$ is the trapped set, define
\begin{equation}
  \label{e:cal-F}
\mathcal F_{(x,\xi)}:=\{\eta\mid \mathscr U_-(x,\xi,\eta)\in K\}\ \subset\ E_u(x,\xi).
\end{equation}
Note that $(\tilde x,\tilde \xi):=\mathscr U_-(x,\xi,\eta)$ lies in $\Gamma_+$ for all $\eta$, since the geodesics
starting at $(x,\xi)$ and $(\tilde x,\tilde\xi)$ converge to each other as $t\to -\infty$ by~\eqref{e:scroo-1}
(see also~\eqref{e:Gpm-formula});
therefore, $K$ can be replaced in~\eqref{e:cal-F} by $\Gamma_-$. By~\eqref{e:scroo-2} and~\eqref{e:Gpm-formula},
for each $(x,\xi)\in S^*\mathbb H^n$ we have
\begin{equation}
  \label{e:cal-F-useful}
\mathcal G(y_-,\Lambda_\Gamma)-\mathcal G(y_-,y_+)=
\mathcal P(x,y_-)\mathcal T_-(x,\xi)\mathcal F_{\pi_\Gamma(x,\xi)},\quad
y_\pm :=B_\pm(x,\xi),
\end{equation}
with $\pi_\Gamma$ defined in~\eqref{e:pi-Gamma}.
\begin{proof}[Proof of Lemma~\ref{l:adred}]
Let $y_0,y_1\in\Lambda_\Gamma$ and $y_0\neq y_1$.

We first prove~\eqref{e:adred} for the case $0<r<1$.
Define the diffeomorphism
$$
\Phi:\mathbb S^{n-1}\setminus \{y_0\}\to T_{y_0}\mathbb S^{n-1},\quad
\Phi(y):=\mathcal G(y_0,y).
$$
Then $d\Phi(y)$ is conformal with factor
${1\over 2}(1+|\Phi(y)|^2)$. It follows that
$$
\sup_{\Phi(y)\in B(\eta_1,1)} \|d\Phi(y)\|\leq {3\over 2} (1+|\eta_1|^2),\quad
\sup_{\Phi(y)\in B(\eta_1,1)} \|d\Phi(y)^{-1}\|\leq {6\over 1+|\eta_1|^2}
$$
where $\eta_1:=\Phi(y_1)$, and therefore
$$
B\Big(y_1,{2r\over 3(1+|\eta_1|^2)}\Big)\ \subset\ \Phi^{-1}(B(\eta_1,r))\ \subset\ B\Big(y_1,{6r\over 1+|\eta_1|^2}\Big).
$$
We now have by~\eqref{defnADRegular}
$$
\begin{aligned}
\mu_\delta\big(\Phi(\Lambda_\Gamma)\cap B(\eta_1,r)\big)&\leq \Big({3\over 2}(1+|\eta_1|^2)\Big)^\delta\mu_\delta\big(\Lambda_\Gamma
\cap \Phi^{-1}(B(\eta_1,r))\big)\\
&\leq \Big({3\over 2}(1+|\eta_1|^2)\Big)^\delta\mu_\delta\Big(\Lambda_\Gamma\cap B\Big(y_1,{6r\over 1+|\eta_1|^2}\Big)\Big)\\
&\leq \mathbf C (9r)^\delta
\end{aligned}
$$
and similarly
$$
\mu_\delta\big(\Phi(\Lambda_\Gamma)\cap B(\eta_1,r)\big)
\geq \Big({1+|\eta_1|^2\over 6}\Big)^\delta\mu_\delta\big(\Lambda_\Gamma
\cap \Phi^{-1}(B(\eta_1,r))\big)
\geq \mathbf C^{-1} (r/9)^\delta
$$
which gives~\eqref{e:adred} for $0<r<1$ with $\mathbf K_0:=9^{n-1}\geq 9^\delta$.

Now, assume that $r\geq 1$. Take some $(x,\xi)\in S^*\mathbb H^n$ on the geodesic connecting
$y_0$ and $y_1$, that is
$$
B_-(x,\xi)=y_0,\quad
B_+(x,\xi)=y_1.
$$
Let $\gamma\in\Gamma$ and put
$$
(x',\xi'):=\gamma.(x,\xi),\quad
B_-(x',\xi')=y'_0,\quad
B_+(x',\xi')=y'_1;
$$
note that $y'_0,y'_1\in\Lambda_\Gamma$. We choose $(x,\xi)$ and $\gamma$ such that 
\begin{equation}
  \label{e:huangpu0}
r':={\mathcal P(x',y_0')\over \mathcal P(x,y_0)}r\ <\ 1.
\end{equation}
To do that, we first remark
that there exists $R>0$ depending on the quotient $M$ such that
for each $(\tilde x,\tilde\xi)\in K\cap S^*M$, there exists
\begin{equation}
  \label{e:huangpu}
(x',\xi')\in\pi_\Gamma^{-1}(\tilde x,\tilde\xi),\quad
\mathcal P(x',B_-(x',\xi'))\leq R.
\end{equation}
This follows immediately from the compactness of $K\cap S^*M$. To ensure~\eqref{e:huangpu0},
it remains to take $(x,\xi)$ such that $\mathcal P(x,y_0)>rR$ (which is always possible
since the function $P(x,y_0)$ grows exponentially along the backwards geodesic flow)
and choose $(x',\xi')$ using~\eqref{e:huangpu} with $(\tilde x,\tilde\xi):=\pi_\Gamma(x,\xi)$.

From~\eqref{e:cal-F-useful} and the fact that $\pi_\Gamma(x,\xi)=\pi_\Gamma(x',\xi')$,
we have
\begin{equation}
  \label{e:huangpu1}
\mathcal G(y_0,\Lambda_\Gamma)-\mathcal G(y_0,y_1)={\mathcal P(x,y_0)\over \mathcal P(x',y'_0)}\widetilde{\mathcal T}\big(\mathcal G(y'_0,\Lambda_\Gamma)-\mathcal G(y'_0,y'_1)\big)
\end{equation}
where $\widetilde{\mathcal T}:T_{y_0'}\mathbb S^{n-1}\to T_{y_0}\mathbb S^{n-1}$ is an isometry.
Since~\eqref{e:adred} is already known for $r<1$, we have using~\eqref{e:huangpu0},
$$
\mathbf K_0^{-1}\mathbf C^{-1}(r')^\delta\ \leq\
\mu_\delta\big(\mathcal G(y'_0,\Lambda_\Gamma)\cap B(\mathcal G(y'_0,y'_1),r')\big)\ \leq\ 
\mathbf K_0\mathbf C (r')^\delta.
$$
Combining this with~\eqref{e:huangpu1}, we obtain~\eqref{e:adred}.
\end{proof}
  
\subsection{Regularity, additive energy, and Minkowski dimension}
  \label{s:minkowski}

In this section, we state a few results estimating the Lebesgue measure of neighborhoods
of Ahlfors-David regular sets. This establishes  bounds on Minkowski dimensions
of these sets. We rely on the following
\begin{defi}
Assume that $(\mathcal M,d)$ is a metric space.
For $\mathcal X\subset\mathcal M$ and $\alpha>0$, define
the maximal number of $\alpha$-separated points in $\mathcal X$:
$$
\mathcal N(\mathcal X,\alpha)=\max\{N\mid x_1,\dots, x_N\in \mathcal X,\ d(x_i,x_j)> \alpha
\quad\text{for }i\neq j\}.
$$
\end{defi}

For regular sets, the quantity $\mathcal N(\mathcal X,\alpha)$ establishes a link
between the Hausdorff and Minkowski dimensions:
\begin{lemm}
  \label{l:alphasep}
Let $(\mathcal M,d)$ be a metric space and let $\mathcal X\subset\mathcal M$ be compact.

1. If $\mathcal X$ is $\delta$--regular in the sense of Definition~\ref{d:ad-regular} with
constant $C_\mathcal X$, then for each $x\in \mathcal X$,
$$
C_{\mathcal X}^{-2}\Big({\alpha'\over \alpha}\Big)^\delta\leq \mathcal N(\mathcal X\cap B(x,\alpha'),\alpha)\leq C_{\mathcal X}^2\Big(1+{2\alpha'\over \alpha}\Big)^\delta,\quad
0<\alpha,\alpha'<\diam(\mathcal M).
$$

2. If $\mathcal M$ is an $m$-dimensional Riemannian manifold and $\mathcal X(\alpha)$ is the $\alpha$-neighborhood of $\mathcal X$,
then
$$
C^{-1}\alpha^m\mathcal N(\mathcal X,\alpha) \leq \mu_L(\mathcal X(\alpha))\leq C\alpha^m\mathcal N(\mathcal X,\alpha),\quad
0<\alpha<1,
$$
where $\mu_L$ is the Lebesgue measure induced by the metric and $C$ is some constant independent of $\alpha$.
\end{lemm}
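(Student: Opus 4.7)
The plan is to use standard packing/covering arguments based on the regularity hypothesis, treating the two parts independently.

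For Part 1 (upper bound), I would take a maximal $\alpha$-separated set $\{x_1,\ldots,x_N\}\subset \mathcal{X}\cap B(x,\alpha')$, so the balls $B(x_i,\alpha/2)$ are disjoint. Each $x_i$ lies in $\mathcal{X}$, so by $\delta$-regularity $\mu_\delta(\mathcal{X}\cap B(x_i,\alpha/2))\geq C_{\mathcal{X}}^{-1}(\alpha/2)^\delta$. Summing and noting that all these sets are contained in $\mathcal{X}\cap B(x,\alpha'+\alpha/2)$, whose measure is at most $C_{\mathcal{X}}(\alpha'+\alpha/2)^\delta$, gives
\[
N\cdot C_{\mathcal{X}}^{-1}(\alpha/2)^\delta \leq C_{\mathcal{X}}(\alpha'+\alpha/2)^\delta,
\]
which rearranges to $N\leq C_{\mathcal{X}}^2(1+2\alpha'/\alpha)^\delta$ as required.

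For the lower bound in Part 1, I would take a maximal $\alpha$-separated set and observe that maximality forces $\mathcal{X}\cap B(x,\alpha')\subset \bigcup_{i=1}^N B(x_i,\alpha)$, since otherwise any uncovered point could be appended to the set. Then
\[
C_{\mathcal{X}}^{-1}(\alpha')^\delta \leq \mu_\delta\big(\mathcal{X}\cap B(x,\alpha')\big) \leq \sum_{i=1}^N \mu_\delta\big(\mathcal{X}\cap B(x_i,\alpha)\big)\leq N\cdot C_{\mathcal{X}}\,\alpha^\delta,
\]
giving $N\geq C_{\mathcal{X}}^{-2}(\alpha'/\alpha)^\delta$. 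Neither argument needs $\alpha'\geq\alpha$ since both target bounds remain meaningful in the opposite regime.

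For Part 2, the essential point is to establish that for $x$ in a neighborhood of $\mathcal{X}$ and $\alpha\in(0,1)$, the Riemannian volume of $B(x,\alpha)$ is comparable to $\alpha^m$ with constants uniform in $x,\alpha$. This follows from compactness of $\mathcal{X}$: the metric tensor is bounded above and below by constant multiples of a Euclidean reference metric on any compact neighborhood of $\mathcal{X}$, and for $\alpha$ smaller than the injectivity radius we get $c\alpha^m\leq \mu_L(B(x,\alpha))\leq C\alpha^m$. Given this, taking a maximal $\alpha$-separated set $\{x_1,\ldots,x_N\}\subset\mathcal{X}$, the lower bound comes from disjointness of $B(x_i,\alpha/2)\subset\mathcal{X}(\alpha)$, yielding $\mu_L(\mathcal{X}(\alpha))\geq N\cdot c(\alpha/2)^m$. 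The upper bound comes from noting that for any $y\in\mathcal{X}(\alpha)$ there is $z\in\mathcal{X}$ with $d(y,z)\leq\alpha$, and by maximality some $x_i$ with $d(z,x_i)\leq\alpha$, so $y\in B(x_i,2\alpha)$ and $\mu_L(\mathcal{X}(\alpha))\leq N\cdot C(2\alpha)^m$.

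The whole argument is routine; the only subtle point is ensuring uniform constants in the Riemannian volume comparisons for Part 2, which is handled by restricting attention to a compact neighborhood of $\mathcal{X}$ and using $\alpha<1$.
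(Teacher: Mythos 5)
Your proof is correct and follows essentially the same route as the paper's: both parts use a maximal $\alpha$-separated set, with the packing inclusion $\bigsqcup B(x_i,\alpha/2)\subset B(x,\alpha'+\alpha/2)$ for the upper bound, the covering inclusion $\mathcal X\cap B(x,\alpha')\subset\bigcup B(x_i,\alpha)$ for the lower bound, and the uniform Riemannian ball-volume comparison $\mu_L(B(x,\alpha))\asymp\alpha^m$ on a compact neighborhood for Part 2. The only difference is that you spell out the steps in slightly more detail than the paper, which simply states the key inclusions and the resulting chains of inequalities.
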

\begin{proof}
We will begin with the first statement. Fix a ball $B(x,\alpha')$ with $x\in\mathcal{X}$. Let $\{x_1,\ldots,x_N\}\subset \X\cap B(x,\alpha')$ be a maximal collection of $\alpha$--separated points. Then
$$
\X\cap B(x,\alpha')\ \subset\ \bigcup_{i=1}^N B(x_i,\alpha ),\qquad
\bigsqcup_{i=1}^N B\Big(x_i,{\alpha\over 2}\Big)\ \subset\ B\Big(x,\alpha'+{\alpha\over 2}\Big).
$$
It follows that
\begin{equation*}
\begin{split}
N &\geq C_{\X}^{-1}   \h^{-\setdim} \sum_{i=1}^N \mu_{\setdim}(\X\cap B(x_i,\h)) \geq C_{\X}^{-1}  \h^{-\setdim} \mu_{\setdim}(\X\cap B(x,\alpha^\prime))\\&\geq C_{\X}^{-2}  \Big({\alpha'\over\alpha}\Big)^{\setdim},\\
N &\leq \Big({2\over\h}\Big)^{\setdim}C_{\X}   \sum_{i=1}^N \mu_{\setdim}\Big(\X\cap B\Big(x_i,{\alpha\over 2}\Big)\Big) \leq \Big({2\over\h}\Big)^{\setdim}C_{\X}   \mu_{\setdim}\Big(\X\cap B\Big(x,\alpha'+{\alpha\over 2}\Big)\Big)\\&\leq C_{\X}^2\Big(1+{2\alpha'\over\alpha}\Big)^{\setdim};
\end{split}
\end{equation*}
this finishes the proof of the first statement. 

The second statement follows similarly from the inclusions
$$
\mathcal X(\alpha)\ \subset\ \bigcup_{i=1}^N B(x_i,2\alpha),\qquad
\bigsqcup_{i=1}^N B\Big(x_i,{\alpha\over 2}\Big)\ \subset\ \mathcal X(\alpha),
$$
where $\{x_1,\dots,x_N\}\subset\mathcal X$ is a maximal collection of $\alpha$-separated points,
plus the observation that on any compact subset $\Omega\subset \mathcal M$, the Lebesgue measure of a ball of radius~$\alpha$ is between $C_{\Omega}^{-1}\alpha^m$ and $C_{\Omega}\alpha^m$.
\end{proof}
As an application, we obtain
\begin{proof}[Proof of~\eqref{e:AD-estimate-Lebesgue}]
Follows directly from $\delta$-regularity of the limit set (see~\S\ref{s:introad}),
Lemma~\ref{l:alphasep}, and the fact that
$\Lambda_\Gamma(\alpha)\cap B(y_0,\alpha')$ contains the $\alpha$-neighborhood
of $\Lambda_\Gamma\cap B(y_0,\alpha'-\alpha)$ and is contained in the $\alpha$-neighborhood
of $\Lambda_\Gamma\cap B(y_0,\alpha'+\alpha)$.
\end{proof}
\noindent\textbf{Remark}. In Definition \ref{d:ad-regular} of Ahlfors-David regularity we used the Hausdorff measure. However, any other Borel measure could be used instead:
\begin{lemm}[\cite{DS}, Lemma 1.2]\label{equivOfADRegDefns}
Let $(\metSpace,\metricChar)$ be a complete metric space with more than one element and let $\mathcal X \subset \metSpace$. Let $\mu$ be a Borel measure on $\metSpace$ with the property that for all $x\in \mathcal X$, 
\begin{equation}
C_{\mathcal{X}}^{-1}r^{\setdim}\leq \mu(\mathcal{X}\cap B(x,r))\leq C_{\mathcal{X}}r^{\setdim},\quad
0<r<\diam(\metSpace).
\end{equation}
Then $\mathcal{X}$ is $\setdim$--regular. The regularity constant depends only on $\setdim$ and $C_{\mathcal X}$. 
\end{lemm}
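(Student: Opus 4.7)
The plan is to establish that, under the hypothesis on $\mu$, the $\delta$--dimensional Hausdorff measure $\mu_\delta$ is comparable to $\mu$ on subsets of $\mathcal X$ up to multiplicative constants depending only on $\delta$ and $C_\mathcal X$; the desired bounds on $\mu_\delta(\mathcal X\cap B(x,r))$ then read off immediately from the hypothesized bounds on $\mu(\mathcal X\cap B(x,r))$. Throughout I would reduce all coverings to coverings by balls centered on $\mathcal X$, losing only a factor of $2^\delta$ per ball, and absorb the usual normalization constant in the definition of Hausdorff measure into the final regularity constant.

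For the upper bound $\mu_\delta(\mathcal X\cap B(x,r))\leq Cr^\delta$ I would mimic the packing argument in the proof of Lemma~\ref{l:alphasep}. Pick a maximal $\alpha$--separated subset $\{x_1,\dots,x_N\}$ of $\mathcal X\cap B(x,r)$. The half--balls $B(x_i,\alpha/2)$ are disjoint, contained in $B(x,r+\alpha/2)$, and each has $\mu$--mass at least $C_\mathcal X^{-1}(\alpha/2)^\delta$ because $x_i\in\mathcal X$; combining with the hypothesized upper bound on $\mu$ gives $N\leq C_\mathcal X^2\,2^\delta(r+\alpha/2)^\delta\alpha^{-\delta}$. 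By maximality, $\{B(x_i,\alpha)\}$ covers $\mathcal X\cap B(x,r)$, so $\sum_i(2\alpha)^\delta\leq C_\mathcal X^2(4r+2\alpha)^\delta$; letting $\alpha\to 0$ gives the upper Hausdorff estimate.

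For the lower bound $\mu_\delta(\mathcal X\cap B(x,r))\geq c\,r^\delta$ I would start from an arbitrary countable cover of $\mathcal X\cap B(x,r)$ by sets $U_i$ of diameter at most $\alpha$, discard those disjoint from $\mathcal X$, replace each $U_i$ by a ball $B(y_i,2\operatorname{diam}U_i)$ with $y_i\in\mathcal X\cap U_i$, and invoke countable subadditivity of $\mu$ together with the hypothesized upper bound on $\mu$ to get $C_\mathcal X^{-1}r^\delta\leq \mu(\mathcal X\cap B(x,r))\leq\sum_i C_\mathcal X(2\operatorname{diam}U_i)^\delta$. Taking the infimum over coverings and then $\alpha\to 0$ yields the Hausdorff lower bound.

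The argument has no serious obstacle; the only points requiring care are the convention used for $\mu_\delta$ (diameter versus radius, and the volume--of--unit--ball normalization), both of which affect only the final multiplicative constant, and the standard reduction of arbitrary Borel coverings to coverings by balls centered on $\mathcal X$, which costs the factor of $2^\delta$ that I already tracked above. All constants produced by the two bounds depend only on $\delta$ and $C_\mathcal X$, which is the conclusion of the lemma.
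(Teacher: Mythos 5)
The paper does not prove this lemma; it is quoted directly from \cite{DS}, Lemma~1.2, so there is no internal proof to compare against. Your argument is correct and is the standard proof of that result: the packing argument (disjoint balls $B(x_i,\alpha/2)$ with $x_i$ a maximal $\alpha$--separated set, together with countable additivity of $\mu$ on the disjoint sets $\mathcal X\cap B(x_i,\alpha/2)$) gives the upper bound on $\mu_\delta$, and the covering/mass--distribution argument (replacing cover elements $U_i$ by balls $B(y_i,2\operatorname{diam}U_i)$ centered on $\mathcal X$ and using countable subadditivity of $\mu$) gives the lower bound. The only points that deserve a word are the ones you already flag: the constants you obtain (roughly $2^\delta C_{\mathcal X}^2$ and $4^\delta C_{\mathcal X}^2$, depending on the Hausdorff normalization) depend only on $\delta$ and $C_{\mathcal X}$ as claimed; cover elements meeting $\mathcal X$ may be shrunk or discarded as needed; and for $r$ close to $\diam(\mathcal M)$ the quantity $r+\alpha/2$ may slightly exceed $\diam(\mathcal M)$, which is handled by applying the hypothesis at $r'<\diam(\mathcal M)$ and letting $r'\uparrow\diam(\mathcal M)$.
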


We now give the proof of Theorem~\ref{t:ad-reduced}. The first step is the following
\begin{lemm}
  \label{l:ad-helper}
Let $\Lambda_\Gamma$ be the limit set of a convex co-compact hyperbolic surface,
$\mathbf C$ be defined in~\eqref{e:ad-regular-limit},
$\mathbf K_0$ be given in Lemma~\ref{l:adred},
and $\mathcal G$ be defined in~\eqref{e:stpro}.
Take $y_0\in \Lambda_\Gamma$ and $R>0$.
Then there exists an interval $[-1,1] \subset I \subset [-2,2]$ such that
$$
\mathcal{X}:=I\cap R^{-1}\mathcal G(y_0,\Lambda_\Gamma)\ \subset\ T_{y_0}\mathbb S^1\simeq \mathbb R
$$
is $\setdim$--regular with regularity constant $\mathbf{C}_2:=(50\mathbf{K}_0\mathbf{C})^{{1+\delta\over 1-\delta}}$.
\end{lemm}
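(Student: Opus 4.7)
The plan is to choose $I$ so that both of its endpoints lie in a sizeable ``gap'' of the set $\mathcal{Y}:=R^{-1}\mathcal{G}(y_0,\Lambda_\Gamma)\subset T_{y_0}\mathbb{S}^1\simeq\mathbb{R}$. By Lemma~\ref{l:adred}, $\mathcal{Y}$ is $\delta$-regular with constant $\mathbf{K}_0\mathbf{C}$. I will first select a radius $\rho>0$ and points $s_+\in[1,2]$, $s_-\in[-2,-1]$ with $d(s_\pm,\mathcal{Y})>\rho$, and then set $I:=[s_-,s_+]$. With this choice, every $x\in\mathcal{X}:=I\cap\mathcal{Y}$ is at distance at least $\rho$ from the endpoints of $I$, so $B(x,r)\subset I$ and consequently $\mathcal{X}\cap B(x,r)=\mathcal{Y}\cap B(x,r)$ for all $r\leq\rho$. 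The two-sided regularity bound with constant $\mathbf{K}_0\mathbf{C}$ in that radius range is then inherited directly from $\mathcal{Y}$.

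To produce such endpoints, I will exploit the hypothesis $\delta<1$ via Lemma~\ref{l:alphasep}. Its first part, applied with center in $\mathcal{Y}\cap[1-\rho,2+\rho]$ (the case $\mathcal{Y}\cap[1-\rho,2+\rho]=\emptyset$ being trivial, and similarly for $[-2,-1]$), bounds the number of $\rho$-separated points of $\mathcal{Y}\cap[1,2]$ by $(\mathbf{K}_0\mathbf{C})^2(1+4/\rho)^\delta$; its second part then yields
\[
\mu_L\big(\mathcal{Y}(\rho)\cap[1,2]\big)\ \leq\ C_0(\mathbf{K}_0\mathbf{C})^2\,\rho^{1-\delta}
\]
for an absolute constant $C_0$. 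Choosing $\rho:=c_0(\mathbf{K}_0\mathbf{C})^{-2/(1-\delta)}$ with $c_0>0$ a sufficiently small absolute constant forces this measure to be strictly less than $1$, so that $[1,2]\setminus\mathcal{Y}(\rho)$ is non-empty and provides the desired $s_+$; the analogous argument in $[-2,-1]$ yields $s_-$.

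For radii $\rho<r<\operatorname{diam}(I)\leq 4$, the upper bound for $\mathcal{X}$ is immediate from $\mathcal{X}\subset\mathcal{Y}$ and the regularity of $\mathcal{Y}$, while the lower bound will follow from the already-established $r=\rho$ estimate by monotonicity:
\[
\mu_\delta(\mathcal{X}\cap B(x,r))\ \geq\ \mu_\delta(\mathcal{X}\cap B(x,\rho))\ \geq\ (\mathbf{K}_0\mathbf{C})^{-1}\rho^\delta\ \geq\ (\mathbf{K}_0\mathbf{C})^{-1}(\rho/4)^\delta\,r^\delta.
\]
This gives a regularity constant of order $\mathbf{K}_0\mathbf{C}\cdot(4/\rho)^\delta$, and substituting $\rho=c_0(\mathbf{K}_0\mathbf{C})^{-2/(1-\delta)}$ produces the exponent $1+2\delta/(1-\delta)=(1+\delta)/(1-\delta)$ on $\mathbf{K}_0\mathbf{C}$, matching the target. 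The main (mild) obstacle is bookkeeping: the absolute constants $c_0,C_0$ and the factor $(4/c_0)^\delta$ must be tracked so as to fit under the base $50$ in the stated bound $(50\mathbf{K}_0\mathbf{C})^{(1+\delta)/(1-\delta)}$, which is feasible because $\delta<1$ and $\mathbf{K}_0\mathbf{C}\geq 1$ let the excess numerical factors be absorbed into the base of the exponential.
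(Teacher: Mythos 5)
Your proposal is correct and tracks the structure of the paper's argument closely: locate a gap in $\mathcal{Y}:=R^{-1}\mathcal{G}(y_0,\Lambda_\Gamma)$ near $\pm 1$, crop $I$ to end inside those gaps so that $B(x,\rho)\subset I$ for all $x\in\mathcal{X}$, and then check regularity by direct inheritance from $\mathcal{Y}$ for $r\leq\rho$ and by monotonicity $\mu_\delta(\mathcal{X}\cap B(x,r))\geq\mu_\delta(\mathcal{X}\cap B(x,\rho))$ for larger $r\leq 4$. The only genuine divergence from the paper is the tool used to \emph{produce} the gap. The paper partitions each of $[-2,-1]$ and $[1,2]$ into $\mathbf{C}_1:=\lceil(10\mathbf{K}_0^2\mathbf{C}^2)^{1/(1-\delta)}\rceil$ equal sub-intervals and invokes Proposition~\ref{ADStronglyAvoids} (the ``regular sets strongly avoid long arithmetic progressions'' statement, already in the toolbox for Theorem~\ref{t:ae-combinatorial}) to conclude one sub-interval must be disjoint from $\mathcal{Y}$. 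You instead run a covering/measure argument through Lemma~\ref{l:alphasep}: bound the number of $\rho$-separated points of $\mathcal{Y}$ near $[1,2]$ by $(\mathbf{K}_0\mathbf{C})^2(1+O(1)/\rho)^\delta$, convert that to a Lebesgue-measure bound $\mu_L(\mathcal{Y}(\rho)\cap[1,2])\lesssim(\mathbf{K}_0\mathbf{C})^2\rho^{1-\delta}$, and then take $\rho$ small enough that this is $<1$, so the complement of $\mathcal{Y}(\rho)$ in $[1,2]$ is nonempty. Both arguments ultimately exploit the same fact ($\delta<1$ forces positive-measure holes at every scale); yours is a bit more self-contained since it avoids Proposition~\ref{ADStronglyAvoids}, at the cost of needing to track the absolute constant from part~2 of Lemma~\ref{l:alphasep}. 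Your final bookkeeping leading to the exponent $(1+\delta)/(1-\delta)$ on $\mathbf{K}_0\mathbf{C}$ is the same computation as in the paper, and the slack in the base ($50$ versus the factors $4^\delta$ and the absolute constant from the Schur-type count) is ample, as you observe.
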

\begin{proof}
By Lemma~\ref{l:adred}, the set
$$
\mathcal Y:=R^{-1}\mathcal G(y_0,\Lambda_\Gamma)\ \subset\ \mathbb R
$$
is $\delta$-regular with constant $\mathbf K_0 \mathbf C$.
Divide $[-2,-1]$ and $[1,2]$ into $\mathbf C_1$
intervals of size $\mathbf C_1^{-1}$ each,
where $\mathbf{C}_1:=\lceil(10\mathbf{K}_0^2\mathbf{C}^2)^{\frac{1}{1-\setdim}}\rceil$.
By Proposition~\ref{ADStronglyAvoids},
at least one of the sub-intervals in $[-2,-1]$ and at least one of the sub-intervals in $[1,2]$ must be disjoint from~$\mathcal{Y}$. Call these intervals $I_1$ and $I_2$. Let $I$ be the convex hull of the midpoints of $I_1$ and~$I_2$. 

We will show that $\mathcal{X}:=I\cap\mathcal Y$ is $\setdim$--regular. Let $x\in \mathcal{X}$ and $0<r\leq 4$. We immediately have 
\begin{equation*}
\mu_{\setdim}\big(\mathcal{X}\cap B(x,r)\big)\leq\mu_{\setdim}\big(\mathcal Y\cap B(x,r)\big)\leq  \mathbf{K}_0\mathbf{C}r^\setdim. 
\end{equation*}
It remains to prove a lower bound. 
If $r<\mathbf{C}_1^{-1}$ then $ \mathcal Y\cap B(x,r)\subset I$ and thus 
\begin{equation*}
\mu_{\setdim}(\mathcal{X}\cap B(x,r))=\mu_{\setdim}\big(\mathcal Y\cap B(x,r)\big)\geq  (\mathbf{K}_0\mathbf{C})^{-1}r^\setdim. 
\end{equation*}
On the other hand, if $\mathbf{C}_1^{-1}\leq r\leq 4$, then 
\begin{equation*}
\begin{split}
\mu_{\setdim}(\mathcal{X}\cap B(x,r))&\geq \mu_{\setdim}(\mathcal{X}\cap B(x,\mathbf{C}_1^{-1}))\\
&\geq \mathbf{K}_0^{-1}\mathbf{C}^{-1}\mathbf{C}_1^{-\setdim}\\
&\geq \mathbf C_2^{-1} r^{\setdim}.\qedhere
\end{split}
\end{equation*}

\end{proof}
We finally combine Theorem~\ref{t:ae-combinatorial} from~\S\ref{s:ae-combinatorial},
Lemma~\ref{l:alphasep}, and Lemma~\ref{l:ad-helper} to obtain
\begin{proof}[Proof of Theorem~\ref{t:ad-reduced}]
Let $\Lambda_\Gamma$ be the limit set of a convex co-compact hyperbolic group. Let $y_0\in\Lambda_\Gamma.$ Let $\h>0$ (small) and $C_1\geq 1$ (large),
and put $\alpha_1:=C_1^{-1}\alpha$.
First, note that 
\begin{equation}\label{boundOnRescaledBallAE}
E_A\big(\mathcal G(y_0,\Lambda_\Gamma)\cap B(0,C_1),\alpha\big)
= E_A\big(C_1^{-1}\mathcal G(y_0,\Lambda_\Gamma)\cap B(0,1),\alpha_1\big)\leq
E_A(\mathcal X,\alpha_1),
\end{equation}
where $\mathcal X:=I\cap C_1^{-1}\mathcal G(y_0,\Lambda_\Gamma)\subset[-2, 2]$ is defined in Lemma~\ref{l:ad-helper}.

Each point $(\eta_1,\eta_2,\eta_3,\eta_4)\in \mathcal X(\alpha_1)^4$ satisfying
$|\eta_1-\eta_2+\eta_3-\eta_4|\leq\alpha_1$ lies in the $4\alpha_1$-neighborhood
of the set
$$
\mathcal Z_{\alpha_1}=\{(\eta_1,\eta_2,\eta_3,\eta_4)\in\mathcal X^4\mid
|\eta_1-\eta_2+\eta_3-\eta_4|\leq 5\alpha_1\}.
$$
By Definition~\ref{d:ae} and part~2 of Lemma~\ref{l:alphasep}, we have for some global constant $C$,
\begin{equation}
  \label{e:allai1}
E_A(\mathcal X,\alpha_1)\leq C \mathcal N(\mathcal Z_{\alpha_1},4\alpha_1).
\end{equation}
We next claim that 
\begin{equation}
  \label{e:allai2}
\mathcal N(\mathcal Z_{\alpha_1},4\alpha_1)\leq C\alpha_1^{-4\delta}\mathcal E_A(\mathcal X,\mu_\delta,9\alpha_1)
\end{equation}
where $\mathcal E_A$ is given by Definition~\ref{d:aespecial} and $C$ is some constant
depending on $\mathbf C$. Indeed, let $z_1,\dots,z_N\in \mathcal Z_{\alpha_1}$ be a $4\alpha_1$-separated set of points.
Then
$$
\bigsqcup_{j=1}^N B(z_j,2\alpha_1)\ \subset\ \{|\eta_1-\eta_2+\eta_3-\eta_4|\leq 9\alpha_1\}.
$$
Taking the $\mu_\delta^4$ measure of the intersection of both sides with $\mathcal X^4$
and arguing similarly to the proof of part~1 of Lemma~\ref{l:alphasep}, we obtain~\eqref{e:allai2}.

Finally, applying Theorem~\ref{t:ae-combinatorial} to ${1\over 2}\mathcal X$,
which is $\delta$-regular by Lemma~\ref{l:ad-helper}, we obtain
\begin{equation}\label{boundOnSlightlyEnlargedBallAE}
\mathcal E_A(\mathcal X,\mu_\delta,9\h_1)\leq C\alpha_1^{\delta+\beta_E}.
\end{equation}
Here $\beta_E=\betaXBoundBFC,$ where $\mathbf{K}$ is an absolute constant,
and $C$ is some constant depending on $\mathbf C$.

Combining \eqref{boundOnRescaledBallAE}--\eqref{boundOnSlightlyEnlargedBallAE}, we conclude that $\Lambda_{\Gamma}$ satisfies the additive energy bound with exponent $\beta_E$ in the sense of Definition~\ref{d:ae-estimate}.
\end{proof}

\subsection{Example: three-funneled surfaces}
  \label{s:3fun}

We now consider a particular family of convex co-compact hyperbolic surfaces
and show that the regularity constants
in Lemma~\ref{l:adred} for
the corresponding limit sets
have a uniform upper bound when the surface varies in a compact
set in the moduli space. (Similar reasoning is expected to work for general convex co-compact hyperbolic surfaces.)

More precisely, we study the family of \emph{three-funneled surfaces}
$M_{\ell}$,
parametrized by the Fenchel--Nielsen coordinates
$\ell:=(\ell_1,\ell_2,\ell_3)\in (0,\infty)^3$. To construct $M_\ell$,
we start with a right-angled hyperbolic hexagon with
sides $\ell_1/2,q_3,\ell_2/2,q_1,\ell_3/2,q_2$. This hexagon is unique
up to isometry, and $q_1,q_2,q_3>0$ are determined by a formula
involving only $\ell_1,\ell_2,$ and $\ell_3$ (see for example~\cite[Theorem~3.5.13]{Ratcliffe}). Gluing two such
hexagons along the $q_3$ side, we obtain a right-angled hyperbolic octagon
with sides $\ell_1,q_2,\ell_3/2,q_1,\ell_2,q_1,\ell_3/2,q_2$.

Attaching funnel ends along the $\ell_1,\ell_2,\ell_3/2$ sides to the above octagon,
we obtain a fundamental domain $\mathcal F_{\ell}\subset\mathbb H^2$.
The complement of $\mathcal F_{\ell}$ is the disjoint union of four open geodesic
half-disks (i.e. regions of $\mathbb H^2$ bounded by a geodesic)
$D_1,D_2,D_3,D_4$, where $D_1,D_3$ are bounded by the geodesics containing
the two $q_2$ sides of the octagon and $D_2,D_4$ are bounded by the geodesics
containing its $q_1$ sides. See Figure~\ref{f:schottky}.

We next define the group $\Gamma_\ell\subset\PSL(2,\mathbb R)$ using the
following Schottky representation.
Let $\omega_1,\omega_2$ be the geodesics on $\mathbb H^2$ containing the
$\ell_1$ and $\ell_2$ sides of the octagon.
Then there exist unique $\gamma_1,\gamma_2\in \PSL(2,\mathbb R)$ satisfying
\begin{equation}
  \label{e:schottky}
\gamma_j(\overline{D_j})=\mathbb H^2\setminus D_{j+2},\quad
\gamma_j(\omega_j)=\omega_j,\quad
j=1,2.
\end{equation}
%
\begin{figure}
\includegraphics{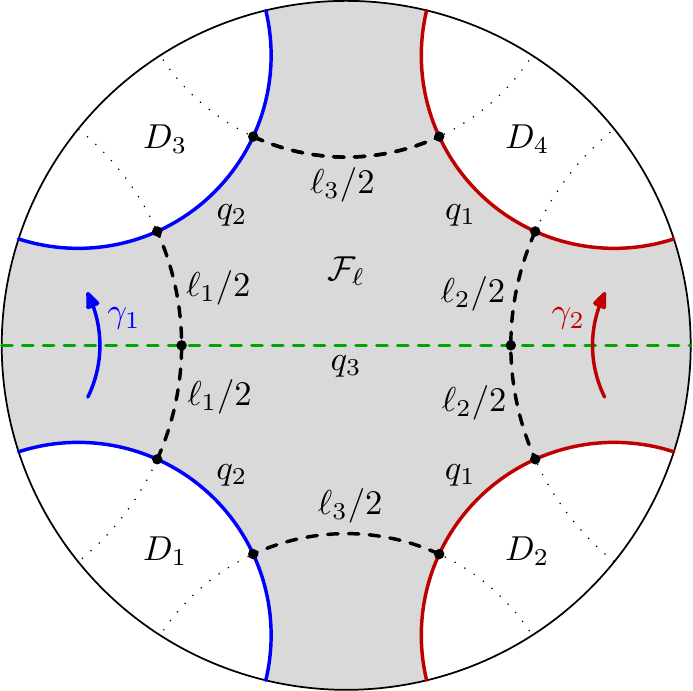}
\qquad
\includegraphics{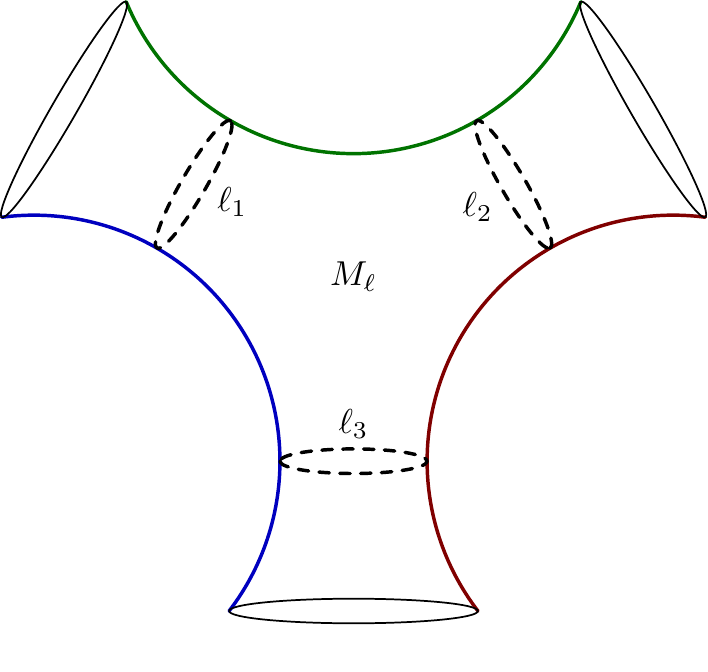}
\caption{A three-funneled surface (on the right) and its fundamental domain
in the Poincar\'e disk model (on the left).}
\label{f:schottky}
\end{figure}%
Let $\Gamma_\ell$ be the group generated by $\gamma_1,\gamma_2$.
Then $\Gamma_\ell$ is a free group, and the quotient
$$
M_\ell:=\Gamma_\ell\backslash\mathbb H^2
$$
is a convex co-compact hyperbolic surface with fundamental domain $\mathcal F_\ell$.
The numbers $\ell_1,\ell_2,\ell_3$ are the lengths of the geodesic necks
separating the funnels of $M_\ell$ from the convex core.
See for instance~\cite[\S15.1]{Borthwick} for details.
The octagon constructed above, the disks $D_j$, and the group elements $\gamma_j$
depend continuously on the choice of $\ell=(\ell_1,\ell_2,\ell_3)$.

Denote by $\Lambda_\ell=\Lambda_{\Gamma_\ell}$ the limit set of $\Gamma_\ell$ and
by $\delta_\ell$ its dimension.
The following proposition states that $\Lambda_\ell$ is Ahlfors--David
regular with regularity constant locally uniform in $\ell$; we use same notation
as in Lemma~\ref{l:adred}.
\begin{prop}
  \label{l:3fun}
Let $\mathscr K\subset (0,\infty)^3$ be a compact set. Then there exists a constant
$\mathbf C_{\mathscr K}>0$ such that for each $\ell\in\mathscr K$
and each $y_0,y_1\in\Lambda_\ell$, $y_0\neq y_1$ we have
\begin{equation}
  \label{e:3fun}
\mathbf C_{\mathscr K}^{-1}c_\ell r^\delta\ \leq\ \mu_\delta\big(\mathcal G(y_0,\Lambda_\ell)\cap B(\mathcal G(y_0,y_1),r)\big)
\ \leq\ \mathbf C_{\mathscr K}c_\ell r^\delta,\quad
r>0
\end{equation}
where $c_\ell>0$ depends only on $\ell$.
\end{prop}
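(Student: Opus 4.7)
The plan is to derive~\eqref{e:3fun} from a locally uniform version of Lemma~\ref{l:adred}. The argument in Lemma~\ref{l:adred} inputs only the Ahlfors--David regularity of $\Lambda_\ell$ and one auxiliary constant $R$ bounding the Poisson kernel over some representative of the trapped set in the cover; since the fundamental domain $\mathcal F_\ell$ varies continuously in $\ell$, this $R$ may be taken uniform on $\mathscr K$. Therefore it suffices to show that there exists $C_{\mathscr K}$ such that $\Lambda_\ell$ is $\delta_\ell$--regular with constant $\leq C_{\mathscr K}$ for every $\ell\in\mathscr K$. By Lemma~\ref{equivOfADRegDefns} it is enough to prove this for the Patterson--Sullivan measure $\mu^{PS}_\ell$, normalised to total mass~$1$; the factor $c_\ell$ in~\eqref{e:3fun} then arises from the Radon--Nikodym derivative $d\mu_{\delta_\ell}/d\mu^{PS}_\ell$ at the base point, which is bounded and bounded below continuously in $\ell$.

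First I would collect the uniform Schottky data. The right-angled hexagon, and hence the disks $D_1,\dots,D_4$ and the Schottky generators $\gamma_1^{\pm 1},\gamma_2^{\pm 1}\in\PSL(2,\mathbb R)$, depend real-analytically on $\ell$. Consequently, on $\mathscr K$ the four arcs $\overline{D_j}\cap\mathbb S^1$ are pairwise separated by a distance $\geq d_{\mathscr K}>0$, and the restrictions of $|(\gamma_j^{\pm 1})'|$ to the relevant arcs are bounded above and below uniformly in $\mathscr K$. The dimension $\ell\mapsto\delta_\ell\in(0,1)$ is continuous via Bowen's formula, so it is bounded strictly between $0$ and $1$ on~$\mathscr K$.

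The heart of the argument is to use the $\delta_\ell$--conformality $\gamma^*\mu^{PS}_\ell=|\gamma'|^{\delta_\ell}\mu^{PS}_\ell$ to transport a ball of arbitrary radius $r<1$ to one of radius of order $1$. Given $y\in\Lambda_\ell$ and $r\in(0,1)$, using the symbolic coding of $\Lambda_\ell$ by the Schottky generators I would choose the shortest word $\gamma$ for which $|\gamma'(\gamma^{-1}(y))|^{-1}\asymp r$. Standard bounded distortion for Schottky groups -- uniform on $\mathscr K$, since it depends only on the separation of the disks and the continuity of the generators -- implies that $\gamma^{-1}$ maps $\Lambda_\ell\cap B(y,r)$ Lipschitz--equivalently onto a subset of a single arc $\Lambda_\ell\cap\overline{D_j}$ of diameter comparable to~$1$. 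The conformality relation then yields $\mu^{PS}_\ell(\Lambda_\ell\cap B(y,r))\asymp r^{\delta_\ell}\mu^{PS}_\ell(V)$ for a definite--size subset $V\subset\Lambda_\ell$.

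The main obstacle is the uniform lower bound $\mu^{PS}_\ell(V)\geq c_0>0$ over $\ell\in\mathscr K$ in the last step: a priori the $\delta_\ell$--conformal measure could concentrate on small portions of the limit set. This is handled by Bowen's thermodynamic formalism -- $\mu^{PS}_\ell$ is the equilibrium state of the continuous potential $-\delta_\ell\log|\gamma'|$ on the Schottky subshift, and both the potential and the subshift vary continuously with $\ell$ -- so the $\mu^{PS}_\ell$--mass of each of the four Schottky arcs is bounded below uniformly on $\mathscr K$. With this uniform regularity of $\mu^{PS}_\ell$ established, Lemma~\ref{l:adred} (with the uniform $R$ noted above) yields~\eqref{e:3fun}, where $c_\ell$ collects the $\ell$--dependent normalisation constants.
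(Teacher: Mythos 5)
Your proposal is correct in its main steps but takes a genuinely different route from the paper's. You aim to prove full uniform Ahlfors--David regularity of $\Lambda_\ell$ over $\ell\in\mathscr K$ (via the Patterson--Sullivan measure, bounded distortion for the Schottky coding, and continuity of Gibbs measures) and then invoke Lemma~\ref{l:adred}. The paper never establishes uniform regularity: it proves only the weaker Lemma~\ref{l:bashas2}, that each Schottky arc $\overline{D_j}$ carries a definite fraction of the total Hausdorff mass $\mu_\delta(\Lambda_\ell)$, by a two-line Lipschitz estimate on $|\gamma_j'|$ combined with the Schottky relation $\gamma_j(\overline{D_j})=\mathbb H^2\setminus D_{j+2}$, and then reruns the scaling step from the proof of Lemma~\ref{l:adred} keeping the total mass $c_\ell=\mu_\delta(\Lambda_\ell)$ explicit, so the quantity that is not obviously uniform in $\ell$ is tracked rather than controlled. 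The paper's route avoids symbolic dynamics and equilibrium states entirely; yours is heavier but would give the formally stronger statement that one may take $c_\ell\asymp1$. Two small remarks: (i) once you have proved uniform regularity and applied Lemma~\ref{equivOfADRegDefns}, the factor $c_\ell$ is already absorbed into $\mathbf C_{\mathscr K}$, so your appeal to a Radon--Nikodym derivative to produce $c_\ell$ reflects a minor confusion about what still remains to be controlled; (ii) the uniform lower bound on the arc masses that you deduce from continuity of equilibrium states is exactly the content of Lemma~\ref{l:bashas2}, whose direct proof via $\gamma_j(\overline{D_j})=\mathbb H^2\setminus D_{j+2}$ works just as well in your set-up, so the thermodynamic formalism, while valid, is considerably more machinery than this step needs.
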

Before proving Proposition~\ref{l:3fun}, we use it to show the following
\begin{theo}
  \label{t:3funny}
There exists an open set $\mathscr U\subset (0,\infty)^3$ such that:
\begin{itemize}
\item if $\ell\in (0,\infty)^3$ and $\delta_\ell=1/2$, then $\ell\in\mathscr U$;
\item if $\ell\in\mathscr U$, then $M_\ell$ has an essential spectral gap
in the sense of~\eqref{e:essential-gap2} of size
$$
\beta=\beta_\ell>\max\Big(0,{1\over 2}-\delta_\ell\Big).
$$
\end{itemize}
\end{theo}
\noindent\textbf{Remark}.
It is well-known that $\delta_\ell$ depends continuously on $\ell$~--- see
for example~\cite{Anderson-Rocha}. Moreover, there exist $\ell_-,\ell_+\in (0,\infty)^3$
such that $\delta_{\ell_-}<1/2<\delta_{\ell_+}$.
In fact, for the case $\ell_1=\ell_2=\ell_3$, we have
$\delta_\ell\to 1$ as $\ell_j\to 0$ and $\delta_\ell\to 0$
as $\ell_j\to \infty$~--- see~\cite[Theorem~3.5]{McMullen}.
By considering a path
connecting $\ell_-$ with $\ell_+$ and applying Theorem~\ref{t:3funny},
we see that there exist $\ell$ such that $\delta_\ell>1/2$, yet 
$M_\ell$ has an essential spectral
gap of size $\beta_\ell>0$. 
\begin{proof}
It suffices to show that for each $\tilde\ell$ with $\delta_{\tilde\ell}=1/2$,
there exists $\tilde\beta>0$ and a neighborhood $U_{\tilde\ell}$ of $\tilde\ell$ such that
for each $\ell\in U_{\tilde\ell}$, $M_\ell$ has an essential spectral gap of size $\tilde\beta$.
Indeed, it follows from here that there is an open neighborhood $U'_{\tilde\ell}$ of $\tilde\ell$
such that for each $\ell\in U'_{\tilde\ell}$, $M_\ell$ has an essential gap of
size $\beta_\ell>\max(0,1/2-\delta_\ell)$. It remains to let $\mathscr U$ be the
union of all $U'_{\tilde\ell}$.

To show the existence of the neighborhood $U_{\tilde\ell}$, by Theorems~\ref{t:fup-reduction} and~\ref{t:ae-reduction}
it suffices to
show that there exists a constant $\beta_E>0$
such that for all $\ell$ sufficiently close to $\tilde\ell$,
the set~$\Lambda_\ell$ satisfies the additive energy bound with
exponent $\beta_E$ in the sense of Definition~\ref{d:ae-estimate}.
To show this, we argue as in the proof of Theorem~\ref{t:ad-reduced} in~\S\ref{s:minkowski}.
The only difference is that Lemma~\ref{l:adred} is replaced by Proposition~\ref{l:3fun}.
The constant $c_\ell$ in~\eqref{e:3fun} can be removed by Lemma~\ref{equivOfADRegDefns};
alternatively, we may argue using the measure $c_\ell^{-1}\mu_\delta$
instead of $\mu_\delta$ since the proof of Theorem~\ref{t:ae-combinatorial}
never used that $\mu_\delta$ is the Hausdorff measure.
\end{proof}
We now prove Proposition~\ref{l:3fun}. Assume that $\ell$ varies
in a compact subset of $(0,\infty)^3$; the constants below will
depend on that subset.
We start with the following
\begin{lemm}
  \label{l:bashas}
Assume that
$$
\gamma=\begin{pmatrix} a&b\\c&d\end{pmatrix}\in \Gamma_\ell\subset\PSL(2,\mathbb R),\quad
|\gamma|:=a^2+b^2+c^2+d^2.
$$
Then for each $A\subset \mathbb S^1$, we have
\begin{equation}
  \label{e:bashas}
\mu_{\delta_\ell}(\Lambda_\ell\cap \gamma(A))\leq (2|\gamma|)^{\delta_\ell} \mu_{\delta_\ell}(\Lambda_\ell\cap A).
\end{equation}
\end{lemm}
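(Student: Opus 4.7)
The plan is to prove the lemma by controlling the Lipschitz constant of the boundary action $\gamma:\mathbb S^1\to\mathbb S^1$ in terms of $|\gamma|$, and then invoking the standard behavior of Hausdorff measure under Lipschitz maps together with the $\Gamma_\ell$-invariance of the limit set.

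First, I would note that $\gamma(\Lambda_\ell)=\Lambda_\ell$ and $\gamma$ is a bijection on $\mathbb S^1$, so $\Lambda_\ell\cap\gamma(A)=\gamma(\Lambda_\ell\cap A)$. The classical estimate $\mu_\delta(f(E))\leq L^\delta \mu_\delta(E)$ for any $L$-Lipschitz map $f$ then reduces~\eqref{e:bashas} to showing that $\gamma:\mathbb S^1\to\mathbb S^1$ is $(2|\gamma|)$-Lipschitz with respect to the Euclidean metric on $\mathbb S^1\subset\mathbb R^2$.

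Next, I would carry out the Lipschitz bound in the Poincaré disk model, where the boundary action of an element of $\PSL(2,\mathbb R)$ (conjugated via the Cayley transform into $\PSU(1,1)$) has the form $\gamma(z)=e^{i\theta}(z-p)/(1-\bar p\,z)$ for some $p\in\mathbb D$. A direct computation of the conformal factor on $\mathbb S^1$ gives
$$
|\gamma'(y)|=\frac{1-|p|^2}{|1-\bar p\,y|^2},\qquad \sup_{y\in\mathbb S^1}|\gamma'(y)|=\frac{1+|p|}{1-|p|}=e^{d_{\mathbb H^2}(0,\gamma(0))}.
$$
On the other hand, in the upper half-plane model with base point $i$, a short calculation using $ad-bc=1$ together with the algebraic identity $(ac+bd)^2+(ad-bc)^2=(a^2+b^2)(c^2+d^2)$ yields
$$
\cosh\bigl(d_{\mathbb H^2}(i,\gamma\cdot i)\bigr)=\tfrac{1}{2}|\gamma|.
$$
Because the Cayley transform is an isometry from the upper half-plane to the disk sending $i$ to $0$, the two hyperbolic distances agree, and hence
$$
\sup_{y\in\mathbb S^1}|\gamma'(y)|=e^d\leq 2\cosh d=|\gamma|,
$$
where the derivative is measured with respect to arc length. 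Passing to the Euclidean (chord) metric on the unit circle costs a factor of at most $\pi/2<2$ (since $\theta\leq(\pi/2)\cdot 2\sin(\theta/2)$ for $\theta\in[0,\pi]$), yielding the desired Lipschitz bound with constant $2|\gamma|$. Combining with the first paragraph gives $\mu_\delta(\Lambda_\ell\cap\gamma(A))\leq(2|\gamma|)^\delta\mu_\delta(\Lambda_\ell\cap A)$.

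The only real point to watch is the bookkeeping across models: $|\gamma|$ is attached to the $\PSL(2,\mathbb R)$ matrix (upper half-plane picture), whereas the supremum of the boundary conformal factor is most transparently computed in the disk model. The identity $|\gamma|=2\cosh d$ and the Cayley-invariance of hyperbolic distance bridge the two, and I expect no substantive obstacle beyond this translation.
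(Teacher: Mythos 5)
Your proof is correct, and it follows a genuinely different (if neighboring) route than the paper's. The paper works directly: after transferring to the disk model it records the explicit conformal factor $|\gamma'(w)|=\frac{1+z^2}{(az+b)^2+(cz+d)^2}$ on $\mathbb S^1$, observes by Cauchy--Schwarz that $|\gamma'(w)|^{-1}\leq|\gamma|$, and then substitutes $\gamma^{-1}$ (which has the same $|\cdot|$) to flip the inequality. You instead parametrize the boundary action by its center $p=\gamma^{-1}(0)$, identify $\sup_{\mathbb S^1}|\gamma'|=\frac{1+|p|}{1-|p|}=e^{d_{\mathbb H^2}(0,\gamma(0))}$, and close the loop with $\cosh d(i,\gamma\cdot i)=\tfrac12|\gamma|$ and $e^d\leq 2\cosh d$. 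Both give a Lipschitz bound $\leq 2|\gamma|$ for the boundary map and then use $\gamma(\Lambda_\ell)=\Lambda_\ell$ plus Hausdorff-measure scaling. Your route is slightly longer but more geometric: it makes transparent that $|\gamma|$ is (up to a factor of $2$) $\cosh$ of the displacement of the base point, and that the boundary expansion factor is controlled by that displacement; the paper's computation is shorter and entirely algebraic. One small remark: the last step, where you insert a factor of $\pi/2$ to pass from the arc-length metric to the chord metric, is unnecessary. The arc-length and chord metrics on $\mathbb S^1$ agree to first order, so the pointwise derivative bound $|\gamma'(y)|\leq|\gamma|$ is the same in either normalization, and that local bound is all the Hausdorff-measure scaling argument requires. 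Dropping that step would in fact tighten your estimate to $|\gamma|^{\delta_\ell}$, but of course $(2|\gamma|)^{\delta_\ell}$ is all the lemma asks for.
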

\begin{proof}
We identify the upper half-plane model $\{\Im z>0\}$ with the
disk model $\{|w|<1\}$ by the M\"obius transformation
$$
w={z-i\over z+i},\quad
z=i{1+w\over 1-w}.
$$
With the M\"obius transformation $\gamma$ given in the $z$ variable
by $\gamma.z={az+b\over cz+d}$, its derivative in the $w$ variable
on $\mathbb S^1$ satisfies
$$
|\gamma'(w)|={1+z^2\over (az+b)^2+(cz+d)^2},\quad
w\in\mathbb S^1.
$$
It follows that
$|\gamma'(w)|^{-1}\leq 2|\gamma|$;
substituting $\gamma^{-1}$ instead of $\gamma$,
we get $|\gamma'(w)|\leq 2|\gamma|$.
The estimate~\eqref{e:bashas} follows from here
and the fact that $\Lambda_\ell\cap\gamma(A)=\gamma(\Lambda_\ell\cap A)$.
\end{proof}
Using Lemma~\ref{l:bashas}, we next prove
\begin{lemm}
  \label{l:bashas2}
There exists a constant $c>0$ such that
\begin{equation}
  \label{e:bashas2}
\mu_{\delta_\ell}(\Lambda_\ell\cap \overline{D_j})\geq c\mu_{\delta_\ell}(\Lambda_\ell),\quad
j=1,2,3,4.
\end{equation}
\end{lemm}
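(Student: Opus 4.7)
The plan is to write each piece $\Lambda_\ell \cap \overline{D_k}$ as (a subset of) the image of $\Lambda_\ell \cap \overline{D_j}$ under an element of $\Gamma_\ell$ of uniformly bounded word length, and then apply Lemma~\ref{l:bashas}.

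First I would record that for Schottky groups $\Lambda_\ell \subset \bigcup_{k=1}^4 \overline{D_k}$: the complementary arcs in $\mathbb S^1 \setminus \bigcup_k \overline{D_k}$ correspond to the sides of the fundamental domain and are mapped into one of the $\overline{D_k}$'s by a generator, so they contain no limit points. Hence $\Lambda_\ell = \bigcup_{k=1}^4 (\Lambda_\ell \cap \overline{D_k})$, where the pairwise overlaps consist of at most finitely many boundary points and hence carry zero $\mu_{\delta_\ell}$-measure (as $\delta_\ell > 0$).

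Next I would unpack the Schottky relations \eqref{e:schottky}. From $\gamma_1(\overline{D_1}) = \mathbb H^2 \setminus D_3$ we obtain $\gamma_1^{-1}(\overline{D_3}) = \mathbb H^2 \setminus D_1$, with analogous statements for $\gamma_2^{\pm 1}$ and the pair $D_2,D_4$. Therefore each of $\gamma_1^{\pm 1}, \gamma_2^{\pm 1}$ maps some $\overline{D_j}$ onto the complement of the open paired disk, which contains three of the four closed disks. In particular, for any fixed $j \in \{1,2,3,4\}$ and any $k \neq j$ there is a word $w_{jk} \in \Gamma_\ell$ of length at most two in $\gamma_1^{\pm 1}, \gamma_2^{\pm 1}$ with $w_{jk}(\overline{D_j}) \supset \overline{D_k}$. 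For example, with $j=1$: $\gamma_1(\overline{D_1}) \supset \overline{D_2} \cup \overline{D_4}$, and for the remaining piece $\overline{D_3}$ one composes, $\gamma_2 \gamma_1(\overline{D_1}) \supset \gamma_2(\overline{D_2}) = \mathbb H^2 \setminus D_4 \supset \overline{D_3}$. The remaining three cases are handled by the obvious permutation of indices.

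Then, using $\Gamma_\ell$-equivariance of the limit set, Lemma~\ref{l:bashas} gives
\[
\mu_{\delta_\ell}(\Lambda_\ell \cap \overline{D_k})
\ \leq\ \mu_{\delta_\ell}(\Lambda_\ell \cap w_{jk}(\overline{D_j}))
\ \leq\ (2|w_{jk}|)^{\delta_\ell}\, \mu_{\delta_\ell}(\Lambda_\ell \cap \overline{D_j}).
\]
Summing over $k \neq j$ and adding the trivial term for $k = j$ yields $\mu_{\delta_\ell}(\Lambda_\ell) \leq \bigl(1 + 3(2M_\ell)^{\delta_\ell}\bigr) \mu_{\delta_\ell}(\Lambda_\ell \cap \overline{D_j})$ with $M_\ell := \max_{j,k} |w_{jk}|$, which rearranges to \eqref{e:bashas2}. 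The one substantive point is uniformity: since the right-angled octagon, the disks $D_j$, and hence $\gamma_1,\gamma_2$ (in the Schottky normalization~\eqref{e:schottky}) depend continuously on $\ell \in (0,\infty)^3$, the matrix norms $|\gamma_1|, |\gamma_2|$ are uniformly bounded on any compact $\mathscr K$, so $M_\ell \leq M_{\mathscr K}$. Together with $\delta_\ell \leq 1$ this produces a uniform constant $c = c(\mathscr K) > 0$; this continuity check is the only nontrivial step, and the combinatorial enumeration of words $w_{jk}$ across the four cases is entirely mechanical.
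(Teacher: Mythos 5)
Your proof is correct and follows essentially the same route as the paper's: the Schottky containments from \eqref{e:schottky}, the distortion bound of Lemma~\ref{l:bashas}, and continuity in $\ell$ to get uniform bounds on $|\gamma_1|,|\gamma_2|$ over a compact set in the Fenchel--Nielsen parameters. The only packaging difference is that the paper chains two applications of Lemma~\ref{l:bashas} (one via $\gamma_1$ bounding $\overline{D_1\cup D_2\cup D_4}$ against $\overline{D_1}$, one via $\gamma_2$ bounding $\overline{D_3}$ against $\overline{D_2}$), whereas you construct a length-$\leq 2$ word $w_{jk}$ for each pair $(j,k)$ and sum directly; the underlying idea and ingredients are identical.
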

\begin{proof}
We consider the case $j=1$, the other cases are treated similarly.

By~\eqref{e:schottky}, we have $\overline{D_1\cup D_2\cup D_4}\subset\gamma_1(\overline{D_1})$.
Since $|\gamma_1|$ is bounded by some constant depending on $\mathscr K$, by Lemma~\ref{l:bashas}
we obtain for some constant $C$,
\begin{equation}
  \label{e:bashas2.1}
\mu_{\delta_\ell}\big(\Lambda_\ell\cap (\overline{D_1\cup D_2\cup D_4})\big)\leq C\mu_{\delta_\ell}(\Lambda_\ell\cap\overline{D_1}).
\end{equation}
Next, $\overline{D_3}\subset\gamma_2(\overline{D_2})$. Therefore,
similarly to~\eqref{e:bashas2.1} we get
\begin{equation}
  \label{e:bashas2.2}
\mu_{\delta_\ell}(\Lambda_\ell\cap \overline{D_3})\leq C\mu_{\delta_\ell}(\Lambda_\ell\cap \overline{D_2}).
\end{equation}
Combining~\eqref{e:bashas2.1} and~\eqref{e:bashas2.2} and using
that $\Lambda_\ell\subset \overline{D_1\cup D_2\cup D_3\cup D_4}$,
we obtain~\eqref{e:bashas2}.
\end{proof}
We are now ready to give
\begin{proof}[Proof of Lemma~\ref{l:3fun}]
We argue similarly to the proof of Lemma~\ref{l:adred}.
Take $y_0,y_1\in\Lambda_\ell$, $y_0\neq y_1$, and $r>0$.
Fix a large constant $C_1>0$ depending only on $\mathscr K$,
to be chosen later.
Let $(x,\xi)\in S^*\mathbb H^2$ be the unique point satisfying
$$
B_-(x,\xi)=y_0,\quad
B_+(x,\xi)=y_1,\quad
\mathcal P(x,y_0)=r/C_1.
$$
By~\eqref{e:Gpm-formula}, the projection $\pi_\Gamma(x,\xi)\in S^*M$
lies in the trapped set. Take $\gamma\in\Gamma_\ell$ such that for
$(x',\xi'):=\gamma.(x,\xi)$, the point $x'$ lies in the fundamental domain $\mathcal F_\ell$,
and denote
$$
y'_0:=B_-(x',\xi'),\quad
y'_1:=B_+(x',\xi').
$$
Since $\pi_\Gamma(x',\xi')$ is in the trapped set, $x'$
lies in the convex core, which is the octagon used in the construction of
$\mathcal F_\ell$. Thus there exists a constant $C_2>0$ depending
only on $\mathscr K$ such that
$$
C_2^{-1}\leq \mathcal P(x',y'_0)\leq C_2.
$$
Applying~\eqref{e:huangpu1}, we see that
$$
\begin{aligned}
&(C_1C_2)^{-\delta} r^\delta \mu_\delta\big(\mathcal G(y'_0,\Lambda_\ell)\cap B(\mathcal G(y'_0,y'_1),C_1/C_2)\big)\\
\leq\ &\mu_\delta\big(\mathcal G(y_0,\Lambda_\ell)\cap B(\mathcal G(y_0,y_1),r)\big)\\
\leq\ &
(C_2/C_1)^\delta r^\delta \mu_\delta\big(\mathcal G(y'_0,\Lambda_\ell)\cap B(\mathcal G(y'_0,y'_1),C_1C_2)\big).
\end{aligned}
$$
Therefore, in order to prove~\eqref{e:3fun}
it is enough to verify the inequalities
$$
\begin{aligned}
\mu_\delta\big(\mathcal G(y'_0,\Lambda_\ell)\cap B(\mathcal G(y'_0,y'_1),C_1C_2)\big)&\leq
Cc_\ell,\\
\mu_\delta\big(\mathcal G(y'_0,\Lambda_\ell)\cap B(\mathcal G(y'_0,y'_1),C_1/C_2)\big)&\geq
C^{-1}c_\ell
\end{aligned}
$$
for some constants $C$ depending only on $\mathscr K$ and $c_\ell$ depending on $\ell$.

We have $y'_0,y'_1\in\Lambda_\ell$, thus $y'_0\in \overline {D_j},y'_1\in\overline{D_k}$
for some $j,k\in\{1,2,3,4\}$. Moreover, $x'\in\mathcal F_\ell$ implies that
$j\neq k$. Thus $|y'_0-y'_1|$ is bounded away from zero uniformly in $\ell\in\mathscr K$.
Since $\mathcal G$ is a smooth map away from the diagonal, we see
that it is enough to prove the inequalities
\begin{align}
  \label{e:fuego2}
\mu_\delta\big(\{y\in\Lambda_\ell\mid \mathcal G(y'_0,y)\in B(\mathcal G(y'_0,y'_1),C_1C_2)\}\big)&\leq Cc_\ell,
\\
  \label{e:fuego3}
\mu_\delta\big(\{y\in\Lambda_\ell\mid \mathcal G(y'_0,y)\in B(\mathcal G(y'_0,y'_1),C_1/C_2)\}\big)&\geq C^{-1}c_\ell.
\end{align}
We put $c_\ell:=\mu_\delta(\Lambda_\ell)$, then~\eqref{e:fuego2} follows automatically.
To show~\eqref{e:fuego3}, we note that for $C_1$ large enough depending on $\mathscr K$,
the set on the left-hand side contains $\Lambda_\ell\cap \overline{D_k}$. It remains
to use Lemma~\ref{l:bashas2}.
\end{proof}

\appendix

\section{Calculations on hyperbolic quotients}
  \label{s:hyperbolic-technical}

In this appendix, we prove several technical lemmas from~\S\ref{s:hyperbolic} and~\S\ref{s:ae}
regarding the geometry of convex co-compact hyperbolic quotients $M=\Gamma\backslash\mathbb H^n$.
Here
$$
\Gamma\ \subset\ G:=\PSO(1,n)
$$
is a group of hyperbolic isometries.

A useful tool is the \emph{coframe bundle} $F^* M$ whose points have the form
$(x,\xi_1,\dots,\xi_n)$, where $x\in M$ and $\xi_1,\dots,\xi_n\in T^*_xM$ form a positively oriented orthonormal basis.
We identify $F^*M$ with $\Gamma\backslash G$
by the diffeomorphism
$$
[\gamma]\in\Gamma\backslash G\ \mapsto\ \pi^F_\Gamma(\gamma(0),d\gamma(0)^{-T}\cdot dx_1,\dots,d\gamma(0)^{-T}\cdot dx_n)
$$
where we use the ball model for $\mathbb H^n$ and
$$
\pi^F_\Gamma:F^*\mathbb H^n\to F^* M
$$
is the covering map. Using this identification, we obtain a right action of $G$
on $F^*M$. Then each element of the Lie algebra of $G$, viewed as a left invariant vector field,
induces a vector field on $F^*M$. We use the induced vector fields
\begin{equation}
  \label{e:horvec}
U_1^\pm,\dots,U_{n-1}^\pm,\quad
\widetilde X,\quad R_{ab},\ 2\leq a,b\leq n
\end{equation}
on $F^*M$ defined in~\cite[\S3.2]{rrh} ($\widetilde X$ is denoted by simply $X$ in~\cite{rrh}). The following
commutation relations hold~\cite[(3.8)]{rrh}:
\begin{equation}
  \label{e:horcom}
[\widetilde X,U^\pm_j]=\pm U^\pm_j,\quad
[\widetilde X,R_{ab}]=0,\ 2\leq a,b\leq n.
\end{equation}
The vector field $\widetilde X$ projects down to $X$ by $\pi_S$; in fact,
$\widetilde X$ is the generator of parallel transport along geodesics.
The vector fields $R_{ab}$ generate rotations of the vectors $(\xi_2,\dots,\xi_n)$;
in fact, the kernel of $d\pi_S$ is spanned by $(R_{ab})$
where $\pi_S$ is the following submersion:
\begin{equation}
  \label{e:pi-S}
\pi_S:F^*M\to S^*M,\quad
(x,\xi_1,\dots,\xi_n)\mapsto (x,\xi_1).
\end{equation}

\begin{proof}[Proof of Lemma~\ref{l:propagated-okay}]
We show the first statement, the second one is proved similarly.
Since $L_s$ is an integrable foliation, induction on $m+k$ shows that the order of applying vector fields
in~\eqref{e:symbols-def} does not matter for establishing the bound. By~\eqref{e:sudec}, it suffices
to prove the bounds
\begin{equation}
  \label{e:derby}
\begin{gathered}
\sup_{T^*M\setminus 0}|\chi_2  Y_1\dots Y_m Z_1\dots Z_k(\xi\cdot\partial_\xi)^iX^j(\chi_1\circ e^{tX})|\leq Ch^{-\rho k},\\
Y_1,\dots, Y_m\in C^\infty(T^*M\setminus 0;E_s),\quad
Z_1,\dots, Z_k\in C^\infty(T^*M\setminus 0;E_u).
\end{gathered}
\end{equation}
Since $X$ commutes with itself and with $\xi\cdot\partial_\xi$, we have
$$
(\xi\cdot\partial_\xi)^iX^j(\chi_1\circ e^{tX})=((\xi\cdot\partial_\xi)^iX^j\chi_1)\circ e^{tX}
$$
and thus we may assume that $i=j=0$.
Since $e^{tX}$ is a homogeneous flow and $E_s,E_u$ are homogeneous foliations,
we may restrict to the cosphere bundle $S^*M$.

On $F^*M$, we have (see~\cite[\S3.3]{rrh})
$$
\pi_S^*E_s=\Span(\{U^+_j\}\cup\{R_{ab}\}),\quad
\pi_S^*E_u=\Span(\{U^-_j\}\cup\{R_{ab}\}),
$$
(The fields $U^\pm_j$ do not project down to vector fields on $S^*M$ for $n\geq 3$,
hence the use of the coframe bundle.)

Then \eqref{e:derby} reduces to the following bound on $F^*M$:
$$
\sup_{F^*M}|(\pi_S^*\chi_2) U^+_{j_1}\dots U^+_{j_m}U^-_{r_1}\dots U^-_{r_k}((\pi_S^*\chi_1)\circ e^{t\widetilde X})|\leq Ch^{-\rho k}.
$$
Using~\eqref{e:horcom}, we see that
$$
U^+_{j_1}\dots U^+_{j_m}U^-_{r_1}\dots U^-_{r_k}((\pi_S^*\chi_1)\circ e^{t\widetilde X})
=e^{(k-m)t}(U^+_{j_1}\dots U^+_{j_m}U^-_{r_1}\dots U^-_{r_k}(\pi_S^*\chi_1))\circ e^{t\widetilde X}
$$
and the proof is finished as $e^{(k-m)t}\leq e^{kt}\leq h^{-\rho k}$ when $t\in [0,\rho\log(1/h)]$.
\end{proof}
%
\begin{proof}[Proof of~\eqref{e:Gpm-formula}]
We prove the second statement, the first one is proved similarly.
Assume first that $(x,\xi)\in T^*\mathbb H^n\setminus 0$ and 
$\pi_\Gamma(x,\xi)\in \Gamma_-$, where $\pi_\Gamma$ is defined in~\eqref{e:pi-Gamma}
and descends to a covering map $\mathbb H^n\to M$ (also called $\pi_\Gamma$).
Let $x(t)\in \mathbb H^n$ be the projection of $e^{tX}(x,\xi)$ to the base.
Since $(x,\xi)\in \Gamma_-$, the curve $\pi_\Gamma(x(t))$ stays in a compact
subset of $M$ when $t\geq 0$; therefore, there exists a sequence $t_j\to \infty$
and $x_\infty\in \mathbb H^n$ such that
$$
\pi_\Gamma(x(t_j))\to \pi_\Gamma(x_\infty)\quad\text{as }j\to\infty.
$$
Take $\gamma_j\in\Gamma$ such that
$d_{\mathbb H^n}(x(t_j),\gamma_j.x_\infty)\to 0$ as $j\to\infty$.
Since $x(t_j)\to B_+(x,\xi)$ in the topology of the closed disk model $\overline{\mathbb H^n}$,
we have $\gamma_j.x_\infty\to B_+(x,\xi)$ in $\overline{\mathbb H^n}$ and thus
$B_+(x,\xi)\in \Lambda_\Gamma$ by~\eqref{e:Lambda-Gamma}.

Assume now that $\pi_\Gamma(x,\xi)\notin\Gamma_-$. Then (see for instance~\cite[\S7.1 and Appendix~A.1]{qeefun})
the trajectory $\pi_\Gamma(x(t))$ converges in the compactified space $\overline M$
to some point $y\in \partial\overline M$ on the conformal boundary.
The point $B_+(x,\xi)$ projects to $y$ by an extension of $\pi_\Gamma$ to the conformal
boundary and there exists a neighborhood $U$ of $B_+(x,\xi)$ in $\overline{\mathbb H^n}$ such that
$\pi_\Gamma$ is a local diffeomorphism on $U$. This implies that $B_+(x,\xi)$
cannot be in the limit set $\Lambda_\Gamma$, finishing the proof.
\end{proof}
%
Next, we prove Lemma~\ref{l:close-to-trapping}. We will use the following
\begin{lemm}
  \label{l:reallytech}
Assume that $(x,\xi_1,\dots,\xi_n)\in F^*\mathbb H^n$, $y\in\mathbb S^{n-1}$,
and
\begin{equation}
  \label{e:rt-1}
y\neq B_-(x,\xi_1).
\end{equation}
Then there exists $\mathbf s=(s_1,\dots,s_{n-1})\in\mathbb R^{n-1}$
such that, putting (see~\eqref{e:horvec})
\begin{equation}
  \label{e:horoflow}
e^{\mathbf s U^-}:=\exp(s_1U^-_1+\dots+s_nU^-_n):F^*\mathbb H^n\to F^*\mathbb H^n,
\end{equation}
we have (with $\pi_S$ defined in~\eqref{e:pi-S})
\begin{equation}
  \label{e:rt-2}
B_+(\pi_S(e^{\mathbf s U^-}(x,\xi_1,\dots,\xi_n)))=y.
\end{equation}
Moreover, if $(x,\xi_1,y)$ varies in some fixed compact subset of $S^*\mathbb H^n\times\mathbb S^{n-1}$
satisfying~\eqref{e:rt-1}, then we may choose
$s_1,\dots,s_{n-1}\in [-C,C]$ where $C$ is independent of $x,\xi_1,\dots,\xi_n,y$.   
\end{lemm}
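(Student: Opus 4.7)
My plan is to exploit the homogeneity of $\mathbb H^n$ under $G = \PSO(1,n)$ and reduce to an explicit computation at a single base frame. Under the identification $F^*\mathbb H^n \cong G$, the fields $U^-_j$ are left-invariant, so the flow $e^{\mathbf s U^-}$ acts as right multiplication by an element of the unstable horospherical subgroup $N^- \subset G$; this right action commutes with the (transitive) left action of $G$ on $F^*\mathbb H^n$. Both $\pi_S$ and $B_\pm$ are equivariant for these left actions, with $G$ acting on $\mathbb S^{n-1}$ via its conformal action on $\partial \mathbb H^n$. Consequently, for any $g \in G$ and base frame $F_0$,
\[
B_+\bigl(\pi_S(e^{\mathbf s U^-}(gF_0))\bigr) = g \cdot B_+\bigl(\pi_S(e^{\mathbf s U^-}F_0)\bigr),
\]
so it suffices to verify both claims at one convenient $F_0$.

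For the base case I would pass to the upper half-space model and choose $F_0$ with base point $(0,\dots,0,1)$, first frame vector pointing straight down, and the remaining vectors along the standard horizontal axes. A direct calculation gives $B_-(F_0) = \infty$, while $N^-$ appears as the horizontal Euclidean translation group. Since $U^-_1,\dots,U^-_{n-1}$ form a basis of $\mathrm{Lie}(N^-)$, the orbit $\mathbf s \mapsto e^{\mathbf s U^-}(F_0)$ has base point $(L\mathbf s,1)$ for some fixed invertible linear $L$ on $\mathbb R^{n-1}$ and the same downward first vector; the corresponding downward geodesic terminates at $L\mathbf s \in \mathbb R^{n-1} \cong \mathbb S^{n-1}\setminus\{\infty\}$. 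Hence
\[
\mathbf s \longmapsto B_+\bigl(\pi_S(e^{\mathbf s U^-}F_0)\bigr) = L\mathbf s
\]
is a global diffeomorphism of $\mathbb R^{n-1}$ onto $\mathbb S^{n-1}\setminus\{B_-(F_0)\}$. Combining this with the equivariance above, for $F = gF_0$ and $y \neq B_-(F)$ we may take $\mathbf s := L^{-1}g^{-1}\cdot y$, which yields the existence part of the lemma.

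For the uniformity statement, the commutation relations between $U^-_j$ and the rotation generators $R_{ab}$ in $\mathrm{Lie}(G)$ show that replacing $(\xi_2,\dots,\xi_n)$ by a rotated frame $O(\xi_2,\dots,\xi_n)$ with $O \in \mathrm{SO}(n-1)$ replaces $\mathbf s$ by $O\mathbf s$; in particular $|\mathbf s|$ depends only on $(x,\xi_1,y)$. The resulting function is continuous on the open set $\{(x,\xi_1,y) : y \neq B_-(x,\xi_1)\} \subset S^*\mathbb H^n \times \mathbb S^{n-1}$, and hence is bounded on any compact subset. The only step I expect to require genuine care is the base-case model computation: pinning down sign conventions and checking that the left-invariant fields $U^-_j$, as defined in~\cite[\S3.2]{rrh}, really generate the horospherical translation subgroup stabilizing $\infty$ in the chosen chart; once this identification is in place, the rest of the argument is a formal consequence of $G$-equivariance.
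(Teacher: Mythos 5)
Your argument is correct and, once unpacked, runs parallel to the paper's: both reduce by $G$-equivariance to an explicit computation at a single reference frame and then use compactness plus $\mathrm{SO}(n-1)$-invariance of $|\mathbf{s}|$ for the uniformity claim. The paper carries out the base computation in the hyperboloid model, producing the inverse-stereographic formula $B_+ = \bigl(1+|\mathbf s|^2\bigr)^{-1}\bigl(1-|\mathbf s|^2,-2\mathbf s\bigr)$, whereas you work in the upper half-space model where the same map becomes linear; these are the same calculation in two charts of $\partial\mathbb H^n$. Your equivariance reduction is stated more explicitly than in the paper, which folds it directly into the displayed formula involving $\gamma$, but this is cosmetic. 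The one place where your write-up is thinner than the paper's is precisely the step you flag yourself: verifying that the $U^-_j$ from~\cite[\S3.2]{rrh} really generate horizontal translations in your chosen chart with your sign conventions, and that the first frame vector stays vertical under $e^{\mathbf s U^-}$; the paper discharges this by citing~\cite[(3.7), (3.16)]{rrh}, and you would need to do the same (or reproduce that calculation) to make the proposal complete.
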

\begin{proof}
We use the hyperboloid model of $\mathbb H^n$, see~\cite[\S3.1]{rrh}. In particular,
we consider elements of $G=\PSO(1,n)$ as linear automorphisms of the Minkowski
space $\mathbb R^{1,n}$. If $e_0,\dots,e_n$ is the canonical basis of $\mathbb R^{1,n}$
and $\gamma\in G$ is defined by
$$
\gamma.(e_0,\dots,e_n)=(x,\xi_1,\dots,\xi_n),
$$
then a direct calculation using~\cite[(3.7) and (3.16)]{rrh} shows that
for some $q>0$,
$$
\big(q,qB_+(\pi_S(e^{\mathbf sU^-}(x,\xi_1,\dots,\xi_n)))\big)=\gamma.(1+|\mathbf s|^2,
1-|\mathbf s|^2,-2s_1,\dots,-2s_{n-1})
$$
where both sides of the equation are vectors in the positive half
of the light cone in $\mathbb R^{1,n}$. The vector $\gamma^{-1}.(1,y)$ lies
in the positive half of the light cone,
therefore we have for some $r>0$ and $v\in\mathbb S^{n-1}$,
$$
\gamma^{-1}.(1,y)=(r,rv).
$$
By~\cite[(3.16)]{rrh}, we see that~\eqref{e:rt-1} implies that
\begin{equation}
  \label{e:rti-1}
v\neq (-1,0,\dots,0). 
\end{equation}
To obtain~\eqref{e:rt-2}, we have to find $\mathbf s$ such that
$$
v_1={1-|\mathbf s|^2\over 1+|\mathbf s|^2};\quad
v_{j+1}=-{2s_j\over 1+|\mathbf s|^2},\
1\leq j\leq n-1.
$$
The existence of such $\mathbf s$ follows directly from~\eqref{e:rti-1},
and the fact that $\mathbf s$ can be chosen bounded uniformly in $(x,\xi_1,\dots,\xi_n,y)$
follows immediately because under~\eqref{e:rt-1},
the distance from $v$ to $(-1,0,\dots,0)$ is bounded away from zero.
\end{proof}

\begin{proof}[Proof of Lemma~\ref{l:close-to-trapping}]
Assume that $t\geq 0$ and
\begin{equation}
  \label{e:henhao}
(x,\xi)\in V,\quad
\pi_\Gamma(e^{tX}(x,\xi))\in\pi_\Gamma(V).
\end{equation}
(The case of propagation for negative time is handled similarly.)
Since $e^{tX}$ is a homogeneous flow, we may additionally
assume that $|\xi|_g=1$.

By~\eqref{e:henhao}, there exists $\gamma'\in\Gamma$ such that
$\gamma'.e^{tX}(x,\xi)\in V$.
Since $K=\Gamma_+\cap\Gamma_-\neq\emptyset$,
by~\eqref{e:Gpm-formula} $\Lambda_\Gamma$ has
at least two elements. Then we may choose $y\in\Lambda_\Gamma$ depending on $\gamma'$ and $(x,\xi)$
such that for some $\varepsilon>0$ independent of $(x,\xi)$,
$$
d(B_-(\gamma'.(x,\xi)),y)>\varepsilon.
$$
Choose $\Xi:=(\xi_2,\dots,\xi_n)$ such that
$(x,\xi,\Xi)\in F^*\mathbb H^n$.
Applying Lemma~\ref{l:reallytech} to $\gamma'.e^{t\widetilde X}(x,\xi,\Xi)$, we see that
there exists a constant $C$ independent of $x,\xi,\Xi$ such that
\begin{equation}
  \label{e:zhidao}
B_+\big(\pi_S\big(e^{\mathbf sU^-}(\gamma'.e^{t\widetilde X}(x,\xi,\Xi))\big)\big)=y
\end{equation}
for some $\mathbf s\in [-C,C]^{n-1}$. Rewriting~\eqref{e:zhidao} using~\eqref{e:horcom}
and since $B_+$ stays constant along the geodesic flow, we get
$$
B_+\big(\pi_S\big(\exp(e^{-t}\mathbf sU^-)(\gamma'.(x,\xi,\Xi))\big)\big)=y.
$$
It follows that
\begin{equation}
  \label{e:dianhua}
B_+\big(\pi_S\big(\exp(e^{-t}\mathbf sU^-)(x,\xi,\Xi)\big)\big)=(\gamma')^{-1}.y\in\Lambda_\Gamma
\end{equation}
where $(\gamma')^{-1}\in\Gamma $ acts on the conformal infinity $\mathbb S^{n-1}$ as in~\cite[\S3.5]{rrh}
and $\Lambda_\Gamma$ is invariant by this action (as can be seen from either~\eqref{e:Lambda-Gamma} or~\eqref{e:Gpm-formula}).
It follows from~\eqref{e:dianhua} that $d(B_+(x,\xi),\Lambda_\Gamma)\leq Ce^{-t}$ for some
constant $C$, finishing the proof.
\end{proof}

\begin{proof}[Proof of Lemma~\ref{l:kappa-M}]
Consider the generating function
\begin{equation}
  \label{e:gen-function}
\mathcal S\in C^\infty(\mathbb H^n_x\times \mathbb R^+_w\times\mathbb S^{n-1}_y;\mathbb R),\quad
\mathcal S(x,w,y)=w\log \mathcal P(x,y).
\end{equation}
Let $\xi,\theta,\eta$ be the momenta on $T^*(\mathbb H^n\times \mathbb R^+\times\mathbb S^{n-1})$ corresponding to $x,w,y$.
We claim that the following two statements are equivalent for each
choice of $(x,w,y,\xi,\theta,\eta)$:
\begin{gather}
  \label{e:haode-1}
\xi=\mp\partial_x\mathcal S,\ \theta=\pm\partial_w\mathcal S,\ \eta=\pm\partial_y\mathcal S;\\
  \label{e:haode-2}
w=p(x,\xi),\ y=B_\mp(x,\xi),\ \theta=\pm\log \mathcal P(x,B_\mp(x,\xi)),\ \eta=\pm G_\mp(x,\xi).
\end{gather}
We compute
$$
\partial_x \mathcal S(x,w,y)=-2w\bigg({x\over 1-|x|^2}+{x-y\over |x-y|^2}\bigg),\quad
\partial_w \mathcal S(x,w,y)=\log \mathcal P(x,y),
$$
and the tangent vector in $T_y\mathbb S^{n-1}\subset\mathbb R^n$ identified
with $\partial_y\mathcal S(x,w,y)$ by the round metric on $\mathbb S^{n-1}$ is
\begin{equation}
  \label{e:zetazeta}
\zeta(x,w,y)=2w{x-(x\cdot y)y\over |x-y|^2}.
\end{equation}
For $(x,\xi)\in T^*\mathbb H^n\setminus 0$, put
$$
\Phi_\pm(x,\xi)=p(x,\xi)\mathcal P\big(x,B_\pm(x,\xi)\big).
$$
We calculate (using for example~\cite[\S3.4]{rrh}; note that in that paper,
$x$ denotes a point in the hyperboloid model and $y$ a point in the ball model;
we relate the two models by~\cite[(3.2)]{rrh}, identify tangent and cotangent vectors
by the metric and extend the formulas from the cosphere bundle to the
whole $T^*\mathbb H^{n+1}\setminus 0$ by homogeneity)
\begin{equation}
  \label{e:hyperfor}
\begin{aligned}
p(x,\xi)&={1-|x|^2\over 2}|\xi|,\\
\Phi_\pm(x,\xi)&={1+|x|^2\over 2}|\xi|\pm x\cdot\xi,\\
\Phi_\pm(x,\xi)B_\pm(x,\xi)&=(|\xi|\pm x\cdot\xi)x\pm {1-|x|^2\over 2}\xi.
\end{aligned}
\end{equation}
We now show that~\eqref{e:haode-1} and~\eqref{e:haode-2} are equivalent.
Assume first that~\eqref{e:haode-1} holds. Then
$$
|\xi|={2w\over 1-|x|^2},\quad
x\cdot\xi=\mp w\Big({1-|x|^2\over |x-y|^2}-{1+|x|^2\over 1-|x|^2}\Big).
$$
It follows that $p(x,\xi)=w$, $\Phi_\mp(x,\xi)=w\mathcal P(x,y)$, and $B_\mp(x,\xi)=y$,
which gives the first three equalities in~\eqref{e:haode-2}. Next,
$$
\begin{aligned}
\Phi_\pm(x,\xi)&=2w{1+|x|^2\over 1-|x|^2}-w\mathcal P(x,y),\\
\Phi_\pm(x,\xi)B_\pm(x,\xi)&={4w\over 1-|x|^2}x-w\mathcal P(x,y)y,\\
\Phi_\pm(x,\xi)(B_+(x,\xi)\cdot B_-(x,\xi))&=\Phi_\mp-{2w\over P(x,y)}.
\end{aligned}
$$
From here we see that $G_\mp(x,\xi)=\zeta(x,w,y)$, with $\zeta$ defined in~\eqref{e:zetazeta};
this finishes the proof of~\eqref{e:haode-2}.

Assume now that~\eqref{e:haode-2} holds. We have
$$
\begin{aligned}
\Phi_\mp(x,\xi)(x\cdot y)&=\mp{1+|x|^2\over 2}(x\cdot\xi)+|\xi|\cdot |x|^2,\\
\Phi_\mp(x,\xi)|x-y|^2&={(1-|x|^2)^2\over 2}|\xi|,\\
\Phi_\mp(x,\xi)(x-y)&=-{1-|x|^2\over 2}(|\xi|x\mp\xi),\\
{2\Phi_\mp(x,\xi)^2\over 1-|x|^2}(x-(x\cdot y)y)&=\Big({1-|x|^2\over 2}|\xi|^2+(x\cdot\xi)^2
\mp|\xi|(x\cdot\xi)\Big)x\\
&-\Big({1+|x|^2\over 2}(x\cdot\xi)\mp|\xi|\cdot |x|^2\Big)\xi.
\end{aligned}
$$
It follows that $w\mathcal P(x,y)=\Phi_\mp(x,\xi)$, $\partial_x\mathcal S(x,w,y)=\mp\xi$, and
$\zeta(x,w,y)=G_\mp(x,\xi)$; this gives~\eqref{e:haode-1}.

The equivalence of~\eqref{e:haode-1} and~\eqref{e:haode-2} implies that $\varkappa^\pm$ is a local symplectomorphism,
since it has a generating function $\mp\mathcal S$.
Moreover, $\varkappa^\pm$ is exact, as $\mp\mathcal S(x,w,y)$ is an antiderivative,
see~\eqref{e:antiderivative}.
To show that $\varkappa^\pm$ is a global symplectomorphism, it remains
to prove that for each $(w,y,\theta,\eta)\in T^*(\mathbb R^+\times\mathbb S^{n-1})$, there exists
unique $(x,\xi)\in T^*\mathbb H^n\setminus 0$ such that $\varkappa^\pm(x,\xi)=(w,y,\theta,\eta)$.
This is a direct consequence of~\eqref{e:kappa-M-def} and the geometry of the hyperbolic space:
the values of $w,\theta$ determine uniquely $p(x,\xi),\Phi_\mp(x,\xi)$ and
then $y,\eta$ determine uniquely $B_+(x,\xi),B_-(x,\xi)$;
then $B_+(x,\xi),B_-(x,\xi)$ determine the normalized geodesic passing through $(x,\xi)$
and $\Phi_\mp(x,\xi),p(x,\xi)$ determine the unique point $(x,\xi)$ on that geodesic.
\end{proof}

\begin{proof}[Proof of Lemma~\ref{l:kappa-hat}]
By~\eqref{e:kappa-hat}, \eqref{e:kappa-hat1} is equivalent to
the fact that
\begin{equation}
  \label{e:kappa-hat0}
(w,y,\theta,\eta)=\varkappa^-(x,\xi),\quad
(w,y',\theta',\eta')=\varkappa^+(x,\xi)
\end{equation}
for some $(x,\xi)\in T^*\mathbb H^n\setminus 0$,
which by~\eqref{e:kappa-M-def} is equivalent to
\begin{equation}
  \label{e:morgul1}
\begin{aligned}
\theta'&=\theta+\log(1+|\eta|^2/w^2),\\
y'&={(|\eta|^2-w^2)y-2w\eta\over |\eta|^2+w^2},\\
\eta'&={2w|\eta|^2y+(|\eta|^2-w^2)\eta\over |\eta|^2+w^2}.
\end{aligned}
\end{equation}
Here we identified vectors and covectors on $\mathbb S^{n-1}$
by the round metric and
used the following identities true for $(x,\xi)\in T^*\mathbb H^n\setminus 0$
and $(y,y')\in\mathbb S^{n-1}_\Delta$:
\begin{gather}
  \label{e:din1}
\mathcal P(x,B_+(x,\xi))\mathcal P(x,B_-(x,\xi))(1-B_+(x,\xi)\cdot B_-(x,\xi))=2,\\
  \label{e:din2}
|\mathcal G(y,y')|^2={1+y\cdot y'\over 1-y\cdot y'}.
\end{gather}
We next see that~\eqref{e:kappa-hat2} is equivalent to
\begin{equation}
  \label{e:morgul2}
\begin{aligned}
\theta'&=\theta+\log{4\over |y-y'|^2},\\
\eta&=-{2w(y'-(y\cdot y')y)\over |y-y'|^2},\\
\eta'&={2w(y-(y\cdot y')y')\over |y-y'|^2}.
\end{aligned}
\end{equation}
A direct calculation shows that~\eqref{e:morgul1} and~\eqref{e:morgul2} are equivalent;
therefore, \eqref{e:kappa-hat1} and~\eqref{e:kappa-hat2} are equivalent as well.
By the proof of Lemma~\ref{l:kappa-M}, the antiderivatives for $\varkappa^+$ and $(\varkappa^-)^{-1}$
are given by $-w\log\mathcal P(x,y')$ and $-w\log\mathcal P(x,y)$, where $x$ is defined by~\eqref{e:kappa-hat0};
their sum is then
$$
-w\log{2\over 1-y\cdot y'}=\Theta(w,y,y').
$$
\end{proof}
%
\begin{proof}[Proof of Lemma~\ref{l:horocyclic}]
We use the coframe bundle $F^*\mathbb H^n$ introduced at the beginning
of this appendix. Let $(x,\xi)\in S^*\mathbb H^n$,
$\eta\in E_u(x,\xi)$, and choose $\Xi=(\xi_2,\dots,\xi_n)$
such that $(x,\xi,\Xi)\in F^*\mathbb H^n$.

As in the proof
of Lemma~\ref{l:reallytech}, we use the hyperboloid model of $\mathbb H^n$
and take $\gamma\in G$ which maps
the standard frame to $(x,\xi,\Xi)$.
Then a direct calculation using~\cite[(3.7)]{rrh} shows that for each
$\mathbf s\in\mathbb R^{n-1}$, we have
\begin{equation}
  \label{e:horocycle-flowed}
\pi_S(e^{\mathbf s U_-}(x,\xi,\Xi))=\bigg(\gamma.\Big(1+{|\mathbf s|^2\over 2},-{|\mathbf s|^2\over 2},-\mathbf s\Big),
\gamma.\Big({|\mathbf s|^2\over 2},1-{|\mathbf s|^2\over 2},-\mathbf s\Big)\bigg)
\end{equation}
where $e^{\mathbf sU_-}:F^*\mathbb H^n\to F^*\mathbb H^n$ is defined in~\eqref{e:horoflow}
and we consider elements of $S^*\mathbb H^n$ as pairs of vectors in the Minkowski space.

Take $\mathbf s\in\mathbb R^{n-1}$ such that
\begin{equation}
  \label{e:sss-1}
\partial_r|_{r=0}\,\pi_S(e^{r\mathbf sU_-}(x,\xi,\Xi))=\eta.
\end{equation}
As follows from a direct calculation using~\eqref{e:horocycle-flowed} and~\cite[(3.14)]{rrh},
such $\mathbf s$ exists, is unique, and depends linearly on $\eta$; moreover
\begin{equation}
  \label{e:sss-2}
|\mathbf s|=|\eta|
\end{equation}
with $|\eta|$ defined in the paragraph preceding~\eqref{e:stun}.

With $\mathbf s$ solving~\eqref{e:sss-1}, we put
\begin{equation}
  \label{e:sss-3}
\mathscr U_-(x,\xi,\eta):=\pi_S(e^{\mathbf sU_-}(x,\xi,\Xi)).
\end{equation}
Then $\mathscr U_-(x,\xi,\eta)$ does not depend on the choice of $\Xi$.
Indeed, take another $\Xi'$ such that $(x,\xi,\Xi')\in F^*\mathbb H^n$,
and let $\gamma'\in G$ map the standard basis of the Minkowski space
to $(x,\xi,\Xi')$. Then $\gamma'=\gamma\tilde\gamma$ where $\tilde \gamma$ lies in the subgroup
$\SO(n-1)\subset G=\PSO(1,n)$ consisting of isometries
of the Minkowski space which fix the first two basis vectors.
Viewing $\tilde\gamma$ as an isometry on $\mathbb R^{n-1}$, we get from~\eqref{e:horocycle-flowed}
$$
\pi_S(e^{\mathbf s U_-}(x,\xi,\Xi'))=\bigg(\gamma.\Big(1+{|\mathbf s|^2\over 2},-{|\mathbf s|^2\over 2},-\tilde\gamma.\mathbf s\Big),
\gamma.\Big({|\mathbf s|^2\over 2},1-{|\mathbf s|^2\over 2},-\tilde\gamma.\mathbf s\Big)\bigg).
$$
Then $s':=(\tilde\gamma)^{-1}.\mathbf s$ solves~\eqref{e:sss-1} with $\Xi$ replaced by $\Xi'$,
and the right-hand sides of~\eqref{e:sss-3} for $\Xi,\mathbf s$ and for $\Xi',\mathbf s'$
are equal.

The fact that $\mathscr U_-$ commutes with the action of $G$ follows immediately from its construction
and the fact that $U_i^-$ are $G$-invariant vector fields on $F^*\mathbb H^n$. The identity~\eqref{e:scroo-1}
follows directly from~\eqref{e:sss-1} and~\cite[(3.16)]{rrh}.

To see~\eqref{e:scroo-2}, note that
by~\eqref{e:stpro} and \eqref{e:din1}
$$
\mathcal G(B_-(x,\xi),B_+(x,\xi))={\Phi_+(x,\xi)\Phi_-(x,\xi)\over 2}B_+(x,\xi)
+\Big(1-{\Phi_+(x,\xi)\Phi_-(x,\xi)\over 2}\Big)B_-(x,\xi)
$$
where $\Phi_\pm(x,\xi):=\mathcal P(x,B_\pm(x,\xi))$.
Put $(\tilde x,\tilde\xi):=\mathscr U_-(x,\xi,\eta)$.
By~\eqref{e:sss-1} and~\cite[(3.16)]{rrh}, we have
$\Phi_-(\tilde x,\tilde\xi)=\Phi_-(x,\xi)$. Then
$$
\begin{gathered}
\mathcal G(B_-(x,\xi),B_+(\tilde x,\tilde \xi))-\mathcal G(B_-(x,\xi),B_+(x,\xi))\\
={\Phi_-(x,\xi)\over 2}(\Phi_+(\tilde x,\tilde\xi)B_+(\tilde x,\tilde\xi)-
\Phi_+(x,\xi)B_+(x,\xi))\\
-{\Phi_+(\tilde x,\tilde\xi)-\Phi_+(x,\xi)\over 2}\Phi_-(x,\xi)B_-(x,\xi).
\end{gathered}
$$
Now, by~\cite[(3.16)]{rrh}, we have in the hyperboloid model
$x\pm \xi=\Phi_\pm(x,\xi)(1,B_\pm(x,\xi))$, therefore
$$
\begin{gathered}
\big(0,\mathcal G(B_-(x,\xi),B_+(\tilde x,\tilde \xi))-\mathcal G(B_-(x,\xi),B_+(x,\xi))\big)\\
={\Phi_-(x,\xi)\over 2}(\tilde x+\tilde \xi-x-\xi)
-{\tilde x_0+\tilde \xi_0-x_0-\xi_0\over 2}\Phi_-(x,\xi)(1,B_-(x,\xi))\\
=\Phi_-(x,\xi)\big(\mathbf w-\mathbf w_0(1,B_-(x,\xi))\big),\quad
\mathbf w=\gamma.\Big({|\mathbf s|^2\over 2},-{|\mathbf s|^2\over 2},-\mathbf s\Big)\in\mathbb R^{1,n}
\end{gathered}
$$
where the last identity follows from~\eqref{e:horocycle-flowed} and~\eqref{e:sss-3}.
Now, since $\gamma.(1,-1,0)=\Phi_-(x,\xi)(1,B_-(x,\xi))$, we have
$$
\mathbf w={|\mathbf s|^2\over 2}\Phi_-(x,\xi)(1,B_-(x,\xi))-\gamma.(0,0,\mathbf s)
$$
therefore~\eqref{e:scroo-2} holds with
$$
(0,\mathcal T_-(x,\xi)\eta)=\mathbf v-\mathbf v_0(1,B_-(x,\xi)),\quad
\mathbf v:=-\gamma.(0,0,\mathbf s)\in\mathbb R^{1,n},
$$
and the fact that $\mathcal T_-$ is an isometry follows from~\eqref{e:sss-2} and the fact that $(1,B_-(x,\xi))$
is a null vector in $\mathbb R^{1,n}$ orthogonal to $\mathbf v$.
\end{proof}

\medskip\noindent\textbf{Acknowledgements.}
We would like to thank Maciej Zworski for constant support and encouragement throughout
the project; Dmitry Dolgopyat,
Fr\'ed\'eric Naud, St\'ephane Nonnenmacher, and Peter Sarnak for several insightful conversations on spectral gaps;
Larry Guth, Alex Iosevich, Hans Parshall, and Tomasz Schoen for several helpful discussions regarding
Theorem~\ref{ADRegularSmallAddEnergyThm};
Alexander Razborov and Jean Bourgain for comments and corrections to an earlier version of this manuscript;
Colin Guillarmou and Peter Perry for a review of the history of hyperbolic scattering;
Jeffrey Galkowski and Dmitry Jakobson for many interesting discussions regarding
this project; and David Borthwick and Tobias Weich for providing the data
for Figure~\ref{f:gaps}(b).

During the period this research was conducted, the first author served as
a Clay Research Fellow and the second author was supported by a NSF mathematical sciences postdoctoral research fellowship.

\def\arXiv#1{\href{http://arxiv.org/abs/#1}{arXiv:#1}}

\end{document}